\documentclass[11pt, twoside]{article}

\usepackage{amssymb}
\usepackage{amsmath}
\usepackage{amsthm}
\usepackage{amsfonts}
\usepackage{mathrsfs}
\usepackage{color}
\usepackage{indentfirst}
\usepackage{txfonts}

\allowdisplaybreaks

\def\red{\color{red}}

\pagestyle{myheadings}\markboth{\footnotesize\rm\sc Xianjie Yan,
Ziyi He, Dachun Yang and Wen Yuan}
{\footnotesize\rm\sc Hardy Spaces Associated with Ball
Quasi-Banach Function Spaces}

\textwidth=15.01cm
\textheight=21.418cm
\oddsidemargin 0.46cm
\evensidemargin 0.46cm

\parindent=13pt

\def\rr{{\mathbb R}}
\def\rn{{{\rr}^n}}

\def\zz{{\mathbb Z}}
\def\nn{{\mathbb N}}
\def\ca{{\mathcal A}}
\def\cc{{\mathcal C}}
\def\uu{{\mathcal U}}

\def\cg{{\mathcal G}}
\def\icgg{\mathcal{G}_0^\eta(\beta,\gamma)}

\def\cl{{\mathcal L}}
\def\cm{{\mathcal M}}

\def\cx{{\mathcal X}}
\def\cy{{\mathcal Y}}

\def\fz{\infty}
\def\az{\alpha}
\def\bz{\beta}
\def\dz{\delta}

\def\ez{\epsilon}
\def\gz{{\gamma}}

\def\lz{\lambda}

\def\oz{{\omega}}

\def\tz{\theta}
\def\sz{\sigma}

\def\vz{\varphi}

\def\lf{\left}
\def\r{\right}

\def\la{\langle}
\def\ra{\rangle}
\def\hs{\hspace{0.26cm}}

\def\ls{\lesssim}

\def\noz{\nonumber}

\def\st{\subset}
\def\com{\complement}

\def\supp{\mathop\mathrm{\,supp\,}}

\def\fin{{\mathop\mathrm{fin}}}

\def\ch1{\mathbf{1}}
\def\aa{{\mathbb A}}
\def\q1{q_1}
\def\Q1{q_2}

\def\hx{H_X(\rn)}

\def\ajk{{a_{j,k}}}

\def\fin{{{\mathop\mathrm{fin}}}}

\DeclareMathOperator*{\esssup}{ess\ sup}

\def\supp{{\mathop\mathrm{\,supp\,}}}
\def\diam{{\mathop\mathrm{\,diam\,}}}

\newtheorem{theorem}{Theorem}[section]
\newtheorem{lemma}[theorem]{Lemma}
\newtheorem{proposition}[theorem]{Proposition}
\newtheorem{corollary}[theorem]{Corollary}
\newtheorem{assumption}[theorem]{Assumption}
\theoremstyle{definition}
\newtheorem{definition}[theorem]{Definition}

\newtheorem{remark}[theorem]{Remark}

\numberwithin{equation}{section}
\numberwithin{equation}{section}

\begin{document}

\arraycolsep=1pt

\title{\bf\Large Hardy Spaces Associated with Ball
Quasi-Banach Function Spaces on Spaces of Homogeneous
Type: Characterizations of Maximal Functions,
Decompositions, and  Dual Spaces
\footnotetext{\hspace{-0.35cm} 2020
{\it Mathematics Subject Classification}.
Primary 42B30; Secondary 42B25, 42B35, 46E36, 30L99.
\endgraf {\it Key words and phrases.} space of homogeneous type,
ball quasi-Banach function space, Hardy space,  maximal function,
atom, molecule, ball Campanato-type function space.
\endgraf This project is partially supported by the National
Key Research and Development Program of China
(Grant No. 2020YFA0712900), the National Natural Science
Foundation of China (Grant Nos. 11971058, 12071197 and 11871100)
and the Fundamental Research
Funds for the Central Universities (Grant Nos. 500421359 and 500421126).}}
\author{Xianjie Yan, Ziyi He, Dachun Yang\footnote{Corresponding author,
E-mail: \texttt{dcyang@bnu.edu.cn}/{\red{August 10, 2021}}/Final version.}
\ and Wen Yuan}
\date{}
\maketitle

\vspace{-0.8cm}

\begin{center}
\begin{minipage}{13cm}
{\small {\bf Abstract}\quad
Let $({\mathcal X},\rho,\mu)$ be a space of homogeneous type
in the sense of Coifman and Weiss,
and $Y({\mathcal X})$ a ball quasi-Banach function space on ${\mathcal X}$,
which supports a Fefferman--Stein vector-valued maximal
inequality, and the boundedness of the powered Hardy--Littlewood
maximal operator on its associate space.
The authors first introduce the Hardy space $H_{Y}^*({\mathcal X})$,
associated with $Y({\mathcal X})$, via the grand maximal function,
and then establish its various real-variable characterizations,
respectively, in terms of radial or non-tangential maximal functions,
atoms or finite atoms, and molecules.
As an application, the authors give the dual space of
$H_{Y}^*({\mathcal X})$, which proves to be a ball Campanato-type function space
associated with $Y({\mathcal X})$.
All these results have a wide range of generality and, particularly,
even when they are applied to variable Hardy spaces,
the obtained results are also new.
The major novelties of this article exist in that, to escape the reverse
doubling condition of $\mu$
and the triangle inequality of $\rho$, the authors
cleverly construct admissible sequences of balls,
and fully use the geometrical properties of ${\mathcal X}$ expressed
by dyadic reference points or dyadic cubes and,
to overcome the difficulty caused by the lack of the good dense
subset of $H_{Y}^*({\mathcal X})$, the authors further prove
that $Y({\mathcal X})$ can be embedded into the weighted Lebesgue
space with certain special weight, and then can fully use the
known results of the weighted Lebesgue space.}
\end{minipage}
\end{center}

\vspace{0.1cm}

\tableofcontents

\vspace{0.1cm}

\section{Introduction\label{s-intro}}

It is well known that the real-variable theory
of Hardy-type spaces on $\rn$ including various real-variable
equivalent characterizations and the boundedness of
Calder\'on--Zygmund operators
plays a key role in harmonic analysis and partial
differential equations (see, for instance, \cite{mu94,stein93}).
Recall that the classical Hardy space $H^p(\rn)$ with
$p\in(0,1]$ was originally introduced by Stein and Weiss
\cite{sw60} and further studied by Fefferman and
Stein \cite{fs72}. From then on,
various variants of classical Hardy spaces have been
introduced and their real-variable theories have been well developed
(see, for instance, \cite{at07,ho15,ho17,hyy,lyy,ly95,ns12,ylk17}).
Recently, Sawano et al. \cite{shyy17} originally introduced
the ball quasi-Banach function space $X$ on $\rn$,
which further generalizes the Banach function space
in \cite{bs88} in order to include weighted Lebesgue spaces,
Morrey spaces, mixed-norm Lebesgue spaces, Orlicz-slice spaces,
and Musielak--Orlicz spaces.
Observe that the aforementioned several function spaces
are not quasi-Banach function spaces (see, for instance,
\cite{shyy17,st15,wyyz,zwyy}).
In the same article \cite{shyy17}, Sawano et al. also
introduced the Hardy space $H_X(\rn)$ associated with $X$,
and established its various maximal function characterizations
by assuming that the Hardy--Littlewood maximal operator is
bounded on the $p$-convexification of $X$,
and several other real-variable characterizations, respectively,
in terms of atoms, molecules, and Lusin-area functions by assuming that
the Hardy--Littlewood maximal operator satisfies a
Fefferman--Stein vector-valued inequality on $X$,
and is bounded on the associate space of $X$.
Later, Wang et al. \cite{wyy} further established the
Littlewood--Paley $g$-function and $g_{{\lambda}}^\ast$-function
characterizations of both $H_X(\rn)$ and its localized version $h_X(\rn)$,
and obtained the boundedness of Calder\'on--Zygmund operators
and pseudo-differential operators, respectively,
on $H_X(\rn)$ and $h_X(\rn)$.
Under much weaker assumptions on the Littlewood--Paley functions
than the corresponding ones in \cite{shyy17,wyy}, Chang et al.
\cite{cwyz19} obtained various Littlewood--Paley function
characterizations of $H_X(\rn)$.
Recently, Zhang et al. \cite{zwyy} and Wang et al. \cite{wyyz}
introduced weak Hardy-type spaces $W\!H_X(\rn)$ associated
with $X$ and developed a complete real-variable theory of these spaces;
Yan et al. \cite{yyy20} established the dual theory and
obtained the intrinsic square function characterizations of $\hx$;
Zhang et al. \cite{zhyy21} introduced some new ball Campanato-type
function space which proves the dual space of $\hx$,
and established its Carleson measure characterization.
For more studies about ball quasi-Banach function spaces on $\rn$,
we refer the reader to \cite{its19,is17,s18,yyy20b}.

On the other hand, Coifman and Weiss \cite{cw71,cw77} originally
introduced the notion of spaces of homogeneous type in order to
establish a uniform framework for both the theory of
Calder\'on--Zygmund operators and the real-variable theory
of Hardy spaces over different underlying spaces.
From then on, spaces of homogeneous type have become the most
natural and the most general underlying space to study the real-variable
theory of function spaces and the boundedness of operators
(see, for instance, \cite{dh09,k01,li98,ms79ii,n06}).
However, many existing results of both function spaces
and boundedness of operators on spaces of homogeneous
type require some additional geometrical assumptions
on the underlying spaces
such as the reverse doubling property of the equipped
measure $\mu$ (see, for instance, \cite{gly08,sst18,sgn18,zsy}).
Recently, a breakthrough on the analysis over a space ${\mathcal X}$
of homogeneous type without any additional geometrical assumptions
was made by Auscher and Hyt\"onen \cite{ah13} who constructed
an orthonormal wavelet basis, with exponential decay,
of $L^2(\cx)$ by using the system of random dyadic cubes.
From then on, the real-variable theory of function spaces
on a given space $\cx$ of homogeneous type has rapidly been developed.
To be precise, Han et al. \cite{hhl16} introduced the Hardy space
via wavelets on $\cx$ and established the criteria of the boundedness
of Calder\'on--Zygmund operators on it and its dual space.
Fu and Yang \cite{fy18} obtained an unconditional basis and
several equivalent characterizations of the atomic Hardy
space on $\cx$ in terms of wavelets. He et al. \cite{hlyy19}
introduced a kind of approximation of the identity with
exponential decay and obtained new Calder\'on reproducing formulae
on $\cx$, which proved necessary to
establish various real-variable characterizations of Hardy spaces.
Motivated by this, He et al. \cite{hhllyy} developed a complete
real-variable theory of Hardy spaces on $\cx$
including various real-variable equivalent characterizations
and the boundedness of sublinear operators.
Fu et al. \cite{fmy19} and Zhou et al. \cite{zhy}
generalized the corresponding results in \cite{hhllyy},
respectively, to Musielak--Orlicz Hardy spaces and
Hardy--Lorentz spaces.
We refer the reader to \cite{bdl18,bdl20,chen14,fy17jfaa,hlw18,hwyy21,hyy19,lj10,whyy21}
for more studies on the real-variable theory of
function spaces over $\cx$, and
\cite{dgklwy,lj13,liu18,whhy21} for their applications in
the boundedness of operators;
particularly, we refer the reader to \cite{bddm19,bdn20,bd20}
for some recent progress on the real-variable
theory of function spaces on $\cx$ associated with operators.

Throughout this article, we always let $(\cx,\rho,\mu)$ be a
space of homogeneous type in the sense of Coifman and Weiss,
and $Y(\cx)$ a ball quasi-Banach function space on $\cx$,
which supports a Fefferman--Stein vector-valued maximal inequality,
and the boundedness of the powered Hardy--Littlewood maximal
operator on its associate space. Motivated by \cite{hhllyy}
and \cite{shyy17}, in this article, using the grand maximal function,
we first introduce the Hardy space $H_{Y}^*(\cx)$
associated with $Y(\cx)$, and then establish
its various real-variable characterizations, respectively,
in terms of radial or non-tangential maximal functions,
atoms or finite atoms, and molecules. As an application,
we also give the dual space of $H_{Y}^*(\cx)$, which proves to be
a ball Campanato-type function space associated with $Y(\cx)$.
These results can further be used to establish
the Littlewood--Paley function characterizations of $H_{Y}^*(\cx)$
and the boundedness of Calder\'on--Zygmund operators on $H_{Y}^*(\cx)$;
to limit the length of this article, they are presented in \cite{yhyy21-2}.
The major novelties of this article exist in that, to escape the reverse
doubling condition of $\mu$ and the triangle inequality of $\rho$,
we cleverly construct admissible sequences of balls,
and fully use the geometrical properties of ${\mathcal X}$ expressed
by dyadic reference points and dyadic cubes and,
to overcome the difficulty caused by the lack of the good dense
subset of $Y({\mathcal X})$, we further prove that $Y({\mathcal X})$
can be embedded into the weighted Lebesgue space with certain special
weight, and then can fully use the known results of the
weighted Lebesgue space.

We point out that the results obtained in this article and
\cite{yhyy21-2} have a wide range of generality because
$H_{Y}^*(\cx)$ includes various known Hardy spaces on $\cx$,
for instance, Hardy spaces in \cite{hlyy19},
Hardy--Lorentz spaces in \cite{zhy},
weighted Hardy spaces in \cite{fmy19},
Orlicz--Hardy spaces in \cite{fmy19},
and variable Hardy spaces on RD-spaces in \cite{zsy}.
Moreover, even when these results are applied to these
concrete cases, some results are also new;
obviously, more applications are expectable.

The remainder of this article is organized as follows.

In Section \ref{s-pre}, we recall some notation and
notions used throughout this article. More precisely,
in Subsection \ref{hts}, we recall the definition of a space
$\cx$ of homogeneous type, and some basic properties of $\cx$.
In Subsection \ref{sbqbs}, we introduce the ball quasi-Banach
function space $Y(\cx)$ on $\cx$ and give some examples.
Some mild assumptions on the boundedness of the Hardy--Littlewood
maximal operator on $Y(\cx)$ are stated in Subsection \ref{bshl},
which are needed throughout this article.
In Subsection \ref{emd}, we first creatively introduce the notion
of admissible sequences of balls to get rid of the dependence
on the reverse doubling property of $\mu$ because an admissible
sequence of balls are either doubling and also reverse doubling
(see Remark \ref{r2.25} below for more details). Also,
it is well known that a ball quasi-Banach function space
may not have a good dense subset and hence the usual
dense argument does not work anymore in this setting.
To overcome this difficulty, using admissible sequences of balls,
we further show that a ball quasi-Banach function space can
be embedded to the weighted Lebesgue space with certain
special $A_1(\cx)$ weight (see Theorem \ref{embed} below),
which plays a crucial role in clarifying the relations between $Y(\cx)$
and the corresponding Hardy space (see the proof of Theorem
\ref{relation} below) and also in establishing the atomic characterization
of the corresponding Hardy space
(see the proof of Proposition \ref{atde} below).

The aim of Section \ref{s-max} is to introduce the Hardy space
$H_Y^*(\cx)$ associated with $Y(\cx)$, discuss the relations
between $Y(\cx)$ and $H_Y^*(\cx)$, and establish various
maximal function characterizations of $H_Y^*(\cx)$.
To be precise, in Subsection \ref{s-max1}, we first introduce
three Hardy spaces $H_{Y}^+(\cx)$, $H_{Y}^a(\cx)$ with $a\in(0,\fz)$,
and $H_{Y}^*(\cx)$ associated with $Y(\cx)$, via using, respectively,
the radial maximal function, the non-tangential maximal function,
and the grand maximal function in Definition \ref{bh},
and then clarify the relations between $Y(\cx)$ and $H_Y^*(\cx)$
in Theorem \ref{relation} below, whose proof strongly depends on
the embedding of $Y(\cx)$ into the weighted Lebesgue space
with certain special $A_1(\cx)$ weight obtained in Theorem \ref{embed}.
In Subsection \ref{s-max2}, we first prove that,
as subspaces of the space of distributions,
the Hardy spaces mentioned above coincide with
each other in the sense of equivalent quasi-norms
(see Theorem \ref{maxch} below).
Indeed, by \cite[(3.1)]{hhllyy}, we know that
these maximal functions are pointwisely comparable
with each other. From this, the boundedness of the
Hardy--Littlewood maximal operator on $Y(\cx)$,
and an argument similar to that used in the
proof of \cite[(3.4)]{hhllyy}, we deduce Theorem \ref{maxch}.
Second, by borrowing some ideas from the proof of
\cite[Proposition 3.8]{hhllyy},
we prove that $H_Y^*(\cx)$ are independent of the choices
of distributions (see Theorem \ref{indep} below).
Finally, combining Theorems \ref{maxch} and \ref{indep},
we further obtain various maximal function characterizations
of $H_Y^*(\cx)$.

Section \ref{s-atom} is devoted to establishing the atomic and
the finite atomic characterizations of $H_{Y}^*(\cx)$.
In Subsection \ref{satom1}, using a variant on $\cx$ of the
key lemma obtained in \cite[Lemma 4.8]{zwyy},
the boundedness of the powered Hardy--Littlewood maximal
operator on the associate space of $Y(\cx)$,
and the Fefferman--Stein vector-valued
maximal inequality on $Y(\cx)$, we prove a ``reconstruction"
result of $H_{Y}^*(\cx)$ (see Proposition \ref{atre} below).
On the other hand, $Y(\cx)$ does not have an absolutely
continuous quasi-norm, which makes the good dense subspace of
$H_{Y}^*(\cx)$ be still unknown.
Thus, the proof of the ``decomposition" result
(see Proposition \ref{atde} below) is quite different from
the methods used in the proofs of \cite[Theorem 5.4]{fmy19}
and \cite[Theorem 4.2]{hhllyy}.
Indeed, we borrow some ideas from the proof of
\cite[Theorem 4.6]{zhy} to directly obtain the atomic
decomposition of distributions in $H_{Y}^*(\cx)$
instead of using some good dense function subspace. However,
duo to the lack of an explicit norm expression of $Y(\cx)$,
the methods used in the proof of the convergence of atomic
decompositions (see, for instance, \cite[(4.27) and (4.41)]{zhy})
are inapplicable anymore. To overcome this difficulty,
in Lemma \ref{embedding} below, we establish a continuous
embedding of $Y(\cx)$ into the weighted Lebesgue space
$L^p_{w}(\cx)$ with certain special $A_1(\cx)$ weight $w$,
which is an application of Theorem \ref{embed}.
From this and the Fefferman--Stein vector-valued maximal
inequality for $L^p_{w}(\cx)$, we prove the convergence
of Calder\'on--Zygmund decompositions for a distribution
in $H_{Y}^*(\cx)$ (see Lemma \ref{prop4.13}).
By using Lemma \ref{embedding} again,
and the known atomic characterization of the weighted
Hardy space which is a special case of the known
atomic characterization of the Musielak--Orlicz
Hardy space on $\cx$ obtained in \cite[Theorem 5.4]{fmy19},
we further prove the convergence of atomic
decompositions in \eqref{3.26.x1} and hence
complete the proof of Proposition \ref{atde}.
In Subsection \ref{sfinatom}, we further obtain the
finite atomic characterization of $H_{Y}^*(\cx)$.

The aim of Section \ref{s-mole} is to characterize $H_{Y}^*(\cx)$
by molecules (see Theorem \ref{mol} below).
We first introduce the notions of both the molecule
and the molecular Hardy space
$H_{\mathrm{mol}}^{Y,q,d,\ez}(\cx)$ in Definition \ref{mold}.
By the fact that each $(Y(\cx),q)$-atom is also a
$(Y(\cx),q,\ez)$-molecule, and Proposition \ref{atde}, we prove
$H_{Y}^*(\cx)\st H_{\mathrm{mol}}^{Y,q,d,\ez}(\cx)$.
On the other hand, we show that any $(Y(\cx),q,\ez)$-molecule
can be divided into an infinite linear combination of $(Y(\cx),q)$-atoms
in Lemma \ref{moldec}, which, combined with an argument
similar to that used in the proof of Proposition \ref{atre},
further implies that
$H_{\mathrm{mol}}^{Y,q,d,\ez}(\cx)\st H_{Y}^*(\cx)$
and hence completes the proof of the molecular characterization
of $H_{Y}^*(\cx)$.

Throughout Section \ref{s-lipa}, we always assume that
$Y(\cx)$ has an absolutely continuous quasi-norm.
Under this additional assumption, we first establish a
dominated convergence theorem on $Y(\cx)$, and then
prove that the finite atomic Hardy is dense in $H_{Y}^*(\cx)$
(see Lemma \ref{dense} below). From this, Theorem \ref{atthm},
and Theorem \ref{finatomeq}, we deduce that the dual space of
$H_{Y}^*(\cx)$ is the ball Campanato-type function space
${\cl}_{Y,q,d}(\cx)$ associated with $Y(\cx)$.
Remarkably, similarly to \cite{hyy21,zhyy21},
to overcome the difficulty caused by the fact that
the quasi-norm of $Y(\cx)$ may not be concave,
we consider the role as a whole of finite linear
combinations of atoms instead of the role of a single atom
in the construction of the ball Campanato-type function space;
this makes perfect sense because the finite atomic Hardy space
is dense in the corresponding Hardy space, and the most
important thing is that both of them have equivalent quasi-norms.

At the end of this section,
we make some conventions on notation.
Let $\nn:=\{1,2,\ldots\}$ and $\zz_+:=\nn\cup\{0\}$.
We denote by $C$ a \emph{positive constant} which is independent
of the main parameters, but may vary from line to line.
We use $C_{(\az,\dots)}$ to denote a positive constant depending
on the indicated parameters $\az,\, \dots$.
The symbol $f\ls g$ means $f\le Cg$
and, if $f\ls g\ls f$, then we write $f\sim g$.
If $f\le Cg$ and $g=h$ or $g\le h$,
we then write $f\ls g\sim h$ or $f\ls g\ls h$,
rather than $f\ls g=h$ or $f\ls g\le h$.
If $E$ is a subset of $\cx$, we denote by ${\mathbf{1}}_E$ its
\emph{characteristic function} and by $E^\complement$
the set $\cx\setminus E$.
For any $r\in(0,\fz)$ and $x\in\cx$, we denote by $B(x,r)$ the ball
centered at $x$ with the radius $r$, namely,
$B(x,r):=\{y\in\cx:\ \rho(x,y)<r\}.$
For any ball $B$, we use $x_B$ to denote its center and $r_B$ its radius,
and denote by $\lz B$ for any $\lz\in(0,\fz)$ the ball concentric with
$B$ having the radius $\lz r_B$. For any index $q\in[1,\fz]$,
we denote by $q'$ its \emph{conjugate index}, namely, $1/q+1/q'=1$.
For any $x$, $x_0$, $y\in\cx$ and $r,\ \gz\in(0,\infty)$,
let $V_r(x):=\mu(B(x,r))$,
\begin{align*}
V(x,y):=
\begin{cases}
\mu(B(x,\rho(x,y)))
\ \ &\text{if}\ x\neq y,\\
0 \ \ &\text{if}\ x=y,
\end{cases}
\end{align*}
and
\begin{align}\label{21.7.15.x1}
P_{\gz}(x_0,x;r):=\frac{1}{V_r(x_0)+V(x_0,x)}
\left[\frac{r}{r+\rho(x_0,x)}\right]^\gz.
\end{align}

\section{Ball quasi-Banach function spaces on
spaces of homogeneous type\label{s-pre}}

In this section, we first recall some basic notions about
spaces of homogeneous type and ball quasi-Banach function
spaces, respectively, in Subsections \ref{hts} and \ref{sbqbs},
which are used throughout this article.
Then we make some assumptions about the boundedness of
the Hardy--Littlewood maximal
operator in Subsection \ref{bshl}, and establish an embedding
theorem from the ball quasi-Banach function space to
the weighted Lebesgue space
on space of homogeneous type in Subsection \ref{emd},
which plays an important role in the remainder of this article.

\subsection{Spaces of homogeneous type\label{hts}}

In this subsection, we recall the notion of spaces of homogeneous type,
and some related basic estimates.

\begin{definition}\label{metric}
A \emph{quasi-metric space} $(\cx, \rho)$ is a non-empty set
$\cx$ equipped with a \emph{quasi-metric} $\rho$,
namely, a non-negative function defined
on $\cx\times\cx$ satisfying that, for any $x,\ y,\ z \in \cx$,
\begin{enumerate}
\item[{\rm(i)}] $\rho(x,y)=0$ if and only if $x=y$;
\item[{\rm(ii)}] $\rho(x,y)=\rho(y,x)$;
\item[{\rm(iii)}] there exists a constant $A_0 \in [1, \infty)$,
independent of $x$, $y$, and $z$, such that
\begin{align}\label{2.1x}
\rho(x,z)\leq A_0[\rho(x,y)+\rho(y,z)].
\end{align}
\end{enumerate}
\end{definition}

The \emph{ball} $B$ of $\cx$, centered at $x_0 \in \cx$
with radius $r\in(0, \infty)$, is defined by setting
$$B:=B(x_0,r):=\lf\{x\in\cx:\ \rho(x,x_0)<r\r\}.$$
For any ball $B$ and any $\tau\in(0,\infty)$,
we denote $B(x_0,\tau r)$ by $\tau B$ if
$B:=B(x_0,r)$ for some $x_0\in\cx$ and $r\in(0,\fz)$.
\begin{definition}\label{homo}
Let $(\cx,\rho)$ be a quasi-metric space and $\mu$
a non-negative measure on $\cx$.
The triple $(\cx,\rho,\mu)$ is called a
\emph{space of homogeneous type}
if $\mu$ satisfies the following doubling condition:
there exists a constant $C_{(\mu)}\in[1,\infty)$
such that, for any ball $B \subset \cx$,
\begin{equation}\label{eq-db1}
\mu(2B)\leq C_{(\mu)}\mu(B).
\end{equation}
\end{definition}
The above doubling condition implies that,
for any ball $B\st\cx$ and any $\lambda\in[1,\infty)$,
\begin{equation}\label{eq-doub}
\mu(\lambda B)\leq C_{(\mu)}\lambda^\omega\mu(B),
\end{equation}
where $\omega:= \log_2 C_{(\mu)}$ is called the
\emph{upper dimension} of $\cx$.
If $A_0 = 1$, then $(\cx,\rho,\mu)$ is called a
\emph{metric measure space of homogeneous type} or,
simply, a \emph{doubling metric measure space}.

Both spaces of homogeneous type, with some additional assumptions,
and function spaces on them have been extensively investigated in
many articles. One special case of spaces of homogeneous type
is the RD-\emph{space}, originally introduced in \cite{han08}
(see also \cite{han06,yz11}), which is a doubling
metric measure space satisfying the following additional
\emph{reverse doubling condition}: there exist constants
$\widetilde{C}_{(\mu)}\in(0,1]$ and $\kappa\in(0,\omega]$ such that,
for any ball $B(x, r)$ with $x\in\cx$ and $r\in(0,\diam\cx/2)$,
and any $\lambda\in[1,\diam\cx/(2r))$,
\begin{equation*}
\widetilde{C}_{(\mu)}\lambda^\kappa\mu(B(x,r))\leq\mu(B(x,\lambda r)),
\end{equation*}
here and thereafter, $\diam\cx:=\sup_{x,\,y\in \cx}\rho(x,y)$.

Throughout this article, according to \cite[pp.\,587-588]{cw77},
we always make the following assumptions on $(\cx,\rho,\mu)$:
\begin{enumerate}
\item[{\rm(i)}] for any point $x\in\cx$, the balls
$\{B(x,r)\}_{r\in(0,\fz)}$ form a basis of
open neighborhoods of $x$;
\item[{\rm(ii)}] $\mu$ is \emph{Borel regular}
which means that all open sets are $\mu$-measurable and every set
$A\st\cx$ is contained in a Borel set $E$ such that $\mu(A)=\mu(E)$;
\item[{\rm(iii)}] for any $x\in\cx$ and $r\in(0,\fz)$,
$\mu(B(x,r))\in(0,\fz)$;
\item[{\rm(iv)}] $\diam \cx=\fz$,
and $(\cx,\rho,\mu)$ is \emph{non-atomic},
which means $\mu(\{x\})=0$ for any $x\in\cx$.
\end{enumerate}
Notice that $\diam \cx=\fz$ implies that $\mu(\cx)=\fz$
(see \cite[p.\,284]{ah13} or \cite[Lemma 5.1]{ny97}).
From this, it follows that, under the above assumptions,
$\mu(\cx)=\fz$ if and only if $\diam \cx=\fz$.

The following basic estimates are from \cite[Lemma 2.1]{han06},
which can be proved by using \eqref{eq-doub}.
\begin{lemma}\label{equlem}
Let $x,\ y\in\cx$ and $r\in(0,\infty)$.
Then $V(x, y)\sim V (y, x)$ and
$$V_r(x)+V_r(y)+V (x,y)\sim V_r(x)+V (x,y)\sim V_r(y)+V (x,y)
\sim \mu(B(x,r+\rho(x,y))).$$
Moreover, if $\rho(x,y)\leq r$, then $V_r(x)\sim V_r(y)$.
Here the positive equivalence  constants are independent of
$x$, $y$, and $r$.
\end{lemma}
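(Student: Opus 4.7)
The plan is to derive everything from the quasi-triangle inequality \eqref{2.1x} together with the doubling estimate \eqref{eq-doub}; no deeper structure of $\cx$ is needed. First I would establish the symmetry statement $V(x,y)\sim V(y,x)$. Assuming $x\neq y$ (the other case is trivial), for any $z\in B(y,\rho(x,y))$ the bound $\rho(x,z)\le A_0[\rho(x,y)+\rho(y,z)]<2A_0\rho(x,y)$ yields the inclusion $B(y,\rho(x,y))\subset B(x,2A_0\rho(x,y))$, and \eqref{eq-doub} with $\lz=2A_0$ then gives $V(y,x)\ls V(x,y)$; the reverse bound is identical after swapping $x$ and $y$.

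Next I would prove the chain of equivalences centered on $\mu(B(x,r+\rho(x,y)))$. For the ``$\ls$'' direction, I split into two cases: if $r\ge\rho(x,y)$, then $B(x,r+\rho(x,y))\subset B(x,2r)$ and \eqref{eq-doub} gives $\mu(B(x,r+\rho(x,y)))\ls V_r(x)$; otherwise $B(x,r+\rho(x,y))\subset B(x,2\rho(x,y))$ and \eqref{eq-doub} gives $\mu(B(x,r+\rho(x,y)))\ls V(x,y)$. Either way
$$
\mu\lf(B(x,r+\rho(x,y))\r)\ls V_r(x)+V(x,y).
$$
For the ``$\gs$'' direction, the inclusions $B(x,r)\subset B(x,r+\rho(x,y))$ and, when $x\ne y$, $B(x,\rho(x,y))\subset B(x,r+\rho(x,y))$ immediately give $V_r(x)+V(x,y)\le 2\mu(B(x,r+\rho(x,y)))$. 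Moreover, for $z\in B(y,r)$ we have $\rho(x,z)\le A_0[\rho(x,y)+r]$, hence $B(y,r)\subset B(x,A_0(r+\rho(x,y)))$, and \eqref{eq-doub} yields $V_r(y)\ls\mu(B(x,r+\rho(x,y)))$. Combining these observations already shows
$$
V_r(x)+V_r(y)+V(x,y)\sim V_r(x)+V(x,y)\sim\mu\lf(B(x,r+\rho(x,y))\r),
$$
and the ``$y$-centered'' version
$$
V_r(y)+V(x,y)\sim\mu\lf(B(y,r+\rho(x,y))\r)
$$
follows by interchanging the roles of $x$ and $y$. To close the circle I would then check that $\mu(B(x,r+\rho(x,y)))\sim\mu(B(y,r+\rho(x,y)))$ by exactly the same argument used for $V(x,y)\sim V(y,x)$: the inclusion $B(y,r+\rho(x,y))\subset B(x,A_0(r+\rho(x,y))+A_0\rho(x,y))\subset B(x,2A_0(r+\rho(x,y)))$ combined with \eqref{eq-doub} gives one direction, and symmetry gives the other.

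Finally, the last assertion is a direct corollary: if $\rho(x,y)\le r$, then the inclusion $B(y,r)\subset B(x,2A_0 r)$ (which is precisely the argument above with $r+\rho(x,y)\le 2r$) together with \eqref{eq-doub} yields $V_r(y)\ls V_r(x)$, and symmetry gives the reverse. I do not expect any genuine obstacle: every step is a one-line use of \eqref{2.1x} followed by \eqref{eq-doub}. The only minor bookkeeping issue will be to package the case $x=y$, where $V(x,y)=0$ by definition, so that the displayed equivalences still hold trivially (both sides reduce to $V_r(x)\sim V_r(y)$, which is covered by the ``moreover'' clause).
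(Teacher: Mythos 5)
Your proof is correct and follows exactly the route the paper intends: the paper does not prove Lemma \ref{equlem} itself but cites \cite[Lemma 2.1]{han06}, noting it "can be proved by using \eqref{eq-doub}", and your argument (one-line applications of the quasi-triangle inequality \eqref{2.1x} to get ball inclusions, followed by \eqref{eq-doub}, with the degenerate case $x=y$ handled separately) is that standard proof with uniform constants. Nothing further is needed.
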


\subsection{Ball quasi-Banach function spaces\label{sbqbs}}

Throughout this article, let $(\cx,\rho,\mu)$ be a
space of homogeneous type with $\mu(\cx)=\fz$.
In this subsection, we introduce the notion of (ball)
quasi-Banach function spaces on $\cx$ (see \cite{shyy17}
for the corresponding Euclidean case). Moreover,
we give some examples of (ball) quasi-Banach function spaces.

\begin{definition}\label{qbs}
Let $Y(\cx)$ be a quasi-Banach space consisting of
$\mu$-measurable functions on $\cx$.
Then $Y(\cx)$ is called a \emph{quasi-Banach function space}
if it satisfies:

\begin{enumerate}
\item[(i)] $\|f\|_{Y(\cx)}=0$ implies that $f=0$
almost everywhere in $\cx$;

\item[(ii)] $|g|\le|f|$ almost everywhere in $\cx$
implies that $\|g\|_{Y(\cx)}\le\|f\|_{Y(\cx)}$;

\item[(iii)] $0\le f_m\uparrow f$ almost everywhere in $\cx$
implies that $\|f_m\|_{Y(\cx)}\uparrow\|f\|_{Y(\cx)}$;

\item[(iv)] $\ch1_{E}\in Y(\cx)$ for any $\mu$-measurable set $E\subset\cx$
with finite measure.
\end{enumerate}

Moreover, a quasi-Banach function space $Y(\cx)$
is called a \emph{Banach function space} if
it is a Banach space and
\begin{enumerate}
\item[(v)] for any given $\mu$-measurable set $E\subset\cx$
with finite measure, there exists a positive constant $C_{(E)}$
such that, for any $f\in Y(\cx)$,
$$\int_E|f(x)|\,d\mu(x)\le C_{(E)}\|f\|_{Y(\cx)}.$$
\end{enumerate}
\end{definition}

\begin{definition}\label{bqbs}
Let $Y(\cx)$ be a quasi-Banach space consisting of
$\mu$-measurable functions on $\cx$.
Then $Y(\cx)$ is called a \emph{ball quasi-Banach function space}
if it satisfies (i), (ii), and (iii) of Definition \ref{qbs},
and
\begin{enumerate}
\item[(iv)$'$] $\ch1_B\in Y(\cx)$ for any ball $B\subset\cx$.
\end{enumerate}

Moreover, a ball quasi-Banach function space $Y(\cx)$
is called a \emph{ball Banach function space} if it
is a Banach space and Definition \ref{qbs}(v) holds true
with any $\mu$-measurable set $E$ of finite measure replaced
by any ball $B$.
\end{definition}

Obviously, a quasi-Banach function space is
also a ball quasi-Banach function space and
the converse is not necessary to be true.

\begin{remark}\label{bqbsrem}
\begin{enumerate}
\item[{\rm (i)}] Let $Y(\cx)$ be a ball quasi-Banach
function space on $\cx$. By an elementary calculation,
we know that $\|f\|_{Y(\cx)}=0$ if and only if
$f=0$ almost everywhere in $\cx$.

\item[{\rm (ii)}] Observe that, in Definition \ref{bqbs},
if we replace any ball $B$ by any bounded
$\mu$-measurable set $E$,
we obtain its another equivalent formulation.
\end{enumerate}
\end{remark}

Next, we present several concrete examples of
ball quasi-Banach function spaces on $\cx$ as follows.
\begin{remark}\label{qbsdefrem}
\begin{enumerate}
\item[{\rm (i)}]
For any given $\mu$-measurable set $E\st\cx$ and
any given $p\in(0,\fz]$,
the \emph{Lebesgue space} $L^p(E)$ is defined by setting,
when $p\in(0,\infty)$,
$$L^p(E):=\left\{f\ \text{is $\mu$-measurable on}\ E:\
\|f\|_{L^p(E)}:=\left[\int_{E}|f(x)|^p\,d\mu(x)\right]^{1/p}
<\infty\right\},$$
and
$$L^\infty(E):=\left\{f\ \text{is $\mu$-measurable on}\ E:\ \|f\|_{L^\infty(E)}:=\displaystyle{\esssup_{x\in E}|f(x)|<\infty}\right\}.$$

It is easy to show that, for any given $\mu$-measurable set $E$
and any given $p\in(0,\fz]$,
$L^p(E)$ is a quasi-Banach function space and hence
a ball quasi-Banach function space.

\item[{\rm (ii)}]
Let $p\in(0,\fz]$ and $r\in(0,\fz]$.
Recall that the \emph{Lorentz space $L^{p,r}(\cx)$}
is defined to be the set of all $\mu$-measurable
functions $f$ on $\cx$ such that
\begin{align*}
\|f\|_{L^{p,r}(\cx)}:=
\begin{cases}\displaystyle\lf\{\int_0^{\fz}
\lf[t^{1/p}R(f)(t)\r]^r\frac{dt}{t}\r\}^{1/r}<\fz
\ \ &\text{when}\ r\in(0,\fz),\\
\displaystyle{\sup_{t\in(0,\fz)}}\lf[t^{1/p}R(f)(t)\r]<\fz \ \ &\text{when}\ r=\fz,
\end{cases}
\end{align*}
with the usual modification made when $p=\fz$,
where $R(f)$ denotes the \emph{decreasing rearrangement function}
of $f$ defined by setting, for any $t\in(0,\fz)$,
$$R(f)(t):=\inf\lf\{\az\in(0,\fz):\,d_{f}(\az)\le t\r\}$$
with $d_{f}(\az):=\mu(\{x\in\cx:\,|f(x)|>\az\})$
for any $\alpha\in(0,\infty)$.
By \cite[Theorem 1.4.11]{Gra1449}, we know that,
for any given $p\in(0,\fz]$ and $r\in(0,\fz]$,
$L^{p,r}(\cx)$ is a quasi-Banach function space
and hence a ball quasi-Banach function space.

\item[{\rm (iii)}]
Recall that a function $\Phi:\ [0,\fz)\rightarrow[0,\fz)$ is
called an \emph{Orlicz function} if it is non-decreasing,
$\Phi(0) = 0$, $\Phi(t)>0$ for any $t\in(0,\fz)$, and $\lim_{t\rightarrow\fz}\Phi(t)=\fz$. The function $\Phi$ is
said to be of \emph{upper} (resp., \emph{lower}) \emph{type $p$}
for some $p\in[0,\fz)$ if there exists a positive constant $C$
such that, for any $s\in[1,\fz)$ (resp., $s\in[0,1]$) and
$t\in[0,\fz)$, $\Phi(st)\le C s^p\Phi(t)$.

For a given function $\vz:\ \cx\times[0,\fz)\rightarrow[0,\fz)$
such that, for almost every $x\in\cx$,
$\vz(x,\cdot)$ is an Orlicz function,
$\vz$ is said to be of \emph{uniformly upper} (resp., \emph{lower})
type $p$ for some $p\in[0,\fz)$ if there exists a positive constant
$C$ such that, for any $x\in\cx$, $s\in[1,\fz)$ (resp., $s\in[0,1]$),
and $t\in[0,\fz)$, $\vz(x,st)\le C s^p\vz(x,t)$.

Let $\vz:\ \cx\times[0,\fz)\rightarrow[0,\fz)$ satisfy that
$x\mapsto\vz(x,t)$ is $\mu$-measurable for any $t\in[0,\fz)$.
The function $\vz(\cdot,t)$ is said to satisfy the
\emph{uniformly Muckenhoupt condition} for some $q\in[1,\fz)$,
denoted by $\vz\in\aa_q(\cx)$, if, when $q\in(1,\fz)$,
\begin{align*}
[\vz]_{\aa_q(\cx)}:=\sup_{t\in(0,\fz)}\sup_{B\subset\cx}
\frac{1}{[\mu(B)]^q}\int_B\vz(x,t)\,d\mu(x)
\lf\{\int_B\lf[\vz(y,t)\r]^{-q'/q}\,d\mu(y)\r\}^{q/q'}<\fz,
\end{align*}
or, when $q=1$,
\begin{align*}
[\vz]_{\aa_1(\cx)}:=\sup_{t\in(0,\fz)}\sup_{B\subset\cx}
\frac{1}{\mu(B)}\int_B\vz(x,t)\,d\mu(x)\lf\{\mathop\mathrm{ess\,sup}
_{y\in B}\lf[\vz(y,t)\r]^{-1}\r\}<\fz,
\end{align*}
where the second suprema are taken over all balls $B\subset\cx$
(see, for instance, \cite{hyy14}). Let
$$\aa_{\fz}(\cx):=\bigcup_{q\in[1,\fz)}\aa_{q}(\cx).$$
The \emph{critical weight index} $q(\vz)$ of $\vz\in\aa_{\fz}(\cx)$
is defined by setting
\begin{align}\label{qvz}
q(\vz):=\inf\lf\{q\in[1,\fz):\ \vz\in\aa_q(\cx)\r\}.
\end{align}
Recall that a function $\vz: \cx\times[0,\fz)\rightarrow[0,\fz)$
is called a \emph{growth function} if $\vz$ has the
following properties:
\begin{enumerate}
\item[(a)] $\vz$ is a \emph{Musielak--Orlicz function},
namely,
\begin{enumerate}
\item[$\mbox{(a)}_1$] the function $\vz(x,\cdot)$
is an Orlicz function for almost every given $x\in\cx$;

\item[$\mbox{(a)}_2$] the function $\vz(\cdot,t)$ is
a $\mu$-measurable function on $\cx$ for any given $t\in[0,\fz)$.
\end{enumerate}

\item[(b)] $\vz\in\aa_{\fz}(\cx)$.

\item[(c)] $\vz$ is of uniformly lower type
$p$ for some $p\in(0,1]$, and of uniformly upper type 1.
\end{enumerate}

Let $\vz$ be a growth function. The \emph{Musielak--Orlicz space}
$L^{\vz}(\cx)$ is defined to be the set of all $\mu$-measurable
functions $f$ on $\cx$ such that
$\int_{\cx}\vz(x,|f(x)|)\,d\mu(x)<\fz$
equipped with the \emph{quasi-norm}
\begin{align*}
\|f\|_{L^{\vz}(\cx)}:=\inf\lf\{\lz\in(0,\fz):\
\int_{\cx}\vz\lf(x,\frac{|f(x)|}{\lz}\r)\,d\mu(x)\le1\r\}.
\end{align*}
By the property of $\aa_{\fz}(\cx)$, we know that,
for any ball $B\subset\cx$, $\|\ch1_B\|_{L^{\vz}(\cx)}<\fz$,
which further implies that $\ch1_B\in L^{\vz}(\cx)$ and hence
$L^{\vz}(\cx)$ is a ball quasi-Banach function space.
However, by an argument similar to that used in \cite[p.\,86]{shyy17},
we know that, if we define the growth function $\vz_0$ by setting,
for any $x\in\rr$ and $t\in[0,\fz)$,
$\vz_0(x,t):=(1+|x|)t$, then $L^{\vz_0}(\rr)$
is \emph{not} a quasi-Banach function space.

\item[{\rm (iv)}] Let $p(\cdot):\ \cx\to(0,\fz)$ be a
$\mu$-measurable function satisfying
\begin{align}\label{2.4x}
0<\widetilde{p_-}:=\mathop\mathrm{ess\,inf}_{x\in\cx}p(x)\le
\mathop\mathrm{ess\,sup}_{x\in\cx}p(x)=:\widetilde{p_+}<\fz.
\end{align}
The \emph{variable Lebesgue space $L^{p(\cdot)}(\cx)$}
is defined to be the set of all $\mu$-measurable functions $f$
on $\cx$ such that $\int_\cx|f(x)|^{p(x)}\,d\mu(x)<\fz$
and equipped with the \emph{quasi-norm}
$$
\|f\|_{L^{p(\cdot)}(\cx)}:=\inf\lf\{\lambda\in(0,\infty):\
\int_\cx\lf[\frac{|f(x)|}{\lambda}\r]^{p(x)}\,d\mu(x)\le1\r\}.
$$
If $\widetilde{p_-}\in[1,\fz)$,
then $L^{p(\cdot)}(\cx)$ is a Banach function space
(see, for instance, \cite{dhr11}).
From this, we deduce that $L^{p(\cdot)/\widetilde{p_-}}(\cx)$
is a Banach function space and hence, for any given
$p(\cdot):\ \cx\to(0,\fz)$ and any given $\mu$-measurable set
$E\st\cx$ with finite measure,
$$\lf\|\ch1_E\r\|_{L^{p(\cdot)}(\cx)}
=\lf\|\ch1_E\r\|_{L^{p(\cdot)/\widetilde{p_-}}(\cx)}
^{1/\widetilde{p_-}}<\fz.$$
Thus, for any given $p(\cdot):\ \cx\to(0,\fz)$ satisfying \eqref{2.4x},
$L^{p(\cdot)}(\cx)$ is a quasi-Banach function space
and hence a ball quasi-Banach function space.
\end{enumerate}
\end{remark}

\begin{remark}\label{r-ar}
Let $Y(\cx)$ be a ball quasi-Banach function space on $\cx$.
Then, by the Aoki--Rolewicz theorem
(see \cite[Exercise 1.4.6]{Gra1449} and also \cite{ta42, sr57}),
we find that there exists a constant $v\in(0,1)$
such that, for any $N\in\nn$ and $\{f_k\}_{k=1}^N\st Y(\cx)$,
$$\lf\|\sum_{k=1}^N |f_k|\r\|_{Y(\cx)}^{v}
\le 4\lf[\sum_{k=1}^N \lf\|f_k\r\|_{Y(\cx)}^{v}\r].$$
\end{remark}

The following Fatou lemma of $Y(\cx)$ when $\cx:=\rn$
is just \cite[Chapter 1, Lemma 1.5(ii)]{bs88},
whose proof is a slight modification of
\cite[Chapter 1, Lemma 1.5(ii)]{bs88}; we omit the details here.
\begin{lemma}\label{fatou}
Let $Y(\cx)$ be a ball quasi-Banach function space
on $\cx$ and $\{f_k\}_{k\in\nn}\st Y(\cx)$.
If $f_k\to f$ almost everywhere in $\cx$ as $k\to\fz$,
and $\liminf_{k\to\fz}\|f_k\|_{Y(\cx)}$ is finite,
then $f\in Y(\cx)$ and
$$\|f\|_{Y(\cx)}\le\liminf_{k\to\fz}\|f_k\|_{Y(\cx)}.$$
\end{lemma}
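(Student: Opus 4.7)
The plan is to mimic the classical Bennett--Sharpley proof \cite[Chapter 1, Lemma 1.5(ii)]{bs88}, which relies only on the lattice axioms (i), (ii), and (iii) of Definition \ref{qbs}, and hence applies verbatim to the ball quasi-Banach setting here. The key observation is that the monotone convergence axiom (iii) already encodes a Fatou-type principle and automatically certifies that an almost-everywhere monotone limit lies in $Y(\cx)$ as soon as the corresponding sequence has uniformly bounded quasi-norms. Concretely, for each $k\in\nn$ I would set
$$g_k:=\inf_{j\ge k}|f_j|,$$
which is $\mu$-measurable, satisfies $0\le g_k\le|f_k|$ almost everywhere in $\cx$ (and hence lies in $Y(\cx)$ by Definition \ref{qbs}(ii)), and the sequence $\{g_k\}_{k\in\nn}$ is almost-everywhere non-decreasing. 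Since $f_k\to f$ almost everywhere, one has $|f_k|\to|f|$ almost everywhere, and therefore
$$g_k\uparrow\liminf_{k\to\fz}|f_k|=|f|\quad\mu\text{-a.e. in }\cx.$$

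Next, applying Definition \ref{qbs}(ii) to $g_k\le|f_j|$ for every $j\ge k$ produces $\|g_k\|_{Y(\cx)}\le\|f_j\|_{Y(\cx)}$, whence $\|g_k\|_{Y(\cx)}\le\inf_{j\ge k}\|f_j\|_{Y(\cx)}$. Invoking the monotone convergence axiom (iii) of Definition \ref{qbs} on the non-negative non-decreasing sequence $g_k\uparrow|f|$ then yields
$$\lf\||f|\r\|_{Y(\cx)}=\lim_{k\to\fz}\|g_k\|_{Y(\cx)}\le\lim_{k\to\fz}\inf_{j\ge k}\|f_j\|_{Y(\cx)}=\liminf_{k\to\fz}\|f_k\|_{Y(\cx)}<\fz.$$
Because Definition \ref{qbs}(ii), applied in both directions to the pair $f$ and $|f|$, gives $\|f\|_{Y(\cx)}=\||f|\|_{Y(\cx)}$, this simultaneously shows that $f\in Y(\cx)$ and establishes the claimed inequality.

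The only place where caution is warranted is the invocation of axiom (iii) without knowing a priori that $|f|\in Y(\cx)$; this is not a genuine obstacle, because (iii) is stated for an arbitrary non-negative non-decreasing sequence converging almost everywhere to a measurable function, and the finiteness of $\|f\|_{Y(\cx)}$ emerges as a byproduct of the uniform bound on $\|g_k\|_{Y(\cx)}$ rather than being a hypothesis. Note also that no geometry specific to spaces of homogeneous type, and no structural axiom beyond (i)--(iii) of Definition \ref{qbs}, enters the argument, which is precisely why the proof carries over unchanged from the Euclidean case alluded to in the statement.
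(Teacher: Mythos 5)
Your proof is correct and is essentially the argument the paper has in mind: the paper omits the details precisely because they are the classical Bennett--Sharpley argument (take $g_k:=\inf_{j\ge k}|f_j|$, use the ideal property (ii) and the monotone convergence axiom (iii)), which is exactly what you wrote. Your closing caveat about invoking (iii) before knowing $|f|\in Y(\cx)$ is handled in the standard way (the quasi-norm is understood on all $\mu$-measurable functions, with membership in $Y(\cx)$ equivalent to finiteness), which is the only ``slight modification'' of \cite[Chapter 1, Lemma 1.5(ii)]{bs88} the paper alludes to.
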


Next, we introduce the notion of the convexification of $Y(\cx)$.
\begin{definition}\label{cvex}
Let $Y(\cx)$ be a ball quasi-Banach function space on $\cx$,
and $p\in(0,\fz)$. The \emph{$p$-convexification} $Y^p(\cx)$
of $Y(\cx)$ is defined by setting
$$Y^p(\cx):=\lf\{f\,\mbox{is $\mu$-measurable on}
\,\cx:\ |f|^p\in Y(\cx)\r\}$$
equipped with the \emph{quasi-norm}
$\|f\|_{Y^p(\cx)}:=\||f|^p\|_{Y(\cx)}^{\frac{1}{p}}$.
\end{definition}

\begin{remark}\label{covrem}
Let $p\in(0,\fz)$ and $Y(\cx)$ be a ball quasi-Banach
function space on $\cx$. By Definitions \ref{bqbs}
and \ref{cvex}, it is easy to show that $Y^p(\cx)$
is also a ball quasi-Banach function space on $\cx$;
see \cite[Lemma 2.6]{cwyz19} for the corresponding
Euclidean case.
\end{remark}

\subsection{Boundedness of the Hardy--Littlewood
maximal operator\label{bshl}}

Throughout this article, we always let $Y(\cx)$ be
a ball quasi-Banach function space on $\cx$.
In this subsection, we make two mild assumptions on the boundedness
of the Hardy--Littlewood maximal operator on $Y(\cx)$ and its associate
space. We first recall that the \emph{Hardy--Littlewood maximal operator}
$\cm$ is defined by setting, for any $\mu$-measurable
function $f$ and any $x\in\cx$,
\begin{align}\label{hlmax}
\cm(f)(x):=\sup_{B\ni x}\frac1{\mu(B)}\int_B |f(y)|\,d\mu(y),
\end{align}
where the supremum is taken over all balls $B$ of $\cx$ containing $x$.
For any given $\theta\in(0,\fz)$, the \emph{powered Hardy--Littlewood
maximal operator} $\cm^{(\tz)}$ is defined by setting, for any
$\mu$-measurable function $f$ and any $x\in\cx$,
\begin{align*}
\cm^{(\tz)}(f)(x):=\lf\{\cm\lf(|f|^{\tz}\r)(x)\r\}^{\frac{1}{\tz}}.
\end{align*}

Throughout this article, we denote by $M(\cx)$ the
\emph{set of all $\mu$-measurable functions} on $\cx$.
For any ball Banach function space $Y(\cx)$,
its \emph{associate space (K\"othe dual)} $Y'(\cx)$
is defined by setting
\begin{align}\label{1.7.x2}
Y'(\cx)&:=\lf\{f\in M(\cx):\ \|f\|_{Y'(\cx)}<\fz\r\},
\end{align}
where, for any $f\in M(\cx)$,
$$\|f\|_{Y'(\cx)}
:=\sup\lf\{\|fg\|_{L^1(\cx)}:\ g\in Y(\cx),\|g\|_{Y(\cx)}=1\r\}.$$

In this article, we need the following mild assumptions about
the boundedness of $\cm$ on the ball quasi-Banach function
space under consideration, and its associate space.

\begin{assumption}\label{assump1}
Let $Y(\cx)$ be a ball quasi-Banach function space on $\cx$.
Assume that there exists a $p_-\in(0,\fz)$ such that,
for any given $p\in(0,p_-)$ and $r\in(1,\fz)$,
there exists a positive constant C such that, for any
$\{f_k\}_{k=1}^{\fz}\subset M(\cx)$,
\begin{align*}
\lf\|\lf\{\sum_{k=1}^{\fz}\lf[\cm(f_k)\r]^r\r\}
^{\frac1r}\r\|_{{Y^{\frac{1}{p}}(\cx)}}\le
C\lf\|\lf\{\sum_{k=1}^{\fz}|f_k|^r\r\}
^{\frac1r}\r\|_{{Y^{\frac{1}{p}}(\cx)}}.
\end{align*}
\end{assumption}

In what follows, for any given $p_-\in(0,\fz)$, let
\begin{align}\label{2.1y}
\underline{p}:=\min\{p_-,1\}.
\end{align}

The following lemma is used throughout this article,
which is a trivial corollary of \eqref{hlmax};
we omit the details here.

\begin{lemma}\label{lem6.2}
For any $h\in(0,\fz)$, any $\az\in[1,\fz)$,
any ball $B\st\cx$, and any $x\in\cx$, it holds true that
\begin{align*}
\ch1_{\az B}(x)\le\lf[\frac{\mu(\az B)}{\mu(B)}\r]^{h}
\lf[\cm\lf(\ch1_B\r)(x)\r]^{h}.
\end{align*}
\end{lemma}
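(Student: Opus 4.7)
The plan is to split into two cases according to whether $x$ lies in $\az B$. If $x\notin\az B$, then $\ch1_{\az B}(x)=0$ and the inequality holds trivially, so the substantive case is $x\in\az B$.

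In that case, the inequality to be proved reduces, after taking $h$-th roots on both sides and rearranging, to
\begin{align*}
\frac{\mu(B)}{\mu(\az B)}\le\cm\lf(\ch1_B\r)(x).
\end{align*}
To produce this lower bound on the Hardy--Littlewood maximal function, I would test the supremum in the definition of $\cm$ at the single ball $\az B$. Since $\az\in[1,\fz)$, the ball $B$ is contained in $\az B$ (they share a common center and the radius has only grown), so $\ch1_B\cdot\ch1_{\az B}=\ch1_B$ almost everywhere; moreover, in the case under consideration, $\az B$ is itself a ball containing $x$. Consequently,
\begin{align*}
\cm\lf(\ch1_B\r)(x)\ge\frac{1}{\mu(\az B)}\int_{\az B}\ch1_B(y)\,d\mu(y)=\frac{\mu(B)}{\mu(\az B)},
\end{align*}
which is exactly what is needed.

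Raising both sides of this last inequality to the power $h\in(0,\fz)$ and multiplying through by the (nonnegative) factor $[\mu(\az B)/\mu(B)]^h$ recovers the asserted pointwise bound on $\ch1_{\az B}(x)$, and the proof concludes by combining the two cases. No step here is an obstacle: the only ingredients are the definition of $\cm$ in \eqref{hlmax}, the nesting $B\st\az B$ which follows from $\az\ge1$ (and the convention on $\az B$ introduced just after Definition \ref{metric}), and the monotonicity of $t\mapsto t^h$ on $[0,\fz)$. The doubling condition \eqref{eq-doub} is not even required at this stage; it will only enter when this lemma is later applied with the factor $\mu(\az B)/\mu(B)$ being controlled by $\az^\oz$.
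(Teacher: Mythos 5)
Your proof is correct and is exactly the argument the paper has in mind: the paper omits the details, calling the lemma a trivial corollary of \eqref{hlmax}, and testing the supremum in the definition of $\cm$ at the ball $\az B\supset B$ containing $x$ is precisely that intended computation. The case split and the trivial handling of $x\notin\az B$ are fine, so nothing is missing.
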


\begin{remark}\label{rek3.19}
Let $Y(\cx)$ be a ball quasi-Banach function space
on $\cx$ satisfying Assumption \ref{assump1},
$\az\in[1,\fz)$, and $h\in(0,\underline{p})$
with $\underline{p}$ as in \eqref{2.1y}.
Then, by Lemma \ref{lem6.2}, \eqref{eq-doub},
and Assumption \ref{assump1},
we conclude that there exists a positive
constant $C$, independent of $\az$, such that,
for any sequence $\{B_k\}_{k\in\nn}$ of balls of $\cx$,
\begin{align*}
\lf\|\sum_{k\in\nn}\ch1_{\az B_k}\r\|_{Y(\cx)}
&\le C\az^{\frac{\oz}{h}}\lf\|\sum_{k\in\nn}
\lf[\cm\lf(\ch1_{B_k}\r)\r]^{\frac{1}{h}}\r\|_{Y(\cx)}\\
&=C\az^{\frac{\oz}{h}}\lf\|\lf\{\sum_{k\in\nn}
\lf[\cm\lf(\ch1_{B_k}\r)\r]^{\frac{1}{h}}\r\}^h\r\|
^{\frac{1}{h}}_{Y^{\frac{1}{h}}(\cx)}\\
&\le C\az^{\frac{\oz}{h}}\lf\|\sum_{k\in\nn}\ch1_{B_k}\r\|_{Y(\cx)}.
\end{align*}
\end{remark}

\begin{assumption}\label{assump2}
Let $p_-\in(0,\fz)$ and $Y(\cx)$ be a ball
quasi-Banach function space on $\cx$.
Assume that there exist a $\tz_0\in(0,\underline{p})$
with $\underline{p}$ as in \eqref{2.1y}, and a $p_0\in(\tz_0,\fz)$
such that $Y^{1/{\tz_0}}(\cx)$ is a ball Banach function space and,
for any $f\in(Y^{1/{\tz_0}})'(\cx)$,
\begin{align}\label{5.14.y1}
\lf\|\cm^{((p_0/\tz_0)')}(f)\r\|_{(Y^{1/{\tz_0}})'(\cx)}
\le C\|f\|_{(Y^{1/{\tz_0}})'(\cx)},
\end{align}
where $C$ is a positive constant independent of $f$,
and $\frac{1}{p_0/\tz_0}+\frac{1}{(p_0/\tz_0)'}=1$.
\end{assumption}

\begin{remark}\label{21.1.7.x1}
Let $\tz_1,\ \tz_2\in(0,\fz)$ satisfy $\tz_1<\tz_2$.
By the H\"older inequality, we know that, for any
$\mu$-measurable function $f$ and any $x\in\cx$,
\begin{align}\label{1.7.x1}
\cm^{(\tz_1)}(f)(x)
&=\sup_{B\ni x}\lf\{\frac{1}{\mu(B)}\int_{B}
|f(y)|^{\tz_1}\,d\mu(y)\r\}^{\frac{1}{\tz_1}}\noz\\
&\le\sup_{B\ni x}\lf\{\frac{1}{\mu(B)}
\lf[\int_{B}|f(y)|^{\tz_2}\,d\mu(y)\r]^{\frac{\tz_1}{\tz_2}}
[\mu(B)]^{1-\frac{\tz_1}{\tz_2}}\r\}^{\frac{1}{\tz_1}}\noz\\
&=\sup_{B\ni x}\lf\{\frac{1}{\mu(B)}
\int_{B}|f(y)|^{\tz_2}\,d\mu(y)\r\}^{\frac{1}{\tz_2}}
=\cm^{(\tz_2)}(f)(x).
\end{align}
Let $Y(\cx)$ be a ball quasi-Banach function space on $\cx$
satisfying Assumption \ref{assump2} with some
$\tz_0\in(0,\underline{p})$ and $p_0\in(\tz_0,\fz)$.
Then, by \eqref{1.7.x1} and Definition \ref{qbs}(ii),
we conclude that there exists a positive constant $C$ such that,
for any given $p_1\in(p_0,\fz]$ and any $f\in(Y^{1/{\tz_0}})'(\cx)$,
\begin{align*}
\lf\|\cm^{((p_1/\tz_0)')}(f)\r\|_{(Y^{1/{\tz_0}})'(\cx)}
\le \lf\|\cm^{((p_0/\tz_0)')}(f)\r\|_{(Y^{1/{\tz_0}})'(\cx)}
\le C\|f\|_{(Y^{1/{\tz_0}})'(\cx)}.
\end{align*}
\end{remark}

\subsection{Embedding theorem for ball
quasi-Banach function spaces \label{emd}}
In this subsection, we aim to embed $Y(\cx)$
continuously into the weighted Lebesgue space on $\cx$.
To this end, we first recall some notions on
weighted Lebesgue spaces, and then state some basic properties
of $Y(\cx)$.

Throughout this article, for any given $p\in(0,\fz)$,
a function $f$ is said to be \emph{locally $p$-integrable}
if, for any $x\in\cx$, there exists an $r\in(0,\fz)$ such that
$$\int_{B(x,r)} |f(y)|^p\,d\mu(y)<\fz.$$
Denote by $L_{\rm loc}^p(\cx)$ the set of all
\emph{locally $p$-integrable functions} on $\cx$.

Let $q\in[1,\fz)$ and $w\in L^1_{\rm loc}(\cx)$ be a nonnegative
function. Then $w$ is said to be a \emph{Muckenhoupt $A_q$-weight},
denoted by $w\in A_q(\cx)$, if
\begin{align}\label{21.7.14.x1}
[w]_{A_q(\cx)}:=\sup_{B\subset\cx}\frac{1}{[\mu(B)]^q}
\int_Bw(x)\,d\mu(x)
\lf\{\int_B\lf[w(y)\r]^{-q'/q}\,d\mu(y)\r\}^{q/q'}<\fz
\end{align}
when $q\in(1,\fz)$, or
\begin{align*}
[w]_{A_1(\cx)}:=\sup_{B\subset\cx}\frac{1}{\mu(B)}\int_Bw(x)\,d\mu(x)
\lf(\mathop\mathrm{ess\,sup}_{y\in B}\lf[w(y)\r]^{-1}\r)<\fz,
\end{align*}
where the suprema are taken over all balls $B\subset\cx$
(see, for instance, \cite{st89}). Let
$$A_{\fz}(\cx):=\bigcup_{q\in[1,\fz)}A_{q}(\cx).$$
Recall that, for any given $p\in(0,\fz)$ and $w\in A_{\fz}(\cx)$,
the \emph{weighted Lebesgue space $L^{p}_w(\cx)$} is defined to be
the set of all $\mu$-measurable functions $f$ on $\cx$ such that
$$\|f\|_{L^{p}_w(\cx)}:=
\lf\{\int_{\cx}|f(x)|^pw(x)\,d\mu(x)\r\}^{1/p}<\infty.$$
Obviously, $L^{p}_w(\cx)$ is a ball quasi-Banach function space,
which even when $\cx:=\rn$ may not be a quasi-Banach function
space (see, for instance, \cite[p.\,86]{shyy17}).

Next, we state an embedding theorem from $Y(\cx)$ to
the weighted Lebesgue space (see \cite[Lemma 4.7]{cwyz19}
for the corresponding Euclidean case).
\begin{theorem}\label{embed}
Let $Y(\cx)$ be a ball quasi-Banach function space on $\cx$.
Assume that there exists an $r\in(0,\fz)$ such that
$Y^{1/{r}}(\cx)$ is a ball Banach function space and
that $\cm$ is bounded on $(Y^{1/{r}})'(\cx)$. Fix $x_0\in\cx$.
Then there exists an $\ez\in(0,1)$ such that $Y(\cx)$ is
continuously embedded into $L^{r}_{w}(\cx)$ with
$w:=[\cm(\ch1_{B(x_0,1)})]^{\ez}\in A_1(\cx)$. Moreover,
there exists a positive constant $C$, depending on $x_0$,
such that, for any $f\in Y(\cx)$,
\begin{align}\label{embd}
\lf\|f\r\|_{L^{r}_{w}(\cx)}\le C\|f\|_{Y(\cx)}.
\end{align}
\end{theorem}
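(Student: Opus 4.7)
The plan is to prove Theorem~\ref{embed} in two stages. First, verify that $w:=[\cm(\ch1_{B(x_0,1)})]^{\ez}\in A_1(\cx)$ for every $\ez\in(0,1)$ via a Coifman--Rochberg--type theorem adapted to $\cx$. Second, deduce the embedding inequality \eqref{embd} through a duality argument that reduces the problem to showing $\|w\|_{(Y^{1/r})'(\cx)}<\infty$ for a suitable $\ez$, and then close this estimate using an admissible sequence of balls so as to circumvent the lack of reverse doubling on $\cx$.

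For the $A_1(\cx)$ assertion, I would invoke the Coifman--Rochberg theorem in the present setting: if $g\in L^1_{\loc}(\cx)$ and $\cm g<\fz$ $\mu$-almost everywhere, then for every $\delta\in(0,1)$ the function $[\cm g]^{\delta}$ belongs to $A_1(\cx)$ with constant depending only on $\delta$ and the doubling constant $C_{(\mu)}$. The standard proof rests on the weak-type $(1,1)$ boundedness of $\cm$ (which holds on any space of homogeneous type) together with the Kolmogorov inequality applied ball-by-ball, and it carries over verbatim to $\cx$. Specialising to $g=\ch1_{B(x_0,1)}$, and noting that $\cm g\le 1$ globally, yields $w\in A_1(\cx)$ for any $\ez\in(0,1)$.

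For the embedding \eqref{embd}, since $Y^{1/r}(\cx)$ is a ball Banach function space by hypothesis, the Lorentz--Luxemburg identity gives $Y^{1/r}(\cx)=(Y^{1/r})''(\cx)$ isometrically, and H\"older's inequality in the $Y^{1/r}$--$(Y^{1/r})'$ pairing produces, for every $f\in Y(\cx)$,
\begin{align*}
\|f\|_{L^{r}_{w}(\cx)}^{r}
=\int_{\cx}|f(x)|^{r}w(x)\,d\mu(x)
\le\bigl\||f|^{r}\bigr\|_{Y^{1/r}(\cx)}\,\|w\|_{(Y^{1/r})'(\cx)}
=\|f\|_{Y(\cx)}^{r}\,\|w\|_{(Y^{1/r})'(\cx)}.
\end{align*}
It therefore suffices to produce some $\ez\in(0,1)$ for which $\|w\|_{(Y^{1/r})'(\cx)}$ is finite; the embedding constant $C$ in \eqref{embd} will then equal $\|w\|_{(Y^{1/r})'(\cx)}^{1/r}$, which accounts for the dependence on $x_0$.

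The main obstacle is precisely the finiteness of $\|w\|_{(Y^{1/r})'(\cx)}$, and this cannot be reached by a direct application of the $\cm$-boundedness hypothesis: since $\cm(\ch1_{B(x_0,1)})\le 1$ pointwise, one has $w\ge \cm(\ch1_{B(x_0,1)})$, so the boundedness inequality runs the wrong way. My strategy is to construct, in the spirit of Subsection~\ref{emd}, an admissible sequence of balls $\{B_k\}_{k\in\zz_+}$ around $x_0$ with $B_0=B(x_0,1)$ and $\mu(B_{k+1})\sim 2\mu(B_k)$; such a sequence exists because $\mu(\cx)=\fz$ and provides a geometric filtration in $\mu$-measure that substitutes for the missing reverse doubling. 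Decomposing $\cx=B_0\cup\bigcup_{k\ge 0}(B_{k+1}\setminus B_k)$, the standard pointwise bound $\cm(\ch1_{B(x_0,1)})(x)\ls \mu(B(x_0,1))/\mu(B(x_0,\rho(x,x_0)))$---which holds without any reverse doubling because the minimal radius of an averaging ball containing both $x$ and $B(x_0,1)$ is comparable with $\rho(x,x_0)$---yields $w(x)\ls 2^{-k\ez}$ on the annulus $B_{k+1}\setminus B_k$, while Lemma~\ref{lem6.2} together with the $\cm$-boundedness on $(Y^{1/r})'(\cx)$ controls $\|\ch1_{B_{k+1}}\|_{(Y^{1/r})'(\cx)}$ in terms of $\|\ch1_{B(x_0,1)}\|_{(Y^{1/r})'(\cx)}$. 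Summing the resulting annular estimates and choosing $\ez\in(0,1)$ close enough to $1$ so that the geometric decay from the pointwise bound outweighs the growth of the characteristic-function norms along the filtration closes the series and delivers $\|w\|_{(Y^{1/r})'(\cx)}<\fz$, thereby completing the proof. The delicate point is the coordinated choice of $\ez$ with the admissible construction, and it is here that the geometric preparation of Subsection~\ref{emd} does its essential work.
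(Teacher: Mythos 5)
Your overall architecture matches the paper's: prove $w\in A_1(\cx)$ (your Coifman--Rochberg route is an acceptable substitute for the paper's direct Arai--Mizuhara-style argument), then split $\cx$ along an admissible sequence of balls centered at $x_0$ and use H\"older in the $Y^{1/r}(\cx)$--$(Y^{1/r})'(\cx)$ pairing. However, the decisive summation step has a genuine gap. On the annulus $B_{k}\setminus B_{k-1}$ you have $w\ls[\mu(B_0)/\mu(B_k)]^{\ez}$, and you propose to control $\|\ch1_{B_k}\|_{(Y^{1/r})'(\cx)}$ by Lemma \ref{lem6.2} together with the boundedness of $\cm$ on $(Y^{1/r})'(\cx)$. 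But to exploit that boundedness you must take $h=1$ in Lemma \ref{lem6.2} (for $h<1$ the function $[\cm(\ch1_{B_0})]^{h}=\cm^{(1/h)}(\ch1_{B_0})$ requires boundedness of a \emph{powered} maximal operator, which is not among the hypotheses you invoke), and this gives
\begin{align*}
\lf\|\ch1_{B_k}\r\|_{(Y^{1/r})'(\cx)}
\ls\frac{\mu(B_k)}{\mu(B_0)}\lf\|\ch1_{B_0}\r\|_{(Y^{1/r})'(\cx)},
\end{align*}
so the $k$-th annular term is of size $[\mu(B_k)/\mu(B_0)]^{1-\ez}\|\ch1_{B_0}\|_{(Y^{1/r})'(\cx)}$. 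Since $\ez$ must be strictly less than $1$ and $\mu(B_k)/\mu(B_0)$ grows geometrically along the admissible sequence, this series diverges for \emph{every} $\ez\in(0,1)$; taking $\ez$ ``close to $1$'' cannot rescue it.

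The missing idea is the self-improvement of maximal-operator boundedness, which is exactly the paper's Lemma \ref{embedlem1}: since $(Y^{1/r})'(\cx)$ is a ball Banach function space (Lemma \ref{assoalso}) on which $\cm$ is bounded, there exists $\eta\in(1,\fz)$ such that $\cm^{(\eta)}$ is bounded on $(Y^{1/r})'(\cx)$. One then uses the pointwise bound $\ch1_{B_k}\le[\mu(B_0)/\mu(B_k)]^{-1/\eta}\cm^{(\eta)}(\ch1_{B_0})$, so the growth exponent of $\|\ch1_{B_k}\|_{(Y^{1/r})'(\cx)}$ is $1/\eta<1$, and finally chooses $\ez\in(1/\eta,1)$; the $k$-th term then decays like $[\mu(B_0)/\mu(B_k)]^{\ez-1/\eta}\ls c^{-(k-1)(\ez-1/\eta)}$ and the series converges. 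Your $A_1$ step is unaffected (any $\ez\in(0,1)$ works there), but without this self-improvement step the coordinated choice of $\ez$ you allude to does not exist, so the proof as proposed does not close.
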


Theorem \ref{embed} is the main result of this subsection.
To prove it, we borrow some ideas from the proof of
\cite[Lemma 4.7]{cwyz19} and establish several technical
lemmas, some of which are used again in later sections.
By Definition \ref{qbs}(i) and \eqref{1.7.x2},
we easily obtain the following H\"older inequality
on $Y(\cx)$ (see \cite[Chapter 1, Theorem 2.4]{bs88} for the
corresponding Euclidean case),
whose proof is a slight modification of
\cite[Chapter 1, Theorem 2.4]{bs88};
we omit the details here.
\begin{lemma}\label{holder}
Let $Y(\cx)$ be a ball Banach function space on $\cx$,
and $Y'(\cx)$ the associate space of $Y(\cx)$.
If $f\in Y(\cx)$ and $g\in Y'(\cx)$,
then $fg$ is integrable and
\begin{align*}
\int_{\cx}|f(x)g(x)|\,d\mu(x)\le\|f\|_{Y(\cx)}\|g\|_{Y'(\cx)}.
\end{align*}
\end{lemma}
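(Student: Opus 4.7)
The plan is to deduce the inequality directly from the definition \eqref{1.7.x2} of the associate norm, after normalizing $f$ so that it has unit $Y(\cx)$-norm. The non-trivial case is $\|f\|_{Y(\cx)}\neq 0$; when $\|f\|_{Y(\cx)}=0$, Remark \ref{bqbsrem}(i) gives $f=0$ almost everywhere, so both sides of the claimed inequality vanish and $fg$ is trivially integrable.

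So, assume $\|f\|_{Y(\cx)}\in(0,\fz)$. The first observation I would record is that the $Y(\cx)$-norm depends only on the modulus of its argument: applying Definition \ref{qbs}(ii) to the pairs $|f|\le|f|$ and $f\le|f|$ (in modulus) yields $\||f|\|_{Y(\cx)}=\|f\|_{Y(\cx)}$. Set
\[
h:=\frac{|f|}{\|f\|_{Y(\cx)}}.
\]
Then $h\in Y(\cx)$ with $\|h\|_{Y(\cx)}=1$.

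Now plug $h$ as the test function in the supremum defining $\|g\|_{Y'(\cx)}$ in \eqref{1.7.x2}. Since $h\ge 0$, one has
\[
\int_{\cx}|g(x)|\,h(x)\,d\mu(x)=\|gh\|_{L^1(\cx)}\le\|g\|_{Y'(\cx)}.
\]
Multiplying through by $\|f\|_{Y(\cx)}$ gives
\[
\int_{\cx}|f(x)g(x)|\,d\mu(x)\le\|f\|_{Y(\cx)}\|g\|_{Y'(\cx)},
\]
which in particular shows that $fg\in L^1(\cx)$.

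There is no real obstacle here; the proof is almost tautological once one unwinds the definition of $Y'(\cx)$. The only mild care needed is the preliminary observation that $\||f|\|_{Y(\cx)}=\|f\|_{Y(\cx)}$ (so that normalizing $|f|$ produces a legitimate test element), together with handling the degenerate case $\|f\|_{Y(\cx)}=0$ via Remark \ref{bqbsrem}(i). Notably, the argument uses only the lattice property (ii) of Definition \ref{qbs} and not any Banach-space structure, so the statement actually remains valid under the weaker ball quasi-Banach function space hypothesis provided the associate space is defined as in \eqref{1.7.x2}.
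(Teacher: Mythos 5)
Your proof is correct and is essentially the argument the paper has in mind: the paper omits the details, deferring to the classical proof of \cite[Chapter 1, Theorem 2.4]{bs88}, which is exactly this normalization of $|f|$ by $\|f\|_{Y(\cx)}$ (plus the degenerate case $\|f\|_{Y(\cx)}=0$ via the non-degeneracy of the norm) fed into the definition \eqref{1.7.x2} of the associate norm. Nothing further is needed.
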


For the corresponding Euclidean case, the following lemma is just
\cite[Proposition 2.3]{shyy17}, whose proof is a slight modification
of \cite[Proposition 2.3]{shyy17}; we omit the details here.
\begin{lemma}\label{assoalso}
Let $Y(\cx)$ be a ball Banach function space on $\cx$.
Then its associate space $Y'(\cx)$ is also a ball Banach function space.
\end{lemma}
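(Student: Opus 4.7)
The plan is to verify directly that $Y'(\cx)$ satisfies all the axioms of a ball Banach function space, mirroring the standard Euclidean argument of \cite[Proposition 2.3]{shyy17}, with the caveat that we work with balls of $(\cx,\rho,\mu)$ in place of cubes of $\rn$. Throughout, I will use only the definitions in \eqref{1.7.x2}, the H\"older inequality for $Y(\cx)$ (Lemma \ref{holder}), and the monotone convergence theorem.

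First I would check the easy lattice and nondegeneracy conditions. Item (ii) of Definition \ref{qbs} for $Y'(\cx)$ is immediate: if $|g|\le|f|$ almost everywhere, then $\|gh\|_{L^1(\cx)}\le\|fh\|_{L^1(\cx)}$ for every $h\in Y(\cx)$, and taking the supremum over the unit ball of $Y(\cx)$ gives $\|g\|_{Y'(\cx)}\le\|f\|_{Y'(\cx)}$. For item (iv)$'$, fix a ball $B\st\cx$; the ball Banach function space axiom (v) applied to $Y(\cx)$ furnishes a constant $C_{(B)}$ with $\int_B|g|\,d\mu\le C_{(B)}\|g\|_{Y(\cx)}$ for every $g\in Y(\cx)$, so for $g$ in the unit ball of $Y(\cx)$ we get $\int_\cx\ch1_B|g|\,d\mu\le C_{(B)}$, proving $\|\ch1_B\|_{Y'(\cx)}\le C_{(B)}<\fz$. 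Property (v) for $Y'(\cx)$ then follows instantly from Lemma \ref{holder}: for any ball $B$ and $f\in Y'(\cx)$,
\begin{align*}
\int_B|f(x)|\,d\mu(x)=\int_\cx|f(x)|\ch1_B(x)\,d\mu(x)
\le\|f\|_{Y'(\cx)}\|\ch1_B\|_{Y(\cx)},
\end{align*}
where $\|\ch1_B\|_{Y(\cx)}<\fz$ by Definition \ref{bqbs}(iv)$'$ applied to $Y(\cx)$. Item (i) of Definition \ref{qbs} drops out of (v): if $\|f\|_{Y'(\cx)}=0$, then $\int_B|f|\,d\mu=0$ for every ball $B$, whence $f=0$ almost everywhere on $\cx$.

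Next I would verify the Fatou property (iii). Suppose $0\le f_m\uparrow f$ almost everywhere. By (ii) the sequence $\{\|f_m\|_{Y'(\cx)}\}_{m\in\nn}$ is nondecreasing and bounded above by $\|f\|_{Y'(\cx)}$, so $L:=\lim_{m\to\fz}\|f_m\|_{Y'(\cx)}\le\|f\|_{Y'(\cx)}$. Conversely, for any $g\in Y(\cx)$ with $\|g\|_{Y(\cx)}=1$, the monotone convergence theorem yields
\begin{align*}
\int_\cx f(x)|g(x)|\,d\mu(x)
=\lim_{m\to\fz}\int_\cx f_m(x)|g(x)|\,d\mu(x)
\le\lim_{m\to\fz}\|f_m\|_{Y'(\cx)}=L,
\end{align*}
and taking the supremum over such $g$ gives $\|f\|_{Y'(\cx)}\le L$, as required.

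It remains to establish that $Y'(\cx)$ is norm-complete; this is the one step requiring a genuine argument. Given a Cauchy sequence $\{f_m\}_{m\in\nn}\st Y'(\cx)$, I would pass to a subsequence $\{f_{m_k}\}_{k\in\nn}$ with $\|f_{m_{k+1}}-f_{m_k}\|_{Y'(\cx)}\le2^{-k}$, set $G_N:=\sum_{k=1}^N|f_{m_{k+1}}-f_{m_k}|$, observe $\|G_N\|_{Y'(\cx)}\le1$ by the triangle inequality, and invoke the Fatou property (iii) just established to conclude that $G_N\uparrow G$ almost everywhere with $G\in Y'(\cx)$. Hence the telescoping series $\sum_k(f_{m_{k+1}}-f_{m_k})$ converges absolutely almost everywhere, giving a pointwise limit $f$ of $\{f_{m_k}\}$; applying the Fatou property once more to $|f-f_m|=\lim_{k\to\fz}|f_{m_k}-f_m|$ shows $\|f-f_m\|_{Y'(\cx)}\to0$, so $f_m\to f$ in $Y'(\cx)$. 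The main obstacle is precisely this completeness step, since all the other axioms are algebraic consequences of the definition; once the Fatou property is in hand, however, the standard series-telescoping trick closes the argument without difficulty.
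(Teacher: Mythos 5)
Your proof is correct and follows essentially the route the paper intends: the paper omits the argument, citing it as a slight modification of the Euclidean case in \cite[Proposition 2.3]{shyy17}, which is precisely the direct verification of the lattice axioms, the Fatou property, and Riesz--Fischer-type completeness that you carry out with balls of $(\cx,\rho,\mu)$ in place of the Euclidean setting. The only step left implicit (that $\|G\|_{Y'(\cx)}<\fz$ forces $G<\fz$ almost everywhere, via the local integrability property (v) you already established for $Y'(\cx)$) is standard and does not affect the argument.
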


By a method similar to that used in \cite[Chapter 1, Theorem 2.7]{bs88},
and Lemmas \ref{holder} and \ref{assoalso},
we have the following conclusion, which is a generalization
of \cite[Lemma 2.6]{zwyy} on $\rn$ to $\cx$;
we omit the details here.
\begin{lemma}\label{second}
Every ball Banach function space $Y(\cx)$ coincides with its
second associate space $Y''(\cx)$. More precisely,
a function $f$ belongs to $Y(\cx)$ if and only if it
belongs to $Y''(\cx)$. Moreover, for any $f\in Y(\cx)$
or $f\in Y''(\cx)$,
$$\|f\|_{Y(\cx)}=\|f\|_{Y''(\cx)}.$$
\end{lemma}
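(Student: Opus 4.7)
The plan is to follow the classical Lorentz--Luxemburg theorem, adapted to the ball Banach function space setting on $\cx$ using Lemmas \ref{holder} and \ref{assoalso}. By Lemma \ref{assoalso} applied twice, both $Y'(\cx)$ and $Y''(\cx)$ are themselves ball Banach function spaces, so the constructions below are well defined.

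The easy inclusion $Y(\cx)\st Y''(\cx)$ with $\|f\|_{Y''(\cx)}\le\|f\|_{Y(\cx)}$ is immediate from Lemma \ref{holder}: for any $f\in Y(\cx)$ and any $g\in Y'(\cx)$ with $\|g\|_{Y'(\cx)}=1$, one has $\int_{\cx}|f(x)g(x)|\,d\mu(x)\le\|f\|_{Y(\cx)}$, and taking the supremum over such $g$ in the associate-norm definition \eqref{1.7.x2} (applied with $Y'(\cx)$ in place of $Y(\cx)$) yields the bound. For the reverse direction, I would first reduce to $f\ge 0$, since replacing $f$ by $|f|$ preserves both quasi-norms by Definition \ref{qbs}(ii); then approximate $f$ by bounded truncations supported in a single ball: fix an exhausting sequence $\{B_n\}_{n\in\nn}$ of balls with $\bigcup_{n\in\nn}B_n=\cx$ and set $f_n:=\min\{f,n\}\ch1_{B_n}$, so that each $f_n\in Y(\cx)$ by Definition \ref{bqbs}(iv)$'$ combined with Definition \ref{qbs}(ii), and $f_n\uparrow f$ almost everywhere.

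The main technical step is to show $\|f_n\|_{Y(\cx)}\le\|f_n\|_{Y''(\cx)}$ via a Hahn--Banach separation argument. Given $\lambda\in(0,\|f_n\|_{Y(\cx)})$, I would separate $f_n$ from the convex set $\{h\in Y(\cx):\ \|h\|_{Y(\cx)}\le\lambda\}$ inside the Banach space $L^1(B_n)$, where this local embedding is supplied by the ball Banach function space analogue of Definition \ref{qbs}(v). This produces a bounded linear functional on $L^1(B_n)$, represented by some $g\in L^\fz(B_n)$; testing this functional against arbitrary $h\in Y(\cx)$ supported in a ball, and using Lemma \ref{assoalso}, one verifies that the zero-extension of $g$ lies in the unit ball of $Y'(\cx)$ and satisfies $\int_{\cx}f_n(x)g(x)\,d\mu(x)\ge\lambda$, whence $\|f_n\|_{Y''(\cx)}\ge\lambda$. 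Letting $\lambda\uparrow\|f_n\|_{Y(\cx)}$ gives the claim, after which Definition \ref{qbs}(iii) applied in both $Y(\cx)$ and $Y''(\cx)$ transfers the identity to $f$ itself via $\|f\|_{Y(\cx)}=\lim_{n\to\fz}\|f_n\|_{Y(\cx)}\le\lim_{n\to\fz}\|f_n\|_{Y''(\cx)}=\|f\|_{Y''(\cx)}$, and the reverse inequality from the previous paragraph completes the proof. The hard part is the Hahn--Banach step: one must verify that the separating functional genuinely extends by zero to an element of $Y'(\cx)$ with the claimed norm bound, not merely to an element of the dual of an auxiliary $L^1$-space, and it is precisely for this verification that Lemma \ref{assoalso} (identifying $Y'(\cx)$ as a bona fide ball Banach function space) is indispensable.
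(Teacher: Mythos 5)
Your argument is correct and is essentially the route the paper itself takes: the paper omits the details and simply invokes the method of the Lorentz--Luxemburg theorem in \cite[Chapter 1, Theorem 2.7]{bs88} together with Lemmas \ref{holder} and \ref{assoalso}, and your truncation-plus-Hahn--Banach separation argument (with the Fatou property of $Y(\cx)$ supplying closedness of the $\lambda$-ball in $L^1(B_n)$ and the passage to the limit) is exactly that classical method adapted to balls.
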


The following lemma is a generalization of
\cite[Lemma 2.15(ii)]{shyy17} on $\rn$ to $\cx$,
whose proof is also valid in this setting;
we omit the details here.
\begin{lemma}\label{embedlem1}
Assume that $Y(\cx)$ is a ball quasi-Banach function
space on $\cx$, and $\cm$ bounded on $Y(\cx)$.
Then there exists an $\eta\in(1,\fz)$ such that
$\cm^{(\eta)}$ is bounded on $Y(\cx)$.
\end{lemma}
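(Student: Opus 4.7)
The plan is to use a Rubio de Francia iteration together with the reverse H\"older inequality for $A_1(\cx)$ weights on the space of homogeneous type $\cx$. Without loss of generality, I would restrict to nonnegative $f\in Y(\cx)$ (the signed case follows by applying the result to $|f|$ and invoking Definition \ref{qbs}(ii)). Set $N:=\|\cm\|_{Y(\cx)\to Y(\cx)}$, which is finite by hypothesis, and define the partial sums $R_M f:=\sum_{k=0}^M \cm^k(f)/(2N)^k$ and their pointwise monotone limit $Rf:=\sum_{k=0}^\fz \cm^k(f)/(2N)^k$. Applying the Aoki--Rolewicz-type inequality of Remark \ref{r-ar} (with some fixed exponent $v\in(0,1)$), together with the iterated bound $\|\cm^k(f)\|_{Y(\cx)}\le N^k\|f\|_{Y(\cx)}$, yields $\|R_M f\|_{Y(\cx)}^v\le 4\|f\|_{Y(\cx)}^v\sum_{k=0}^M 2^{-kv}$, which is uniformly bounded in $M$. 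Hence Lemma \ref{fatou} gives $Rf\in Y(\cx)$ with $\|Rf\|_{Y(\cx)}\ls\|f\|_{Y(\cx)}$. One trivially has $f\le Rf$ pointwise (from the $k=0$ summand), and a telescoping argument, using the sublinearity and monotonicity of $\cm$ together with monotone convergence, yields $\cm(Rf)\le 2N\cdot Rf$ pointwise; equivalently, $Rf\in A_1(\cx)$ with $[Rf]_{A_1(\cx)}\le 2N$.

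Next I would invoke the reverse H\"older inequality for $A_1(\cx)$ weights on $\cx$, a classical self-improvement fact valid on any space of homogeneous type (see, for instance, Str\"omberg--Torchinsky): there exist $\eta\in(1,\fz)$ and a positive constant $C$, both depending only on $[Rf]_{A_1(\cx)}$ and hence only on $N$, such that, for any ball $B\st\cx$,
$$\lf[\frac{1}{\mu(B)}\int_B (Rf)^\eta \,d\mu\r]^{1/\eta}\le C\cdot\frac{1}{\mu(B)}\int_B Rf\,d\mu.$$
Combining this with $f\le Rf$ and $\cm(Rf)\le 2N\cdot Rf$ gives, for every $x\in\cx$,
$$\cm^{(\eta)}(f)(x)\le \cm^{(\eta)}(Rf)(x)\le C\cdot\cm(Rf)(x)\le 2CN\cdot Rf(x).$$
Taking $Y(\cx)$-quasi-norms and using $\|Rf\|_{Y(\cx)}\ls\|f\|_{Y(\cx)}$ concludes the argument.

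The main obstacle is the quasi-Banach nature of $Y(\cx)$: in a Banach setting the bound $\|Rf\|_{Y(\cx)}\ls\|f\|_{Y(\cx)}$ is a trivial geometric-series computation, whereas here one must handle the triangle-inequality defect by routing through the $v$-power inequality from Remark \ref{r-ar}, a truncation, and a monotone-convergence step through Lemma \ref{fatou}. One should also be mindful that the reverse H\"older inequality cited remains valid on $\cx$ without the reverse doubling condition, which it does because its usual proof depends only on a Calder\'on--Zygmund decomposition available through Christ-type dyadic cubes.
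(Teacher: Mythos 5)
Your proof is correct and is essentially the argument the paper relies on: the paper omits the proof of this lemma, deferring to \cite[Lemma 2.15(ii)]{shyy17}, whose proof is exactly this Rubio de Francia iteration producing an $A_1(\cx)$ weight $Rf$ with $[Rf]_{A_1(\cx)}\le 2N$ and then invoking the reverse H\"older inequality for $A_1(\cx)$ weights on spaces of homogeneous type (cf. \cite{st89}). Your way of making the series estimate rigorous in the quasi-Banach setting, by applying Remark \ref{r-ar} to the partial sums and passing to the limit via Lemma \ref{fatou}, is the appropriate adaptation.
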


The following lemma establishes the pointwise estimates of
$\cm(\mathbf 1_{B(x_0,1)})$ with $x_0\in\cx$ and $\cm$
as in \eqref{hlmax}, which can be regarded as a generalization
of \cite[(2.1.6)]{Gra1449} on $\rn$ to $\cx$.

\begin{lemma}\label{lem-phl}
Let $x_0\in\cx$ and $\cm$ be as in \eqref{hlmax}.
Then there exists a constant $C\in[1,\fz)$,
independent of $x_0$, such that, for any $x\in\cx$,
\begin{equation}\label{eq-phl}
C^{-1}\min\lf\{1,\frac{V_1(x_0)}{V(x_0,x)}\r\}
\le\cm(\mathbf 1_{B(x_0,1)})(x)
\le C\min\lf\{1,\frac{V_1(x_0)}{V(x_0,x)}\r\}.
\end{equation}
\end{lemma}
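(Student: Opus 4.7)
The plan is to split the analysis into two geometric regimes according to whether $\rho(x_0,x)$ is small or large (relative to a constant depending only on the quasi-metric constant $A_0$), and then rely on Lemma~\ref{equlem} together with the doubling property \eqref{eq-doub} to pass between ball measures.

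For the \emph{upper bound}, note first that the trivial estimate $\cm(\mathbf{1}_{B(x_0,1)})(x)\le 1$ is automatic. Hence it suffices to prove $\cm(\mathbf{1}_{B(x_0,1)})(x)\lesssim V_1(x_0)/V(x_0,x)$ when $\rho(x_0,x)$ exceeds a threshold $\lambda_0=\lambda_0(A_0)$. For this, consider any ball $B=B(y,r)\ni x$ with $B\cap B(x_0,1)\neq\emptyset$; picking $z\in B\cap B(x_0,1)$ and applying \eqref{2.1x} twice yields $\rho(x_0,x)\le A_0^2 r + A_0(1+A_0)$, so for $\rho(x_0,x)\ge \lambda_0$ one gets $r\gtrsim \rho(x_0,x)$. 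A further application of the quasi-triangle inequality then shows $B(x_0,\rho(x_0,x))\subset c_0 B$ for a constant $c_0=c_0(A_0)$, whence by \eqref{eq-doub} $V(x_0,x)\lesssim \mu(B)$. Since $\int_B \mathbf{1}_{B(x_0,1)}\,d\mu\le V_1(x_0)$, averaging over $B$ and taking the supremum delivers the desired bound.

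For the \emph{lower bound}, I would exhibit an explicit admissible ball. When $\rho(x_0,x)\ge 1$, take $B:=B(x,2A_0\rho(x_0,x))$; the quasi-triangle inequality shows $B\supset B(x_0,1)$, while at the same time $B\subset B(x_0,A_0(1+2A_0)\rho(x_0,x))$, so \eqref{eq-doub} gives $\mu(B)\lesssim V(x_0,x)$. Therefore
\[
\cm(\mathbf{1}_{B(x_0,1)})(x)\ge \frac{1}{\mu(B)}\int_B \mathbf{1}_{B(x_0,1)}\,d\mu=\frac{V_1(x_0)}{\mu(B)}\gtrsim\frac{V_1(x_0)}{V(x_0,x)}.
\]
When $\rho(x_0,x)<1$ (or $\le\lambda_0$ in the complementary regime from the upper bound), choose instead $B:=B(x,R_0)$ with $R_0=R_0(A_0)$ large enough that $B\supset B(x_0,1)$; Lemma~\ref{equlem} and \eqref{eq-doub} give $\mu(B)\sim V_1(x_0)\sim V_1(x_0)+V(x_0,x)$, so the averaged integral is bounded below by a constant, which matches $\min\{1,V_1(x_0)/V(x_0,x)\}\sim 1$ in this regime.

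The main technical nuisance, rather than any conceptual obstacle, is keeping precise track of the interplay of the constants $A_0$ from \eqref{2.1x} and $C_{(\mu)}$ from \eqref{eq-doub}: one must verify that every comparability constant produced by iterating the quasi-triangle inequality and by applying the doubling condition depends only on $A_0$ and $C_{(\mu)}$, and in particular is independent of $x_0$, so that the final constant $C$ in \eqref{eq-phl} is universal. Once the regime split is fixed with $\lambda_0$ chosen in terms of $A_0$, all the inclusions $B(x_0,\rho(x_0,x))\subset c_0 B$ and $B\subset c_1 B(x_0,\rho(x_0,x))$ needed above are straightforward consequences of \eqref{2.1x}, and Lemma~\ref{equlem} provides the bridge $V_1(x_0)+V(x_0,x)\sim\mu(B(x_0,1+\rho(x_0,x)))$ that glues the two regimes together.
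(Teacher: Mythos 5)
Your proposal is correct and follows essentially the same route as the paper's proof: a two-regime split according to whether $\rho(x_0,x)$ is below or above a threshold depending only on $A_0$, the trivial bound $\cm(\mathbf 1_{B(x_0,1)})\le 1$ plus the observation that any ball meeting both $x$ and $B(x_0,1)$ must have radius $\gtrsim\rho(x_0,x)$ for the upper bound, and testing on one explicit ball containing both $x$ and $B(x_0,1)$ for the lower bound. The only cosmetic difference is that you treat arbitrary (uncentered) balls containing $x$ directly, whereas the paper first passes to balls centered at $x$ and then uses the disjointness claim \eqref{eq-cl1} to force the radius to be large; both yield the same estimates with constants depending only on $A_0$ and $C_{(\mu)}$.
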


\begin{proof}
Let all the symbols be as in the present lemma.
To obtain \eqref{eq-phl},
we consider two cases on $\rho(x_0,x)$.

{\it Case 1)} $\rho(x_0,x)\in[0,2A_0)$. In this case,
it suffices to show that $\cm(\mathbf 1_{B(x_0,1)})(x)\sim 1$.
On one hand, by \eqref{hlmax}, we have
$$
\cm\lf(\mathbf 1_{B(x_0,1)}\r)(x)\le 1.
$$
On the other hand, by $\rho(x_0,x)<2A_0$, \eqref{hlmax},
and \eqref{eq-doub}, we conclude that
\begin{equation*}
\cm(\ch 1_{B(x_0,1)})(x)
\ge\frac{1}{\mu(B(x_0,2A_0))}\int_{B(x_0,2A_0)}
\ch 1_{B(x_0,1)}(y)\,d\mu(y)
=\frac{\mu(B(x_0,1))}{\mu(B(x_0,2A_0))}\gtrsim 1.
\end{equation*}
Combining the above two inequalities,
we obtain the desired estimates in this case.

{\it Case 2)} $\rho(x_0,x)\in[2A_0,\fz)$.
In this case, we only need to show that
\begin{align}\label{5.7.x1}
\cm\lf(\mathbf 1_{B(x_0,1)}\r)(x)\sim\frac{V_1(x_0)}{V(x_0,x)}.
\end{align}
To this end, we first claim that
\begin{equation}\label{eq-cl1}
B(x_0,1)\cap B(x,\rho(x_0,x)/[2A_0])=\emptyset.
\end{equation}
Indeed, if there exists a $y\in B(x_0,1)\cap B(x,\rho(x_0,x)/[2A_0])$,
then, by the quasi-triangle inequality and $\rho(x_0,x)\ge2A_0$,
we conclude that
$$
\rho(x_0,x)\le A_0[\rho(x_0,y)+\rho(y,x)]
<A_0\lf[1+\frac{\rho(x_0,x)}{2A_0}\r]\le\rho(x_0,x),
$$
which makes a contradiction. Thus, \eqref{eq-cl1} holds true.
On one hand, by \eqref{eq-cl1}, \eqref{eq-doub},
and Lemma \ref{equlem}, we obtain
\begin{align}\label{5.7.x2}
\cm(\ch 1_{B(x_0,1)})(x)
&\sim\sup_{r\in(0,\fz)}\frac{1}{\mu(B(x,r))}\int_{B(x,r)}
\ch 1_{B(x_0,1)}(y)\,d\mu(y)\noz\\
&\sim\sup_{r\in(\rho(x_0,x)/[2A_0],\fz)}\frac{\mu(B(x,r)\cap B(x_0,1))}{\mu(B(x,r))}\noz\\
&\ls\frac{\mu(B(x_0,1))}{\mu(B(x,\rho(x_0,x)/[2A_0]))}
\sim\frac{\mu(B(x_0,1))}{\mu(B(x_0,\rho(x_0,x)))}
\sim\frac{V_1(x_0)}{V(x_0,x)}.
\end{align}

On the other hand, observing that $x\in B(x_0,2\rho(x_0,x))$,
form this, \eqref{hlmax}, and \eqref{eq-doub},
it follows that
\begin{align*}
\cm(\ch 1_{B(x_0,1)})(x)
&\ge\frac{1}{\mu(B(x_0,2\rho(x_0,x)))}
\int_{B(x_0,2\rho(x_0,x))}\ch 1_{B(x_0,1)}(y)\,d\mu(y)\\
&=\frac{\mu(B(x_0,1))}{\mu(B(x_0,2\rho(x_0,x)))}
\sim\frac{V_1(x_0)}{V(x_0,x)},
\end{align*}
which, combined with \eqref{5.7.x2}, further implies \eqref{5.7.x1}.
Thus, we find that \eqref{eq-phl} also holds true in this case.
This finishes the proof of Lemma \ref{lem-phl}.
\end{proof}

By borrowing some ideas from \cite{glpv20}, we establish the following
lemma, which is regarded as a substitute of the reverse doubling
condition and can be used to deal with integrals only in terms
of measures of balls.

\begin{lemma}\label{lem-rdc}
Let $\cx$ be a space of homogeneous type with $\mu(\cx)=\fz$,
$c\in(1,\fz)$, $x_0\in\cx$, and $r_0\in(0,\fz)$.
Then there exists a sequence $\{r_k\}_{k=1}^\fz\subset(0,\fz)$ such that,
for any $k\in\nn$,
\begin{enumerate}
\item[\rm (i)] $r_k\ge 2r_{k-1}$;
\item[\rm (ii)]
$c\mu(B(x_0,r_{k-1}))<\mu(B(x_0,r_k))
\le cC_{(\mu)}\mu(B(x_0,r_{k-1}))$,
where $C_{(\mu)}\in(1,\fz)$ is as in \eqref{eq-db1}.
\end{enumerate}
\end{lemma}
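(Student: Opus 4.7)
The plan is to construct the sequence $\{r_k\}_{k=1}^\fz$ inductively, treating the measure function $r\mapsto\mu(B(x_0,r))$ as a non-decreasing, left-continuous function that tends to $\fz$ with $r$, and then picking $r_k$ either as $2r_{k-1}$ or as twice a suitable supremum, depending on a case division.

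First I would record two preliminary facts about $f(r):=\mu(B(x_0,r))$. Since the balls $B(x_0,r)$ are open and $B(x_0,r_n)\uparrow B(x_0,r)$ whenever $r_n\uparrow r$, continuity of $\mu$ from below yields $f(r_n)\uparrow f(r)$, so $f$ is non-decreasing and left-continuous. Moreover, combining $\mu(\cx)=\fz$ with $\cx=\bigcup_{n\in\nn}B(x_0,n)$ and continuity of $\mu$ from below forces $f(r)\to\fz$ as $r\to\fz$.

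Next, given $r_{k-1}$, I split into two cases. In the easy case $f(2r_{k-1})>cf(r_{k-1})$, I set $r_k:=2r_{k-1}$; condition (i) is automatic and \eqref{eq-db1} gives
\begin{align*}
cf(r_{k-1})<f(r_k)=f(2r_{k-1})\le C_{(\mu)}f(r_{k-1})\le cC_{(\mu)}f(r_{k-1}),
\end{align*}
where $c>1$ is used in the last inequality. In the other case $f(2r_{k-1})\le cf(r_{k-1})$, I define
\begin{align*}
s_k:=\sup\lf\{r\ge 2r_{k-1}:\ f(r)\le cf(r_{k-1})\r\},
\end{align*}
which is finite by the second preliminary fact and is at least $2r_{k-1}$. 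Monotonicity of $f$ makes this set an interval starting at $2r_{k-1}$, and left-continuity of $f$ ensures $s_k$ itself belongs to it, i.e., $f(s_k)\le cf(r_{k-1})$. I then set $r_k:=2s_k$. Condition (i) holds since $r_k=2s_k\ge 4r_{k-1}$; the strict lower bound $f(r_k)>cf(r_{k-1})$ follows from $r_k>s_k$ and the definition of $s_k$; and the upper bound follows from
\begin{align*}
f(r_k)=f(2s_k)\le C_{(\mu)}f(s_k)\le cC_{(\mu)}f(r_{k-1})
\end{align*}
by applying the doubling inequality \eqref{eq-db1} to the ball $B(x_0,s_k)$.

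The main delicate point is the left-continuity argument needed to conclude that $f(s_k)\le cf(r_{k-1})$; without this the upper bound $f(r_k)\le cC_{(\mu)}f(r_{k-1})$ could fail, since $f$ might have an uncontrolled jump exactly at the threshold. The rest of the argument relies only on the doubling inequality and the trivial observation $c\le cC_{(\mu)}$, so no further structural properties of $\cx$ are required.
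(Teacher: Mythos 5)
Your proposal is correct and is essentially the paper's argument in a slightly repackaged form: the paper defines $r_k$ directly as $\sup\{r\in(0,\fz):\ \mu(B(x_0,r))\le cC_{(\mu)}\mu(B(x_0,r_{k-1}))\}$, obtaining the upper bound in (ii) from continuity of $\mu$ from below (your left-continuity of $f$) and the lower bound from $2r_k\notin A$ plus doubling, whereas you take the supremum at the threshold $c$ and then double it, so doubling enters your upper bound instead of your lower bound. Both proofs rest on exactly the same ingredients --- monotone continuity of the measure, the doubling inequality, and $\mu(\cx)=\fz$ to ensure finiteness of the supremum --- so your case split and the choice $r_k=2s_k$ are a cosmetic variation rather than a different route.
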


\begin{proof}
Let all the symbols be as in the present lemma. We use an inductive
method to construct $\{r_k\}_{k=1}^\fz$.
For any fixed $k\in\nn$, if $r_{k-1}\in(0,\fz)$ has been constructed,
we then let

$$
A:=\lf\{r\in(0,\fz):\ \mu(B(x_0,r))
\le cC_{(\mu)}\mu(B(x_0,r_{k-1}))\r\}
\quad\mbox{and}\quad r_k:=\sup A.
$$
First, by $r_{k-1}\in A$, we know that
$r_k\ge r_{k-1}$ and hence $r_k>0$.
Second, if $r_k=\fz$, then, from the definition of $r_k$,
$\mu(\cx)=\fz$, and $r_{k-1}<\fz$, we infer that
there exists an increasing sequence
$\{\tau_n\}_{n=1}^\fz\subset A$ such that
$\lim_{n\to\fz}\tau_n=\fz$ and
$$\fz=\lim_{n\to\fz}\mu(B(x_0,\tau_n))
\le cC_{(\mu)}\mu(B(x_0,r_{k-1}))<\fz,$$
which makes a contradiction. Thus, $r_k<\fz$.
Finally, by \eqref{eq-db1}, we find that $2r_{k-1}\in A$
which implies that $r_{k}\ge 2r_{k-1}$. This proves (i).

Now, we prove (ii). Indeed, by the definition of $r_k$,
we know that there exists an increasing sequence
$\{\tau_n\}_{n=1}^\fz\subset A$
such that $\lim_{n\to\fz}\tau_n=r_k$,
which further implies that
$$
\mu(B(x_0,r_k))=\lim_{n\to\fz}\mu(B(x_0,\tau_n))
\le cC_{(\mu)}\mu(B(x_0,r_{k-1})).
$$
On the other hand, note that $2r_k>r_k$. Thus, $2r_k\notin A$.
From this and \eqref{eq-db1}, we deduce that
$$
\mu(B(x_0,r_k))\ge C_{(\mu)}^{-1}\mu(B(x_0,2r_k))
>c\mu(B(x_0,r_{k-1})).
$$
These two inequalities then complete the proof of (ii)
and hence of Lemma \ref{lem-rdc}.
\end{proof}

\begin{definition}\label{admbdef}
Let $\cx$ be a space of homogeneous type with $\mu(\cx)=\fz$,
$c\in(1,\fz)$, $x_0\in\cx$, and $r_0\in(0,\fz)$.
A sequence $\{B_k\}_{k\in\zz_+}:=\{B(x_0,r_k)\}_{k\in\zz_+}$ of balls
centered at $x_0$, associated with $\mu$,
$c$, and $r_0$, is said to be \emph{admissible} if $\{r_k\}_{k\in\zz_+}$
satisfy (i) and (ii) of Lemma \ref{lem-rdc}.
\end{definition}

\begin{remark}\label{r2.25}
\begin{itemize}
\item[(i)] Obviously, for any given $\mu$ and $c$ as
in Definition \ref{admbdef}, and any $x_0\in\cx$
and $r_0\in (0,\fz)$, Lemma \ref{lem-rdc}
guarantees that there exists an admissible sequence
$\{B_k\}_{k\in\zz_+}$ of balls centered at $x_0$,
associated with $\mu$, $c$, and $r_0$.

\item[(ii)] Observe that any admissible sequence $\{B_k\}_{k\in\zz_+}$
of balls satisfies both the doubling and the reverse doubling conditions,
which is the key tool used in the proof of Theorem \ref{embed}
to escape the dependence on the reverse doubling assumption of
the measure $\mu$ under consideration.
We believe that more applications are expectable.
\end{itemize}
\end{remark}

Now, we give the proof of Theorem \ref{embed}.

\begin{proof}[Proof of Theorem \ref{embed}]
Let all the symbols be as in the present theorem.
By Lemma \ref{assoalso}, we know that $(Y^{1/{r}})'(\cx)$
is a ball Banach function space. From this,
the fact that $\cm$ is bounded on $(Y^{1/{r}})'(\cx)$,
and Lemma \ref{embedlem1}, it follows that there exists an
$\eta\in(1,\fz)$ such that $\cm^{(\eta)}$ is bounded
on $(Y^{1/{r}})'(\cx)$. Let $\ez\in(1/\eta,1)$
and $w:=[\cm(\ch1_{B(x_0,1)})]^{\ez}$.
We first prove that $w\in A_1(\cx)$.
To this end, from the definition of $A_1(\cx)$,
it suffices to show that there exists a positive constant
$C$ such that, for any $B:=B(x_B,r_B)\st\cx$ with $x_B\in\cx$
and $r_B\in(0,\fz)$, and almost every $x\in B$,
\begin{align}\label{6.11.x1}
\frac{1}{\mu(B)}\int_Bw(y)\,d\mu(y)\le Cw(x).
\end{align}
Decompose $\ch1_{B(x_0,1)}$ as
\begin{align}\label{6.11.x2}
\ch1_{B(x_0,1)}&=\ch1_{B(x_0,1)\cap 2A_0B}
+\ch1_{B(x_0,1)\cap(2A_0B)^{\complement}}
=:f_1+f_2.
\end{align}

On one hand, by the boundedness of $\cm$ from $L^1(\cx)$
to $L^{1,\fz}(\cx)$ (see, for instance, \cite[Theorem (3.5)]{cw77}),
\eqref{eq-db1}, $\ez<1$, and \eqref{hlmax}, we know that,
for any $x\in B$,
\begin{align}\label{6.11.x3}
&\frac{1}{\mu(B)}\int_B\lf[\cm\lf(f_1\r)(y)\r]^{\ez}\,d\mu(y)\noz\\
&\quad=\frac{\ez}{\mu(B)}\int_0^{\fz}\lz^{\ez-1}
\mu\lf(\lf\{y\in B:\ \cm\lf(f_1\r)(y)>\lz\r\}\r)\,d\lz\noz\\
&\quad\ls\frac{\ez}{\mu(B)}\int_0^{\fz}\lz^{\ez-1}
\min\lf\{\mu(B),\frac{\|f_1\|_{L^1(\cx)}}
{\lz}\r\}\,d\lz\noz\\
&\quad\ls\frac{\ez}{\mu(B)}
\lf[\int_0^{\|f_1\|_{L^1(\cx)}/{\mu(B)}}\lz^{\ez-1}\mu(B)\,d\lz
+\int_{\|f_1\|_{L^1(\cx)}/{\mu(B)}}^{\fz}\lz^{\ez-2}
\|f_1\|_{L^1(\cx)}\,d\lz\r]\noz\\
&\quad\ls\|f_1\|_{L^1(\cx)}^{\ez}[\mu(B)]^{-\ez}
\ls\lf[\frac{1}{\mu(2A_0B)}\int_{2A_0B}
\ch1_{B(x_0,1)}(y)\,d\mu(y)\r]^{\ez}\ls w(x),
\end{align}
where the implicit positive constants are independent of $B$ and $x$.

On the other hand, let $z\in B$ and
$\widetilde{B}:=B(x_{\widetilde{B}},r_{\widetilde{B}})\st\cx$
be such that $z\in\widetilde{B}$ and
$\int_{\widetilde{B}}f_2(x)\,d\mu(x)>0$. Then
\begin{align}\label{6.14.x1}
r_B<2A_0r_{\widetilde{B}}.
\end{align}
Otherwise,
by the fact that $\int_{\widetilde{B}}f_2(x)\,d\mu(x)>0$, we know that
there exists a $u\in \widetilde{B}\cap(2A_0B)^{\complement}$,
and hence
\begin{align*}
2A_0r_B\le\rho(u,x_B)\le A_0[\rho(u,z)+\rho(z,x_B)]
< A_0\rho(u,z)+A_0r_B,
\end{align*}
which further implies that
\begin{align*}
r_B<\rho(u,z)\le A_0[\rho(u,x_{\widetilde{B}})+\rho(x_{\widetilde{B}},z)]
<2A_0r_{\widetilde{B}}\le r_B.
\end{align*}
This is a contradiction and hence \eqref{6.14.x1} holds true.
By \eqref{6.14.x1}, we conclude that, for any $v\in B$,
\begin{align*}
\rho(v,x_{\widetilde{B}})
&\le A_0[\rho(v,x_B)+\rho(x_B,x_{\widetilde{B}})]
\le A_0\{\rho(v,x_B)+
A_0[\rho(x_B,z)+\rho(z,x_{\widetilde{B}})]\}\\
&<(A_0+A_0^2)r_B+A_0^2r_{\widetilde{B}}
<(2A_0+3)A_0^2r_{\widetilde{B}}.
\end{align*}
This further implies that $v\in(2A_0+3)A_0^2\widetilde{B}$,
and hence $B\st(2A_0+3)A_0^2\widetilde{B}$ by the arbitrariness
of $v\in B$. From this and \eqref{eq-doub},
we deduce that, for any $x\in B$,
\begin{align*}
\frac{1}{\mu(\widetilde{B})}\int_{\widetilde{B}}f_2(y)\,d\mu(y)
&\ls\frac{1}{\mu((2A_0+3)A_0^2\widetilde{B})}
\int_{(2A_0+3)A_0^2\widetilde{B}}f_2(y)\,d\mu(y)
\ls\cm\lf(\ch1_{B(x_0,1)}\r)(x).
\end{align*}
Thus, by the arbitrariness of $\widetilde{B}$,
we find that, for any $x\in B$,
\begin{align*}
\cm\lf(f_2\r)(z)\ls\cm\lf(\ch1_{B(x_0,1)}\r)(x),
\end{align*}
which further implies that
\begin{align}\label{6.11.x6}
\frac{1}{\mu(B)}\int_B\lf[\cm\lf(f_2\r)(z)\r]^{\ez}\,d\mu(z)
\ls w(x),
\end{align}
where the implicit positive constant is independent of $B$ and $x$.

Combining \eqref{6.11.x2}, \eqref{6.11.x3}, and \eqref{6.11.x6},
we conclude that, for any $B\st\cx$ and $x\in B$,
\begin{align*}
\frac{1}{\mu(B)}\int_Bw(y)\,d\mu(y)
\ls\frac{1}{\mu(B)}\int_B\lf\{\lf[\cm\lf(f_1\r)(y)\r]^{\ez}
+\lf[\cm\lf(f_2\r)(y)\r]^{\ez}\r\}\,d\mu(y)\ls w(x).
\end{align*}
This proves \eqref{6.11.x1} and hence $w\in A_1(\cx)$.

Next, we prove \eqref{embd}.
Let $r_0=1$, $c\in(1,\fz)$, $x_0$ be as in the present theorem,
and $\{B_k\}_{k\in\zz_+}$ an admissible sequence of balls
as in Definition \ref{admbdef}. From Lemma \ref{lem-rdc}(i),
we deduce that, for any $f\in Y(\cx)$,
\begin{align}\label{embddec}
\lf\|f\r\|_{L^{r}_{w}(\cx)}^r&=\int_{\cx}|f(x)|^rw(x)\,d\mu(x)\noz\\
&=\int_{B_0}|f(x)|^rw(x)\,d\mu(x)
+\sum_{k=1}^{\fz}\int_{B_k\setminus B_{k-1}}\cdots\noz\\
&=:{\rm I}_1+{\rm I}_2.
\end{align}

We first estimate ${\rm I}_1$. Indeed, by Lemmas \ref{lem-phl},
\ref{holder}, and \ref{assoalso}, and Definitions \ref{cvex}
and \ref{qbs}(iv),
we know that
\begin{align}\label{embdi1}
{\rm I}_1&\ls\int_{B_0}|f(x)|^r\,d\mu(x)
\ls\lf\||f|^r\r\|_{Y^{\frac1{r}}(\cx)}
\lf\|\ch1_{B_0}\r\|_{(Y^{\frac1{r}})'(\cx)}
\sim\|f\|_{Y(\cx)}^r.
\end{align}

Next, we estimate ${\rm I}_2$. By Lemmas \ref{lem-phl} and
\ref{lem-rdc}(ii), we know that,
for any $k\in\nn$ and $x\in B_k\setminus B_{k-1}$,
\begin{align}\label{1.8.x1}
w(x)\ls\lf[\frac{\mu(B_0)}{\mu(B_{k-1})}\r]^\ez
\sim\lf[\frac{\mu(B_0)}{\mu(B_{k})}\r]^\ez.
\end{align}
On the other hand, from the definition of $\cm^{(\eta)}$,
we deduce that, for any $k\in\nn$ and $x\in B_k$,
$$
\cm^{(\eta)}(\ch 1_{B_0})(x)
\ge\lf[\frac{\mu(B_0)}{\mu(B_k)}\r]^{1/\eta},
$$
which further implies that, for any $x\in\cx$,
\begin{equation*}
\ch 1_{B_k}(x)
\le\lf[\frac{\mu(B_0)}{\mu(B_k)}\r]^{-1/\eta}
\cm^{(\eta)}(\ch 1_{B_0})(x).
\end{equation*}
By this, \eqref{1.8.x1}, Lemma \ref{holder}, Definition \ref{cvex},
the boundedness of $\cm^{(\eta)}$ on $(Y^{1/{r}})'(\cx)$,
Lemmas \ref{assoalso} and \ref{lem-rdc}(ii),
Definition \ref{qbs}(iv), and $\ez>1/\eta$,
we conclude that
\begin{align*}
{\rm I}_2&\ls\sum_{k=1}^{\fz}
\lf[\frac{\mu(B_0)}{\mu(B_k)}\r]^{\ez}
\int_{B_k}|f(x)|^r\,d\mu(x)
\ls\sum_{k=1}^{\fz}
\lf[\frac{\mu(B_0)}{\mu(B_k)}\r]^{\ez}
\lf\|\ch1_{B_k}\r\|_{(Y^{\frac1{r}})'(\cx)}
\|f\|_{Y(\cx)}^r\\
&\ls\sum_{k=1}^{\fz}
\lf[\frac{\mu(B_0)}{\mu(B_k)}\r]^{\ez-1/\eta}
\lf\|\ch1_{B_0}\r\|_{(Y^{\frac1{r}})'(\cx)}
\|f\|_{Y(\cx)}^r
\ls\|f\|_{Y(\cx)}^r\sum_{k=1}^{\fz}c^{-(k-1)(\ez-1/\eta)}
\ls\|f\|_{Y(\cx)}^r.
\end{align*}
This, combined with \eqref{embddec} and \eqref{embdi1},
implies \eqref{embd} and hence finishes
the proof of Theorem \ref{embed}.
\end{proof}

\begin{remark}\label{weight}
We point out that the proof of $w\in A_1(\cx)$ in
Theorem \ref{embed} is a slight modification of the
proof of \cite[Lemma 3.3]{am97}.
We still give some details here for the sake of completeness.
\end{remark}

\section{Maximal function characterizations of $H_Y^*(\cx)$\label{s-max}}
In this section, we first introduce the Hardy space
$H_Y^*(\cx)$ associated with ball quasi-Banach function
space $Y(\cx)$ via using the grand maximal function,
then discuss the relations between $Y(\cx)$ and $H_Y^*(\cx)$,
and finally establish the radial and the non-tangential
maximal function characterizations of $H_Y^*(\cx)$.

\subsection{Relations between $Y(\cx)$ and $H_Y^*(\cx)$\label{s-max1}}

The aim of this subsection is to clarify the relations
between $Y(\cx)$ and $H_Y^*(\cx)$. To this end,
we first recall the notions of the spaces of test
functions and distributions on $\cx$,
which were originally introduced by Han et al.
\cite[Definition 2.2]{han06} (see also \cite[Definition 2.8]{han08}).
\begin{definition}[test functions]\label{test}
Let $x_0\in\cx$, $r\in(0,\infty)$, $\beta \in (0,1]$,
and $\gamma \in (0,\infty)$.
A function $f$ on $\cx$ is called a \emph{test function of type}
$(x_0, r, \beta, \gamma)$, denoted by $f \in \cg(x_0,r,\beta,\gamma)$,
if there exists a positive constant $C$ such that
\begin{enumerate}
\item[{\rm(i)}] (the \emph{size condition}) for any $x\in\cx$,
\begin{equation}\label{3.31.x1}
|f(x)|\leq CP_{\gz}(x_0,x;r),
\end{equation}
here and thereafter, $P_{\gz}(x_0,x;r)$
is as in \eqref{21.7.15.x1};
\item[{\rm(ii)}] (the \emph{regularity condition})
for any $x,\ y \in \cx$ satisfying
$\rho(x, y)\leq(2A_0)^{-1}[r + \rho(x_0, x)]$,
\begin{equation}\label{3.31.x2}
|f(x)-f(y)|\leq C\left[\frac{\rho(x,y)}{r+\rho(x_0,x)}\right]^\beta
P_{\gz}(x_0,x;r).
\end{equation}
\end{enumerate}
For any  $f\in \cg(x_0,r,\beta,\gamma)$,
its \emph{norm} $\|f\|_{\cg(x_0,r,\beta,\gamma)}$ in
$ \cg(x_0,r,\beta,\gamma)$ is defined by setting
$$\|f\|_{\cg(x_0, r, \beta, \gamma)}:=\inf\lf\{C\in(0,\infty):\
C\ \text{satisfies \eqref{3.31.x1} and \eqref{3.31.x2}}\r\}.$$
\end{definition}

Fix $x_0\in\cx$.
We denote $\cg(x_0, 1, \beta, \gamma)$ simply by $\cg(\beta,\gamma)$.
Obviously, $\cg(\beta,\gamma)$ is a Banach space.
Note that, for any fixed $x\in\cx$ and $r \in(0,\infty)$,
$\cg(x,r,\beta,\gamma)=\cg(\beta,\gamma)$
with equivalent norms, but the positive equivalence
constants may depend on $x$ and $r$.

Fix $\epsilon\in (0, 1]$ and $\beta,\ \gamma \in (0, \epsilon]$.
Let $\cg^\epsilon_0(\beta,\gamma)$ be the completion
of the set $\cg(\epsilon, \epsilon)$ in $\cg(\beta,\gamma)$.
Furthermore,
the norm of $\cg^\epsilon_0(\beta,\gamma)$ is defined by setting
$\|\cdot\|_{\cg^\epsilon_0(\beta,\gamma)}:=\|\cdot\|_{\cg(\beta,\gamma)}$.
The space $\cg^\epsilon_0(\beta,\gamma)$
is called the \emph{space of test functions}.
The \emph{dual space} $(\cg^\epsilon_0(\beta,\gamma))'$
is defined to be the set of all continuous linear functionals from
$\cg^\epsilon_0(\beta,\gamma)$
to $\mathbb{C}$, equipped with the weak-$\ast$ topology.
The space $(\cg^\epsilon_0(\beta,\gamma))'$
is called the \emph{space of distributions}.

The following system of dyadic cubes of $(\cx,\rho,\mu)$
was established by Hyt\"onen and Kairema in \cite[Theorem 2.2]{hk12}.
\begin{lemma}\label{daydic}
Suppose that constants $0 < c_0\leq C_0 <\infty$ and $\delta\in(0, 1)$
satisfy $12A_0^3C_0\delta\leq c_0$. Assume that a set of points,
$\{ z_\alpha^k : k \in\zz , \alpha \in \ca_k \} \subset \cx$ with $\ca_k$,
for any $k \in\zz$, being a set of indices, has the following properties:
for any $k \in\zz$,
$$\rho(z^k_\alpha, z^k_\beta)\geq c_0\delta^k\quad\text{if}\quad\alpha\neq\beta,\quad
\text{and}\quad\min_{\alpha\in\ca_k}\rho(x,z_\alpha^k)<C_0\delta^k
\quad\text{for any}\quad x\in\cx.$$
Then there exists a family of sets,
$\{ Q_\alpha^k : k \in\zz , \alpha \in \ca_k \}$,
satisfying
\begin{enumerate}
\item[{\rm(i)}] for any $k\in\zz$, $\bigcup_{\alpha\in\ca_k}
Q_\alpha^k=\cx$ and $\{ Q_\alpha^k : \alpha \in \ca_k \}$
is disjoint for any given $k\in\zz$;
\item[{\rm(ii)}] if $k,\ l\in\zz$ and $k\leq l$, then,
for any $\alpha\in\ca_k$ and $\beta\in\ca_l$,
either $Q_\beta^l\subset Q_\alpha^k$ or
$Q_\beta^l\cap Q_\alpha^k=\emptyset$;
\item[{\rm(iii)}] for any $k\in\zz$ and $\alpha\in\ca_k$,
$B(z^k_\alpha, (3A_0^2)^{-1}c_0\delta^k)\subset Q_\alpha^k\subset
B(z^k_\alpha, 2A_0C_0\delta^k)$.
\end{enumerate}
\end{lemma}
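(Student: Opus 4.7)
\medskip

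\noindent\textbf{Proof proposal.}
The plan is to follow the classical Christ-type construction, as adapted by Hyt\"onen and Kairema to a setting without any auxiliary nested structure. The argument falls naturally into three stages: first, build a parent map between consecutive levels; second, iterate it to obtain a hierarchy across all of $\zz$; third, assemble the cubes from a limit of Voronoi-type partitions corrected by this hierarchy.

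For the first stage, for each $k\in\zz$ and each $\beta\in\ca_{k+1}$ I would define the parent $\pi_k(\beta)\in\ca_k$ to be an index $\alpha$ minimizing $\rho(z^{k+1}_\beta,z^k_\alpha)$, breaking ties by a fixed well-ordering on $\ca_k$. The covering hypothesis $\min_{\alpha\in\ca_k}\rho(z^{k+1}_\beta,z^k_\alpha)<C_0\delta^k$ supplies the quantitative bound $\rho(z^{k+1}_\beta,z^k_{\pi_k(\beta)})<C_0\delta^k$. For $l>k$ I then set $\pi^{l,k}:=\pi_k\circ\pi_{k+1}\circ\cdots\circ\pi_{l-1}\colon\ca_l\to\ca_k$, so every center has a unique ancestor at every coarser level, with the transitivity $\pi^{l,k}=\pi^{m,k}\circ\pi^{l,m}$ whenever $k\le m\le l$.

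For the second stage, at each level $k$ I form the Voronoi-type partition
\[
\widetilde Q^k_\alpha:=\bigl\{x\in\cx:\rho(x,z^k_\alpha)<\rho(x,z^k_\gamma)\text{ for every }\gamma\in\ca_k\setminus\{\alpha\}\bigr\},
\]
with boundary ties broken by the fixed ordering, so that $\{\widetilde Q^k_\alpha\}_\alpha$ is a disjoint cover of $\cx$. These partitions are not nested in $k$, so I then glue along the parent map to obtain the final cubes
\[
Q^k_\alpha:=\bigcup\bigl\{\widetilde Q^l_\beta:l\ge k,\ \beta\in\ca_l,\ \pi^{l,k}(\beta)=\alpha\bigr\},
\]
interpreted so that each point is assigned according to its eventually stabilizing ancestry chain. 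Property (ii) is then immediate from the transitivity of $\pi^{l,k}$, and property (i) follows from the facts that parents are single-valued and every point stabilizes into a unique chain.

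The principal obstacle, and the place where the hypothesis $12A_0^3C_0\delta\le c_0$ is actually used, is property (iii). The outer inclusion $Q^k_\alpha\subset B(z^k_\alpha,2A_0C_0\delta^k)$ requires a uniform bound on $\rho(z^l_\beta,z^k_\alpha)$ over all descendants $\beta$ of $\alpha$; iterating the quasi-triangle inequality against the per-level parent bounds produces a geometric-type sum of the shape $\sum_{j\ge 0}A_0^{j+1}C_0\delta^{k+j}$, which the smallness of $\delta$ relative to $A_0$ and $c_0$ forces to be at most $2A_0C_0\delta^k$. Dually, the inner inclusion $B(z^k_\alpha,(3A_0^2)^{-1}c_0\delta^k)\subset Q^k_\alpha$ requires that no finer-scale Voronoi cell descending from a sibling $z^k_\gamma$ (which sits at distance at least $c_0\delta^k$ from $z^k_\alpha$) can reach inside this inner ball; the same geometric tail, now subtracted from $c_0\delta^k$, must remain at least $(3A_0^2)^{-1}c_0\delta^k$. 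Matching these two balance conditions is precisely what the constant hypothesis encodes, and I expect the careful bookkeeping of $A_0$-factors from repeated use of the quasi-triangle inequality to be the most delicate part of the argument.
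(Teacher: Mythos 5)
You should note first that the paper does not prove this lemma at all: it is quoted verbatim from Hyt\"onen and Kairema \cite[Theorem 2.2]{hk12}, so the only ``paper proof'' is a citation, and your sketch has to be judged on its own as a reconstruction of that (genuinely delicate) result. Your first stage is fine: the minimal-distance parent map with the bound $\rho(z^{k+1}_\beta,z^k_{\pi_k(\beta)})<C_0\delta^k$ is standard, and the separation hypothesis even guarantees the extra fact you will eventually need, namely that a level-$(k+1)$ center within $c_0\delta^k/(2A_0)$ of $z^k_\alpha$ is forced to have $\alpha$ as its parent. The quantitative bookkeeping in your third stage is also in order: since $12A_0^3C_0\delta\le c_0\le C_0$ gives $A_0\delta\le 1/12$, the iterated quasi-triangle sums you describe do stay below $2A_0C_0\delta^k$, and a point of $B(z^k_\alpha,(3A_0^2)^{-1}c_0\delta^k)$ does lie in the level-$k$ Voronoi cell of $\alpha$.

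The genuine gap is in the second stage, i.e.\ exactly where properties (i) and (ii) are supposed to come from. Defining $Q^k_\alpha$ as the union of all Voronoi cells $\widetilde Q^l_\beta$ with $l\ge k$ and $\pi^{l,k}(\beta)=\alpha$ does not produce disjoint sets at level $k$: a fixed point $x$ lies in exactly one Voronoi cell $\widetilde Q^l_{\beta_l(x)}$ at \emph{every} level $l\ge k$, and nothing forces the ancestors $\pi^{l,k}(\beta_l(x))$ to agree for different $l$ --- the Voronoi partitions are not nested, which is precisely why the naive construction fails. Consequently the same $x$ is claimed by several $Q^k_\alpha$'s, (i) breaks down, and (ii) (which you deduce from transitivity \emph{plus} level-$k$ disjointness) breaks down with it. The phrase ``eventually stabilizing ancestry chain'' is doing all the work here and is unproved; in general the ancestry of $\beta_l(x)$ need not stabilize (points near Voronoi interfaces can switch allegiance at infinitely many scales). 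The correct constructions avoid Voronoi cells altogether: Christ builds the cubes from the tree of centers itself (unions of balls $B(z^l_\beta,c\delta^l)$ over descendants $\beta$ of $\alpha$), which yields a partition only up to null boundary sets, and the exact partition of all of $\cx$ asserted in (i) is the specific contribution of Hyt\"onen--Kairema, obtained by constructing closed and open cubes $\widetilde Q\subset Q\subset\overline Q$ and then making a careful, level-consistent selection of the boundary points. So to repair your argument you would either have to prove a stabilization statement for the Voronoi ancestries (which is false as stated) or replace stage two by the descendant-ball/boundary-selection construction; as written, the proposal does not establish (i) or (ii).
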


Throughout this article, for any $k\in\zz$, define
\begin{align}\label{21.7.16.x1}
\cg_k:=\ca_{k+1}\setminus\ca_k\quad\text{and}\quad
\cy^k:=\left\{z^{k+1}_\alpha\right\}_{\alpha\in\cg_k}
=: \left\{y_\alpha^k\right\}_{\alpha\in\cg_k}
\end{align}
and, for any $x \in \cx$, define
\begin{align}\label{21.7.16.x2}
\rho\lf(x,\cy^k\r):=\inf_{y\in\cy^k}\rho(x,y).
\end{align}

Next, we recall the notion of approximations of
the identity with exponential decay which was
introduced in \cite[Definition 2.7]{hlyy19}.
\begin{definition}\label{expati}
Let $\delta$ be as in Lemma \ref{daydic}.
A sequence $\{Q_k\}_{k\in\zz}$ of bounded linear integral operators on
$L^2(\cx)$ is called an \emph{approximation of the identity with
exponential decay} (for short, exp-ATI) if there exist constants $C$,
$\nu\in(0, \infty)$, $a\in(0, 1]$, and $\eta\in(0,1)$ such that,
for any $k \in\zz$, the kernel of the operator $Q_k$, a
function on $\cx \times \cx$, which is still denoted by $Q_k$,
has the following properties:
\begin{enumerate}
\item[{\rm(i)}] (the \emph{identity condition})
$\sum_{k\in\zz} Q_k=I$ in $L^2(\cx)$,
where $I$ denotes the identity operator on $L^2(\cx)$;
\item[{\rm(ii)}] (the \emph{size condition}) for any $x,\ y\in\cx$,
\begin{align*}
|Q_k(x,y)|&\leq CE_k(x,y),
\end{align*}
here and thereafter,
$$E_k(x,y):=\frac{1}{\sqrt{V_{\delta^k}(x)V_{\delta^k}(y)}}
\exp\left\{-\nu\left[\frac{\rho(x,y)}{\delta^k}\right]^a\right\}
\exp\left\{-\nu\left[\frac{\max\{\rho(x,\cy^k),\rho(y,\cy^k)\}}
{\delta^k}\right]^a\right\};$$
\item[{\rm(iii)}] (the \emph{regularity condition}) for any
$x,\ x',\ y\in\cx$ with $\rho(x, x')\leq\delta^k$,
\begin{align*}
|Q_k(x,y)-Q_k(x',y)|+|Q_k(y,x)-Q_k(y,x')|
\leq C\left[\frac{\rho(x,x')}{\delta^k}\right]^\eta
E_k(x,y);
\end{align*}
\item[{\rm(iv)}] (the \emph{second difference regularity condition})
for any $x,\ x',\ y,\ y'\in\cx$ with $\rho(x, x')\leq\delta^k$
and $\rho(y, y')\leq\delta^k$,
\begin{align*}
|[Q_k(x,y)-Q_k(x',y)]-[Q_k(x,y')-Q_k(x',y')]|
\leq C\left[\frac{\rho(x,x')}{\delta^k}\right]^\eta
\left[\frac{\rho(y,y')}{\delta^k}\right]^\eta E_k(x,y);
\end{align*}
\item[{\rm(v)}] (the \emph{cancellation condition}) for any $x,\ y \in \cx$,
$$\int_\cx Q_k(x,y')\,d\mu(y')=0=\int_\cx Q_k(x',y)\,d\mu(x').$$
\end{enumerate}
\end{definition}

The following notion of 1-exp-ATIs is just \cite[Definition 2.8]{hhllyy}.
\begin{definition}\label{1expati}
Let $\eta\in(0,1)$ be as in Definition \ref{expati}.
A sequence $\{P_k\}_{k\in\zz}$ of bounded
linear integral operators on $L^2(\cx)$ is called an
\emph{approximation of the identity with exponential
decay and integration 1} (for short, 1-exp-ATI)
if $\{P_k\}_{k\in\zz}$ has the following properties:
\begin{enumerate}
\item[{\rm(i)}] for any $k\in\zz$, $P_k$ satisfies (ii), (iii),
and (iv) of Definition \ref{expati} but without the term
$E_k(x,y)$;
\item[{\rm(ii)}] for any $k\in\zz$ and $x\in\cx$,
$$\int_\cx P_k(x,y)\,d\mu(y)=1=\int_\cx P_k(y,x)\,d\mu(y);$$
\item[{\rm(iii)}] for any $k\in\zz$, letting $Q_k:=P_k-P_{k-1}$,
then $\{Q_k\}_{k\in\zz}$ is an exp-ATI.
\end{enumerate}
\end{definition}

The following lemma is just \cite[Lemma 3.3]{zhy}.
\begin{lemma}\label{1eptilem}
Let $\{P_k\}_{k\in\zz}$ be a {\rm{1-exp-ATI}} and $\eta$
as in Definition \ref{expati}.
Then the following statements hold true:
\begin{enumerate}
\item[{\rm(i)}] for any given $\bz$, $\gz\in(0,\eta]$,
there exists a positive constant $C$ such that,
for any $k\in \zz_+$ and $\vz\in\cg(\beta,\gamma)$,
$$\lf\|P_k\vz\r\|_{\cg(\bz,\gz)}\le C\|\vz\|_{\cg(\bz,\gz)};$$
\item[{\rm(ii)}] for any $\bz$, $\gz\in(0,\eta]$,
$\widetilde{\bz}\in(0,\bz)$, and $f\in\cg(\beta,\gamma)$,
$$\lim_{k\rightarrow\fz}P_kf=f$$
in $\cg(\widetilde{\bz},\gz)$;
\item[{\rm(iii)}] for any $\bz\in(0,\eta)$ and $\gz\in(0,\eta]$,
if $f\in\cg_0^{\eta}(\bz,\gz)$
[resp., $f\in(\cg_0^{\eta}(\bz,\gz))'$], then
$$\lim_{k\rightarrow\fz}P_kf=f$$
in $\cg_0^{\eta}(\bz,\gz)$ [resp., $(\cg_0^{\eta}(\bz,\gz))'$].
\end{enumerate}
\end{lemma}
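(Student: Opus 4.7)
The plan is to establish parts (i), (ii), and (iii) in this order, with (i) serving as the uniform-boundedness engine that drives the convergence arguments in (ii) and the density argument in (iii). Throughout, I would use only the kernel estimates from Definition~\ref{1expati}, the doubling property \eqref{eq-doub}, and Lemma~\ref{equlem}.

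For part (i), fix $\vz\in\cg(\bz,\gz)$ and verify that $P_k\vz$ satisfies the size bound \eqref{3.31.x1} and the regularity bound \eqref{3.31.x2} with a constant proportional to $\|\vz\|_{\cg(\bz,\gz)}$. To bound $|P_k\vz(x)|=|\int_\cx P_k(x,y)\vz(y)\,d\mu(y)|$, I would split $\cx$ at $\rho(x,y)\sim(2A_0)^{-1}[1+\rho(x_0,x)]$: on the near part, Lemma~\ref{equlem} yields $P_\gz(x_0,y;1)\sim P_\gz(x_0,x;1)$ and the integration-$1$ property controls the kernel integral, while on the far part the exponential kernel decay absorbs the polynomial loss by a geometric-series argument using the doubling property. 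For the regularity, treat $\rho(x,x')\le\delta^k$ via the regularity of $P_k$ directly. For $\delta^k<\rho(x,x')\le(2A_0)^{-1}[1+\rho(x_0,x)]$, use the integration-$1$ property to symmetrize as
\begin{align*}
P_k\vz(x)-P_k\vz(x')
&=\int_\cx P_k(x,y)[\vz(y)-\vz(x)]\,d\mu(y)\\
&\quad-\int_\cx P_k(x',y)[\vz(y)-\vz(x')]\,d\mu(y)+[\vz(x)-\vz(x')],
\end{align*}
and bound each integral by combining the $\bz$-regularity of $\vz$ on the near regions of $x$ (resp., $x'$) with the kernel decay on the far regions; the boundary term is handled directly by the regularity of $\vz$.

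For part (ii), the key identity, obtained from $\int_\cx P_k(x,y)\,d\mu(y)=1$, reads
$$P_kf(x)-f(x)=\int_\cx P_k(x,y)[f(y)-f(x)]\,d\mu(y).$$
Using the $\bz$-regularity of $f$ on $\{y:\rho(x,y)\le\delta^k\}$ and the exponential decay of $P_k(x,y)$ outside it, one obtains $|P_kf(x)-f(x)|\ls\delta^{k\bz}\|f\|_{\cg(\bz,\gz)}P_\gz(x_0,x;1)$, which vanishes as $k\to\fz$. The corresponding estimate for the second difference $[P_kf(x)-f(x)]-[P_kf(x')-f(x')]$ is handled by interpolating between the modulus $\rho(x,x')^{\bz}$ coming from $f\in\cg(\bz,\gz)$ and a crude $(\delta^{-k}\rho(x,x'))^{\eta}$ bound coming from the kernel difference; the restriction $\widetilde\bz<\bz$ is precisely what is needed to extract the residual $\delta^{k(\bz-\widetilde\bz)}$ decay and to conclude $\|P_kf-f\|_{\cg(\widetilde\bz,\gz)}\to0$.

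For part (iii), a three-$\epsilon$ density argument suffices. Given $f\in\cg_0^\eta(\bz,\gz)$ and $\epsilon>0$, pick $g\in\cg(\eta,\eta)$ with $\|f-g\|_{\cg(\bz,\gz)}<\epsilon$; then write $P_kf-f=P_k(f-g)+(P_kg-g)+(g-f)$, use (i) to bound the first and third terms uniformly in $k$, and apply (ii) to $g\in\cg(\eta,\eta)$ with target parameter $\bz<\eta$ to get $\|P_kg-g\|_{\cg(\bz,\gz)}\to0$. Since each $P_kg\in\cg(\eta,\eta)\subset\cg_0^\eta(\bz,\gz)$ by (i) with $(\bz,\gz)$ replaced by $(\eta,\eta)$, the closedness of $\cg_0^\eta(\bz,\gz)$ in $\cg(\bz,\gz)$ ensures $P_kf\to f$ inside this subspace. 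The dual statement then follows by duality: the symmetry of the kernel $P_k(x,y)$ gives $\la P_kf,\psi\ra=\la f,P_k\psi\ra$ for $f\in(\cg_0^\eta(\bz,\gz))'$ and $\psi\in\cg_0^\eta(\bz,\gz)$, and the just-proved strong convergence $P_k\psi\to\psi$, together with the continuity of $f$, yields the weak-$*$ convergence $P_kf\to f$. The main obstacle I foresee is the regularity estimate in part (i) in the intermediate regime $\delta^k<\rho(x,x')\ls 1+\rho(x_0,x)$: organizing the dyadic decomposition of $\cx$ around both $x$ and $x'$ and tracking the interplay of the three length scales $\delta^k$, $\rho(x,x')$, and $1+\rho(x_0,x)$ against the doubling constants requires careful geometric bookkeeping, though all arguments are standard for $1$-exp-ATIs on spaces of homogeneous type and mirror those in \cite{han06,han08,hlyy19}.
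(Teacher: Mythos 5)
The paper itself offers no proof of this lemma: it is quoted verbatim from \cite[Lemma 3.3]{zhy}, so there is no in-text argument to compare with. Your sketch reconstructs the standard proof from that literature, and its main ideas are correct: using $\int_\cx P_k(x,y)\,d\mu(y)=1$ to write $P_k\vz-\vz$ and the symmetrized differences as integrals against $\vz(y)-\vz(\cdot)$, splitting near/far at the scales $\dz^k$ and $1+\rho(x_0,x)$ and absorbing the doubling losses into the exponential kernel decay, trading spatial modulus against powers of $\dz^k$ to explain why $\widetilde{\bz}<\bz$ is needed in (ii), and running a density-plus-uniform-boundedness argument for (iii), with the membership $P_kf\in\cg_0^{\eta}(\bz,\gz)$ obtained from (i) applied with parameters $(\eta,\eta)$ and the closedness of $\cg_0^{\eta}(\bz,\gz)$ in $\cg(\bz,\gz)$.

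Two points need repair, though neither is fatal. First, in the dual half of (iii) you invoke ``symmetry of the kernel''; a 1-exp-ATI kernel is not assumed symmetric. The action of $P_k$ on a distribution is defined through the transpose operator $P_k^t$ with kernel $P_k(y,x)$, which satisfies exactly the same size, regularity, and normalization conditions (all the conditions in Definitions \ref{expati} and \ref{1expati} are symmetric in the two variables), so the argument survives as $\la P_kf,\psi\ra=\la f,P_k^t\psi\ra\to\la f,\psi\ra$; it is the transpose, not symmetry, that you should cite. Second, in (ii) the $k$-decay in the regularity estimate cannot come from interpolating the modulus-$\bz$ bound of $f$ with the kernel-difference bound $(\rho(x,x')/\dz^k)^{\eta}$ alone: the latter is available only when $\rho(x,x')\le\dz^k$ and carries no decay in $k$, while the former is $k$-independent. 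The decay must be injected from the size estimate $|P_kf(x)-f(x)|\ls\|f\|_{\cg(\bz,\gz)}[\dz^k/(1+\rho(x_0,x))]^{\bz}P_{\gz}(x_0,x;1)$, either by a two-regime split at $\rho(x,x')\sim\dz^k$ or by taking a geometric mean of this size estimate with the ($k$-uniform) $\cg(\bz,\gz)$-regularity of $P_kf-f$ furnished by (i); both routes produce the modulus $\widetilde{\bz}$ together with the factor $\dz^{k(\bz-\widetilde{\bz})}$, which is the mechanism you correctly identify. With these adjustments your outline matches the known proof; the remaining details (the exponential-decay kernel integral estimates and the far-region bookkeeping) are indeed standard.
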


The following properties for \rm{1-exp-ATIs} as
in Definition \ref{1expati} are just
\cite[Lemmas 4.16 and 4.17]{zhy}, respectively.

\begin{lemma}\label{zhoulem1}
Let $\bz$, $\gz\in(0,\eta)$ with $\eta$ as in Definition \ref{expati},
$\{P_k\}_{k\in\zz}$ be a {\rm{1-exp-ATI}} as in Definition \ref{1expati},
and $f\in \cg(x_0,r_0,\bz,\gz)$ for some $x_0\in\cx$ and $r_0\in(0,\fz)$.
Then there exists a positive constant $C$, independent of $x_0$, $r_0$,
and $f$, such that, for any $k\in\zz$ satisfying $r_0\ge\dz^k$,
$$\lf\|P_kf\r\|_{\cg(x_0,r_0,\bz,\gz)}
\le C\|f\|_{\cg(x_0,r_0,\bz,\gz)}.$$
\end{lemma}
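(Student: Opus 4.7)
The plan is to verify the size condition \eqref{3.31.x1} and the regularity condition \eqref{3.31.x2} of Definition \ref{test} for $P_kf$ at the scale $r_0$, with a positive constant independent of $x_0$, $r_0$, $f$, and $k$. Throughout, I will use the integral representation $P_kf(x)=\int_\cx P_k(x,y)f(y)\,d\mu(y)$, the size and regularity of the kernel $P_k$ inherited via Definition \ref{1expati}(i) from (ii) and (iii) of Definition \ref{expati}, the identity $\int_\cx P_k(x,y)\,d\mu(y)=1$ from Definition \ref{1expati}(ii), the size and regularity of $f$ as an element of $\cg(x_0,r_0,\bz,\gz)$, the hypothesis $r_0\ge\dz^k$, and the doubling condition \eqref{eq-doub} together with Lemma \ref{equlem}.

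For the size, bounding $|f(y)|\le\|f\|_{\cg(x_0,r_0,\bz,\gz)}P_{\gz}(x_0,y;r_0)$ reduces matters to the integral estimate
\[
\int_\cx |P_k(x,y)|P_{\gz}(x_0,y;r_0)\,d\mu(y)\ls P_{\gz}(x_0,x;r_0).
\]
I would prove this by splitting the integral according to whether $\rho(x,y)\le(2A_0)^{-1}[r_0+\rho(x_0,x)]$ or not. On the near region, Lemma \ref{equlem} yields $P_{\gz}(x_0,y;r_0)\sim P_{\gz}(x_0,x;r_0)$, and the kernel is integrable with a uniform bound. On the far region, the exponential decay of $P_k$ at scale $\dz^k\le r_0\le r_0+\rho(x_0,x)$ beats the at-worst polynomial growth of $P_{\gz}(x_0,y;r_0)$, contributing only a lower-order term.

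For the regularity, fix $x,x'\in\cx$ with $\rho(x,x')\le(2A_0)^{-1}[r_0+\rho(x_0,x)]$. The crucial step is to invoke $\int_\cx[P_k(x,y)-P_k(x',y)]\,d\mu(y)=0$ in order to rewrite
\[
P_kf(x)-P_kf(x')=\int_\cx[P_k(x,y)-P_k(x',y)][f(y)-f(x)]\,d\mu(y),
\]
which unlocks the regularity of $f$ itself. I then separate two cases. Case (A), $\rho(x,x')\le\dz^k$: use the kernel regularity to obtain $|P_k(x,y)-P_k(x',y)|\ls[\rho(x,x')/\dz^k]^\eta\wz E_k(x,y)$, where $\wz E_k$ denotes the exponential-decay majorant of the $P_k$-kernel. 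Case (B), $\rho(x,x')>\dz^k$: bound $|P_k(x,y)-P_k(x',y)|\le|P_k(x,y)|+|P_k(x',y)|$ by kernel sizes. In both cases, I further split the $y$-integration into the near region $\rho(x,y)\le(2A_0)^{-1}[r_0+\rho(x_0,x)]$, where the $f$-regularity supplies a factor $[\rho(x,y)/(r_0+\rho(x_0,x))]^\bz$, and its complement, handled by the exponential decay of the kernel exactly as in the size estimate. The standard moment bound $\int\wz E_k(x,y)\rho(x,y)^\bz\,d\mu(y)\ls(\dz^k)^\bz$ then, in Case (A), produces the factor $[\rho(x,x')/\dz^k]^\eta[\dz^k/(r_0+\rho(x_0,x))]^\bz$, which equals the desired $[\rho(x,x')/(r_0+\rho(x_0,x))]^\bz$ multiplied by $[\rho(x,x')/\dz^k]^{\eta-\bz}\le1$ (using $\eta>\bz$ and $\rho(x,x')\le\dz^k$); in Case (B) it produces $[\dz^k/(r_0+\rho(x_0,x))]^\bz$, which is absorbed into $[\rho(x,x')/(r_0+\rho(x_0,x))]^\bz$ via $\dz^k<\rho(x,x')$.

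The main obstacle will be the careful treatment of the far region $\rho(x,y)>(2A_0)^{-1}[r_0+\rho(x_0,x)]$ inside the regularity argument, since there $|f(y)-f(x)|$ is only controlled by the sum of sizes $P_{\gz}(x_0,y;r_0)+P_{\gz}(x_0,x;r_0)$, and $P_{\gz}(x_0,y;r_0)$ may be substantially larger than $P_{\gz}(x_0,x;r_0)$. Because $\rho(x,y)\gs r_0\ge\dz^k$ on that region, I would trade the exponential decay of the $P_k$-kernel at scale $\dz^k$ for any needed polynomial decay in $\rho(x,y)/\dz^k$, thereby reducing the tail contribution to at most a constant multiple of $P_{\gz}(x_0,x;r_0)$ times the already-extracted factor $[\rho(x,x')/(r_0+\rho(x_0,x))]^\bz$. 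All constants appearing in these steps depend only on the exp-ATI parameters, $C_{(\mu)}$, $\dz$, $\bz$, and $\gz$, so the required independence from $x_0$, $r_0$, $f$, and $k$ is automatic.
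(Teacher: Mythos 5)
The paper does not prove Lemma \ref{zhoulem1} at all: it is quoted directly from \cite[Lemma 4.16]{zhy}, so there is no in-paper argument to compare yours against. Your direct verification of the size and regularity conditions of Definition \ref{test} for $P_kf$ at the scale $r_0$ is the standard route for such statements and is essentially correct: the reduction of the size condition to $\int_\cx|P_k(x,y)|P_\gz(x_0,y;r_0)\,d\mu(y)\ls P_\gz(x_0,x;r_0)$ with a near/far splitting (using $P_\gz(x_0,y;r_0)\sim P_\gz(x_0,x;r_0)$ on the near region via Lemma \ref{equlem}), the use of $\int_\cx[P_k(x,y)-P_k(x',y)]\,d\mu(y)=0$ from Definition \ref{1expati}(ii) to bring in the regularity of $f$, the dichotomy $\rho(x,x')\le\dz^k$ versus $\rho(x,x')>\dz^k$, and the bookkeeping $[\rho(x,x')/\dz^k]^\eta[\dz^k/(r_0+\rho(x_0,x))]^\bz\le[\rho(x,x')/(r_0+\rho(x_0,x))]^\bz$ (valid since $\bz<\eta$) are exactly the needed ingredients; the hypothesis $\dz^k\le r_0$ enters precisely where you use it, in converting the exponential decay $\exp\{-\nu[\rho(x,y)/\dz^k]^a\}$ into any power of $\dz^k/[r_0+\rho(x_0,x)]\le r_0/[r_0+\rho(x_0,x)]$ on the far region, and all constants depend only on structural parameters, so uniformity in $x_0$, $r_0$, $f$, and $k$ is indeed automatic.

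One step is stated too quickly. In Case (B) of the regularity estimate, on the far region $\rho(x,y)>(2A_0)^{-1}[r_0+\rho(x_0,x)]$ you propose to treat the term containing $|P_k(x',y)|$ ``exactly as in the size estimate'', but the exponential decay of that kernel is in $\rho(x',y)/\dz^k$, and on this region (defined by the distance of $y$ to $x$, not to $x'$) $\rho(x',y)$ may be small. The fix is routine: if $\rho(x',y)\ge\rho(x,y)/(2A_0)$, your decay argument goes through unchanged; if instead $\rho(x',y)<\rho(x,y)/(2A_0)$, the quasi-triangle inequality forces $\rho(x,x')>\rho(x,y)/(2A_0)\gs r_0+\rho(x_0,x)$, so the target factor $[\rho(x,x')/(r_0+\rho(x_0,x))]^\bz$ is bounded below by a positive constant and the plain size estimate applied at $x'$, combined with $P_\gz(x_0,x';r_0)\sim P_\gz(x_0,x;r_0)$ (which holds because $\rho(x,x')\le(2A_0)^{-1}[r_0+\rho(x_0,x)]$), already yields the required bound. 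Similarly, in the near region of Case (B) the moment bound for the $x'$-centered kernel should be run via $\rho(x,y)^\bz\ls\rho(x,x')^\bz+\rho(x',y)^\bz$, which produces $\rho(x,x')^\bz$ rather than $(\dz^k)^\bz$; since $\dz^k<\rho(x,x')$ there, your stated conclusion is unaffected. With these small adjustments the argument is complete.
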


\begin{lemma}\label{zhoulem2}
Let $\bz$, $\gz\in(0,\eta)$ with $\eta$ as in Definition \ref{expati},
$\{P_k\}_{k\in\zz}$ be a {\rm{1-exp-ATI}} as in Definition \ref{1expati},
and $f\in \cg(x_0,r_0,\bz,\gz)$ for some $x_0\in\cx$ and $r_0\in(0,\fz)$.
Then there exists a positive constant $C$, independent of $x_0$, $r_0$,
and $f$, such that, for any $k\in\zz$ satisfying $r_0<\dz^k$,
$$\lf\|P_kf\r\|_{\cg(x_0,\dz^k,\bz,\gz)}
\le C\|f\|_{\cg(x_0,r_0,\bz,\gz)}.$$
\end{lemma}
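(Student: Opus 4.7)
The plan is to verify directly that $P_k f$ satisfies the size condition \eqref{3.31.x1} and the regularity condition \eqref{3.31.x2} at scale $\delta^k$, with norm bounded by $C\|f\|_{\cg(x_0, r_0, \beta, \gamma)}$. Throughout I write $P_k f(x) = \int_\cx P_k(x, z) f(z)\,d\mu(z)$ and use three workhorses: the exponential decay of the kernel of $P_k$ at scale $\delta^k$ (inherited via Definition \ref{1expati}(i) from Definition \ref{expati}(ii)), which allows dominating $|P_k(x, z)|$ by $C_N P_N(x, z; \delta^k)$ for any $N > 0$; the cancellation $\int_\cx [P_k(x, z) - P_k(y, z)]\,d\mu(z) = 0$ coming from Definition \ref{1expati}(ii); and the standard convolution-type estimate
$$\int_\cx P_N(x, z; \delta^k) P_\gamma(x_0, z; r_0)\,d\mu(z) \ls P_\gamma(x_0, x; \delta^k),$$
valid for $N \ge \gamma$ since $\max\{\delta^k, r_0\} = \delta^k$ by hypothesis. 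For the size condition, these three tools combined with the size bound $|f(z)|\le\|f\|_{\cg(x_0,r_0,\bz,\gz)}P_\gz(x_0,z;r_0)$ immediately yield $|P_k f(x)| \ls \|f\|_{\cg(x_0, r_0, \beta, \gamma)} P_\gamma(x_0, x; \delta^k)$.

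For the regularity condition, fix $x, y \in \cx$ with $\rho(x, y) \le (2A_0)^{-1}[\delta^k + \rho(x_0, x)]$. Using the cancellation identity, rewrite
$$P_k f(x) - P_k f(y) = \int_\cx [P_k(x, z) - P_k(y, z)][f(z) - f(x)]\,d\mu(z),$$
and consider two cases. When $\rho(x, y) \le \delta^k$, apply Definition \ref{expati}(iii) to bound $|P_k(x, z) - P_k(y, z)| \ls (\rho(x, y)/\delta^k)^\eta E_k(x, z)$, and split the $z$-integration into the subregion $\rho(x, z) \le (2A_0)^{-1}[r_0 + \rho(x_0, x)]$, where the regularity of $f$ produces the factor $[\rho(x, z)/(r_0 + \rho(x_0, x))]^\beta$, and its complement, where $|f(z) - f(x)|$ is bounded by the size of $f$ and the exponential decay of $P_k$ over the ``far'' region carries the load. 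The key absorption identity, justified by $\beta < \eta$ and $\rho(x, y) \le \delta^k$, is
$$\left(\frac{\rho(x, y)}{\delta^k}\right)^\eta \le \left[\frac{\delta^k + \rho(x_0, x)}{\delta^k}\right]^\beta \left[\frac{\rho(x, y)}{\delta^k + \rho(x_0, x)}\right]^\beta.$$
When $\rho(x, y) > \delta^k$, estimate $|P_k f(x)|$ and $|P_k f(y)|$ separately via the size bound; a short triangle-inequality computation using the hypothesis forces $\rho(x_0, y) \sim \rho(x_0, x)$, so $P_\gamma(x_0, y; \delta^k) \sim P_\gamma(x_0, x; \delta^k)$, and $\rho(x, y) > \delta^k$ then yields $[\rho(x, y)/(\delta^k + \rho(x_0, x))]^\beta \gtrsim [\delta^k/(\delta^k + \rho(x_0, x))]^\beta$, furnishing the required decay.

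The main obstacle is absorbing the potentially large factor $[(\delta^k + \rho(x_0, x))/\delta^k]^\beta$ arising in the regime $\rho(x_0, x) \gg \delta^k$. To defeat this, I exploit the exponential decay of $P_k$ on the subregion $\{z : \rho(x_0, z) \le \rho(x_0, x)/(2A_0)\}$, where the quasi-triangle inequality forces $\rho(x, z) \gtrsim \rho(x_0, x) \gg \delta^k$ and hence renders $|P_k(x, z)|$ exponentially small in $\rho(x_0, x)/\delta^k$; through the doubling inequality \eqref{eq-doub} relating $V_{\delta^k}(x)$ and $V(x_0, x)$, this exponential smallness dominates any polynomial factor in $\rho(x_0, x)/\delta^k$ and absorbs the excess. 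On the complementary subregion, $P_\gamma(x_0, z; r_0)$ is controlled by $[r_0/\rho(x_0, x)]^\gamma/V(x_0, z)$, and the assumption $r_0 < \delta^k$ ensures the resulting polynomial factor is of the right order to complete the estimate.
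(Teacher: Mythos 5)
The paper offers no proof of this lemma (it simply cites \cite[Lemma 4.17]{zhy}), so your proposal has to stand on its own. Your treatment of the size condition is fine, and the architecture of Case 1 ($\rho(x,y)\le\delta^k$: cancellation of $\int[P_k(x,z)-P_k(y,z)]\,d\mu(z)$, kernel regularity, near/far splitting in $z$) is the right one. But Case 2 contains a genuine gap. When $\delta^k<\rho(x,y)\le(2A_0)^{-1}[\delta^k+\rho(x_0,x)]$, bounding $|P_kf(x)|$ and $|P_kf(y)|$ separately by the size estimate only yields $\lesssim\|f\|_{\cg(x_0,r_0,\bz,\gz)}P_{\gz}(x_0,x;\delta^k)$, whereas the regularity condition \eqref{3.31.x2} at scale $\delta^k$ demands the additional factor $[\rho(x,y)/(\delta^k+\rho(x_0,x))]^{\bz}$, which in this case can be as small as roughly $[\delta^k/\rho(x_0,x)]^{\bz}$: the hypothesis $\rho(x,y)>\delta^k$ does \emph{not} force $\rho(x,y)\gtrsim\delta^k+\rho(x_0,x)$. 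Your displayed inequality $[\rho(x,y)/(\delta^k+\rho(x_0,x))]^{\bz}\gtrsim[\delta^k/(\delta^k+\rho(x_0,x))]^{\bz}$ goes the wrong way: it lower-bounds the factor you must \emph{produce}, but the size bound you invoke does not come with any spare factor $[\delta^k/(\delta^k+\rho(x_0,x))]^{\bz}$ to give away (and the improved size bound $|P_kf(x)|\lesssim\|f\|[\delta^k/(\delta^k+\rho(x_0,x))]^{\bz}P_{\gz}(x_0,x;\delta^k)$ is false in general, e.g.\ when $r_0$ is comparable to $\delta^k$). The standard repair is to use $\int_{\cx}P_k(x,z)\,d\mu(z)=1=\int_{\cx}P_k(y,z)\,d\mu(z)$ to write $P_kf(x)-P_kf(y)=\int P_k(x,z)[f(z)-f(x)]\,d\mu(z)-\int P_k(y,z)[f(z)-f(y)]\,d\mu(z)+[f(x)-f(y)]$; the two integrals carry an extra factor $[\delta^k/(\delta^k+\rho(x_0,\cdot))]^{\bz}$ (Hölder regularity of $f$ for $z$ near the base point, exponential decay of $P_k$ far away), which is $\le[\rho(x,y)/(\delta^k+\rho(x_0,x))]^{\bz}$ precisely because $\rho(x,y)>\delta^k$, while $f(x)-f(y)$ is estimated by the regularity of $f$ itself when $\rho(x,y)\le(2A_0)^{-1}[r_0+\rho(x_0,x)]$; in the leftover range one has $\rho(x,y)\gtrsim\rho(x_0,x)\gtrsim\delta^k+\rho(x_0,x)$ (since $\rho(x_0,x)>(2A_0-1)\delta^k$ in Case 2), and only then do size bounds genuinely suffice.

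A related bookkeeping slip occurs in your final paragraph on absorbing $[(\delta^k+\rho(x_0,x))/\delta^k]^{\bz}$ in Case 1: on the region $\{z:\rho(x_0,z)>\rho(x_0,x)/(2A_0)\}$ the size of $f$ only gives $[r_0/\rho(x_0,x)]^{\gz}\le[\delta^k/\rho(x_0,x)]^{\gz}$, which restores the $\gz$-decay but cannot pay a $\bz$-excess (note $r_0$ may be arbitrarily close to $\delta^k$, so $r_0<\delta^k$ yields no quantitative gain of order $\bz$). The excess is actually paid on the region where $z$ is close to $x$, by integrating the Hölder factor $[\rho(x,z)/(r_0+\rho(x_0,x))]^{\bz}$ from the regularity of $f$ against the kernel at scale $\delta^k$, which produces $[\delta^k/(r_0+\rho(x_0,x))]^{\bz}$ and exactly cancels $[(\delta^k+\rho(x_0,x))/\delta^k]^{\bz}$; far from $x$ the exponential decay beats every polynomial factor. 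With this accounting made explicit and Case 2 reworked as above, the argument closes.
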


\begin{remark}\label{remhe1}
Compared with the corresponding ones on RD-spaces (see \cite{han08}),
these exp-ATIs on $\cx$ have some essential differences presented
via some terms such as
$$\exp\left\{-\nu\left[\frac{\max\{\rho(x,\cy^k),\rho(y,\cy^k)\}}
{\delta^k}\right]^a\right\}.$$
Observe that, here $x$, $y\in\cx$, $\cy^k$ is the set
of dyadic reference points as in \eqref{21.7.16.x1},
and $\rho(x,\cy^k)$ [resp. $\rho(y,\cy^k)$] the quasi-distance
between $x$ (resp. $y$) and $\cy^k$ as in \eqref{21.7.16.x2}.
Thus, such terms are closely connecting with the geometrical
property of $\cx$ and, therefore, help us to get rid of the
dependence on the reverse doubling assumption of the
equipped measure $\mu$ of $\cx$.
\end{remark}

Next, we introduce several Hardy spaces, associated with $Y(\cx)$,
via different maximal functions.

\begin{definition}\label{bh}
Let $\beta,\ \gamma \in (0, \eta)$ and $f\in(\icgg)'$
with $\eta$ as in Definition \ref{expati}.
Let $\{P_k\}_{k\in\zz}$ be a 1-exp-ATI as
in Definition \ref{1expati}.
The \emph{radial maximal function} $\cm^+(f)$ of $f$
is defined by setting, for any $x\in\cx$,
\begin{align*}
\cm^+(f)(x):=\sup_{k\in\zz}\lf|P_kf(x)\r|
\end{align*}
and, for any given $a\in(0,\fz)$,
the \emph{non-tangential maximal function}
$\cm_a(f)$, with aperture $a$, of $f$ is defined by setting,
for any $x\in\cx$,
$$\cm_a(f)(x):=\sup_{k\in\zz}\sup_{y\in B(x,\,a\delta^k)}\lf|P_kf(y)\r|,$$
where $\delta$ is as in Lemma \ref{daydic}. Moreover,
the \emph{grand maximal function} $f^*$ of $f$ is defined by setting,
for any $x\in\cx$,
$$f^*(x):=\sup\lf\{|\langle f,h\rangle|:\
h\in\icgg,\ \|h\|_{\cg(x,r,\beta,\gamma)}\le1\ \mbox{for some}\ r\in(0,\fz)\r\}.$$
\end{definition}

Let $Y(\cx)$ be a ball quasi-Banach function space on $\cx$.
Then the \emph{Hardy spaces} $H_Y^+(\cx)$, $H_Y^a(\cx)$
[with $a\in(0,\fz)$], and $H_Y^*(\cx)$, associated with $Y(\cx)$,
are defined, respectively, by setting
\begin{align}\label{3.5x}
H_Y^+(\cx):=\lf\{f\in\lf(\icgg\r)':\
\|f\|_{H_Y^+(\cx)}:=\lf\|\cm^+(f)\r\|_{Y(\cx)}<\fz\r\},
\end{align}
\begin{align}\label{3.5y}
H_Y^a(\cx):=\lf\{f\in\lf(\icgg\r)':\
\|f\|_{H_Y^a(\cx)}:=\lf\|\cm_a(f)\r\|_{Y(\cx)}<\fz\r\},
\end{align}
and
\begin{align}\label{3.5z}
H_Y^*(\cx):=\lf\{f\in\lf(\icgg\r)':\
\|f\|_{H_Y^*(\cx)}:=\lf\|f^*\r\|_{Y(\cx)}<\fz\r\}.
\end{align}

The following lemma shows that $H_Y^*(\cx)$ can be embedded
into $(\icgg)'$.

\begin{lemma}\label{7.9.x1}
Assume that $Y(\cx)$ is a ball quasi-Banach function space on $\cx$.
Let $\bz$, $\gz$, and $\eta$ be as in Definition \ref{bh}.
Then $H_Y^*(\cx)$ is continuously embedded into $(\icgg)'$, namely,
there exists a positive constant $C$ such that, for any $f\in H_Y^*(\cx)$
and $h\in\icgg$,
\begin{align}\label{1.9.x1}
\lf|\langle f,h\rangle\r|\le C\|f\|_{H_Y^*(\cx)}\|h\|_{\icgg}.
\end{align}
\end{lemma}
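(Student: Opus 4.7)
The plan is to obtain \eqref{1.9.x1} directly from the definition of the grand maximal function $f^*$ after a single change-of-center estimate on the test-function norm. Throughout, let $x_0\in\cx$ denote the reference point used to define $\icgg$, and fix arbitrary $f\in H_Y^*(\cx)$ and $h\in\icgg$ with $\|h\|_{\icgg}>0$.

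The first step is the change-of-center comparison: I would show that there exists $C_1\in(0,\fz)$, depending only on $x_0$, $A_0$, $C_{(\mu)}$, $\bz$, and $\gz$ (but not on $h$), such that, for every $x\in B(x_0,1)$,
$$\|h\|_{\cg(x,1,\bz,\gz)}\le C_1\|h\|_{\icgg}.$$
This follows from \eqref{2.1x} and Lemma \ref{equlem}: when $\rho(x,x_0)<1$ and $y\in\cx$ is arbitrary, one has $V_1(x)+V(x,y)\sim V_1(x_0)+V(x_0,y)$ and $1+\rho(x,y)\sim 1+\rho(x_0,y)$ uniformly in $y$, so $P_\gz(x,y;1)\sim P_\gz(x_0,y;1)$, and the neighborhood $\{z:\rho(x,z)\le(2A_0)^{-1}[1+\rho(x,z)]\}$ transfers to a comparable neighborhood around $x_0$ with only admissible loss of constants. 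Hence both the size condition \eqref{3.31.x1} and the regularity condition \eqref{3.31.x2} for $h$ centered at $x$ follow from the corresponding conditions centered at $x_0$.

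The second step exploits the definition of $f^*$ in Definition \ref{bh} at scale $r=1$: normalizing $h$ by $\|h\|_{\cg(x,1,\bz,\gz)}$ gives, for every $x\in\cx$,
$$|\langle f,h\rangle|\le\|h\|_{\cg(x,1,\bz,\gz)}\,f^*(x),$$
which, combined with the first step, yields
$$|\langle f,h\rangle|\,\ch1_{B(x_0,1)}(x)\le C_1\|h\|_{\icgg}\,f^*(x)$$
for $\mu$-almost every $x\in\cx$. Applying $\|\cdot\|_{Y(\cx)}$ to both sides via Definition \ref{qbs}(ii), and then dividing by $\|\ch1_{B(x_0,1)}\|_{Y(\cx)}\in(0,\fz)$ (which is guaranteed by Definition \ref{bqbs}(iv)$'$ together with Remark \ref{bqbsrem}(i)), delivers \eqref{1.9.x1} with $C:=C_1/\|\ch1_{B(x_0,1)}\|_{Y(\cx)}$.

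The only place requiring genuine calculation is the change-of-center estimate, and even that is routine given Lemma \ref{equlem}; no property of the Hardy--Littlewood maximal operator (neither Assumption \ref{assump1} nor Assumption \ref{assump2}) is needed, and the remainder of the argument is purely formal once one observes that $|\langle f,h\rangle|$ is a constant which can be majorized pointwise by $f^*$ on the bounded set $B(x_0,1)$.
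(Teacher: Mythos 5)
Your proposal is correct and follows essentially the same route as the paper: compare $\|h\|_{\cg(x,1,\bz,\gz)}$ with $\|h\|_{\icgg}$ uniformly for $x\in B(x_0,1)$, use the definition of the grand maximal function to get $|\langle f,h\rangle|\ls\|h\|_{\icgg}f^*(x)$ on that ball, and then pass to the $Y(\cx)$-quasi-norm and divide by $\|\ch1_{B(x_0,1)}\|_{Y(\cx)}$, exactly as in the paper (which phrases the last step via $\inf_{x\in B(x_0,1)}f^*(x)$). The only point stated a bit too quickly is the change-of-center estimate for the regularity condition, where the $x$-centered regularity range can slightly exceed the $x_0$-centered one and the overflow region must be handled by the size condition; this is the standard uniform norm-equivalence of $\cg(x,1,\bz,\gz)$ and $\cg(\bz,\gz)$ that the paper also invokes without detail.
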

\begin{proof}
Let all the symbols be as in the present lemma.
Without loss of generality, we may assume
that $h\in\icgg$ with $\|h\|_{\icgg}\le1$.
Let $x_0\in\cx$ be as in Definition \ref{test}.
By Definition \ref{test}, we know that, for any $x\in B(x_0,1)$,
\begin{align*}
\|h\|_{\cg(x,1,\bz,\gz)}\sim\|h\|_{\cg(\bz,\gz)}\sim\|h\|_{\icgg}\ls1,
\end{align*}
where the implicit positive constants are independent of $x$ and $h$.
From this and Definition \ref{bh}, we deduce that,
for any $f\in H_Y^*(\cx)$ and $x\in B(x_0,1)$,
$|\langle f,h\rangle|\ls f^*(x)$,
which, combined with (ii) and (iv) of Definition \ref{qbs},
further implies that
\begin{align}\label{3.19.x1}
|\langle f,h\rangle|\ls\inf_{x\in B(x_0,1)}f^*(x)
\ls\lf\|f^*\r\|_{Y(\cx)}\lf\|\ch1_{B(x_0,1)}\r\|_{Y(\cx)}^{-1}
\sim\lf\|f\r\|_{H_Y^*(\cx)}.
\end{align}
This shows \eqref{1.9.x1} and hence
finishes the proof of Lemma \ref{7.9.x1}.
\end{proof}

\begin{remark}\label{maxeq}
\begin{enumerate}
\item[{\rm (i)}]
Let $a\in(0,\fz)$ and $\eta$ be as in Definition \ref{expati}.
By \cite[Remark 2.9(ii)]{gly08}, we know that
there exists a positive constant $C$, depending on $a$,
such that, for any $f\in(\icgg)'$ with $\bz,\ \gz\in(0, \eta)$,
and any $x\in\cx$,
$$\cm^+(f)(x)\le\cm_a(f)(x)\le Cf^*(x).$$

\item[{\rm (ii)}]
Let $Y(\cx)$ be a ball quasi-Banach function space on $\cx$.
By Lemmas \ref{fatou} and \ref{7.9.x1}, together with an
argument similar to that used in the proof of
\cite[Proposition 3.1]{hhllyy} (see also \cite[Proposition 3.9]{zhy}),
we conclude that $H_Y^*(\cx)$ is complete.
\end{enumerate}
\end{remark}

Next, we state the main theorem of this subsection as follows.

\begin{theorem}\label{relation}
Let $\bz,\ \gz\in(0,\eta)$ with $\eta$ as in Definition \ref{expati},
and $Y(\cx)$ be a ball quasi-Banach function space on $\cx$.
Assume that there exist an $r_1\in(1,\fz)$ and an $r_2\in[1,\fz)$
such that $Y^{1/{r_1}}(\cx)$ is a ball Banach function space and
that $\cm$ is bounded, respectively, on both $(Y^{1/{r_1}})'(\cx)$
and $Y^{1/{r_2}}(\cx)$.
Then the following statements hold true:
\begin{enumerate}
\item[{\rm (i)}]
$Y(\cx)\hookrightarrow(\icgg)'$;

\item[{\rm (ii)}]
if $f\in Y(\cx)$, then $f\in H_Y^*(\cx)$ and there exists a
positive constant $C$, independent of $f$, such that
$\|f\|_{H_Y^*(\cx)}\le C\|f\|_{Y(\cx)}$;

\item[{\rm (iii)}]
if $f\in H_Y^+(\cx)$, then there exists an $\widetilde{f}\in Y(\cx)$
such that $\widetilde{f}=f$ in $(\icgg)'$ and
$\|\widetilde{f}\|_{Y(\cx)}\le \|f\|_{H_Y^+(\cx)}$.
\end{enumerate}

Consequently, for any fixed $a\in(0,\fz)$,
$H_Y^+(\cx)=H_Y^a(\cx)=H_Y^*(\cx)=Y(\cx)$
in the sense of both representing the same
distributions and having the equivalent norms.
\end{theorem}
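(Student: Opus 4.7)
The plan is to prove (i), (ii), and (iii) in order; the final equivalence then follows by combining them with the pointwise inequalities among maximal functions in Remark~\ref{maxeq}(i). Throughout, I fix $x_0\in\cx$ and apply Theorem~\ref{embed} with $r=r_1$ to obtain some $\ez\in(1/\eta,1)$ and $w:=[\cm(\ch1_{B(x_0,1)})]^{\ez}\in A_1(\cx)$ such that $Y(\cx)\hookrightarrow L^{r_1}_w(\cx)$. For (i), given $f\in Y(\cx)$ and $h\in\icgg$, the size bound $|h(y)|\le\|h\|_{\icgg}P_{\gz}(x_0,y;1)$ from Definition~\ref{test}(i), combined with H\"older's inequality in $L^{r_1}_w(\cx)$, yields
\begin{align*}
|\langle f,h\rangle|\le\|f\|_{L^{r_1}_w(\cx)}\lf\|P_{\gz}(x_0,\cdot;1)w^{-1/r_1}\r\|_{L^{r_1'}(\cx)}\|h\|_{\icgg}.
\end{align*}
By Lemma~\ref{lem-phl}, $w(y)\sim\min\{1,V_1(x_0)/V(x_0,y)\}^{\ez}$; a standard annular decomposition of $\cx$ around $x_0$ then shows the middle factor is finite because $\ez<1$ and $\gz>0$ together ensure the integrand decays strictly faster than $V(x_0,y)^{-1}$. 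Combined with the embedding $\|f\|_{L^{r_1}_w(\cx)}\ls\|f\|_{Y(\cx)}$ from Theorem~\ref{embed}, this proves (i).

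For (ii), I would first upgrade the hypothesis on $r_2$ to boundedness of $\cm$ on $Y(\cx)$ itself. For any $f\in Y(\cx)$, setting $g:=|f|^{r_2}$ gives $g\in Y^{1/r_2}(\cx)$ with $\|g\|_{Y^{1/r_2}(\cx)}=\|f\|_{Y(\cx)}^{r_2}$; since $r_2\ge1$, the pointwise bound $\cm(f)\le\cm^{(r_2)}(f)=[\cm(g)]^{1/r_2}$ from Remark~\ref{21.1.7.x1} combined with the assumed boundedness of $\cm$ on $Y^{1/r_2}(\cx)$ yields $\|\cm(f)\|_{Y(\cx)}\ls\|f\|_{Y(\cx)}$. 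Next, the pointwise comparison $f^*(x)\ls\cm(f)(x)$ is obtained by testing: for any $r\in(0,\fz)$ and $h\in\icgg$ with $\|h\|_{\cg(x,r,\bz,\gz)}\le1$, the bound $|h(y)|\le P_{\gz}(x,y;r)$ and a dyadic annular decomposition of $\cx$ over $\{y:2^{k-1}r<\rho(x,y)\le 2^kr\}$ together with the $\gz$-decay of $P_{\gz}(x,\cdot;r)$ produce a convergent geometric sum bounded by $\cm(f)(x)$. Taking the supremum over admissible $h$ yields $f^*\ls\cm(f)$, and hence $\|f\|_{H_Y^*(\cx)}\ls\|\cm(f)\|_{Y(\cx)}\ls\|f\|_{Y(\cx)}$.

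For (iii), let $f\in H_Y^+(\cx)$ and consider the family $\{P_kf\}_{k\in\zz}$. By Definition~\ref{bh}, $|P_kf|\le\cm^+(f)$ pointwise, so $\|P_kf\|_{Y(\cx)}\le\|f\|_{H_Y^+(\cx)}$ uniformly in $k$. By Theorem~\ref{embed}, $\{P_kf\}$ is then uniformly bounded in $L^{r_1}_w(\cx)$, which is reflexive since $r_1>1$. I extract a weakly convergent subsequence $\{P_{k_j}f\}_{j\in\nn}$ with weak limit $\widetilde f\in L^{r_1}_w(\cx)$, apply Mazur's lemma to produce convex combinations $g_n:=\sum_{j\ge n}a_{n,j}P_{k_j}f$ converging in $L^{r_1}_w(\cx)$-norm to $\widetilde f$, and then pass to a further subsequence of $\{g_n\}$ converging $\mu$-a.e.~to $\widetilde f$. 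Since each $g_n$ is a positive convex combination, $|g_n|\le\cm^+(f)$, which passes to the pointwise limit to give $|\widetilde f|\le\cm^+(f)$, hence $\widetilde f\in Y(\cx)$ with $\|\widetilde f\|_{Y(\cx)}\le\|f\|_{H_Y^+(\cx)}$ by Definition~\ref{qbs}(ii). To identify $\widetilde f=f$ in $(\icgg)'$, for any fixed $h\in\icgg$ the computation in (i) shows $h\in(L^{r_1}_w(\cx))^*$, so weak convergence gives $\langle g_n,h\rangle\to\langle\widetilde f,h\rangle$, while Lemma~\ref{1eptilem}(iii) gives $\langle P_{k_j}f,h\rangle\to\langle f,h\rangle$ in the weak-$\ast$ topology of $(\icgg)'$ and averaging propagates this to $\langle g_n,h\rangle\to\langle f,h\rangle$.

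Finally, combining (ii), Remark~\ref{maxeq}(i), and (iii), the chain of continuous embeddings $Y(\cx)\hookrightarrow H_Y^*(\cx)\hookrightarrow H_Y^a(\cx)\hookrightarrow H_Y^+(\cx)\hookrightarrow Y(\cx)$ yields equality of all four spaces with equivalent quasi-norms. The main obstacle is part (iii): since $Y(\cx)$ may lack an absolutely continuous quasi-norm, a direct limiting argument inside $Y(\cx)$ is unavailable; the plan circumvents this by transferring all convergence arguments to the reflexive space $L^{r_1}_w(\cx)$ via Theorem~\ref{embed}, using Mazur's lemma to pass from weak to $\mu$-a.e.~convergence, and then returning to $Y(\cx)$ via the monotonicity axiom Definition~\ref{qbs}(ii).
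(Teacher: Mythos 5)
Your proposal is correct, and its core, part (iii), follows the same route as the paper: embed $Y(\cx)$ into $L^{r_1}_{w}(\cx)$ via Theorem \ref{embed}, use weak compactness of the bounded family $\{P_kf\}_k$ there, dominate the limit $\widetilde f$ pointwise by $\cm^+(f)$, and identify $\widetilde f$ with $f$ in $(\icgg)'$ through Lemma \ref{1eptilem}(iii). Two local deviations are worth recording. For (i), the paper does not use the weighted embedding at all: it first proves $\|\cm(f)\|_{Y(\cx)}\ls\|f\|_{Y(\cx)}$ from the boundedness of $\cm$ on $Y^{1/r_2}(\cx)$ and then bounds $\int_{\cx}|f\vz|\,d\mu$ over annuli by $\inf_{y\in B(x_0,1)}\cm(f)(y)\ls\|\cm(f)\|_{Y(\cx)}\|\ch1_{B(x_0,1)}\|_{Y(\cx)}^{-1}$; your route instead runs H\"older in $L^{r_1}_{w}(\cx)$ against the explicit decay of $w$ from Lemma \ref{lem-phl}, which does work (in your annular sum the exponent of $\mu(B(x_0,2^k))$ is $(\ez-1)/(r_1-1)<0$, so that factor stays bounded and the $2^{-k\gz r_1'}$ decay gives summability) and has the mild advantage of using only the $r_1$-hypothesis in (i). In (iii), where the paper gets $|\widetilde f|\le\sup_{k}|P_kf|$ almost everywhere by testing against arbitrary $\vz\in\icgg$ and invoking the Lebesgue differentiation theorem, you use Mazur's lemma and almost everywhere convergence of tail convex combinations; this is an equally valid alternative. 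When writing it up, make two points explicit that your sketch leaves implicit (the paper is also terse on the first): choose the weakly convergent subsequence along $k_j\to+\fz$ so that Lemma \ref{1eptilem}(iii) applies, and justify that the distributional pairing $\langle P_{k_j}f,h\rangle$ coincides with $\int_{\cx}P_{k_j}f(x)h(x)\,d\mu(x)$ — the paper checks $P_{k_j}f\,h\in L^1(\cx)$ by H\"older with the weight $w^{1-r_1'}$ and cites \cite[(3.98)]{han08}; this identification is what links the weak limit in $L^{r_1}_{w}(\cx)$ to the distributional limit.
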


\begin{proof}
Let all the symbols be as in the present theorem.
First, we prove (i).
Let $f\in Y(\cx)$, $\vz\in\icgg$, and $r_2$ be as in the present theorem.
By \eqref{1.7.x1}, we know that $\cm(f)\le\cm^{(r_2)}(f)$,
which further implies that $[\cm(f)]^{r_2}\le\cm(|f|^{r_2})$.
From this, Definition \ref{cvex},
and the boundedness of $\cm$ on $Y^{1/{r_2}}(\cx)$,
we deduce that
\begin{align}\label{21.7.5.x1}
\lf\|\cm(f)\r\|_{Y(\cx)}
&=\lf\|\lf[\cm(f)\r]^{r_2}\r\|_{Y^{\frac1{r_2}}(\cx)}^{\frac1{r_2}}
\le\lf\|\cm(|f|^{r_2})\r\|_{Y^{\frac1{r_2}}(\cx)}^{\frac1{r_2}}
\ls\lf\||f|^{r_2}\r\|_{Y^{\frac1{r_2}}(\cx)}^{\frac1{r_2}}
\sim\|f\|_{Y(\cx)}.
\end{align}
By this, Definition \ref{test}(i), \eqref{eq-doub},
and Definition \ref{qbs}(iv), we conclude that
\begin{align*}
&\int_{\cx}|f(x)\vz(x)|\,d\mu(x)\\
&\quad\le\|\vz\|_{\icgg}
\int_{\cx}|f(x)|P_{\gz}(x_0,x;1)\,d\mu(x)\\
&\quad=\|\vz\|_{\icgg}\lf\{\sum_{k\in\nn}
\int_{B(x_0,2^{k+1})\backslash B(x_0,2^{k})}
+\int_{B(x_0,2)}\r\}|f(x)|P_{\gz}(x_0,x;1)\,d\mu(x)\\
&\quad\ls\|\vz\|_{\icgg}\sum_{k\in\zz_+}2^{-k\gamma}
\frac{1}{\mu(B(x_0,2^{k}))}
\int_{B(x_0,2^{k+1})}|f(x)|\,d\mu(x)\\
&\quad\ls\|\vz\|_{\icgg}\sum_{k\in\zz_+}2^{-k\gamma}
\inf_{y\in B(x_0,1)}\cm(f)(y)\\
&\quad\ls\|\vz\|_{\icgg}\lf\|\cm(f)\r\|_{Y(\cx)}
\lf\|\ch1_{B(x_0,1)}\r\|_{Y(\cx)}^{-1}
\ls\|\vz\|_{\icgg}\|f\|_{Y(\cx)},
\end{align*}
where $P_{\gz}(x_0,x;1)$ is as in \eqref{21.7.15.x1}
with $r$ replaced by $1$.
This further implies that $Y(\cx)\hookrightarrow(\icgg)'$
and hence (i) holds true.

Next, we prove (ii). Let $f\in Y(\cx)$.
By (i) and \cite[Proposition 3.9]{gly08}, we know that
$f^*\ls\cm(f)$. From this and \eqref{21.7.5.x1}, we deduce that
\begin{align*}
\|f\|_{H_Y^*(\cx)}\ls\lf\|\cm(f)\r\|_{Y(\cx)}\ls\|f\|_{Y(\cx)},
\end{align*}
which completes the proof of (ii).

Finally, we prove (iii). Let $f\in H_Y^+(\cx)$,
$\{P_k\}_{k\in\zz}$ be a 1-exp-ATI as in Definition \ref{1expati},
and $r_1$ as in the present theorem.
By Definition \ref{bh} and Theorem \ref{embed}, we know that
$$\lf\|\sup_{k\in\zz}\lf|P_kf\r|\r\|_{L^{r_1}_w(\cx)}
\ls\lf\|\sup_{k\in\zz}\lf|P_kf\r|\r\|_{Y(\cx)}
\sim\lf\|f\r\|_{H_Y^+(\cx)}<\fz,$$
where $w$ is as in Theorem \ref{embed}.
From this and the Banach--Alaoglu theorem (see, for instance,
\cite[Theorem 3.17]{rudin91}), we deduce that there exist an
$\widetilde{f}\in L^{r_1}_w(\cx)$ and a sequence
$\{k_j\}_{j\in\nn}\st\zz$ such that,
for any $g\in(L^{r_1}_w(\cx))^*$
[the dual space of $L^{r_1}_w(\cx)$],
\begin{align}\label{21.7.5.x2}
\int_{\cx}\widetilde{f}(x)g(x)\,d\mu(x)
&=\lim_{j\rightarrow\fz}\int_{\cx}P_{k_j}f(x)g(x)\,d\mu(x).
\end{align}
Using this and $\icgg\st(L^{r_1}_w(\cx))^*$,
we conclude that, for any $\vz\in\icgg$,
\begin{align*}
\lf|\int_{\cx}\widetilde{f}(x)\vz(x)\,d\mu(x)\r|
&=\lim_{j\rightarrow\fz}
\lf|\int_{\cx}P_{k_j}f(x)\vz(x)\,d\mu(x)\r|\\
&\le\int_{\cx}\sup_{k\in\zz}\lf|P_{k}f(x)\r||\vz(x)|\,d\mu(x),
\end{align*}
which, combined with the arbitrariness of $\vz\in\icgg$ and
the Lebesgue differentiation theorem
(see, for instance, \cite[Theorem 1.8]{he01}),
further implies that, for almost every $x\in\cx$,
$$\lf|\widetilde{f}(x)\r|\le\sup_{k\in\zz}\lf|P_{k}f(x)\r|.$$
Thus, $\|\widetilde{f}\|_{Y(\cx)}\le \|f\|_{H_Y^+(\cx)}$.
Moreover, letting $w_1:=w^{1-r_1'}$, then, by the H\"older inequality
and $\icgg\st L^{r_1'}_{w_1}(\cx)$, we know that,
for any $j\in\nn$ and $\vz\in\icgg$,
\begin{align*}
&\int_{\cx}\lf|P_{k_j}f(x)\vz(x)\r|\,d\mu(x)\\
&\qquad\le\lf[\int_{\cx}\lf|P_{k_j}f(x)\r|^{r_1}w(x)\,d\mu(x)\r]^{1/{r_1}}
\lf[\int_{\cx}\lf|\vz(x)\r|^{r_1'}
\lf[w(x)\r]^{-r_1'/{r_1}}\,d\mu(x)\r]^{1/{r_1'}}\\
&\qquad=\lf\|P_{k_j}f\r\|_{L^{r_1}_w(\cx)}
\|\vz\|_{L^{r_1'}_{w_1}(\cx)}<\fz.
\end{align*}
From this, Lemma \ref{1eptilem}(iii), $L^{r_1}_w(\cx)\st(\icgg)'$,
\cite[(3.98)]{han08}, and \eqref{21.7.5.x2},
we deduce that, for any $\vz\in\icgg$,
\begin{align*}
\langle f,\vz\rangle
&=\lim_{j\rightarrow\fz}
\lf\langle P_{k_j}f,\vz\r\rangle
=\lim_{j\rightarrow\fz}\int_{\cx}P_{k_j}f(x)\vz(x)\,d\mu(x)
=\int_{\cx}\widetilde{f}(x)\vz(x)\,d\mu(x).
\end{align*}
This finishes the proof of (iii).

By (ii), (iii), and Remark \ref{maxeq}(i), we obtain
$H_Y^+(\cx)=H_Y^a(\cx)=H_Y^*(\cx)=Y(\cx)$,
which completes the proof of Theorem \ref{relation}.
\end{proof}

\begin{remark}\label{relarem}
\begin{enumerate}
\item[{\rm (i)}]
Let $a\in(0,\fz)$, $p\in(1,\fz]$, and $Y(\cx):=L^{p}(\cx)$.
Then $H_Y^+(\cx)$, $H_Y^a(\cx)$, and $H_Y^*(\cx)$ are,
respectively, the classical \emph{Hardy spaces}
$H^{+,p}(\cx)$, $H^{p}_a(\cx)$, and $H^{*,p}(\cx)$,
which were introduced by He et al. in \cite{hhllyy}.
Choose $r_1$, $r_2\in(1,p)$. By this, Remark \ref{qbsdefrem}(i),
and \cite[(3.6)]{cw77}, we conclude that $L^{p}(\cx)$
satisfies all the assumptions of Theorem \ref{relation}.
In this case, Theorem \ref{relation} covers
\cite[Theorem 3.4]{hhllyy}.

\item[{\rm (ii)}]
We point out that, in the proof of Theorem \ref{relation}(iii),
we can not find a desired $\widetilde{f}$ directly since $Y(\cx)$
does not have an explicit norm expression.
To overcome the difficulty caused by this,
via Theorem \ref{embed}, we embed $Y(\cx)$
to the weighted Lebesgue space $L^{r_1}_w(\cx)$ with $w$
as in Theorem \ref{embed}, and then apply the completeness
of $L^{r_1}_w(\cx)$ together with the Banach--Alaoglu theorem
to complete the proof of \ref{relation}(iii).
\end{enumerate}
\end{remark}

\subsection{Maximal function characterizations\label{s-max2}}

In this subsection, we characterize $H_Y^*(\cx)$ by
the radial and the non-tangential maximal functions, respectively.
To this end, we first prove the following conclusion.

\begin{theorem}\label{maxch}
Let $Y(\cx)$ be a ball quasi-Banach function space on $\cx$,
$a\in(0,\fz)$, and $p_-\in({\omega}/(\omega+\eta),\fz)$ with
$\omega$ and $\eta$, respectively,
as in \eqref{eq-doub} and Definition \ref{expati}.
Assume that, for any given $\tz\in(0,\underline{p})$ with $\underline{p}$
as in \eqref{2.1y}, the Hardy--Littlewood maximal operator $\cm$
in \eqref{hlmax} is bounded on $Y^{1/\tz}(\cx)$.
Let $\bz$, $\gz\in(\omega(1/\underline{p}-1),\eta)$.
Then
$$\lf[H_Y^+(\cx)\cap(\icgg)'\r]=\lf[H_Y^a(\cx)\cap(\icgg)'\r]
=\lf[H_Y^*(\cx)\cap(\icgg)'\r]$$
with equivalent quasi-norms.
\end{theorem}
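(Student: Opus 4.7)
The plan is to reduce everything to a pointwise comparison of the three maximal functions, combined with the boundedness of $\cm$ on $Y^{1/\tz}(\cx)$ for a suitably chosen $\tz$. Remark~\ref{maxeq}(i) already supplies the trivial half: for any $f\in(\icgg)'$ and $x\in\cx$,
$$\cm^+(f)(x)\le\cm_a(f)(x)\le Cf^*(x),$$
so Definition~\ref{qbs}(ii) applied to $Y(\cx)$ yields
$$\lf[H_Y^*(\cx)\cap(\icgg)'\r]\subset\lf[H_Y^a(\cx)\cap(\icgg)'\r]\subset\lf[H_Y^+(\cx)\cap(\icgg)'\r]$$
together with the corresponding quasi-norm inequalities. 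It remains to reverse these inclusions on the quasi-norm level.

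For this reverse direction, it suffices to prove that, for any $f\in H_Y^+(\cx)\cap(\icgg)'$, $\|f^*\|_{Y(\cx)}\ls\|\cm^+(f)\|_{Y(\cx)}$. Following the strategy of \cite[(3.1)]{hhllyy}, I would first establish the pointwise inequality
$$f^*(x)\le C\lf\{\cm\lf(\lf[\cm^+(f)\r]^{\tz}\r)(x)\r\}^{1/\tz}\quad\text{for all }x\in\cx,$$
valid for any $\tz\in(\oz/(\oz+\min\{\bz,\gz\}),\underline{p})$. Such a $\tz$ exists precisely because of the hypothesis $\bz,\gz\in(\oz(1/\underline{p}-1),\eta)$, which is equivalent to $\oz/(\oz+\min\{\bz,\gz\})<\underline{p}$. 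To prove the pointwise inequality at a fixed $x\in\cx$, I would test $f^*(x)$ against an arbitrary $h\in\icgg$ with $\|h\|_{\cg(x,r,\bz,\gz)}\le1$, decompose $h$ by means of a 1-exp-ATI $\{P_k\}_{k\in\zz}$ as in Definition~\ref{1expati} with scale chosen so that $\dz^k\sim r$, and use the size and regularity of $h$ (Definition~\ref{test}) together with the kernel estimates of Definition~\ref{expati} and Lemmas~\ref{zhoulem1} and \ref{zhoulem2} to express $\langle f,h\rangle$ as a sum over dyadic scales of quantities controlled by $\cm^+(f)$; averaging these through the ball defining the Hardy--Littlewood maximal function then gives the displayed inequality. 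The tail decay in the kernels is exactly what forces the restriction $\tz>\oz/(\oz+\min\{\bz,\gz\})$.

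Granted this pointwise estimate, the desired quasi-norm bound follows immediately from Definition~\ref{cvex} and the boundedness of $\cm$ on $Y^{1/\tz}(\cx)$ (guaranteed by the hypothesis since $\tz<\underline{p}$): for the chosen $\tz$,
$$\lf\|f^*\r\|_{Y(\cx)}^{\tz}
=\lf\|(f^*)^{\tz}\r\|_{Y^{1/\tz}(\cx)}
\ls\lf\|\cm\lf([\cm^+(f)]^{\tz}\r)\r\|_{Y^{1/\tz}(\cx)}
\ls\lf\|[\cm^+(f)]^{\tz}\r\|_{Y^{1/\tz}(\cx)}
=\lf\|\cm^+(f)\r\|_{Y(\cx)}^{\tz}.$$
Combined with the trivial direction, this gives the three-way equivalence claimed in the theorem.

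The main obstacle is the pointwise inequality. In the Euclidean setting and in the RD-space setting of \cite{hhllyy}, the analogous estimate is fairly standard, but on a general space of homogeneous type the 1-exp-ATI kernels carry the additional factor built from $\rho(\cdot,\cy^k)$ coming from the dyadic reference points (see Remark~\ref{remhe1}), and only the quasi-triangle inequality is available. Consequently, Lemmas~\ref{zhoulem1} and \ref{zhoulem2} must be invoked to control $\|P_kh\|_{\cg(x,\dz^k,\bz,\gz)}$ according to whether $\dz^k\le r$ or $\dz^k>r$, and the bookkeeping of dyadic scales in this geometry is the most delicate step; once it is settled, the remaining computation is routine and parallels \cite[(3.4)]{hhllyy}.
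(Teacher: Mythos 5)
Your proposal is correct and follows essentially the same route as the paper: the trivial inclusions via Remark \ref{maxeq}(i) and Definition \ref{qbs}(ii), then the pointwise bound of $f^*$ by a powered Hardy--Littlewood maximal function of $\cm^+(f)$ (proved as in \cite[(3.5)]{hhllyy}, with $\tz$ chosen in $(\oz/(\oz+\min\{\bz,\gz\}),\underline{p})$ exactly as the hypotheses on $\bz,\gz$ allow), and finally the boundedness of $\cm$ on $Y^{1/\tz}(\cx)$ together with Definition \ref{cvex}. The only cosmetic difference is that the paper states the pointwise estimate with an additional harmless term $\cm^+(f)(x)$ on the right-hand side, which does not affect the norm computation.
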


\begin{proof}
Let all the symbols be as in the present theorem.
Assume that $f\in(\icgg)'$.
By Remark \ref{maxeq}(i) and Definition \ref{qbs}(ii),
we know that
$$\lf\|\cm^+(f)\r\|_{Y(\cx)}\le\lf\|\cm_a(f)\r\|_{Y(\cx)}
\ls\lf\|f^*\r\|_{Y(\cx)}$$
and hence $H_Y^*(\cx)\st H_Y^a(\cx)\st H_Y^+(\cx)$.
Thus, to complete the proof of this theorem,
it suffices to show that $H_Y^+(\cx)\st H_Y^*(\cx)$ and
\begin{align}\label{4.2.x1}
\lf\|f^*\r\|_{Y(\cx)}\ls\lf\|\cm^+(f)\r\|_{Y(\cx)}.
\end{align}
Choose $\widetilde{\bz}\in(0,\min\{\bz,\gz\})$ such that $\frac{\oz}{\oz+\widetilde{\bz}}<\underline{p}$.
Let $\tz\in(\frac{\oz}{\oz+\widetilde{\bz}},\underline{p})$.
Then, similarly to the proof of \cite[(3.5)]{hhllyy},
we conclude that, for any $x\in\cx$,
$$f^*(x)\ls\cm^+(f)(x)
+\lf\{\cm\lf(\lf[\cm^+(f)\r]^{\tz}\r)(x)\r\}^{\frac1{\tz}},$$
which, combined with Definitions \ref{qbs}(ii) and \ref{cvex},
and the boundedness of $\cm$ on $Y^{1/\tz}(\cx)$,
further implies that
\begin{align*}
\lf\|f^*\r\|_{Y(\cx)}&\ls\lf\|\cm^+(f)\r\|_{Y(\cx)}
+\lf\|\lf\{\cm\lf(\lf[\cm^+(f)\r]^{\tz}\r)\r\}
^{\frac1{\tz}}\r\|_{Y(\cx)}\\
&\sim\lf\|\cm^+(f)\r\|_{Y(\cx)}
+\lf\|\cm\lf(\lf[\cm^+(f)\r]^{\tz}\r)\r\|_{Y^{1/{\tz}}(\cx)}^{\frac1{\tz}}\\
&\ls\lf\|\cm^+(f)\r\|_{Y(\cx)}.
\end{align*}
This finishes the proof of \eqref{4.2.x1}
and hence of Theorem \ref{maxch}.
\end{proof}

Next, we give an important property of $H_Y^*(\cx)$ as follows.

\begin{theorem}\label{indep}
Let $Y(\cx)$ be a ball quasi-Banach function space on $\cx$,
and $p_-\in({\omega}/(\omega+\eta),\fz)$ with $\omega$ and $\eta$,
respectively, as in \eqref{eq-doub} and Definition \ref{expati}.
Assume that, for any given $\tz\in(0,\underline{p})$ with $\underline{p}$
as in \eqref{2.1y}, the Hardy--Littlewood maximal operator $\cm$
in \eqref{hlmax} is bounded on $Y^{1/\tz}(\cx)$.
Let $\bz_1$, $\bz_2$, $\gz_1$, $\gz_2\in(\omega(1/\underline{p}-1),\eta)$
and $H_Y^*(\cx)$ be as in Definition \ref{bh} with $\bz$ and $\gz$
replaced, respectively, by $\bz_1$ and $\gz_1$.
If $f\in H_Y^*(\cx)$, then $f\in (\mathcal{G}_0^\eta(\bz_2,\gz_2))'$.
\end{theorem}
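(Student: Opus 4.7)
The plan is to extend the action of $f$, initially defined on $\mathcal{G}_0^\eta(\bz_1,\gz_1)$, to $\mathcal{G}_0^\eta(\bz_2,\gz_2)$ by means of a $1$-exp-ATI approximation. Fix a $1$-exp-ATI $\{P_k\}_{k\in\zz}$ as in Definition \ref{1expati}. For any $\vz\in\mathcal{G}_0^\eta(\bz_2,\gz_2)$ and any $k\in\zz$, the kernel of $P_k$ is smooth enough that $P_k\vz\in\cg(\bz',\gz')$ for every $\bz',\gz'\in(0,\eta]$ (this is the content of Lemmas \ref{zhoulem1} and \ref{zhoulem2}, together with the regularity in Definition \ref{1expati}). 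In particular $P_k\vz\in\mathcal{G}_0^\eta(\bz_1,\gz_1)$, so that $\langle f,P_k\vz\rangle$ is well-defined via the original pairing. I would then set
$$\langle f,\vz\rangle:=\lim_{k\to\fz}\langle f,P_k\vz\rangle,$$
once the limit is shown to exist.

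To establish convergence, I would use the telescoping identity $P_N\vz=P_0\vz+\sum_{k=1}^{N}Q_k\vz$ with $Q_k:=P_k-P_{k-1}$, and aim to prove that $\sum_{k\ge 1}\langle f,Q_k\vz\rangle$ converges absolutely. The central estimate to be established is: there exists $\alpha\in(0,\min\{\bz_1,\bz_2\})$ such that, for every $k\in\zz_+$,
$$\|Q_k\vz\|_{\cg(x_0,1,\bz_1,\gz_1)}\le C\dz^{k\alpha}\|\vz\|_{\cg(\bz_2,\gz_2)},$$
where $x_0$ is the basepoint in Definition \ref{test}. Once this is in hand, Lemma \ref{7.9.x1} applied to the parameters $(\bz_1,\gz_1)$ yields
$$|\langle f,Q_k\vz\rangle|\le C\|f\|_{H_Y^*(\cx)}\dz^{k\alpha}\|\vz\|_{\cg(\bz_2,\gz_2)},$$
which sums to a convergent geometric series. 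This simultaneously produces the limit and the continuity bound $|\langle f,\vz\rangle|\ls \|f\|_{H_Y^*(\cx)}\|\vz\|_{\cg(\bz_2,\gz_2)}$, giving $f\in(\mathcal{G}_0^\eta(\bz_2,\gz_2))'$.

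Two subsidiary points remain. First, for $\vz\in\mathcal{G}_0^\eta(\bz_1,\gz_1)$, Lemma \ref{1eptilem}(iii) ensures $P_k\vz\to\vz$ in $\mathcal{G}_0^\eta(\bz_1,\gz_1)$, so the extended pairing agrees with the original. Second, independence from the choice of $1$-exp-ATI is obtained by applying the same telescoping argument to the difference of two $1$-exp-ATIs acting on $\vz$, since both approximate the identity at the same scale.

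The main obstacle is the core estimate on $\|Q_k\vz\|_{\cg(x_0,1,\bz_1,\gz_1)}$. Since $Q_k$ is itself an exp-ATI by Definition \ref{1expati}(iii), the cancellation $\int Q_k(x,y)\,d\mu(y)=0$ combined with the $\bz_2$-Hölder regularity of $\vz$ and the exponential decay of $E_k(x,y)$ in $\rho(x,y)/\dz^k$ produces the required size bound $|Q_k\vz(x)|\ls\dz^{k\bz_2}P_{\gz_2}(x_0,x;1)$. The analogous Hölder-regularity bound on $Q_k\vz$ comes from the second-difference condition in Definition \ref{expati}(iii), invoked with exponent $\min\{\bz_1,\bz_2\}$; interpolation of the two regularity scales present (that of $\vz$ and that of $Q_k$) is the technical heart. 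The scale-mismatch between $\dz^k$ and the ambient scale $1$ in the target test function norm is handled precisely by Lemmas \ref{zhoulem1} and \ref{zhoulem2}, mirroring the strategy of the proof of \cite[Proposition 3.8]{hhllyy}.
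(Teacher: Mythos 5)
Your plan—mollify a $(\bz_2,\gz_2)$ test function by $P_k$ so that it lands back in the original class $\mathcal{G}_0^\eta(\bz_1,\gz_1)$, then telescope—has a genuine gap, and it occurs exactly in the parameter regime the theorem must cover (all four parameters range independently over $(\omega(1/\underline{p}-1),\eta)$, and the case $\bz_2<\bz_1$, $\gz_2<\gz_1$ is precisely what the symmetry argument in Theorem \ref{maxchprop} needs). First, the well-definedness of $\langle f,P_k\vz\rangle$ is already problematic: applying $P_k$ can raise the regularity exponent up to $\eta$, but it cannot improve the decay exponent, since $P_k\vz(x)=\vz(x)+\int_\cx P_k(x,y)[\vz(y)-\vz(x)]\,d\mu(y)$ differs from $\vz$ by a small error, so if $\vz$ decays exactly like $\gz_2$ then so does $P_k\vz$; hence for $\gz_1>\gz_2$ one has $P_k\vz\notin\cg(\bz_1,\gz_1)$ in general. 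Lemmas \ref{zhoulem1} and \ref{zhoulem2} do not support your claim that $P_k\vz\in\cg(\bz',\gz')$ for all $\bz',\gz'\in(0,\eta]$: they keep the same pair $(\bz,\gz)$ and only adjust the scale. Second, the central estimate $\|Q_k\vz\|_{\cg(x_0,1,\bz_1,\gz_1)}\le C\dz^{k\alpha}\|\vz\|_{\cg(\bz_2,\gz_2)}$ with $\alpha>0$ is false when $\bz_1\ge\bz_2$. The almost-orthogonality bounds give $|Q_k\vz(x)-Q_k\vz(x')|\ls[\rho(x,x')/\dz^k]^{\eta}\,\dz^{k\bz_2}(\cdots)$; interpolating with the size bound yields a regularity exponent $\beta'$ with gain $\dz^{k(\bz_2-\beta')}$, so a positive gain forces $\beta'<\bz_2$, while membership in $\cg(x_0,1,\bz_1,\gz_1)$ forces $\beta'\ge\bz_1$. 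This is not merely a limitation of the proof: taking (say, in the Euclidean model) a unit-norm $\vz$ with genuine H\"older-$\bz_2$ behavior down to scale $\dz^k$ near a point, $Q_k\vz$ oscillates by about $\dz^{k\bz_2}$ over distances about $\dz^k$, so $\|Q_k\vz\|_{\cg(x_0,1,\bz_1,\gz_1)}\gtrsim\dz^{-k(\bz_1-\bz_2)}\to\fz$, and your geometric series diverges.

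The paper avoids ever placing pieces of $\vz$ back into the $(\bz_1,\gz_1)$ class. It proves the uniform bound $|\langle f,\vz\rangle|\ls\|f\|_{H_Y^*(\cx)}\|\vz\|_{\cg(\bz_2,\gz_2)}$ directly for all $\vz$ in the common dense class $\cg(\eta,\eta)$, where the pairing is already defined: by the argument of \cite[(3.9)]{hhllyy} there exist $a\in(0,\fz)$ and $\tz\in(\omega/(\omega+\eta),\underline{p})$ such that $|\langle f,\vz\rangle|\ls\{\cm([\cm_a(f)]^{\tz})(x)\}^{1/\tz}$ for every $x\in B(x_0,1)$ whenever $\|\vz\|_{\cg(\bz_2,\gz_2)}\le1$; taking the infimum over $B(x_0,1)$, using Definition \ref{qbs}(ii) and (iv), the boundedness of $\cm$ on $Y^{1/\tz}(\cx)$, and Theorem \ref{maxch} turns this into $\ls\|f\|_{H_Y^*(\cx)}$. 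The Calder\'on-reproducing-formula work is thus converted into a maximal-function bound on $f$, not a test-norm bound on $Q_k\vz$. Since $\cg_0^\eta(\bz_2,\gz_2)$ is by definition the completion of $\cg(\eta,\eta)$ in $\|\cdot\|_{\cg(\bz_2,\gz_2)}$, a Cauchy-sequence/density argument then extends the pairing and gives $f\in(\cg_0^\eta(\bz_2,\gz_2))'$. To salvage your approach you would have to reorganize it into this form: expand $f$ (not $\vz$) and control everything by $\cm_a(f)$, which is exactly the content of \cite[Proposition 3.8]{hhllyy} that the paper imports.
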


\begin{proof}
Let $\bz_1$ and $\gz_1$ be as in the present theorem,
$f\in H_Y^*(\cx)$,
and $x_0$ be as in Definition \ref{test}.
Then, by an argument similar to that used in the proof of
\cite[(3.9)]{hhllyy}, we know that there exist an
$a\in(0,\fz)$ and a $\tz\in({\omega}/(\omega+\eta),\underline{p})$
such that, for any $\vz\in\mathcal{G}(\eta,\eta)$
with $\|\vz\|_{\mathcal{G}(\bz_2,\gz_2)}\le 1$, and any $x\in B(x_0,1)$,
$$\lf|\lf\langle f,\vz\r\rangle\r|
\ls\lf\{\cm\lf(\lf[\cm_a(f)\r]^{\tz}\r)(x)\r\}^{\frac1{\tz}},$$
which, combined with (ii) and (iv) of Definition \ref{qbs},
Definition \ref{cvex}, the boundedness of $\cm$ on $Y^{1/\tz}(\cx)$,
and Theorem \ref{maxch}, further implies that
\begin{align*}
\lf|\lf\langle f,\vz\r\rangle\r|&\ls\inf_{x\in B(x_0,1)}
\lf\{\cm\lf(\lf[\cm_a(f)\r]^{\tz}\r)(x)\r\}^{\frac1{\tz}}
\ls\lf\|\lf\{\cm\lf(\lf[\cm_a(f)\r]^{\tz}\r)\r\}^{\frac1{\tz}}\r\|_{Y(\cx)}
\lf\|\ch1_{B(x_0,1)}\r\|_{Y(\cx)}^{-1}\\
&\ls\lf\|\cm\lf(\lf[\cm_a(f)\r]^{\tz}\r)\r\|
_{Y^{1/{\tz}}(\cx)}^{\frac1{\tz}}
\ls\lf\|\cm_a(f)\r\|_{Y(\cx)}
\sim\|f\|_{H_Y^{*}(\cx)}.
\end{align*}
From this and the homogeneity of
$\|\cdot\|_{\mathcal{G}(\bz_2,\gz_2)}$,
we deduce that, for any $\vz\in\mathcal{G}(\eta,\eta)$,
\begin{align}\label{21.7.17.x1}
\lf|\lf\langle f,\vz\r\rangle\r|&
\ls\|f\|_{H_Y^{*}(\cx)}\|\vz\|_{\mathcal{G}(\bz_2,\gz_2)}.
\end{align}
Now, let $g\in\mathcal{G}_0^\eta(\bz_2,\gz_2)$.
By the definition of $\mathcal{G}_0^\eta(\bz_2,\gz_2)$,
we know that there exist $\{g_j\}_{j\in\nn}\st\mathcal{G}(\eta,\eta)$
such that
\begin{align}\label{21.7.18.x1}
\lf\|g-g_j\r\|_{\mathcal{G}(\bz_2,\gz_2)}\rightarrow0
\end{align}
as $j\rightarrow\fz$. Moreover, from \eqref{21.7.17.x1},
we deduce that, for any $j$, $k\in\nn$,
$$\lf|\lf\langle f,g_j\r\rangle-\lf\langle f,g_k\r\rangle\r|
=\lf|\lf\langle f,g_j-g_k\r\rangle\r|\ls\|f\|_{H_Y^{*}(\cx)}
\lf\|g_j-g_k\r\|_{\mathcal{G}(\bz_2,\gz_2)}.$$
This, combined with \eqref{21.7.18.x1}, implies that
$\lim_{j\rightarrow\fz}\langle f,g_j\rangle$ exists and
the limit is independent of the choice of $\{g_j\}_{j\in\nn}$.
Define
$\langle f,g\rangle:=\lim_{j\rightarrow\fz}\langle f,g_j\rangle$.
By this, \eqref{21.7.17.x1}, and \eqref{21.7.18.x1}, we conclude that
\begin{align*}
\lf|\lf\langle f,g\r\rangle\r|
&=\lim_{j\rightarrow\fz}\lf|\lf\langle f,g_j\r\rangle\r|
\ls\|f\|_{H_Y^{*}(\cx)}
\lim_{j\rightarrow\fz}\lf\|g_j\r\|_{\mathcal{G}(\bz_2,\gz_2)}
\sim\|f\|_{H_Y^{*}(\cx)}\|g\|_{\mathcal{G}_0^\eta(\bz_2,\gz_2)},
\end{align*}
which further implies that $f\in(\mathcal{G}_0^\eta(\bz_2,\gz_2))'$.
This finishes the proof of Theorem \ref{indep}.
\end{proof}

Combining Theorems \ref{maxch} and \ref{indep},
we obtain the following main theorem of this subsection,
which shows that the Hardy spaces in Definition \ref{bh}
are all independent of the choices of $(\icgg)'$
and coincide with equivalent quasi-norms.

\begin{theorem}\label{maxchprop}
Let $Y(\cx)$ be a ball quasi-Banach function space on $\cx$, $a\in(0,\fz)$,
and $p_-\in({\omega}/(\omega+\eta),\fz)$ with $\omega$ and $\eta$, respectively,
as in \eqref{eq-doub} and Definition \ref{expati}.
Assume that, for any given $\tz\in(0,\underline{p})$ with
$\underline{p}$ as in \eqref{2.1y}, the Hardy--Littlewood
maximal operator $\cm$ in \eqref{hlmax} is bounded on $Y^{1/\tz}(\cx)$.
Then $H_Y^+(\cx)$, $H_Y^a(\cx)$, and $H_Y^*(\cx)$ are all independent
of the choices of $(\icgg)'$ whenever $\bz$, $\gz\in(\omega(1/\underline{p}-1),\eta)$,
and coincide with equivalent quasi-norms.
\end{theorem}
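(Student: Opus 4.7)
The plan is to reduce Theorem~\ref{maxchprop} to a direct concatenation of Theorems~\ref{maxch} and \ref{indep}, exploiting the fact that the radial maximal function $\cm^+(f)$ and the non-tangential maximal function $\cm_a(f)$ depend only on the fixed 1-exp-ATI $\{P_k\}_{k\in\zz}$ and not on the choice of $(\bz,\gz)$. Fix two admissible pairs $(\bz_1,\gz_1)$ and $(\bz_2,\gz_2)$ in $(\omega(1/\underline{p}-1),\eta)$, and for a given distribution $f$ let $f^*_{\bz,\gz}$ denote its grand maximal function built from test functions in $\mathcal{G}_0^\eta(\bz,\gz)$. Since Theorem~\ref{maxch}, applied separately to each pair, already identifies the radial, non-tangential, and grand maximal Hardy spaces within a single $(\mathcal{G}_0^\eta(\bz,\gz))'$ with equivalent quasi-norms, it suffices to compare the grand maximal Hardy space attached to $(\bz_1,\gz_1)$ with the one attached to $(\bz_2,\gz_2)$.

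Starting from $f\in H_Y^*(\cx)$ defined via $(\bz_1,\gz_1)$, I would first invoke Theorem~\ref{indep} to extend $f$ by continuity from the common dense subspace $\mathcal{G}(\eta,\eta)$ to a functional in $(\mathcal{G}_0^\eta(\bz_2,\gz_2))'$. Because every kernel $P_k(\cdot,y)$ lies in $\mathcal{G}(\eta,\eta)$, the radial maximal function computed in either dual space is the same pointwise object; this is the conceptual pivot of the argument. Applying Theorem~\ref{maxch} with $(\bz_1,\gz_1)$ gives $\|f^*_{\bz_1,\gz_1}\|_{Y(\cx)}\sim\|\cm^+(f)\|_{Y(\cx)}$, and the finiteness of the right-hand side together with $f\in(\mathcal{G}_0^\eta(\bz_2,\gz_2))'$ places $f$ into $H_Y^+(\cx)$ relative to the second pair; Theorem~\ref{maxch} with $(\bz_2,\gz_2)$ then yields $\|\cm^+(f)\|_{Y(\cx)}\sim\|f^*_{\bz_2,\gz_2}\|_{Y(\cx)}$. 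Chaining these equivalences, and exchanging the roles of the two pairs, shows that $H_Y^+(\cx)$, $H_Y^a(\cx)$, and $H_Y^*(\cx)$ all depend on $(\bz,\gz)$ in the admissible range only up to equivalent quasi-norms.

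The only subtle point, which is already absorbed by Theorem~\ref{indep}, is the bookkeeping of viewing $f$ as a single functional that lives simultaneously on two distinct test-function spaces via a continuous extension from $\mathcal{G}(\eta,\eta)$; once this identification is made and the intrinsic nature of $\cm^+$ and $\cm_a$ with respect to the 1-exp-ATI is invoked, no further analytic work is needed and the six resulting quasi-norms are pairwise equivalent. In short, the hard analytic content lives in Theorems~\ref{maxch} and \ref{indep}, and the present theorem is essentially the diagrammatic consequence that closes the loop.
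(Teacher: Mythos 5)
Your proposal is correct and follows essentially the same route as the paper: the paper likewise reduces the statement to showing that the radial-maximal Hardy spaces attached to two admissible pairs $(\bz_1,\gz_1)$ and $(\bz_2,\gz_2)$ coincide, by combining Theorem \ref{maxch} with Theorem \ref{indep} and the observation that $\cm^+(f)$ (being built from the fixed 1-exp-ATI) is unchanged when $f$ is viewed in the other distribution space, and then applies Theorem \ref{maxch} twice to transfer the identification to $H_Y^a(\cx)$ and $H_Y^*(\cx)$. No substantive difference from the paper's argument.
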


\begin{proof}
Let $\bz_1$, $\bz_2$, $\gz_1$,
$\gz_2\in(\omega(1/\underline{p}-1),\eta)$.
For any $i\in\{1,2\}$, define $H_Y^{+,i}(\cx)$, $H_Y^{a,i}(\cx)$,
and $H_Y^{*,i}(\cx)$, respectively, as in \eqref{3.5x},
\eqref{3.5y}, and \eqref{3.5z} with $(\icgg)'$ replaced by
$(\mathcal{G}_0^\eta(\bz_i,\gz_i))'$.
To prove this theorem, it suffices to show that
\begin{align}\label{21.7.18.x2}
H_Y^{+,1}(\cx)&=H_Y^{+,2}(\cx).
\end{align}
Indeed, using \eqref{21.7.18.x2} and Theorem \ref{maxch} twice,
we obtain
$$H_Y^{a,1}(\cx)=H_Y^{+,1}(\cx)=H_Y^{+,2}(\cx)=H_Y^{a,2}(\cx)$$
and
$$H_Y^{*,1}(\cx)=H_Y^{+,1}(\cx)=H_Y^{+,2}(\cx)=H_Y^{*,2}(\cx).$$
Next, we prove \eqref{21.7.18.x2}.
By symmetry, we only need to show
$H_Y^{+,1}(\cx)\st H_Y^{+,2}(\cx)$. To this end,
let $f\in H_Y^{+,1}(\cx)\st(\mathcal{G}_0^\eta(\bz_1,\gz_1))'$.
Then, by Theorems \ref{maxch} and \ref{indep}, we know that
$f\in H_Y^{*,1}(\cx)\st(\mathcal{G}_0^\eta(\bz_2,\gz_2))'$,
which, combined with $\cm^+(f)\in Y(\cx)$,
further implies that $f\in H_Y^{+,2}(\cx)$.
Thus, $H_Y^{+,1}(\cx)\st H_Y^{+,2}(\cx)$.
This finishes the proof of \eqref{21.7.18.x2}
and hence of Theorem \ref{maxchprop}.
\end{proof}

\begin{remark}\label{maxcharem}
Next, we apply Theorem \ref{maxchprop} to several concrete
examples of ball quasi-Banach function spaces on $\cx$,
namely, classical Lebesgue spaces, Lorentz spaces,
Musielak--Orlicz spaces, and variable Lebesgue spaces.
\begin{enumerate}
\item[{\rm (i)}]
Assume that $a\in(0,\fz)$, $p\in({\omega}/(\omega+\eta),1]$,
and $\bz$, $\gz\in(\omega(1/p-1),\eta)$ with $\omega$ as in \eqref{eq-doub}
and $\eta$ as in Definition \ref{expati}.
Let $Y(\cx):=L^{p}(\cx)$. Then $H_Y^+(\cx)$,
$H_Y^a(\cx)$, and $H_Y^*(\cx)$ are, respectively,
the classical \emph{Hardy spaces} $H^{+,p}(\cx)$,
$H^{p}_a(\cx)$, and $H^{*,p}(\cx)$,
which were introduced by He et al. in \cite{hhllyy}.
Choose
$p_-\in(\max\{{\omega}/(\omega+\bz),{\omega}/(\omega+\gz)\},p]$.
By this, Remark \ref{qbsdefrem}(i),
the fact that $Y^{1/\tz}(\cx)=L^{p/\tz}(\cx)$ for any $\tz\in(0,\fz)$,
and \cite[(3.6)]{cw77}, we conclude that $L^{p}(\cx)$
satisfies all the assumptions of Theorem \ref{maxchprop}.
In this case, Theorem \ref{maxchprop} improves the corresponding
results in \cite[Theorem 3.5]{hhllyy} by removing the assumption
that Hardy spaces are restricted to the same space of distributions.

\item[{\rm (ii)}]
Assume that $a\in(0,\fz)$, $r\in(0,\fz)$,
$p\in({\omega}/(\omega+\eta),\fz)$,
and $\bz$, $\gz\in(\omega[1/p-1]_+,\eta)$ with $\omega$ as in
\eqref{eq-doub}, $\eta$ as in Definition \ref{expati},
and $[1/p-1]_+:=\max\{1/p-1,0\}$.
Let $Y(\cx):=L^{p,r}(\cx)$.
Then $H_Y^+(\cx)$, $H_Y^a(\cx)$, and $H_Y^*(\cx)$
are, respectively, the \emph{Hardy--Lorentz spaces}
$H^{p,r}_+(\cx)$, $H^{p,r}_a(\cx)$, and $H^{p,r}_{\star}(\cx)$,
which were introduced by Zhou et al. in \cite{zhy}.
Choose
\begin{align*}
p_-\in
\begin{cases}
(\max\{{\omega}/(\omega+\bz),{\omega}/(\omega+\gz)\},p]
\ \ &\text{if}\ p\in({\omega}/(\omega+\eta),1),\\
[1,p]
\ \ &\text{if}\ p\in[1,\fz).
\end{cases}
\end{align*}
By this, Remark \ref{qbsdefrem}(ii),
the fact that
$Y^{1/\tz}(\cx)=L^{p/\tz,r/\tz}(\cx)$ for any $\tz\in(0,\fz)$,
and \cite[Lemma 3.5]{zhy}, we conclude that $L^{p,r}(\cx)$
satisfies all the assumptions of Theorem \ref{maxchprop}.
In this case, Theorem \ref{maxchprop} improves the corresponding
results in \cite[Theorem 3.6]{zhy} by removing the assumption
that Hardy spaces are restricted to the same space of distributions.

\item[{\rm (iii)}]
Assume that $a\in(0,\fz)$, $\vz$ is a growth function
as in Remark \ref{qbsdefrem}(iii) satisfying $p/q(\vz)\in(\oz/(\oz+\eta),1]$,
and $\bz$, $\gz\in(\omega[q(\vz)/p-1],\eta)$
with $\omega$ as in \eqref{eq-doub} and
$\eta$ as in Definition \ref{expati}.
Let $Y(\cx):=L^{\vz}(\cx)$. Then $H_Y^+(\cx)$,
$H_Y^a(\cx)$, and $H_Y^*(\cx)$ are, respectively,
the \emph{Musielak--Orlicz Hardy spaces} $H^{+,\vz}(\cx)$, $H^{\vz}_a(\cx)$, and $H^{*,\vz}(\cx)$, which were
introduced by Fu et al. in \cite{fmy19}.
Choose $p_-\in(\max\{{\omega}/(\omega+\bz),{\omega}/(\omega+\gz)\},p/q(\vz)]$.
By this, Remark \ref{qbsdefrem}(iii),
the fact that $Y^{1/\tz}(\cx)=L^{\widetilde{\vz}}(\cx)$
with $\widetilde{\vz}(\cdot,t)=\vz(\cdot,t^{1/\tz})$
for any $t\in(0,\fz)$ and $\tz\in(0,\fz)$,
and \cite[Theorem 4.11]{fmy19}, we conclude that $L^{\vz}(\cx)$
satisfies all the assumptions of Theorem \ref{maxchprop}.
In this setting, Theorem \ref{maxchprop} improves the
corresponding results in \cite[Theorem 4.12]{fmy19}
by removing the assumption that Hardy spaces are restricted
to the same space of distributions.

\item[{\rm (iv)}]
Recall that a $\mu$-measurable function $p(\cdot):\ \cx\to(0,\infty)$
is said to be \emph{globally log-H\"older continuous},
denoted by $p(\cdot)\in C^{\log}(\cx)$,
if there exist positive constants $C_{\log}(p)$
and $C_\fz$, $x_p\in\cx$, and $p_\fz\in\rr$ such that,
for any $x,\ y\in\cx$,
\begin{equation*}
|p(x)-p(y)|\le \frac{C_{\log}(p)}{\log(e+1/\rho(x,y))}
\end{equation*}
and
\begin{equation*}
|p(x)-p_\fz|\le \frac{C_\fz}{\log(e+\rho(x,x_p))}.
\end{equation*}
Assume that $a\in(0,\fz)$, $p(\cdot)\in C^{\log}(\cx)$
satisfies $\widetilde{p_-}\in({\omega}/(\omega+\eta),\fz)$,
and $\bz$, $\gz\in(\omega[1/\widetilde{p_-}-1]_+,\eta)$
with $\omega$ as in \eqref{eq-doub}, $\eta$ as in Definition \ref{expati}, and $[1/\widetilde{p_-}-1]_+:=\max\{1/\widetilde{p_-}-1,0\}$.
Let $Y(\cx):=L^{p(\cdot)}(\cx)$.
Then $H_Y^*(\cx)$ is the \emph{variable Hardy space}
$H^{*,p(\cdot)}(\cx)$ which was introduced
by Zhuo et al. in \cite{zsy}.
Choose
\begin{align*}
p_-\in
\begin{cases}
(\max\{{\omega}/(\omega+\bz),{\omega}/(\omega+\gz)\},\widetilde{p_-}]
\ \ &\text{if}\ \widetilde{p_-}\in({\omega}/(\omega+\eta),1),\\
[1,\widetilde{p_-}]
\ \ &\text{if}\ \widetilde{p_-}\in[1,\fz).
\end{cases}
\end{align*}
By this, Remark \ref{qbsdefrem}(iv),
the fact that $Y^{1/\tz}(\cx)=L^{p(\cdot)/\tz}(\cx)$
for any $\tz\in(0,\fz)$, and \cite[Lemma 2.5]{zsy},
we conclude that $L^{p(\cdot)}(\cx)$ satisfies
all the assumptions of Theorem \ref{maxchprop}.
In this case, a corresponding conclusion of
Theorem \ref{maxchprop} was obtained in
\cite[Theorem 3.11]{zsy}. However, we point out that,
in \cite[Theorem 3.11]{zsy}, $(\cx,\rho,\mu)$ is assumed to be an RD-space.
Thus, Theorem \ref{maxchprop} improves
the corresponding results in \cite{zsy} by removing
the assumptions that $\mu$ satisfies the reverse doubling
condition and that Hardy spaces are restricted to
the same space of distributions.
\end{enumerate}
\end{remark}

\section{Atomic and finite atomic characterizations of
$H_Y^*(\cx)$\label{s-atom}}

In this section, we characterize
$H_Y^*(\cx)$ by atoms and finite atoms. Indeed,
using the boundedness of the powered Hardy--Littlewood
maximal operator on the associate space of $Y(\cx)$,
a key lemma (see Lemma \ref{claatlem} below),
the continuous embedding of $Y(\cx)$ to $L^p_{w}(\cx)$,
and the Fefferman--Stein vector-valued maximal inequality on $Y(\cx)$,
we first establish the atomic characterization of
$H_Y^*(\cx)$. By this, we further obtain the
finite atomic characterization of $H_{Y}^*(\cx)$.

\subsection{Atomic characterizations\label{satom1}}

In this subsection, we establish the atomic
characterization of $H_Y^*(\cx)$. To this end,
we first introduce the notion of atoms associated with
$Y(\cx)$.
\begin{definition}\label{atom}
Let $Y(\cx)$ be a ball quasi-Banach function space
on $\cx$, and $q\in[1,\fz]$.
A $\mu$-measurable function $a$ on $\cx$ is called a
\emph{$(Y(\cx),q)$-atom} if there exists a ball $B\subset\cx$ such that
\begin{enumerate}
\item[{\rm (i)}] $\supp a:=\lf\{x\in\cx:\ a(x)\neq0\r\} \st B$;
\item[{\rm (ii)}] $\|a\|_{L^q(\cx)}
\le\lf[\mu(B)\r]^{1/q}\lf\|\ch1_{B}\r\|^{-1}_{Y(\cx)}$;
\item[{\rm (iii)}] $\int_{\cx}a(x)\,d\mu(x)=0$.
\end{enumerate}
\end{definition}

We first give a reconstruction result of $H_Y^*(\cx)$
as follows (see \cite[Theorem 3.6]{shyy17} for the
corresponding Euclidean case).
\begin{proposition}\label{atre}
Let $Y(\cx)$ be a ball quasi-Banach function space on $\cx$ satisfying Assumption \ref{assump1} with $p_-\in({\omega}/(\omega+\eta),\fz)$,
where $\omega$ is as in \eqref{eq-doub} and $\eta$ as in
Definition \ref{expati}. Further assume that $Y(\cx)$
satisfies Assumption \ref{assump2} with the same $p_-$
as in Assumption \ref{assump1},
$\tz_0\in({\omega}/(\omega+\eta),\underline{p})$,
and $p_0\in(\tz_0,\fz)$, where $\underline{p}$ is as in \eqref{2.1y}.
Let $q\in(\max\{p_0,1\},\fz]$ and $d\in(0,\tz_0]$.
Suppose that $\{a_j\}_{j\in\nn}$ is a sequence of
$(Y(\cx),q)$-atoms supported, respectively, in balls
$\{B_j\}_{j\in\nn}$ of $\cx$, $\{\lz_j\}_{j\in\nn}\subset[0,\fz)$,
and
$$\lf\|\lf\{\sum_{j\in\nn}\lf[\frac{\lz_j}{\|\ch1_{B_j}\|_{Y(\cx)}}\r]
^{d}\ch1_{B_j}\r\}^{\frac{1}{d}}\r\|_{Y(\cx)}<\fz.$$
Then $f:=\sum_{j\in\nn}\lz_j a_j$ converges in $(\icgg)'$ for any
given $\bz$, $\gz\in(\omega(1/\tz_0-1),\eta)$. Moreover,
$f\in H_Y^*(\cx)$ and there exists a positive constant $C$,
independent of $\{a_j\}_{j\in\nn}$, $\{B_j\}_{j\in\nn}$,
and $\{\lz_j\}_{j\in\nn}$, such that
\begin{align}\label{8.1.x1}
\|f\|_{H_Y^*(\cx)}\le C\lf\|\lf\{\sum_{j\in\nn}
\lf[\frac{\lz_j}{\|\ch1_{B_j}\|_{Y(\cx)}}\r]^{d}
\ch1_{B_j}\r\}^{\frac{1}{d}}\r\|_{Y(\cx)}.
\end{align}
\end{proposition}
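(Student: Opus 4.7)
The plan is to establish the maximal function bound \eqref{8.1.x1} first for finite sums and then pass to the infinite series by a Cauchy argument. The proof of the finite-sum bound has three ingredients: a pointwise split of each atomic grand maximal function $a_j^*$ into a ``local'' part on $2A_0 B_j$ and a ``far'' part with polynomial decay, a $d$-subadditivity reduction, and the two maximal inequalities in Assumptions~\ref{assump1} and~\ref{assump2}.

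For the pointwise estimate, fix an atom $a_j$ supported in $B_j=B(x_j,r_j)$. On $2A_0B_j$, a routine test-function argument using $q>1$ together with H\"older's inequality yields
\[
a_j^*(x)\ls\lf[\cm\lf(|a_j|^{\tz_0}\r)(x)\r]^{1/\tz_0}.
\]
Off $2A_0 B_j$, the vanishing mean of $a_j$ combined with the H\"older regularity in Definition~\ref{test}(ii) extracts a decay gain of $(r_j/\rho(x,x_j))^{\bz}$. Combining this with $\|a_j\|_{L^1(\cx)}\le \mu(B_j)/\|\ch1_{B_j}\|_{Y(\cx)}$ (H\"older plus Definition~\ref{atom}(ii)), Lemma~\ref{equlem}, and Lemma~\ref{lem6.2} produces
\[
a_j^*(x)\ls \frac{1}{\|\ch1_{B_j}\|_{Y(\cx)}}\lf[\cm\lf(\ch1_{B_j}\r)(x)\r]^{(\oz+\bz)/\oz};
\]
the choice $\bz\in(\oz(1/\tz_0-1),\eta)$ guarantees that the exponent $(\oz+\bz)/\oz$ strictly exceeds $1/\tz_0$, which is essential later.

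Since $d\le\tz_0\le\underline{p}\le 1$, $d$-subadditivity gives $(f^*)^d\le\sum_j\lz_j^d(a_j^*)^d$ pointwise for any finite partial sum. Substituting the two pointwise pieces and taking $\|\cdot\|_{Y^{1/d}(\cx)}$-norm: the local contribution, after re-packaging $\lz_j^d[\cm(|a_j|^{\tz_0})]^{d/\tz_0}$ as $[\cm(\lz_j^{\tz_0}|a_j|^{\tz_0})]^{d/\tz_0}$, is handled by Assumption~\ref{assump1} (applied after taking the $\tz_0/d$-th power so that Fefferman--Stein is used with $r=\tz_0/d>1$) together with the atom size bound $\|a_j\|_{L^q(\cx)}\le \mu(B_j)^{1/q}/\|\ch1_{B_j}\|_{Y(\cx)}$. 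The far contribution reduces to controlling
\[
\lf\|\sum_j\lf[\frac{\lz_j}{\|\ch1_{B_j}\|_{Y(\cx)}}\r]^d\lf[\cm\lf(\ch1_{B_j}\r)\r]^{d(\oz+\bz)/\oz}\r\|_{Y^{1/d}(\cx)}
\]
by the $d$-th power of the RHS of \eqref{8.1.x1}.

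The main obstacle is this last estimate, since $d(\oz+\bz)/\oz>d$ and Fefferman--Stein does not apply directly to powered maximal functions. The plan is to adapt \cite[Lemma~4.8]{zwyy} to $\cx$: dualize against a nonnegative $g\in (Y^{1/d})'(\cx)$ with $\|g\|_{(Y^{1/d})'(\cx)}=1$, then apply H\"older with exponents $p_0/d$ and $(p_0/d)'$ to convert the resulting integral into one against $\cm^{((p_0/d)')}(g)$, which is controlled by Assumption~\ref{assump2} combined with Remark~\ref{21.1.7.x1} (absorbing the extra $d$ into the $\tz_0$-powered ball Banach structure). The constraints $q>\max\{p_0,1\}$, $d\le\tz_0$, and $\bz>\oz(1/\tz_0-1)$ are precisely what make the exponent arithmetic line up. Finally, applying the finite-sum bound to partial sums $f_N:=\sum_{j=1}^N\lz_j a_j$ and their differences, combined with dominated convergence on the atomic norm tail, shows $\{f_N\}$ is Cauchy in $H_Y^*(\cx)$; Lemma~\ref{7.9.x1} transports this to Cauchy convergence in $(\icgg)'$, producing a distributional limit $f$ which satisfies \eqref{8.1.x1} and belongs to $H_Y^*(\cx)$.
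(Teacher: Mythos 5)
Your overall shape is right — split each $a_j^*$ into a piece on $2A_0B_j$ and a decaying piece outside, use $d$-subadditivity, then invoke the two maximal-operator assumptions — but two of the three ingredients go wrong in a way that would stop the proof. The local pointwise bound $a_j^*(x)\ls[\cm(|a_j|^{\tz_0})(x)]^{1/\tz_0}$ on $2A_0B_j$ is false: since $\tz_0<1$, H\"older gives $\cm^{(\tz_0)}(a_j)\le\cm(a_j)$, so your claim is \emph{strictly stronger} than the available estimate $a_j^*\ls\cm(a_j)$ (which is what \cite[(4.1)]{hhllyy} and the paper actually use), and I don't see how the ``routine test-function argument'' could deliver a smaller maximal function than $\cm(a_j)$. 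Relatedly, the roles of Assumptions \ref{assump1} and \ref{assump2} are inverted relative to the paper, and neither works in the direction you send it. After $d$-subadditivity, the local part becomes $\sum_j[\cm(\lz_j^{\tz_0}|a_j|^{\tz_0})]^{d/\tz_0}$ with inner exponent $d/\tz_0\le1$, which is \emph{not} of the form $\sum_j[\cm(f_j)]^r$ with $r>1$ required by Assumption \ref{assump1}; rescaling by taking $\tz_0/d$-th powers does not transform the sum into this shape. Moreover even if Fefferman--Stein did apply, you would be left with $\|(\sum|\lz_ja_j|^r)^{1/r}\|$-type quantities for which the $L^q$-size of the atoms must be exploited, and that exploitation is exactly what Lemma \ref{claatlem} (i.e.\ Assumption \ref{assump2} via duality) is for — it is not a consequence of Assumption \ref{assump1} alone. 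Conversely, your dualization plan for the far part cannot mimic the proof of Lemma \ref{claatlem}, because that proof depends crucially on the $a_j$ being supported in $B_j$, while $[\cm(\ch1_{B_j})]^{(\oz+\bz)/\oz}$ is supported on all of $\cx$; one would first have to decompose it over dyadic annuli and absorb the geometric decay, which is precisely what the paper's Fefferman--Stein/convexification step (and Remark \ref{rek3.19}) accomplishes in the opposite organization.

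Two further technical gaps: your Fefferman--Stein parameter $r=\tz_0/d>1$ excludes $d=\tz_0$, and since decreasing $d$ makes the right-hand side of \eqref{8.1.x1} larger, the endpoint $d=\tz_0$ is the genuinely hard case; the paper proves the estimate at $d=\tz_0$ and derives $d<\tz_0$ afterward via Lemma \ref{jessen}. Finally, the closing Cauchy argument appeals to ``dominated convergence on the atomic norm tail,'' but the tail $\|\{\sum_{j>N}[\lz_j/\|\ch1_{B_j}\|_{Y(\cx)}]^d\ch1_{B_j}\}^{1/d}\|_{Y(\cx)}$ need not vanish unless $Y(\cx)$ has an absolutely continuous quasi-norm, which is \emph{not} assumed in Proposition \ref{atre}. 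The paper avoids this entirely: it bounds $\|\sum_j\lz_ja_j^*\|_{Y(\cx)}$ directly and simultaneously obtains convergence of the series in $(\icgg)'$ by taking an infimum of $\sum_j\lz_ja_j^*$ over a fixed small ball, a step that needs no approximation property of $Y(\cx)$.
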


The following technical lemma,
which is a generalization of \cite[Lemma 4.8]{zwyy}
on $\rn$ to $\cx$, plays a key role in the proof of
Proposition \ref{atre}. It can be proved by an argument
similar to that used in the proofs of \cite[Lemma 4.8]{zwyy}
and Lemmas \ref{holder} and \ref{second};
we omit the details here.

\begin{lemma}\label{claatlem}
Let $r\in(0,\fz)$, $s\in(r,\fz]$, and $Y(\cx)$ be a ball
quasi-Banach function space on $\cx$. Assume that $Y^{1/{r}}(\cx)$
is a ball Banach function space and that there exists a
positive constant $C_0$ such that, for any $f\in (Y^{1/{r}})'(\cx)$ ,
\begin{align*}
\lf\|\cm^{((s/r)')}(f)\r\|_{(Y^{1/{r}})'(\cx)}
\le C_0\|f\|_{(Y^{1/{r}})'(\cx)}.
\end{align*}
Then there exists a positive constant $C$ such that,
for any sequence $\{B_j\}_{j\in\nn}$ of balls,
$\{\lz_j\}_{j\in\nn}\st\mathbb{C}$, and $\mu$-measurable functions
$\{a_j\}_{j\in\nn}$ satisfying that, for any $j\in\nn$,
$\supp a_j\subset B_j$ and $\|a_j\|_{L^s(\cx)}\le[\mu(B_j)]^{1/s}$,
it holds true that
\begin{align*}
\lf\|\lf(\sum_{j\in\nn}\lf|\lz_ja_j\r|^{r}\r)^{\frac{1}{r}}\r\|_{Y(\cx)}
\le C\lf\|\lf(\sum_{j\in\nn}\lf|\lz_j\ch1_{B_j}\r|^{r}\r)
^{\frac{1}{r}}\r\|_{Y(\cx)}.
\end{align*}
\end{lemma}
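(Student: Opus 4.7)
The plan is to reduce the claim to an inequality in $Y^{1/r}(\cx)$ by the natural reformulation, then run a standard duality argument that swaps $|a_j|^r$ for $\ch1_{B_j}$ via the powered maximal operator and its assumed boundedness on $(Y^{1/r})'(\cx)$. Specifically, by the definition of the $r$-convexification in Definition \ref{cvex}, the desired inequality is equivalent to showing
\[
\lf\|\sum_{j\in\nn}|\lz_j a_j|^{r}\r\|_{Y^{1/r}(\cx)}
\le C^{r}\lf\|\sum_{j\in\nn}|\lz_j|^{r}\ch1_{B_j}\r\|_{Y^{1/r}(\cx)}.
\]
Since $Y^{1/r}(\cx)$ is a ball Banach function space, Lemma \ref{second} permits us to compute its norm via the second associate space, so it suffices to bound
$
\int_{\cx}\big(\sum_{j}|\lz_j a_j|^{r}\big)h\,d\mu
$
uniformly in nonnegative $h$ with $\|h\|_{(Y^{1/r})'(\cx)}\le 1$.

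For each fixed $j$, I would apply the H\"older inequality with exponents $s/r$ and $(s/r)'$, together with the hypothesis $\|a_j\|_{L^s(\cx)}\le[\mu(B_j)]^{1/s}$, to obtain
\[
\int_{B_j}|a_j|^{r}h\,d\mu
\le [\mu(B_j)]^{r/s}\lf(\int_{B_j}h^{(s/r)'}\,d\mu\r)^{1/(s/r)'}.
\]
The key step is to recognize that, for any $x\in B_j$, the right-hand factor is dominated by $[\mu(B_j)]^{1/(s/r)'}\cm^{((s/r)')}(h)(x)$ directly from the definition of $\cm^{((s/r)')}$; since $r/s+1/(s/r)'=1$, this yields
\[
\int_{B_j}|a_j|^{r}h\,d\mu
\le \int_{B_j}\cm^{((s/r)')}(h)(x)\,d\mu(x)
=\int_{\cx}\ch1_{B_j}(x)\cm^{((s/r)')}(h)(x)\,d\mu(x),
\]
after averaging in $x\in B_j$ and using $\supp a_j\subset B_j$. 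Summing in $j$ (with the coefficients $|\lz_j|^r$) and invoking the H\"older inequality for $Y^{1/r}(\cx)$ from Lemma \ref{holder}, together with the assumed boundedness
$\|\cm^{((s/r)')}(h)\|_{(Y^{1/r})'(\cx)}\le C_0\|h\|_{(Y^{1/r})'(\cx)}$, one gets
\[
\int_{\cx}\lf(\sum_{j\in\nn}|\lz_j a_j|^{r}\r)h\,d\mu
\le C_0\lf\|\sum_{j\in\nn}|\lz_j|^{r}\ch1_{B_j}\r\|_{Y^{1/r}(\cx)}\|h\|_{(Y^{1/r})'(\cx)}.
\]
Taking the supremum over admissible $h$ and raising to the $1/r$ power delivers the stated inequality with $C=C_0^{1/r}$.

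The only place requiring care is the initial monotone-convergence/Fubini justification so that the sum can be exchanged with the integral against $h$, which is immediate for nonnegative integrands, and the verification that Lemma \ref{holder} and the second-associate identity Lemma \ref{second} apply to $Y^{1/r}(\cx)$ under the hypothesis that this space is a ball Banach function space. No further structural facts about $\cx$ are needed beyond the fact that $\cm^{((s/r)')}$ is pointwise controlled by averages on balls, so this proof transfers from $\rn$ to a general space of homogeneous type without modification, matching the claim in the statement that the $\rn$-proof of \cite[Lemma 4.8]{zwyy} adapts with only minor changes.
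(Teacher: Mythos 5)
Your proof is correct and is exactly the argument the paper has in mind: the paper omits the details, citing the Euclidean case \cite[Lemma 4.8]{zwyy} together with Lemmas \ref{holder} and \ref{second}, and your route via the $r$-convexification, the second associate space, the H\"older inequality with exponents $s/r$ and $(s/r)'$, the pointwise domination of the local average of $h^{(s/r)'}$ by $\cm^{((s/r)')}(h)$, and the assumed boundedness of $\cm^{((s/r)')}$ on $(Y^{1/r})'(\cx)$ is precisely that duality argument. The only cosmetic point is the endpoint $s=\fz$, where one reads $(s/r)'=1$ and $[\mu(B_j)]^{1/s}=1$, which your computation accommodates with the usual conventions.
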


We also need the following basic inequality.
\begin{lemma}\label{jessen}
Let $\tz\in(0,1]$. Then, for any $\{a_j\}_{j\in\nn}\st[0,\fz)$,
$$\lf(\sum_{j\in\nn}a_j\r)^{\tz}\le\sum_{j\in\nn}a_j^{\tz}.$$
\end{lemma}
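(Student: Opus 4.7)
The plan is to reduce the infinite sum inequality to the two-term case $(a+b)^\tz\le a^\tz+b^\tz$ for $a,b\in[0,\fz)$ and $\tz\in(0,1]$, and then pass to the limit through finite partial sums.

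First I would establish the basic two-term inequality. The case $\tz=1$ is trivial; for $\tz\in(0,1)$, I would assume without loss of generality that $a+b>0$ and set $s:=a/(a+b)$ and $t:=b/(a+b)$, so $s,t\in[0,1]$ and $s+t=1$. Since $\tz\le 1$ and $s,t\in[0,1]$, the elementary inequality $u^\tz\ge u$ for $u\in[0,1]$ yields $s^\tz+t^\tz\ge s+t=1$. Multiplying both sides by $(a+b)^\tz$ gives $a^\tz+b^\tz\ge(a+b)^\tz$, which is the claimed two-term inequality.

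Next I would extend this to finite sums by a straightforward induction on $N\in\nn$: assuming $(\sum_{j=1}^{N}a_j)^\tz\le\sum_{j=1}^{N}a_j^\tz$ and applying the two-term case to $\sum_{j=1}^{N}a_j$ and $a_{N+1}$ completes the induction step. Thus, for every $N\in\nn$,
\begin{align*}
\lf(\sum_{j=1}^{N}a_j\r)^{\tz}\le\sum_{j=1}^{N}a_j^{\tz}\le\sum_{j\in\nn}a_j^{\tz}.
\end{align*}

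Finally, I would pass to the limit $N\to\fz$. If $\sum_{j\in\nn}a_j^\tz=\fz$, the inequality is immediate. Otherwise, the partial sums $S_N:=\sum_{j=1}^{N}a_j$ increase to $S:=\sum_{j\in\nn}a_j\in[0,\fz]$, and the continuity (and monotonicity) of the map $u\mapsto u^\tz$ on $[0,\fz)$ gives $S_N^{\tz}\to S^{\tz}$; combined with the finite estimate above, this yields $(\sum_{j\in\nn}a_j)^{\tz}\le\sum_{j\in\nn}a_j^{\tz}$, as desired. There is no real obstacle here; the only mild point of care is handling the case $\sum_{j\in\nn}a_j=\fz$ separately, which is covered by the preceding case analysis.
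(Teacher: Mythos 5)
Your proof is correct: the normalization argument $s^\tz+t^\tz\ge s+t=1$ for $s+t=1$ gives the two-term inequality, induction handles finite sums, and the monotone limit passage (with the infinite-sum case disposed of as you note) is sound. The paper states this lemma as a basic inequality without proof, and your argument is exactly the standard one that would be supplied.
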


Now, we show Proposition \ref{atre}.
\begin{proof}[Proof of Proposition \ref{atre}]
Let all the symbols be as in the present proposition,
$a$ a $(Y(\cx),q)$-atom supported in a ball
$B:=B(x_B, r_B)$ with $x_B\in\cx$ and $r_B\in(0,\infty)$,
and $\bz$, $\gz\in(\omega(1/\tz_0-1),\eta)$.
Let $\widetilde{B}:=2A_0B$.
From the proof of \cite[(4.1)]{hhllyy}, we deduce that,
for any $x\in\widetilde{B}$,
\begin{align}\label{5.9.x3}
a^*(x)&\ls\cm(a)(x).
\end{align}
Moreover, by the cancellation of $a$, \eqref{3.31.x2},
Lemma \ref{equlem}, the H\"older inequality, \eqref{eq-doub},
and the definition of $\cm$, we know that,
for any $x\in{\widetilde{B}}^{\com}$ and $h\in\icgg$
satisfying that $\|h\|_{\cg(x,r,\beta,\gamma)}\le1$
for some $r\in(0,\fz)$,
\begin{align*}
|\la a,h\ra|&=\lf|\int_{B} a(y)\lf[h(y)-h(x_B)\r]\,d\mu(y)\r|
\le\int_{B} |a(y)|\lf|h(y)-h(x_B)\r|\,d\mu(y)\\
&\ls\int_{B} |a(y)|
\lf[\frac{\rho(x_B,y)}{r+\rho(x,x_B)}\r]^{\bz}\frac{1}
{V_{r}(x)+V(x,x_B)}
\lf[\frac{r}{r+\rho(x,x_B)}\r]^{\gz}\,d\mu(y)\noz\\
&\ls\int_{B}|a(y)|\,d\mu(y)
\lf[\frac{r_B}{\rho(x_B,x)}\r]^{\bz}\frac{1}{V(x_B,x)}
\ls\frac{\mu(B)}{\|\ch1_{B}\|_{Y(\cx)}}
\lf[\frac{r_B}{\rho(x_B,x)}\r]^{\bz}
\frac{1}{V(x_B,x)}\noz\\
&\ls\frac{1}{\|\ch1_{B}\|_{Y(\cx)}}
\lf[\frac{\mu(B)}{V(x_B,x)}\r]^{\frac{\bz+\oz}{\oz}}
\ls\frac{1}{\|\ch1_{B}\|_{Y(\cx)}}\lf[\cm(\ch1_{B})(x)\r]
^{\frac{\bz+\oz}{\oz}},\noz
\end{align*}
which further implies that, for any $x\in{\widetilde{B}}^{\com}$,
\begin{align}\label{5.9.x2}
a^*(x)&\ls\frac{1}{\|\ch1_{B}\|_{Y(\cx)}}\lf[\cm(\ch1_{B})(x)\r]
^{\frac{\bz+\oz}{\oz}}.
\end{align}

Let $\{a_j\}_{j\in\nn}$ be a sequence of $(Y(\cx),q)$-atoms supported,
respectively, in balls $\{B_j \}_{j\in\nn}$ of $\cx$,
and $\{\lz_j\}_{j\in\nn}\subset[0,\fz)$.
We first prove that $\sum_{j\in\nn}\lz_j a_j$ converges in $(\icgg)'$.
Without loss of generality, we may assume that
$\vz\in\icgg$ with $\|\vz\|_{\icgg}\le1$.
Let $x_0$ be as in Definition \ref{test}.
Then, by an argument similar to that used in the
proof of \eqref{3.19.x1}, (ii) and (iv) of Definition \ref{qbs},
\eqref{5.9.x3}, and \eqref{5.9.x2}, we know that
\begin{align}\label{5.9.x4}
\sum_{j\in\nn}\lz_j\lf|\lf\langle a_j,\vz\r\rangle\r|
&\ls\sum_{j\in\nn}\lz_j\inf_{x\in B(x_0,1)}a_j^*(x)
\ls\inf_{x\in B(x_0,1)}\sum_{j\in\nn}\lz_ja_j^*(x)\noz\\
&\ls\lf\|\sum_{j\in\nn}\lz_j a_j^*\r\|_{Y(\cx)}
\lf\|\ch1_{B(x_0,1)}\r\|_{Y(\cx)}^{-1}
\ls\lf\|\sum_{j\in\nn}\lz_j a_j^*
\ch1_{\widetilde{B_j}}\r\|_{Y(\cx)}
+\lf\|\sum_{j\in\nn}\lz_j a_j^*
\ch1_{{\widetilde{B_j}}^{\com}}\r\|_{Y(\cx)}\noz\\
&\ls{\rm I}_1+{\rm I}_2,
\end{align}
where, for any $j\in\nn$, $\widetilde{B_j}:=2A_0B_j$,
the implicit positive constants depend on $x_0$,
$${\rm I}_1:=\lf\|\sum_{j\in\nn}\lz_j\cm(a_j)
\ch1_{\widetilde{B_j}}\r\|_{Y(\cx)},$$
and
$${\rm I}_2:=\lf\|\sum_{j\in\nn}\frac{\lz_j }{\|\ch1_{B_j}\|_{Y(\cx)}}
\lf[\cm(\ch1_{B_j})\r]^{\frac{\bz+\oz}{\oz}}\r\|_{Y(\cx)}.$$

For ${\rm I}_1$, by $q\in(\max\{p_0,1\},\fz]$,
the H\"older inequality, and the boundedness of $\cm$
on $L^{q}(\cx)$ (see, for instance, \cite[(3.6)]{cw77}), we know that,
for any $j\in\nn$,
$$\supp\lf(\cm(a_j)\ch1_{\widetilde{B_j}}
\lf\|\ch1_{B_j}\r\|_{Y(\cx)}\r)\subset\widetilde{B_j}$$
and
$$\lf\|\cm(a_j)\ch1_{\widetilde{B_j}}
\lf\|\ch1_{B_j}\r\|_{Y(\cx)}\r\|_{L^{p_0}(\cx)}
\ls\lf\|\ch1_{B_j}\r\|_{Y(\cx)}\lf\|\cm(a_j)\r\|_{L^{q}(\cx)}
\lf[\mu\lf(\widetilde{B_j}\r)\r]^{\frac{1}{p_0}-\frac{1}{q}}
\ls\lf[\mu\lf(\widetilde{B_j}\r)\r]^{\frac{1}{p_0}}.$$
From this, Lemma \ref{jessen}, Definition \ref{qbs}(ii),
$p_0>\tz_0$, Assumption \ref{assump2}, Lemma \ref{claatlem},
Remark \ref{rek3.19}, and $d\in(0,\tz_0]$,
we deduce that
\begin{align}\label{5.9.x5}
{\rm I}_1&\le\lf\|\lf\{\sum_{j\in\nn}
\lf[\lz_j\cm(a_j)\ch1_{\widetilde{B_j}}\r]^{\tz_0}\r\}
^{\frac1{\tz_0}}\r\|_{Y(\cx)}
\ls\lf\|\lf\{\sum_{j\in\nn}
\lf[\frac{\lz_j}{\|\ch1_{B_j}\|_{Y(\cx)}}\r]
^{\tz_0}\ch1_{\widetilde{B_j}}\r\}^{\frac1{\tz_0}}\r\|_{Y(\cx)}\noz\\
&\ls\lf\|\lf\{\sum_{j\in\nn}
\lf[\frac{\lz_j}{\|\ch1_{B_j}\|_{Y(\cx)}}\r]
^{\tz_0}\ch1_{B_j}\r\}^{\frac1{\tz_0}}\r\|_{Y(\cx)}
\ls\lf\|\lf\{\sum_{j\in\nn}
\lf[\frac{\lz_j}{\|\ch1_{B_j}\|_{Y(\cx)}}\r]
^{d}\ch1_{B_j}\r\}^{\frac1{d}}\r\|_{Y(\cx)}<\fz.
\end{align}

For ${\rm I}_2$, by Definition \ref{cvex},
$\bz>\oz(1/{\tz_0}-1)$, $\tz_0<\underline{p}$,
Assumption \ref{assump1}, Lemma \ref{jessen},
and $d\in(0,\tz_0]\st(0,1)$, we conclude that
\begin{align}\label{6.15.x3}
{\rm I}_2&=\lf\|\lf\{\sum_{j\in\nn}
\frac{\lz_j }{\|\ch1_{B_j}\|_{Y(\cx)}}
\lf[\cm(\ch1_{B_j})\r]^{\frac{\bz+\oz}{\oz}}\r\}
^{\frac{\oz}{\bz+\oz}}\r\|
_{Y^{\frac{\bz+\oz}{\oz}}(\cx)}^{\frac{\bz+\oz}{\oz}}\noz\\
&\ls\lf\|\sum_{j\in\nn}\frac{\lz_j }{\|\ch1_{B_j}\|_{Y(\cx)}}
\ch1_{B_j}\r\|_{Y(\cx)}
\ls\lf\|\lf\{\sum_{j\in\nn}\lf[\frac{\lz_j}{\|\ch1_{B_j}\|_{Y(\cx)}}\r]
^{d}\ch1_{B_j}\r\}^{\frac1{d}}\r\|_{Y(\cx)}<\fz,
\end{align}
which, combined with \eqref{5.9.x4} and \eqref{5.9.x5},
further implies that $\sum_{j\in\nn}\lz_j a_j$ converges in $(\icgg)'$.

Next, we prove \eqref{8.1.x1}. Define $f:=\sum_{j\in\nn}\lz_j a_j$
in $(\icgg)'$. Then, from \eqref{5.9.x4}, \eqref{5.9.x5},
and \eqref{6.15.x3}, it follows that
\begin{align*}
\|f\|_{H_Y^*(\cx)}&=\lf\|f^*\r\|_{Y(\cx)}
\ls\lf\|\sum_{j\in\nn}\lz_j a_j^*
\ch1_{\widetilde{B_j}}\r\|_{Y(\cx)}
+\lf\|\sum_{j\in\nn}\lz_j a_j^*
\ch1_{{\widetilde{B_j}}^{\com}}\r\|_{Y(\cx)}\\
&\ls\lf\|\lf\{\sum_{j\in\nn}\lf[\frac{\lz_j}{\|\ch1_{B_j}\|_{Y(\cx)}}\r]
^{d}\ch1_{B_j}\r\}^{\frac1{d}}\r\|_{Y(\cx)}.\noz
\end{align*}
This shows \eqref{8.1.x1} and hence finishes
the proof of  Proposition \ref{atre}.
\end{proof}

Now, we formulate a decomposition result of $H_Y^*(\cx)$
(see \cite[Theorem 3.7]{shyy17} for the corresponding Euclidean case).
\begin{proposition}\label{atde}
Let $Y(\cx)$ be a ball quasi-Banach function space on $\cx$ satisfying Assumption \ref{assump1} with $p_-\in({\omega}/(\omega+\eta),\fz)$,
where $\omega$ is as in \eqref{eq-doub} and $\eta$ as in
Definition \ref{expati}. Further assume that $Y(\cx)$
satisfies Assumption \ref{assump2} with the same $p_-$
as in Assumption \ref{assump1},
$\tz_0\in({\omega}/(\omega+\eta),\underline{p})$,
and $p_0\in(\tz_0,\fz)$, where $\underline{p}$ is as in \eqref{2.1y}.
Let $d\in(0,\tz_0]$, $\bz$, $\gz\in(\omega(1/\tz_0-1),\eta)$,
and $f\in H_Y^*(\cx)$. Then there exist a sequence
$\{a_j\}_{j\in\nn}$ of $(Y(\cx),\fz)$-atoms supported,
respectively, in the balls $\{B_j \}_{j\in\nn}$,
and a sequence $\{\lz_j\}_{j\in\nn}\st[0,\fz)$ such that
\begin{align*}
f&=\sum_{j\in\nn}\lz_j a_j
\end{align*}
in $(\icgg)'$, and there exists a positive constant $C$,
independent of $f$, such that
$$\lf\|\lf\{\sum_{j\in\nn}
\lf[\frac{\lz_j}{\|\ch1_{B_j}\|_{Y(\cx)}}\r]^{d}
\ch1_{B_j}\r\}^{\frac{1}{d}}\r\|_{Y(\cx)}
\le C\|f\|_{H_Y^*(\cx)}.$$
\end{proposition}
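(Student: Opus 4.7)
The plan is to adapt the Calder\'on--Zygmund scheme to $H_Y^*(\cx)$ by running it at each dyadic level of the grand maximal function, assembling the resulting pieces into $(Y(\cx),\fz)$-atoms, and handling convergence via the embedding into a weighted Lebesgue space provided by Theorem \ref{embed}.

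First, for each $i\in\zz$, I would form the open level set $\Omega_i:=\{x\in\cx:\,f^*(x)>2^i\}$ and cover it using the dyadic cubes of Lemma \ref{daydic} together with a Whitney-type selection, producing balls $\{B_{i,k}\}_{k}$ of bounded overlap with $r_{B_{i,k}}\sim\dist(B_{i,k},\Omega_i^\complement)$ and an adapted smooth partition of unity $\{\phi_{i,k}\}_k$ subordinate to dilates of these balls. Standard estimates, exploiting that nearby points of $\Omega_i^\complement$ control $f^*$ through the test-function regularity in Definition \ref{test}, give $|P_j f|\ls 2^i$ on $\supp\phi_{i,k}$ at the scale $\dz^j\sim r_{B_{i,k}}$, and therefore yield a Calder\'on--Zygmund-type decomposition
\begin{align*}
f=g_i+\sum_k b_{i,k}\quad\text{in }(\icgg)',
\end{align*}
where each $b_{i,k}$ has mean zero, is supported in a fixed dilate of $B_{i,k}$, and satisfies $\|b_{i,k}\|_{L^\fz(\cx)}\ls 2^i$.

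Next, following the strategy of \cite[Theorem 4.6]{zhy}, I would take telescoping differences between the decompositions at levels $i$ and $i+1$ to isolate the atomic building blocks. Setting $\lambda_{i,k}:=C\,2^i\|\ch1_{B_{i,k}}\|_{Y(\cx)}$ for a suitable absolute constant $C$, each normalized piece $a_{i,k}$ becomes a genuine $(Y(\cx),\fz)$-atom associated with a dilate of $B_{i,k}$. The required quasi-norm bound will then follow from the bounded overlap $\sum_k\ch1_{B_{i,k}}\ls\ch1_{\Omega_i}$, the pointwise comparison $\sum_i 2^{id}\ch1_{\Omega_i}\sim [f^*]^d$ up to a constant depending only on $d$, and Definition \ref{cvex}, noting that the constants produced survive under Assumption \ref{assump1}.

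The main obstacle will be verifying $f=\sum_{i,k}\lambda_{i,k}a_{i,k}$ in $(\icgg)'$: since $Y(\cx)$ carries no absolutely continuous quasi-norm, there is no good dense subclass of $H_Y^*(\cx)$ on which the decomposition can first be obtained by approximation, so the methods of \cite[(4.27) and (4.41)]{zhy} do not apply directly. The plan to circumvent this is to invoke Theorem \ref{embed} to embed $Y(\cx)$ continuously into $L^{r_1}_w(\cx)$ for some $A_1(\cx)$ weight $w$ (built from the maximal function of a fixed ball), so that $f$ is simultaneously a distribution in the weighted Hardy space $H^{r_1}_w(\cx)$. There the atomic decomposition is a special case of the Musielak--Orlicz result in \cite[Theorem 5.4]{fmy19}, which supplies convergence of the partial sums of $\sum_{i,k}\lambda_{i,k}a_{i,k}$ in $H^{r_1}_w(\cx)$ and hence, after pairing against any $h\in\icgg\st L^{r_1'}_{w^{1-r_1'}}(\cx)$ via the H\"older inequality, convergence in $(\icgg)'$. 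Combining this weighted convergence with the Fefferman--Stein maximal inequality from Assumption \ref{assump1} applied to the tails delivers the identification of the limit with $f$ and completes the proof.
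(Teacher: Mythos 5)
Your overall architecture coincides with the paper's: a Calder\'on--Zygmund decomposition at the levels $2^i$ of the grand maximal function, telescoping between consecutive levels, coefficients $\lz_{i,k}\sim 2^i\|\ch1_{B_{i,k}}\|_{Y(\cx)}$, the layer-cake estimate for the quasi-norm bound, and the use of Theorem \ref{embed} to compensate for the missing dense subspace. However, several steps as you describe them would fail. First, the bad parts do not satisfy $\|b_{i,k}\|_{L^\fz(\cx)}\ls 2^i$: they carry precisely the unbounded part of $f$, and only the good parts and the telescoped pieces are bounded. More seriously, you run the construction directly on the distribution $f$, so the pieces $b_{i,k}$ and their telescoped combinations are a priori only distributions; the paper instead decomposes the functions $f^{(m)}:=P_mf$ given by a 1-exp-ATI, verifies the size, support, and cancellation of $h^{(m)}_{j,k}$ pointwise, and then extracts the actual atoms $h_{j,k}$ as weak-$*$ limits via the Banach--Alaoglu theorem and a diagonal argument; some substitute for this limiting step is needed in your plan. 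In addition, the level set $\{x\in\cx:\ f^*(x)>2^i\}$ need not be open (even when $\rho$ is a metric), which is why the paper passes to the pointwise equivalent maximal function $f^\star$ built from the metric $\widetilde\rho\sim\rho^\tz$; and since $Y(\cx)$ gives no control of $\mu(\Omega_i)$, the Whitney-type Lemmas \ref{helem1} and \ref{helem2} are applied with respect to the weighted measure $\mu_w$, whose finiteness on $\Omega_i$ (Lemma \ref{omegafinite}) is exactly the second place where the embedding (Lemma \ref{embedding}) is used.

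Second, your convergence argument at the end does not work as stated. Under the hypotheses of the proposition the available embedding is into $L^{\tz_0}_w(\cx)$ with $\tz_0<1$ (Lemma \ref{embedding}); no $r_1\in(1,\fz)$ with the required boundedness of $\cm$ on the associate space of $Y^{1/r_1}(\cx)$ is assumed, so the H\"older pairing of $L^{r_1}_w(\cx)$ against $L^{r_1'}_{w^{1-r_1'}}(\cx)$ is unavailable. Moreover, invoking \cite[Theorem 5.4]{fmy19} for $f$ viewed in a weighted Hardy space yields some atomic decomposition of $f$ with its own atoms and coefficients; it does not by itself give convergence of your specific series $\sum_{i,k}\lz_{i,k}a_{i,k}$. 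What the paper actually proves is the uniform-in-$m$ tail estimate \eqref{12.30.x1}: each $h^{(m)}_{j,k}$ is a harmless multiple of an atom of the weighted Hardy spaces associated with $\vz_1(x,t)=w(x)t$ and $\vz_2(x,t)=w(x)t^{p_1}$ for some $p_1\in(\oz/(\oz+\eta),\tz_0)$ (Lemma \ref{futheorem}), the coefficient tails are summed using $w(\Omega_j)\ls 2^{-j\tz_0}\|f^\star\|^{\tz_0}_{L^{\tz_0}_w(\cx)}\ls 2^{-j\tz_0}\|f\|^{\tz_0}_{H_Y^*(\cx)}$, and convergence in $(\icgg)'$ and the identification of the limit with $f$ then follow from the embedding of these weighted Hardy spaces into $(\icgg)'$ (Lemma \ref{fulem}), not from duality. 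Until these points are supplied -- boundedness of the atomic pieces for a general distribution (via the $P_m$-approximation or otherwise), openness and finite $\mu_w$-measure of the level sets, and the uniform tail estimates identifying the limit -- the proposal has genuine gaps, even though its guiding idea matches the paper's.
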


To prove Proposition \ref{atde}, we need several technical lemmas.
The following two lemmas are, respectively, generalizations
of \cite[Propositions 4.4 and 4.5]{hhllyy} from the measure $\mu$
under consideration to any given doubling measure $\nu$,
whose proofs are slight modifications of
\cite[Propositions 4.4 and 4.5]{hhllyy};
we omit the details here.
\begin{lemma}\label{helem1}
Suppose that $\nu$ is a doubling measure,
$\Omega\subset\cx$ an open set with $\nu(\Omega)\in(0,\fz)$,
and $A\in[1,\fz)$. For any $x\in\Omega$, let
$$r(x):=\frac{\rho(x,\Omega^\complement)}{2AA_0}\in(0,\fz)$$
with $A_0$ as in \eqref{2.1x}.
Then there exist an $L_0\in\nn$ and a sequence
$\{x_k\}_{k\in I}\subset\Omega$,
where $I$ is a countable index set, such that
\begin{enumerate}
\item[{\rm (i)}] $\{B(x_k,r_k/(5A_0^3))\}_{k\in I}$ is disjoint.
Here and thereafter, $r_k:=r(x_k)$ for any $k\in I$;

\item[{\rm (ii)}] $\bigcup_{k\in I}B(x_k,r_k)=\Omega$ and
$B(x_k,Ar_k)\st\Omega$ for any $k\in I$;

\item[{\rm (iii)}] for any $k\in I$ and $x\in B(x_k,r_k)$,
$Ar_k\le \rho(x,\Omega^\complement)\le3AA_0^2r_k$;

\item[{\rm (iv)}] for any $k\in I$,
there exists a $y_k\notin\Omega$ such that $\rho(x_k,y_k)<3AA_0r_k$;

\item[{\rm (v)}] for any given $k\in I$, the number of balls
$\{B(x_j,Ar_j)\}_{j\in I}$ that intersect $B(x_k,Ar_k)$ is at most $L_0$;

\item[{\rm (vi)}] if, in addition, $\Omega$ is bounded,
then, for any $\sz\in(0,\fz)$, the set $\{k\in I:\ r_k>\sz\}$ is finite.
\end{enumerate}
\end{lemma}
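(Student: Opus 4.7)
The plan is to adapt the classical Whitney decomposition to the quasi-metric setting by running a Vitali-type selection argument on the family $\cf := \{B(x, r(x)/(5A_0^3)):\ x \in \Omega\}$. I would first partition $\cf$ into dyadic layers $\cf_n := \{B(x, r(x)/(5A_0^3)):\ x \in \Omega,\ 2^n \le r(x) < 2^{n+1}\}$ for $n \in \zz$, and then inductively extract, from the largest radii downward, a maximal subfamily $\cg_n \subset \cf_n$ whose members are mutually disjoint and also disjoint from $\cg_{n+1} \cup \cg_{n+2} \cup \cdots$. Setting $\{B(x_k, r_k/(5A_0^3))\}_{k \in I} := \bigcup_n \cg_n$ yields the disjoint family in (i); countability of $I$ follows from separability of $(\cx, \rho)$ (e.g., via the dyadic reference points furnished by Lemma \ref{daydic}) together with the disjointness of these balls. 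By maximality, for every $x \in \Omega$ the ball $B(x, r(x)/(5A_0^3))$ meets some $B(x_k, r_k/(5A_0^3))$ with $r_k \ge r(x)/2$, and the quasi-triangle inequality then yields $\rho(x, x_k) < [r(x) + r_k]/(5A_0^2) \le 3r_k/(5A_0^2) < r_k$, so $\Omega = \bigcup_k B(x_k, r_k)$, proving the first half of (ii).

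To verify the remaining distance estimates, I would argue directly from the identity $r_k = \rho(x_k, \Omega^\complement)/(2AA_0)$. For $x \in B(x_k, Ar_k)$ and any $z \in \Omega^\complement$, rearranging $\rho(x_k, z) \le A_0[\rho(x_k, x) + \rho(x, z)]$ gives $\rho(x, z) \ge \rho(x_k, z)/A_0 - \rho(x_k, x) \ge 2Ar_k - Ar_k = Ar_k > 0$; this shows both that $B(x_k, Ar_k) \subset \Omega$ (completing (ii)) and, upon restricting to $x \in B(x_k, r_k)$, that $\rho(x, \Omega^\complement) \ge (2A-1)r_k \ge Ar_k$ since $A \ge 1$, which is the lower bound in (iii). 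The upper bound comes from $\rho(x, z) \le A_0[\rho(x, x_k) + \rho(x_k, z)]$: with $x \in B(x_k, r_k)$, taking the infimum over $z \in \Omega^\complement$ yields $\rho(x, \Omega^\complement) \le A_0 r_k + 2AA_0^2 r_k \le 3AA_0^2 r_k$. Property (iv) is then immediate: for any $\varepsilon \in (0, AA_0 r_k)$ one can select $y_k \in \Omega^\complement$ with $\rho(x_k, y_k) < 2AA_0 r_k + \varepsilon < 3AA_0 r_k$.

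For the bounded overlap in (v), suppose $B(x_k, Ar_k) \cap B(x_j, Ar_j) \neq \emptyset$. Applying (iii) at both $x_k$ and $x_j$ and combining with the quasi-triangle inequality produces a two-sided comparison $r_j \sim r_k$ with equivalence constants depending only on $A$ and $A_0$, together with $\rho(x_k, x_j) \le C(A, A_0)\, r_k$. Hence all such $x_j$ lie in a single ball $B(x_k, C_1 r_k)$, and each disjoint ball $B(x_j, r_j/(5A_0^3))$ satisfies both $B(x_j, r_j/(5A_0^3)) \subset B(x_k, C_2 r_k)$ and, via doubling of $\nu$ applied to balls of comparable radii, $\nu(B(x_j, r_j/(5A_0^3))) \ge c\, \nu(B(x_k, r_k))$. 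Summing over such $j$ and invoking disjointness gives
\begin{align*}
\#\lf\{j:\ B(x_j, Ar_j) \cap B(x_k, Ar_k) \neq \emptyset\r\}\cdot c\, \nu(B(x_k, r_k)) \le \nu(B(x_k, C_2 r_k)) \le C\, \nu(B(x_k, r_k)),
\end{align*}
which caps the count by a uniform $L_0$ depending only on $A$, $A_0$, and the doubling constant of $\nu$. Finally for (vi): if $\Omega$ is bounded then $\Omega \subset B(x^*, R)$ for some $x^* \in \cx$ and $R \in (0, \fz)$; disjointness gives $\sum_k \nu(B(x_k, r_k/(5A_0^3))) \le \nu(\Omega) < \fz$, while for $x_k$ with $r_k > \sigma$ the inclusion $B(x^*, R) \subset B(x_k, 2A_0 R)$ together with doubling of $\nu$ produces a uniform lower bound $\nu(B(x_k, \sigma/(5A_0^3))) \ge c_\sigma\, \nu(\Omega) > 0$, so $\{k:\ r_k > \sigma\}$ must be finite.

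The step I expect to be the main obstacle is the bounded overlap (v): it requires careful tracking of several powers of $A_0$ through iterated applications of the quasi-triangle inequality to establish the two-sided comparability $r_j \sim r_k$, and then invoking doubling of $\nu$ in both directions to sandwich $\nu(B(x_j, r_j/(5A_0^3)))$ between constant multiples of $\nu(B(x_k, r_k))$. The other parts are, by contrast, fairly direct transcriptions of the classical Whitney argument with constants inflated by appropriate powers of $A_0$ at each use of the quasi-triangle inequality.
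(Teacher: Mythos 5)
Your argument is the standard Whitney-type construction, and most of it checks out: the verification of (ii)--(iv) from the identity $r_k=\rho(x_k,\Omega^\complement)/(2AA_0)$ is correct, and the overlap and finiteness counts in (v) and (vi) via disjointness plus doubling of $\nu$ are the right mechanism (note only that in (v) the intersection point lies in $B(x_k,Ar_k)$, not in $B(x_k,r_k)$, so you should observe that the two-sided bounds $Ar_k\le\rho(\cdot,\Omega^\complement)\le 3AA_0^2r_k$ hold on all of $B(x_k,Ar_k)$ --- which your own computation for (ii) already gives). For context, the paper does not prove this lemma directly; it invokes \cite[Proposition 4.4]{hhllyy} with $\mu$ replaced by a general doubling measure $\nu$, so a self-contained argument like yours is a legitimate alternative route.

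There is, however, one step that can fail as written: the selection ``from the largest radii downward.'' To define $\cg_n$ you need the layers $\cg_m$ for all $m>n$ to have been constructed already, and this downward recursion has no starting point unless $\sup_{x\in\Omega}r(x)<\fz$. That bound is not guaranteed by the hypotheses: $\nu$ is an arbitrary doubling measure, and $\nu(\Omega)<\fz$ does not force the Whitney radii to be bounded. For instance, take $\cx=\rr$ with Lebesgue structure, $d\nu=(1+|x|)^{-1/2}\,dx$ (an $A_1$ weight, hence doubling), and $\Omega=\bigcup_{k\ge 10}(k^3-\log k,\,k^3+\log k)$: then $\nu(\Omega)<\fz$ but $\rho(k^3,\Omega^\complement)=\log k\to\fz$, so every dyadic layer above any level is eventually nonempty and your induction cannot begin. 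The fix is easy and stays within your scheme: take, by Zorn's lemma, a maximal disjoint subfamily of the whole family $\{B(x,r(x)/(5A_0^3)):x\in\Omega\}$ with no layering, and recover the missing radius comparison from the Whitney property itself --- if $B(x,r(x)/(5A_0^3))$ meets a selected $B(x_k,r_k/(5A_0^3))$, then $\rho(x,x_k)<[r(x)+r_k]/(5A_0^2)$ and the quasi-triangle inequality applied to $\rho(\cdot,\Omega^\complement)$ gives $r(x)\le cA_0r_k$ with an absolute $c$, whence $\rho(x,x_k)<r_k$ and the covering in (ii) still follows. This is the same comparability estimate you already use in (v), so the repair costs nothing; but as stated, the construction is not well defined in the generality the lemma is applied in ($\nu=\mu_w$, $\Omega$ a level set of the grand maximal function, possibly unbounded).
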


\begin{lemma}\label{helem2}
Let $\nu$ be a doubling measure,
$\Omega\st\cx$ an open set with $\nu(\Omega)\in(0,\fz)$,
and $A_0$ as in \eqref{2.1x}.
Suppose that sequences $\{x_k\}_{k\in I}$ and $\{r_k\}_{k\in I}$
are as in Lemma \ref{helem1} with $A:=16A_0^4$ therein.
Then there exist non-negative functions $\{\phi_k\}_{k\in I}$ such that
\begin{enumerate}
\item[{\rm (i)}] for any $k\in I$, $0\le\phi_k\le1$ and
$\supp\phi_k\subset B(x_k,2A_0r_k)$;

\item[{\rm (ii)}] $\sum_{k\in I}\phi_k=\ch1_{\Omega}$;

\item[{\rm (iii)}] for any $k\in I$, $\phi_k\ge L_0^{-1}$
in $B(x_k,r_k)$, where $L_0$ is as in Lemma \ref{helem1};

\item[{\rm (iv)}] there exists a positive constant $C$ such that,
for any $k\in I$,
$\|\phi_k\|_{\cg(x_k,r_k,\eta,\eta)}\le CV_{r_k}(x_k)$
with $\eta$ as in Definition \ref{expati}.
\end{enumerate}
\end{lemma}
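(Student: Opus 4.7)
The plan is to construct $\{\phi_k\}_{k\in I}$ by normalizing a family of H\"older-continuous cutoff functions supported on the Whitney balls. For each $k\in I$, I would first produce an auxiliary function $\psi_k$ satisfying $0\le\psi_k\le 1$, $\psi_k\equiv 1$ on $B(x_k,r_k)$, $\supp\psi_k\subset B(x_k,2A_0r_k)$, and
$$
|\psi_k(x)-\psi_k(y)|\le C\lf[\frac{\rho(x,y)}{r_k}\r]^\eta
\quad\text{for all } x,\,y\in\cx.
$$
Such a $\psi_k$ can be obtained by truncating (a H\"older-regular modification, if necessary, of) the quasi-distance $\rho(\cdot,x_k)/r_k$ to the interval $[1,2A_0]$, then interpolating between a Lipschitz-type estimate with constant $\sim r_k^{-1}$ and the trivial bound $|\psi_k|\le 1$.

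Next, set $\Phi:=\sum_{j\in I}\psi_j$. Since $\Omega=\bigcup_k B(x_k,r_k)$ by Lemma \ref{helem1}(ii) and $\psi_k\equiv 1$ on $B(x_k,r_k)$, one has $\Phi\ge 1$ on $\Omega$ and $\Phi=0$ on $\Omega^\complement$. Moreover, if the support balls $B(x_j,2A_0r_j)$ and $B(x_k,2A_0r_k)$ share a common point, then Lemma \ref{helem1}(iii) applied to both, together with \eqref{2.1x}, forces $r_j\sim r_k$ and hence $B(x_j,Ar_j)\cap B(x_k,Ar_k)\ne\emptyset$; combined with Lemma \ref{helem1}(v), this shows that at most $L_0$ of the $\psi_j$ are nonzero at any given point, and therefore $\Phi\le L_0$ on $\Omega$. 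Define
$$
\phi_k:=\frac{\psi_k}{\Phi}\,\ch1_\Omega.
$$
Properties (i), (ii), and (iii) are then immediate: (i) follows from $\supp\phi_k\subset\supp\psi_k$ and $0\le\psi_k/\Phi\le\psi_k\le 1$ on $\Omega$; (ii) from $\sum_j\psi_j=\Phi$ on $\Omega$; (iii) from $\psi_k\equiv 1$ on $B(x_k,r_k)$ and $\Phi\le L_0$.

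The main obstacle is the verification of (iv). The size bound $|\phi_k|\le\ch1_{B(x_k,2A_0r_k)}$, combined with \eqref{eq-doub}, yields the size part of the test-function norm directly. For the regularity part, the key observation is that whenever $x\in\supp\psi_k$ and $\rho(x,y)\le(2A_0)^{-1}[r_k+\rho(x_k,x)]$, one also has $y\in\Omega$ and in fact $y\in B(x_k,4A_0^2r_k)$; this follows from a quasi-triangle inequality computation using Lemma \ref{helem1}(ii), which gives $\rho(x_k,\Omega^\complement)\gtrsim Ar_k=16A_0^4r_k\gg r_k$, so no point $y$ outside a $O(r_k)$-neighborhood of $x_k$ can satisfy the proximity condition. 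Thus $\Phi(x),\Phi(y)\in[1,L_0]$, and algebraic manipulation yields
$$
\phi_k(x)-\phi_k(y)=\frac{\psi_k(x)-\psi_k(y)}{\Phi(y)}-\frac{\psi_k(x)}{\Phi(x)\Phi(y)}\sum_{j\in I}\lf[\psi_j(x)-\psi_j(y)\r].
$$
At most $2L_0$ indices $j$ contribute nonzero differences in the sum (because either $x$ or $y$ must lie in $\supp\psi_j$, each giving at most $L_0$ choices), and for every such $j$ one has $r_j\sim r_k$ by the preceding comparison. Applying the H\"older estimate for each $\psi_j$ with this comparison yields $|\phi_k(x)-\phi_k(y)|\ls[\rho(x,y)/r_k]^\eta$. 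Renormalizing by $V_{r_k}(x_k)+V(x_k,x)\sim V_{r_k}(x_k)$ and $r_k+\rho(x_k,x)\sim r_k$ on $B(x_k,2A_0r_k)$ (via \eqref{eq-doub} and Lemma \ref{equlem}) then gives $\|\phi_k\|_{\cg(x_k,r_k,\eta,\eta)}\le CV_{r_k}(x_k)$, which completes the proof of (iv).
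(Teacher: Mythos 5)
Your construction is correct and is essentially the argument the paper relies on: the paper omits the proof, citing it as a slight modification of \cite[Proposition 4.5]{hhllyy}, and that proof is exactly this standard Whitney partition of unity — H\"older cutoffs $\psi_k$ (built from the regularized quasi-metric) adapted to the balls of Lemma \ref{helem1}, normalized by $\Phi=\sum_j\psi_j$ with $1\le\Phi\le L_0$ on $\Omega$ via the bounded-overlap property, and the $\cg(x_k,r_k,\eta,\eta)$ bound obtained from $r_j\sim r_k$ for overlapping balls together with $r_k+\rho(x_k,x)\sim r_k$ and $V_{r_k}(x_k)+V(x_k,x)\sim V_{r_k}(x_k)$ on the support. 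No gaps worth flagging.
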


Let $Y(\cx)$, $\bz$, $\gz$, and $\eta$ be as in Proposition \ref{atde}.
Assume that $f\in H_Y^*(\cx)$. From Lemma \ref{7.9.x1},
it follows that $f\in (\icgg)'$.
Next, we aim to obtain the Calder\'on--Zygmund decomposition of $f$
by applying Lemmas \ref{helem1} and \ref{helem2} to
the level set $\{x\in\cx:\ f^\ast(x)>\lz\}$ with $\lz\in(0,\fz)$.
However, this level set may not be open even
in the case that $\rho$ is a metric. To solve this problem,
we borrow some ideas from  \cite[Section 2]{gly08}
and \cite[Theorem 2]{ms79} (see also \cite[Section 4.2]{hhllyy}).

For the quasi-metric $\rho$, by the proof of \cite[Theorem 2]{ms79},
we know that there exist a $\tz\in(0,1)$ and a metric $\widetilde{\rho}$
such that $\widetilde{\rho}\sim \rho^\tz$.
For any $x\in\cx$ and $r\in(0,\fz)$,
define the $\widetilde{\rho}$\emph{-ball} $\widetilde{B}(x,r)$ by setting
$$\widetilde{B}(x,r)
:=\lf\{y\in\cx:\ \widetilde{\rho}(x, y)<r\r\}.$$
Then $(\cx,\widetilde{\rho},\mu)$ is a doubling metric measure space
and, for any $x,\ y\in\cx$ and $r\in(0,\fz)$,
\begin{align}\label{3.22.x1}
\mu(B(x,r+\rho(x,y)))\sim\mu\lf(\widetilde{B}\lf(x,[r+\rho(x,y)]^{\tz}\r)\r)
\sim\mu\lf(\widetilde{B}\lf(x,r^{\tz}+\widetilde{\rho}(x,y)\r)\r),
\end{align}
where the implicit positive constants
are independent of $x$, $y$, and $r$.
Based on the metric $\widetilde{\rho}$, a variant of the space of
test functions was introduced in \cite[Definition 4.6]{hhllyy}.

\begin{definition}\label{hedef1}
For any $x_1\in\cx$, $R\in(0,\fz)$, and
$\widetilde{\bz},\ \widetilde{\gz}\in(0,\fz)$,
the \emph{space $G(x_1,R,\widetilde{\bz},\widetilde{\gz})$}
is defined to be the set of all functions $f$ on $\cx$
satisfying that there exists a positive constant $C$ such that
\begin{enumerate}
\item[{\rm (i)}] (the \emph{size condition}) for any $x\in\cx$,
$$|f(x)|\le C\frac{1}{\mu(\widetilde{B}(x,R+\widetilde{\rho}(x_1,x)))}
\lf[\frac{R}{R+\widetilde{\rho}(x_1,x)}\r]^{\widetilde{\gz}};$$

\item[{\rm (ii)}] (the \emph{regularity condition})
for any $x,\ y\in\cx$ satisfying $\widetilde{\rho}(x,y)\le[R+\widetilde{\rho}(x_1,x)]/2$,
$$|f(x)-f (y)|\le C\lf[\frac{\widetilde{\rho}(x,y)}
{R+\widetilde{\rho}(x_1,x)}\r]^{\widetilde{\bz}}
\frac{1}{\mu(\widetilde{B}(x,R+\widetilde{\rho}(x_1,x)))}
\lf[\frac{R}{R+\widetilde{\rho}(x_1,x)}\r]^{\widetilde{\gz}}.$$
\end{enumerate}
For any  $f\in G(x_1,R,\widetilde{\bz},\widetilde{\gz})$, its norm
$\|f\|_{G(x_1, R, \widetilde{\bz}, \widetilde{\gz})}$ in
$G(x_1,R,\widetilde{\bz},\widetilde{\gz})$ is defined by setting
\begin{align*}
\|f\|_{G(x_1,R,\widetilde{\bz},\widetilde{\gz})}:=
\inf\lf\{C\in(0,\fz):\ \mbox{(i) and (ii) hold true}\r\}.
\end{align*}
\end{definition}

By Lemma \ref{equlem} and \eqref{3.22.x1},
we know that there exists a $\tz\in(0,1)$
such that $$\cg(x_1,r,\bz,\gz)=G(x_1,r^{\tz},\bz/\tz,\gz/\tz)$$
with equivalent norms, and the positive equivalence
constants are independent of $x_1$ and $r$.
For any $\bz,\ \gz\in(0,\eta)$ and $f\in(\icgg)'$,
define the \emph{modified grand maximal function} $f^\star$ of $f$
by setting, for any $x\in\cx$,
$$f^\star(x):=\sup\lf\{|\langle f,\varphi\rangle|:\ \varphi\in\icgg\
\mbox{with}\ \|\varphi\|_{G(x,r^\tz,\bz/\tz,\gz/\tz)}\le 1\ \mbox{for some}
\ r\in(0,\fz)\r\}.$$
It is easy to see that $f^\star\sim f^\ast$ pointwisely on $\cx$.

Next, we establish an embedding lemma for $Y(\cx)$ as in
Proposition \ref{atde}, which is actually an application of
Theorem \ref{embed}.

\begin{lemma}\label{embedding}
Let $Y(\cx)$ be a ball quasi-Banach function space on $\cx$ satisfying Assumption \ref{assump1} with $p_-\in({\omega}/(\omega+\eta),\fz)$,
where $\omega$ is as in \eqref{eq-doub} and $\eta$ as in
Definition \ref{expati}. Further assume that $Y(\cx)$
satisfies Assumption \ref{assump2} with the same $p_-$
as in Assumption \ref{assump1},
$\tz_0\in({\omega}/(\omega+\eta),\underline{p})$,
and $p_0\in(\tz_0,\fz)$, where $\underline{p}$ is as in \eqref{2.1y}.
Let $x_0\in\cx$. Then there exists an $\ez\in(0,1)$ such that
$Y(\cx)$ is continuously embedded into $L^{\tz_0}_{w}(\cx)$
with $w:=[\cm(\ch1_{B(x_0,1)})]^{\ez}\in A_1(\cx)$.
\end{lemma}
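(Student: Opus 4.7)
The plan is to derive Lemma \ref{embedding} as a direct application of Theorem \ref{embed} with the choice $r=\tz_0$. To do so, I need to verify the two hypotheses of Theorem \ref{embed} in this setting: namely, that $Y^{1/\tz_0}(\cx)$ is a ball Banach function space and that the (unpowered) Hardy--Littlewood maximal operator $\cm$ is bounded on the associate space $(Y^{1/\tz_0})'(\cx)$. The first of these is given to us for free by Assumption \ref{assump2}, so the only actual work lies in extracting the boundedness of $\cm$ on $(Y^{1/\tz_0})'(\cx)$ from the boundedness of the powered maximal operator $\cm^{((p_0/\tz_0)')}$ on the same space.

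First I would observe that, since $p_0\in(\tz_0,\fz)$, we have $p_0/\tz_0\in(1,\fz)$ and hence the conjugate exponent $(p_0/\tz_0)'=\frac{p_0/\tz_0}{p_0/\tz_0-1}$ lies in $(1,\fz)$. Then, applying the inequality \eqref{1.7.x1} from Remark \ref{21.1.7.x1} with $\tz_1=1$ and $\tz_2=(p_0/\tz_0)'$, I obtain the pointwise bound
\begin{align*}
\cm(f)(x)=\cm^{(1)}(f)(x)\le \cm^{((p_0/\tz_0)')}(f)(x)
\end{align*}
for any $\mu$-measurable $f$ and every $x\in\cx$. Combining this pointwise estimate with Definition \ref{qbs}(ii) applied to the ball Banach function space $(Y^{1/\tz_0})'(\cx)$ (which is a ball Banach function space by Lemma \ref{assoalso}) and invoking \eqref{5.14.y1} of Assumption \ref{assump2}, I conclude that, for any $f\in(Y^{1/\tz_0})'(\cx)$,
\begin{align*}
\lf\|\cm(f)\r\|_{(Y^{1/\tz_0})'(\cx)}\le\lf\|\cm^{((p_0/\tz_0)')}(f)\r\|_{(Y^{1/\tz_0})'(\cx)}\ls\|f\|_{(Y^{1/\tz_0})'(\cx)}.
\end{align*}
This shows that $\cm$ is bounded on $(Y^{1/\tz_0})'(\cx)$.

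With both hypotheses of Theorem \ref{embed} verified (with $r:=\tz_0$), I invoke Theorem \ref{embed} directly: it produces an $\ez\in(0,1)$ such that $w:=[\cm(\ch1_{B(x_0,1)})]^{\ez}\in A_1(\cx)$ and $Y(\cx)$ is continuously embedded into $L^{\tz_0}_{w}(\cx)$, which is exactly the conclusion of Lemma \ref{embedding}. There is no genuine obstacle here; the only subtle point to highlight in the write-up is the monotonicity \eqref{1.7.x1} of the powered maximal operator in its exponent, which converts the strong hypothesis of Assumption \ref{assump2} (boundedness of a powered operator) into the weaker condition (boundedness of $\cm$ itself) required to feed Theorem \ref{embed}.
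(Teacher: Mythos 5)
Your proposal is correct and follows essentially the same route as the paper: the paper likewise deduces from Assumption \ref{assump2} together with \eqref{1.7.x1} (applied with $\tz_1=1$ and $\tz_2=(p_0/\tz_0)'$) that $\cm$ is bounded on $(Y^{1/\tz_0})'(\cx)$, and then applies Theorem \ref{embed} with $r=\tz_0$. Your write-up simply makes explicit the pointwise monotonicity step and the use of the lattice property, which the paper leaves implicit.
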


\begin{proof}
By Assumption \ref{assump2} and \eqref{1.7.x1} with $\tz_1$
and $\tz_2$ replaced, respectively, by $1$ and $(p_0/\tz_0)'$,
we know that $Y^{1/{\tz_0}}(\cx)$ is a ball Banach function space
and $\cm$ bounded on $(Y^{1/{\tz_0}})'(\cx)$. From this
and Theorem \ref{embed}, we further deduce that
there exists an $\ez\in(0,1)$ such that $Y(\cx)$ is
continuously embedded into $L^{\tz_0}_{w}(\cx)$ with
$w:=[\cm(\ch1_{B(x_0,1)})]^{\ez}\in A_1(\cx)$.
This finishes the proof of Lemma \ref{embedding}.
\end{proof}

The following lemma shows that the set
$\{x\in\cx:\ f^\star(x)>\lz\}$ with $\lz\in(0,\fz)$ is an
open and $\mu_{w}$-measurable set with finite measure.

\begin{lemma}\label{omegafinite}
Let $f\in H^*_{Y}(\cx)$.
For any $\lz\in(0,\fz)$ and any $\mu$-measurable $E\st\cx$,
define
$$\Omega_{\lz}:=\lf\{x\in\cx:\ f^\star(x)>\lz\r\}$$
and
$$\mu_{w}(E):=w(E):=\int_{E}w(x)\,d\mu(x),$$
where $w$ is as in Lemma \ref{embedding}.
Then the following two statements hold true:
\begin{enumerate}
\item[{\rm (i)}] $\mu_{w}$ is a doubling measure;

\item[{\rm (ii)}] for any given $\lz$, $\Omega_{\lz}$ is open under
the topology induced by $\rho$, and $\mu_{w}(\Omega_{\lz})<\fz$.
\end{enumerate}
\end{lemma}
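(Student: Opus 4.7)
The plan is to handle parts (i) and (ii) separately, with (ii) further split into an openness argument and a finite-measure argument.

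For part (i), Theorem \ref{embed} yields $w=[\cm(\ch1_{B(x_0,1)})]^{\ez}\in A_1(\cx)$. Combining the $A_1$ condition \eqref{21.7.14.x1} applied to the balls $B$ and $2B$, the trivial monotonicity $\essinf_{y\in 2B}w(y)\le\essinf_{y\in B}w(y)$, and the doubling \eqref{eq-db1} of $\mu$, I would obtain, for any ball $B\st\cx$,
\begin{align*}
\mu_w(2B)
\le[w]_{A_1(\cx)}\mu(2B)\essinf_{y\in 2B}w(y)
\le[w]_{A_1(\cx)}C_{(\mu)}\mu(B)\essinf_{y\in B}w(y)
\le[w]_{A_1(\cx)}C_{(\mu)}\mu_w(B),
\end{align*}
which shows that $\mu_w$ is doubling.

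For the finiteness claim in part (ii), the plan is to invoke Lemma \ref{embedding} to get the continuous embedding $Y(\cx)\hookrightarrow L^{\tz_0}_w(\cx)$. Since $f\in H_Y^*(\cx)$ gives $f^*\in Y(\cx)$ and $f^\star\sim f^*$ pointwise on $\cx$ (as observed right after the definition of the modified grand maximal function following Definition \ref{hedef1}), it follows that $\|f^\star\|_{L^{\tz_0}_w(\cx)}\ls\|f\|_{H_Y^*(\cx)}<\fz$. The Chebyshev inequality then yields
\begin{align*}
\mu_w(\Omega_\lz)
=\mu_w\lf(\lf\{x\in\cx:\ f^\star(x)>\lz\r\}\r)
\le\lz^{-\tz_0}\int_\cx[f^\star(x)]^{\tz_0}w(x)\,d\mu(x)
\ls\lz^{-\tz_0}\|f\|_{H_Y^*(\cx)}^{\tz_0}<\fz.
\end{align*}

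For the openness of $\Omega_\lz$ under the topology induced by $\rho$, I plan to show that $f^\star$ is lower semi-continuous with respect to $\rho$. Given $x_0\in\Omega_\lz$, one may choose $\lz'\in(\lz,f^\star(x_0))$ and then select $\varphi\in\icgg$ together with some $r\in(0,\fz)$ such that $\|\varphi\|_{G(x_0,r^\tz,\bz/\tz,\gz/\tz)}\le 1$ and $|\langle f,\varphi\rangle|>\lz'$. For $y\in\cx$ with $\rho(y,x_0)$ sufficiently small relative to $r$, I would verify, using the size and regularity conditions of Definition \ref{hedef1}, the comparability \eqref{3.22.x1}, the quasi-triangle inequality \eqref{2.1x}, and the doubling of $\mu$, that $\|\varphi\|_{G(y,s^\tz,\bz/\tz,\gz/\tz)}\le\lz'/\lz$ for a suitably chosen $s\ge r$. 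Then $\psi:=(\lz/\lz')\varphi$ is admissible for $y$ and satisfies $|\langle f,\psi\rangle|>\lz$, whence $f^\star(y)>\lz$. The main obstacle is the careful bookkeeping of constants in this norm comparison, since the scale $s$ enters both the volume factor $\mu(\wz B(z,s^\tz+\wz\rho(y,z)))$ and the decay factor $[s^\tz/(s^\tz+\wz\rho(y,z))]^{\gz/\tz}$ in Definition \ref{hedef1}; one must exploit the flexibility to enlarge $s$ together with the continuity of $\wz\rho$ in order to ensure that the multiplicative loss in this comparison can be made no larger than $\lz'/\lz$.
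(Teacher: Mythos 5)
Your treatment of (i) and of the finiteness claim in (ii) is correct: for (i) you give a direct three-line argument from the $A_1$ condition and \eqref{eq-db1} (the paper instead just cites \cite[Chapter 1, Lemma 12]{st89}), and your finiteness argument is exactly the paper's: $f^\star\sim f^*$, the embedding of Lemma \ref{embedding}, and Chebyshev. The issue is the openness part, which the paper disposes of by citing \cite[Remark 2.9(iii)]{gly08} and which you try to prove directly.

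There your sketch has a genuine gap: you adjust the scale in the wrong direction. If you pass from the admissible pair $(x_0,r)$ to $(y,s)$ with $s\ge r$, then for those $x$ with $\wz\rho(y,x)\ge\wz\rho(x_0,x)$ the new radius $s^\tz+\wz\rho(y,x)$ exceeds the old one $r^\tz+\wz\rho(x_0,x)$, so the volume factor in Definition \ref{hedef1} must be compared via the doubling property, and the regularity condition must be verified on a strictly larger set of pairs $(x,x')$ than the original norm controls. Both comparisons cost a multiplicative constant that does \emph{not} tend to $1$ as $\rho(y,x_0)\to0$: \eqref{eq-doub} only gives $\mu(\lambda\wz B)\le C_{(\mu)}\lambda^\oz\mu(\wz B)$ with $C_{(\mu)}$ fixed, and $t\mapsto\mu(\wz B(x,t))$ need not be (uniformly) right-continuous, since spheres may carry positive measure. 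Since $\lz'/\lz$ can be arbitrarily close to $1$ (when $f^\star(x_0)$ barely exceeds $\lz$), a fixed structural loss is fatal: you only obtain $f^\star(y)\gs f^\star(x_0)$, not $f^\star(y)>\lz$, and openness does not follow. The repair — and the point of using the genuine metric $\wz\rho$ — is to \emph{shrink} the scale: once $\wz\rho(x_0,y)<r^\tz$, set the new scale equal to $r^\tz-\wz\rho(x_0,y)$. Then the triangle inequality gives, for every $x$,
\begin{align*}
\lf[r^\tz-\wz\rho(x_0,y)\r]+\wz\rho(y,x)\le r^\tz+\wz\rho(x_0,x),
\end{align*}
so the volume factor and the admissible range of the regularity condition transfer with constant exactly $1$ (no doubling needed), and the only loss is the factor $[r^\tz/(r^\tz-\wz\rho(x_0,y))]^{\gz/\tz}$, which tends to $1$ as $y\to x_0$ and can thus be made $\le\lz'/\lz$; since $\wz\rho\sim\rho^\tz$, the resulting $\wz\rho$-neighborhood contains a $\rho$-ball, giving openness in the $\rho$-topology. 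With this correction your argument matches the one behind the cited remark; as written, the enlargement step would fail.
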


\begin{proof}
First, by \eqref{eq-doub}, the fact that $w\in A_1(\cx)$,
and \cite[Chapter 1, Lemma 12]{st89},
we know that $\mu_{w}$ is a doubling measure,
namely, (i) holds true. Next, we prove (ii).
By an argument similar to that used in \cite[Remark 2.9(iii)]{gly08},
we conclude that $\Omega_{\lz}$ is open
under the topology induced by $\rho$.
Moreover, from the fact that $f^\star\sim f^\ast$,
and Lemma \ref{embedding}, we deduce that,
for any given $\lz\in(0,\fz)$,
\begin{align*}
\mu_{w}(\Omega_{\lz})&=\int_{\Omega_{\lz}}w(x)\,d\mu(x)
\le\int_{\cx}\lf[\frac{f^\star(x)}{\lz}\r]^{\tz_0}w(x)\,d\mu(x)\\
&\sim\lz^{-\tz_0}\lf\|f^*\r\|^{\tz_0}_{L^{\tz_0}_{w}(\cx)}
\ls\lz^{-\tz_0}\lf\|f^*\r\|_{Y(\cx)}^{\tz_0}
\sim\lz^{-\tz_0}\|f\|^{\tz_0}_{H^*_{Y}(\cx)}<\fz,
\end{align*}
where $\tz_0$ is as in Lemma \ref{embedding} and the implicit
positive constants are independent of $\lz$ and $f$. This proves
(ii) and hence finishes the proof of Lemma \ref{omegafinite}.
\end{proof}

Let $Y(\cx)$, $\bz$, $\gz$, and $\eta$ be as in Proposition \ref{atde}.
Assume that $f\in H_Y^*(\cx)$. To decompose $f$ into a sum of $(Y(\cx),\fz)$-atoms,
we let $\{P_m\}_{m\in\zz}$ be a 1-exp-ATI and,
for any $m\in\zz$, let
\begin{align}\label{4.8x}
f^{(m)}:=P_mf.
\end{align}
In the remainder of this section,
we always assume that $m\in\nn$.
By the fact that $\bz$, $\gz\in(\omega(1/\tz_0-1),\eta)$,
Lemmas \ref{1eptilem}(iii), \ref{zhoulem1}, and \ref{zhoulem2},
and an argument similar to that used in \cite[p.\,210]{zhy},
we know that
\begin{align}\label{7.21.y1}
f^{(m)}\rightarrow f
\end{align}
in $(\icgg)'$ as $m\rightarrow\fz$ and, for any $m\in\nn$ and $x\in\cx$,
$(f^{(m)})^*(x)\ls f^*(x)$ with the implicit positive constant
independent of $m$, $f$, and $x$. Next, we deal with $f^{(m)}$.
To this end, for any $j\in\zz$, let
\begin{align}\label{4.10x}
\Omega_{j}:=\lf\{x\in\cx:\ f^\star(x)>2^j\r\}.
\end{align}
By Lemma \ref{omegafinite}(ii), we know that, for any $j\in\zz$,
$\Omega_{j}$ is open and $\mu_w(\Omega_{j})<\fz$.
From this, and Lemmas \ref{helem1} and \ref{helem2}
with $\nu:=\mu_w$ therein, we further deduce that,
for any $j\in\zz$, there exist $\{x_{j,k}\}_{k\in I_j}\subset\cx$
with $I_j$ being a countable index set,
$\{r_{j,k}\}_{k\in I_j}\subset(0,\fz)$,
$L_0\in\nn$, and a sequence $\{\phi_{j,k}\}_{k\in I_j}$ of
non-negative functions satisfying all the conclusions of
Lemmas \ref{helem1} and \ref{helem2}. For any $m\in\nn$,
$j\in\zz$, and $k\in I_j$, let
\begin{align}\label{5.28.x1}
P_{j,k}^{(m)}:=\frac{1}{\|\phi_{j,k}\|_{L^1(\cx)}}
\int_{\cx}f^{(m)}(\xi)\phi_{j,k}(\xi)\,d\mu(\xi)\quad\mbox{and}\quad
b_{j,k}^{(m)}:=\lf(f^{(m)}-P_{j,k}^{(m)}\r)\phi_{j,k}.
\end{align}

We then have the following technical lemma.

\begin{lemma}\label{prop4.13}
For any $m\in\nn$, $j\in\zz$, and $k\in I_j$,
let $b_{j,k}^{(m)}$ be as in \eqref{5.28.x1}.
Then the following statements hold true:
\begin{enumerate}
\item[{\rm (i)}] there exists a positive constant $C$ such that,
for any $m\in\nn$ and $j\in\zz$,
\begin{align}\label{5.24.x5m}
\lf\|\sum_{k\in I_j}\lf(b_{j,k}^{(m)}\r)^*\r\|_{Y(\cx)}
&\le C\lf\|f^\ast\ch1_{\Omega_j}\r\|_{Y(\cx)};
\end{align}
moreover, $b_j^{(m)}:=\sum_{k\in I_j}b_{j,k}^{(m)}$
converges in $(\icgg)'$ satisfying that $b_j^{(m)}\in H_Y^*(\cx)$
and, for any $x\in\cx$,
\begin{align}\label{5.24.x2m}
\lf(b_j^{(m)}\r)^\ast(x)&\le C2^j\sum_{k\in I_j}
\frac{{V_{r_{j,k}}(x_{j,k})}}{{V_{r_{j,k}}(x_{j,k})}+V(x_{j,k},x)}
\lf[\frac{r_{j,k}}{r_{j,k}
+\rho(x_{j,k},x)}\r]^\bz+Cf^\ast(x)\ch1_{\Omega_j}(x);
\end{align}

\item[{\rm (ii)}] for any $m\in\nn$, $b_j^{(m)}\rightarrow0$
in $(\icgg)'$ as $j\rightarrow\fz$;

\item[{\rm (iii)}] for any $m\in\nn$ and $j\in\zz$,
let $g_j^{(m)}:=f^{(m)}-b_j^{(m)}$. Then, for any $m\in\nn$,
$g_j^{(m)}\rightarrow0$ in $(\icgg)'$ as $j\rightarrow-\fz$.
\end{enumerate}
\end{lemma}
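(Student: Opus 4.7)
The plan is to follow a Calder\'on--Zygmund style analysis of each piece $b_{j,k}^{(m)}$, transfer the resulting pointwise bounds into $Y(\cx)$-norm bounds via Assumption \ref{assump1} and Remark \ref{rek3.19}, and for the convergence assertions (ii) and (iii) invoke Lemma \ref{embedding} to reduce matters to dominated convergence in the weighted Lebesgue space $L^{\tz_0}_w(\cx)$, where explicit pointwise integration is available.

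The first step is the pointwise estimate on $(b_{j,k}^{(m)})^*$. Since $b_{j,k}^{(m)}$ is supported in $B(x_{j,k},2A_0r_{j,k})$ and the definition of $P_{j,k}^{(m)}$ forces $\int_\cx b_{j,k}^{(m)}\,d\mu=0$, each piece has the cancellation of an atom. For $x$ in an enlarged ball $\lz B(x_{j,k},r_{j,k})$ with $\lz$ sufficiently large, one bounds $(b_{j,k}^{(m)})^*$ by the Hardy--Littlewood maximal function as in \eqref{5.9.x3}, using $|f^{(m)}|\ls f^*$ for the first piece and $|P_{j,k}^{(m)}|\ls 2^j$ for the second; the latter follows from Lemma \ref{helem1}(iv), which produces $y_{j,k}\notin\Omega_j$ within quasi-distance $\sim r_{j,k}$ of $x_{j,k}$, combined with Lemma \ref{helem2}(iv), which allows one to regard $\phi_{j,k}/\|\phi_{j,k}\|_{L^1(\cx)}$ as a normalized element of $\cg(x_{j,k},r_{j,k},\eta,\eta)$ and pair it against $f^\star$ at $y_{j,k}$ to get $|P_{j,k}^{(m)}|\ls f^\star(y_{j,k})\ls 2^j$. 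For $x$ outside the enlarged ball, the cancellation together with the H\"older regularity \eqref{3.31.x2} of any admissible test function yields a bound of the form $2^j V_{r_{j,k}}(x_{j,k})[V_{r_{j,k}}(x_{j,k})+V(x_{j,k},x)]^{-1}[r_{j,k}/(r_{j,k}+\rho(x_{j,k},x))]^\bz$. Summing in $k$ and using the bounded overlap in Lemma \ref{helem1}(v) gives \eqref{5.24.x2m}. To pass to the $Y(\cx)$-norm bound \eqref{5.24.x5m}, one exploits the pointwise domination
\[
\frac{V_{r_{j,k}}(x_{j,k})}{V_{r_{j,k}}(x_{j,k})+V(x_{j,k},x)}\lf[\frac{r_{j,k}}{r_{j,k}+\rho(x_{j,k},x)}\r]^\bz\ls\lf[\cm\lf(\ch1_{B(x_{j,k},r_{j,k})}\r)(x)\r]^{(\bz+\oz)/\oz}
\]
(a standard doubling-measure computation using \eqref{eq-doub}), applies Assumption \ref{assump1} on the convexification $Y^{(\bz+\oz)/\oz}(\cx)$ (legitimate since $\bz>\oz(1/\tz_0-1)$ forces $\oz/(\bz+\oz)<\tz_0<p_-$), invokes Lemma \ref{helem1}(i) and Remark \ref{rek3.19} to obtain $\|\sum_k\ch1_{B_{j,k}}\|_{Y(\cx)}\ls\|\ch1_{\Omega_j}\|_{Y(\cx)}$, and absorbs $2^j\ch1_{\Omega_j}\le f^\ast\ch1_{\Omega_j}$. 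The convergence of $\sum_kb_{j,k}^{(m)}$ in $(\icgg)'$ and the membership $b_j^{(m)}\in H_Y^*(\cx)$ then follow by pairing with $\varphi\in\icgg$, dominating $|\langle b_{j,k}^{(m)},\varphi\rangle|$ by $(b_{j,k}^{(m)})^*$ at a reference point, and invoking Definition \ref{qbs} as in the proof of Lemma \ref{7.9.x1}.

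For (ii), the splitting $b_{j,k}^{(m)}=f^{(m)}\phi_{j,k}-P_{j,k}^{(m)}\phi_{j,k}$, combined with $\sum_k\phi_{j,k}=\ch1_{\Omega_j}$, $|f^{(m)}|\ls f^\ast$, and $|P_{j,k}^{(m)}|\ls 2^j\le f^\ast$ on $\Omega_j$, yields $|b_j^{(m)}|\ls f^\ast\ch1_{\Omega_j}$. Lemma \ref{embedding} places $f^\ast\in L^{\tz_0}_w(\cx)$ and hence $f^\ast<\fz$ $\mu$-a.e.; since $\mu_w(\Omega_j)\to 0$ as $j\to\fz$, one has $\ch1_{\Omega_j}\to 0$ $\mu$-a.e., and the dominated convergence theorem with dominating function $f^\ast|\varphi|\in L^1(\cx)$ (integrability of which follows as in the proof of Theorem \ref{relation}(i)) gives $\langle b_j^{(m)},\varphi\rangle\to 0$ for any $\varphi\in\icgg$. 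For (iii), $g_j^{(m)}=f^{(m)}$ on $\Omega_j^\com$ with $|f^{(m)}|\le f^\ast\le 2^j$, while $g_j^{(m)}=\sum_kP_{j,k}^{(m)}\phi_{j,k}$ on $\Omega_j$ with absolute value $\ls 2^j$ by the bounded overlap; hence $|g_j^{(m)}(x)|\ls 2^j$ uniformly in $x$, and pairing with $\varphi\in\icgg$ together with the $L^1$-integrability of $P_\gz(x_0,\cdot;1)$ (as in the proof of Theorem \ref{relation}(i)) yields $|\langle g_j^{(m)},\varphi\rangle|\ls 2^j\|\varphi\|_{\icgg}\to 0$ as $j\to-\fz$.

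The main obstacle is the bound $|P_{j,k}^{(m)}|\ls 2^j$: one must interpret the average as a pairing of $f^{(m)}$ against a normalized test function concentrated near $x_{j,k}$, transfer this pairing to an $f^\star$-bound at the reference point $y_{j,k}\notin\Omega_j$ produced by Lemma \ref{helem1}(iv), and along the way carefully track the sizes of $\phi_{j,k}$ and $r_{j,k}$ through the auxiliary metric $\widetilde\rho$ and Lemmas \ref{helem1}--\ref{helem2}. Once this step is in place, the passage to $Y(\cx)$-norm bounds via Assumption \ref{assump1} and Remark \ref{rek3.19}, and the convergence assertions via Lemma \ref{embedding}, are essentially routine.
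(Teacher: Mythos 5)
Your argument for part (ii) has a genuine gap. It hinges on dominated convergence with the majorant $f^\ast|\varphi|$, which you claim lies in $L^1(\cx)$ ``as in the proof of Theorem \ref{relation}(i)''. That step is not available here: Theorem \ref{relation} additionally assumes that $\cm$ is bounded on $Y^{1/r_2}(\cx)$ for some $r_2\in[1,\fz)$, an assumption which is not among the standing hypotheses of Lemma \ref{prop4.13}; under Assumptions \ref{assump1} and \ref{assump2} with $p_-\le1$ one only controls convexifications $Y^{1/\tz}$ with $\tz<p_-<1$. In the Hardy regime the grand maximal function of $f\in H_Y^*(\cx)$ need not be locally integrable (already for $Y(\cx)=L^p(\cx)$ with $p<1$), so $\int_{\Omega_j}f^\ast|\varphi|\,d\mu$ can be infinite for every $j$, your bound $|\langle b_j^{(m)},\varphi\rangle|\le\int_{\cx} f^\ast\ch1_{\Omega_j}|\varphi|\,d\mu$ becomes vacuous, and the identification $\langle b_j^{(m)},\varphi\rangle=\int_{\cx} b_j^{(m)}\varphi\,d\mu$ (interchanging the $k$-sum with a global integral) suffers from the same problem. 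The paper's proof of (ii) is designed precisely to avoid integrating $f^\ast$ to the first power: it bounds $|\langle b_j^{(m)},\varphi\rangle|\ls\inf_{x\in B(x_0,1)}(b_j^{(m)})^\ast(x)\ls\|(b_j^{(m)})^\ast\|_{L^{\tz_0}_w(\cx)}$, feeds \eqref{5.24.x2m} into the Fefferman--Stein vector-valued maximal inequality on $L^{\tz_0(\bz+\oz)/\oz}_w(\cx)$ to obtain $\ls\|f^\ast\ch1_{\Omega_j}\|_{L^{\tz_0}_w(\cx)}$, and only then applies dominated convergence, whose majorant $(f^\ast)^{\tz_0}w$ \emph{is} integrable by Lemma \ref{embedding}. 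Some such detour through the exponent $\tz_0<1$ is indispensable.

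There is also a secondary flaw in your part (i): for $x$ in the enlarged ball you bound $(b_{j,k}^{(m)})^\ast$ by $\cm(b_{j,k}^{(m)})$ as in \eqref{5.9.x3} and then invoke $|f^{(m)}|\ls f^\ast$, but this yields $\cm(f^\ast\ch1_{B(x_{j,k},\cdot)})(x)$, which is not pointwise $\ls f^\ast(x)$; hence neither the second term of \eqref{5.24.x2m} nor the near-field part of \eqref{5.24.x5m} follows, and, unlike in Proposition \ref{atre}, there is no $L^q$ normalization of $b_{j,k}^{(m)}$ that would let you run Lemma \ref{claatlem} afterwards. The correct near-field bound is $(b_{j,k}^{(m)})^\ast(x)\ls f^\ast(x)$ for $x\in B(x_{j,k},16A_0^4r_{j,k})$, obtained by testing: for a normalized $\psi$ one writes $\langle b_{j,k}^{(m)},\psi\rangle=\langle f^{(m)},\phi_{j,k}\psi\rangle-P_{j,k}^{(m)}\int_{\cx}\phi_{j,k}\psi\,d\mu$, uses Lemma \ref{helem2}(iv) to see that $\phi_{j,k}\psi$ is a harmless multiple of an admissible test function, and uses $2^j<f^\star\sim f^\ast$ on $\Omega_j$; this is exactly the estimate the paper imports from \cite[(4.21)]{zhy}. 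The rest of your proposal is sound and consistent with the paper: the far-field estimate, the passage to the $Y(\cx)$-norm via the domination by $[\cm(\ch1_{B(x_{j,k},Ar_{j,k})})]^{(\bz+\oz)/\oz}$ together with Assumption \ref{assump1} (a legitimate alternative to the paper's annular decomposition with $h\in(\oz/(\oz+\bz),\tz_0)$), the convergence of $\sum_{k\in I_j}b_{j,k}^{(m)}$ in $(\icgg)'$, and part (iii) via $\|g_j^{(m)}\|_{L^\fz(\cx)}\ls2^j$.
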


\begin{proof}
Let all the symbols be as in the present lemma and
$${\rm{J}}:=
\frac{{V_{r_{j,k}}(x_{j,k})}}{{V_{r_{j,k}}(x_{j,k})}+V(x_{j,k},x)}
\lf[\frac{r_{j,k}}{r_{j,k}+\rho(x_{j,k},x)}\r]^\bz.$$
We first prove (i). By \cite[(4.21)]{zhy} and
Definition \ref{qbs}(ii), we conclude that,
for any $m\in\nn$ and $j\in\zz$,
\begin{align}\label{5.24.y1}
\lf\|\sum_{k\in I_j}\lf(b_{j,k}^{(m)}\r)^\ast\r\|_{Y(\cx)}
&\ls\lf\|\sum_{k\in I_j}f^\ast\ch1_{B(x_{j,k},Ar_{j,k})}\r\|_{Y(\cx)}
+\lf\|\sum_{k\in I_j}2^jJ
\ch1_{[B(x_{j,k},Ar_{j,k})]^\complement}\r\|_{Y(\cx)}\noz\\
&=:{\rm I}_1+{\rm I}_2,
\end{align}
where $A:=16A_0^4$. By (ii) and (v) of Lemma \ref{helem1},
we obtain
\begin{align}\label{7.6.x1}
{\rm I}_1&\ls\lf\|f^\ast\ch1_{\Omega_j}\r\|_{Y(\cx)}.
\end{align}
To estimate ${\rm I}_2$, since $\bz\in(\omega(1/\tz_0-1),\eta)$,
it follows that we can choose $h\in(\frac{\oz}{\oz+\bz},\tz_0)$.
By Definition \ref{qbs}(ii), Lemma \ref{lem6.2}, \eqref{eq-doub},
Definition \ref{cvex}, $\oz(1/h-1)<\bz$, Assumption \ref{assump1},
\eqref{4.10x} with the fact that $f^\star\sim f^\ast$,
and (ii) and (v) of Lemma \ref{helem1}, we conclude that
\begin{align*}
{\rm I}_2&\sim\lf\|\sum_{k\in I_j}
\sum_{s\in\zz_+}2^jJ\ch1_{B(x_{j,k},2^{s+1}Ar_{j,k})
\setminus B(x_{j,k},2^{s}Ar_{j,k})}\r\|_{Y(\cx)}\\
&\ls\lf\|\sum_{k\in I_j}2^j\sum_{s\in\zz_+}
2^{-s\bz}\frac{{V_{r_{j,k}}(x_{j,k})}}{V_{2^{s}Ar_{j,k}}(x_{j,k})}
\ch1_{B(x_{j,k},2^{s+1}Ar_{j,k})}\r\|_{Y(\cx)}\noz\\
&\sim\lf\|\sum_{k\in I_j}2^j\sum_{s\in\zz_+}2^{-s\bz}
\lf[\frac{{V_{2^{s}Ar_{j,k}}(x_{j,k})}}{V_{Ar_{j,k}}(x_{j,k})}\r]
^{\frac{1}{h}-1}
\lf[\frac{V_{Ar_{j,k}}(x_{j,k})}{{V_{2^{s}Ar_{j,k}}(x_{j,k})}}\r]
^{\frac{1}{h}}\ch1_{B(x_{j,k},2^{s+1}Ar_{j,k})}\r\|_{Y(\cx)}\noz\\
&\ls\lf\|\sum_{k\in I_j}2^j
\sum_{s\in\zz_+}2^{[\oz(\frac1h-1)-\bz]s}
\lf[\cm\lf(\ch1_{B(x_{j,k},Ar_{j,k})}\r)\r]
^{\frac{1}{h}}\r\|_{Y(\cx)}
\ls\lf\|\lf\{\sum_{k\in I_j}2^j
\lf[\cm\lf(\ch1_{B(x_{j,k},Ar_{j,k})}\r)\r]
^{\frac{1}{h}}\r\}^h\r\|_{Y^{\frac1h}(\cx)}^{\frac1h}\noz\\
&\ls\lf\|\sum_{k\in I_j}
2^j\ch1_{B(x_{j,k},Ar_{j,k})}\r\|_{Y(\cx)}
\ls\lf\|\sum_{k\in I_j}f^\ast
\ch1_{B(x_{j,k},Ar_{j,k})}\r\|_{Y(\cx)}
\ls\lf\|f^\ast\ch1_{\Omega_j}\r\|_{Y(\cx)},\noz
\end{align*}
which, combined with \eqref{5.24.y1} and \eqref{7.6.x1},
further implies \eqref{5.24.x5m}.

Next, we show that, for any $m\in\nn$ and $j\in\zz$,
$\sum_{k\in I_j}b_{j,k}^{(m)}$ converges in $(\icgg)'$.
Without loss of generality,
we may assume that $\vz\in\icgg$ with $\|\vz\|_{\icgg}\le1$.
Let $x_0\in\cx$ be as in Definition \ref{test}.
From an argument similar to that used in the proof of \eqref{3.19.x1},
(ii) and (iv) of Definition \ref{qbs}, and \eqref{5.24.x5m},
we deduce that, for any $m\in\nn$ and $j\in\zz$,
\begin{align*}
\sum_{k\in I_j}\lf|\lf\langle b_{j,k}^{(m)},\varphi\r\rangle\r|
&\ls\sum_{k\in I_j}\inf_{x\in B(x_0,1)}\lf(b_{j,k}^{(m)}\r)^*(x)
\ls\inf_{x\in B(x_0,1)}\sum_{k\in I_j}\lf(b_{j,k}^{(m)}\r)^*(x)\\
&\ls\lf\|\sum_{k\in I_j}\lf(b_{j,k}^{(m)}\r)^*\r\|_{Y(\cx)}
\lf\|\ch1_{B(x_0,1)}\r\|_{Y(\cx)}^{-1}
\sim\lf\|\sum_{k\in I_j}\lf(b_{j,k}^{(m)}\r)^*\r\|_{Y(\cx)}\\
&\ls\lf\|f^\ast\ch1_{\Omega_j}\r\|_{Y(\cx)}
\ls\|f\|_{H_Y^*(\cx)}<\fz,
\end{align*}
which further implies that
$\sum_{k\in I_j}b_{j,k}^{(m)}$ in $(\icgg)'$,
where the implicit positive constants depend on $x_0$.
Let $b_j^{(m)}:=\sum_{k\in I_j}b_{j,k}^{(m)}$ in $(\icgg)'$.
Then, by \eqref{5.24.x5m} and \cite[(4.21)]{zhy},
we obtain $b_j^{(m)}\in H_Y^*(\cx)$ and \eqref{5.24.x2m}.
This finishes the proof of (i).

Next, we prove (ii). Without loss of generality,
we may assume that $\vz\in\icgg$ with $\|\vz\|_{\icgg}\le 1$.
Let $x_0\in\cx$ be as in Definition \ref{test}.
By an argument similar to that used in
the proof of \eqref{3.19.x1}, we conclude that,
for any $m\in\nn$ and $j\in\zz$,
\begin{align}\label{6.23.x1}
\lf|\lf\langle b_j^{(m)},\varphi\r\rangle\r|
&\ls\inf_{x\in B(x_0,1)}\lf(b_j^{(m)}\r)^*(x)
\ls\lf\|\lf(b_j^{(m)}\r)^*\r\|_{L^{\tz_0}_{w}(\cx)}
\lf\|\ch1_{B(x_0,1)}\r\|_{L^{\tz_0}_{w}(\cx)}^{-1}
\ls\lf\|\lf(b_j^{(m)}\r)^*\r\|_{L^{\tz_0}_{w}(\cx)},
\end{align}
where $w$ is as in Lemma \ref{embedding} and the implicit
positive constants depend on $x_0$.
Moreover, on one hand, by \eqref{hlmax},
we know that, for any $j\in\zz$, $k\in I_j$,
and $x\in B(x_{j,k},2A_0Ar_{j,k})$,
\begin{align*}
{\rm{J}}\le1\ls\lf[\cm\lf(\ch1_{B(x_{j,k},Ar_{j,k})}\r)(x)\r]
^{{\frac{\bz+\oz}{\oz}}};
\end{align*}
on the other hand, by \eqref{eq-doub}, we conclude that,
for any $j\in\zz$, $k\in I_j$, and $x\in(B(x_{j,k},2A_0Ar_{j,k}))^\complement$,
\begin{align*}
{\rm{J}}\le\frac{{V_{r_{j,k}}(x_{j,k})}}{V(x_{j,k},x)}
\lf[\frac{r_{j,k}}{\rho(x_{j,k},x)}\r]^\bz
\ls\lf[\cm\lf(\ch1_{B(x_{j,k},Ar_{j,k})}\r)(x)\r]
^{{\frac{\bz+\oz}{\oz}}}.
\end{align*}
To summarize, for any $j\in\zz$, $k\in I_j$, and $x\in\cx$,
it holds true that
\begin{align*}
{\rm{J}}\ls\lf[\cm\lf(\ch1_{B(x_{j,k},Ar_{j,k})}\r)(x)\r]
^{{\frac{\bz+\oz}{\oz}}}.
\end{align*}
From this, \eqref{6.23.x1}, \eqref{5.24.x2m}, the facts that $\bz>\omega(1/{\tz_0}-1)$ and $f^\star\sim f^\ast$,
the Fefferman--Stein vector-valued maximal inequality of $L^{[\tz_0(\bz+\oz)]/{\oz}}_{w}(\cx)$ (see \cite[Theorem 6.5(ii)]{fmy19}),
and (ii) and (v) of Lemma \ref{helem1}, we deduce that,
for any $m\in\nn$ and $j\in\zz$,
\begin{align*}
\lf|\lf\langle b_j^{(m)},\varphi\r\rangle\r|
&\ls\lf\|2^j\sum_{k\in I_j}
\lf[\cm\lf(\ch1_{B(x_{j,k},Ar_{j,k})}\r)\r]
^{{\frac{\bz+\oz}{\oz}}}\r\|_{L^{\tz_0}_{w}(\cx)}
+\lf\|f^\ast\ch1_{\Omega_j}\r\|_{L^{\tz_0}_{w}(\cx)}\\
&\sim\lf\|\lf\{2^j\sum_{k\in I_j}
\lf[\cm\lf(\ch1_{B(x_{j,k},Ar_{j,k})}\r)\r]
^{{\frac{\bz+\oz}{\oz}}}\r\}^{\frac{\oz}{\bz+\oz}}\r\|
^{\frac{\bz+\oz}{\oz}}_{L^{\frac{\tz_0(\bz+\oz)}{\oz}}_{w}(\cx)}
+\lf\|f^\ast\ch1_{\Omega_j}\r\|_{L^{\tz_0}_{w}(\cx)}\\
&\ls\lf\|2^j\sum_{k\in I_j}
\ch1_{B(x_{j,k},Ar_{j,k})}\r\|_{L^{\tz_0}_{w}(\cx)}
+\lf\|f^\ast\ch1_{\Omega_j}\r\|_{L^{\tz_0}_{w}(\cx)}
\ls\lf\|f^\ast\ch1_{\Omega_j}\r\|_{L^{\tz_0}_{w}(\cx)},
\end{align*}
which, combined with the dominated convergence theorem and the
fact that $\Omega_j\downarrow\emptyset$ as $j\rightarrow\fz$,
then completes the proof of (ii).

Finally, we prove (iii). By Lemmas \ref{zhoulem1}, \ref{zhoulem2},
\ref{helem1}, and \ref{helem2}, and an argument similar to that
used in the proof of \cite[(4.26)]{zhy}, we conclude that,
for any $m\in\nn$ and $j\in\zz$,
$\|g_j^{(m)}\|_{L^{\fz}(\cx)}\ls 2^j$,
which further implies (iii) and hence completes
the proof of Lemma \ref{prop4.13}.
\end{proof}

Next, we recall the atomic characterization of
Musielak--Orlicz Hardy spaces on $\cx$,
which plays a key role in the proof of Proposition \ref{atde}
and was originally established in \cite{fmy19}.
To this end, we first recall some notions on
Musielak--Orlicz Hardy spaces.

\begin{definition}\label{fuvzdefn}
Let $\beta,\ \gamma \in (0, \eta)$ with $\eta$ as
in Definition \ref{expati}, and $\varphi$ be a growth
function as in Remark \ref{qbsdefrem}(iii).
Then the \emph{Musielak--Orlicz Hardy space} $H^{*,\vz}(\cx)$
is defined by setting
$$H^{*,\vz}(\cx):=\lf\{f\in\lf(\icgg\r)':\
\|f\|_{H^{*,\vz}(\cx)}:=\lf\|f^*\r\|_{L^{\vz}(\cx)}<\fz\r\}.$$
\end{definition}

\begin{definition}\label{fudefn}
Let $E$ be a $\mu$-measurable subset of $\cx$ and $q\in[1,\fz]$,
the \emph{space} $L^q_{\vz}(E)$ is defined to be the set of all
$\mu$-measurable functions $f$ on $E$ such that
\begin{align*}
\|f\|_{L^q_\vz(E)}:=
\begin{cases}
\displaystyle\sup_{t\in(0,\fz)}\lf[\frac1{\vz(E,t)}\int_{E}
|f(x)|^q\vz(x,t)\,d\mu(x)\r]^{1/q}
\ \ &\text{when}\ q\in[1,\fz),\\
\|f\|_{L^{\fz}(E)} \ \ &\text{when}\ q=\fz
\end{cases}
\end{align*}
is finite, here and thereafter,
$\vz(E,t):=\int_{E}\vz(x,t)\,d\mu(x)$.
\end{definition}

\begin{definition}\label{fuatomdefn}
Let $\oz$ be as in \eqref{eq-doub}, $\varphi$ a growth function
as in Remark \ref{qbsdefrem}(iii), $q\in(q(\vz),\fz]$, and
\begin{align*}
m(\vz):=\lf\lfloor\oz\lf[\frac{q(\vz)}{i(\vz)}-1\r]\r\rfloor\le0,
\end{align*}
where $q(\vz)$ is as in \eqref{qvz} and
\begin{align*}
i(\vz):=\sup\lf\{p\in(0,\fz):\ \vz\ \mbox{is of uniformly lower type}\ p\r\}.
\end{align*}
A $\mu$-measurable function $a$ is called a \emph{$(\vz,q)$-atom}
supported in a ball $B\st\cx$ if the following three conditions
hold true:
\begin{enumerate}
\item[{\rm (i)}] $\supp a:=\{x\in\cx:\ a(x)\neq0\}\st B$;

\item[{\rm (ii)}] $a\in L^q_{\vz}(B)$ and
$\|a\|_{L^q_{\vz}(B)}\le\|\ch1_{B}\|^{-1}_{L^{\vz}(\cx)}$;

\item[{\rm (iii)}] $\int_{\cx}a(x)\,d\mu(x)=0$.
\end{enumerate}
\end{definition}

The following lemma is just \cite[Lemma 4.9]{fmy19}.

\begin{lemma}\label{fulem}
Let $\beta,\ \gamma \in (0, \eta)$ with $\eta$ as in Definition
\ref{expati}, and $\vz$ be as in Definition \ref{fuatomdefn}.
Then $H^{*,\vz}(\cx)$ continuously embeds into $(\icgg)'$.
\end{lemma}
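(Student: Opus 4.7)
The plan is to mimic the proof of Lemma \ref{7.9.x1} almost verbatim, taking advantage of the fact that, by Remark \ref{qbsdefrem}(iii), the Musielak--Orlicz space $L^{\vz}(\cx)$ is itself a ball quasi-Banach function space. Fix a reference point $x_0 \in \cx$ as in Definition \ref{test}, let $f \in H^{*,\vz}(\cx)$ and $h \in \icgg$, and, by homogeneity of $\|\cdot\|_{\icgg}$, reduce to the case $\|h\|_{\icgg} \le 1$.

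First, I would invoke the observation after Definition \ref{test} (that $\cg(x,r,\bz,\gz) = \cg(\bz,\gz)$ with equivalent norms whose constants depend only on $x$ and $r$) to conclude, for every $x \in B(x_0,1)$,
\begin{align*}
\|h\|_{\cg(x,1,\bz,\gz)} \sim \|h\|_{\cg(\bz,\gz)} \sim \|h\|_{\icgg} \lesssim 1,
\end{align*}
where the implicit constants depend only on $x_0$. Combining this with the definition of the grand maximal function (Definition \ref{bh}) immediately yields $|\langle f, h \rangle| \lesssim f^*(x)$ for every $x \in B(x_0,1)$.

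Second, I would take the infimum over $x \in B(x_0,1)$ and exploit that $L^{\vz}(\cx)$ is a ball quasi-Banach function space. Using the pointwise inequality $[\inf_{x\in B(x_0,1)} f^*(x)]\,\ch1_{B(x_0,1)} \le f^*$, the lattice property (ii) of Definition \ref{bqbs}, and the (absolute) homogeneity of the Luxemburg-type quasi-norm $\|\cdot\|_{L^{\vz}(\cx)}$, one then obtains
\begin{align*}
|\langle f, h \rangle| \lesssim \inf_{x \in B(x_0,1)} f^*(x)
\lesssim \lf\|f^*\r\|_{L^{\vz}(\cx)}\,\lf\|\ch1_{B(x_0,1)}\r\|_{L^{\vz}(\cx)}^{-1}
\sim \|f\|_{H^{*,\vz}(\cx)},
\end{align*}
where both the positivity and the finiteness of $\|\ch1_{B(x_0,1)}\|_{L^{\vz}(\cx)}$ are guaranteed by Remark \ref{qbsdefrem}(iii). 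Undoing the normalization on $h$ via homogeneity then produces $|\langle f, h\rangle| \le C \|f\|_{H^{*,\vz}(\cx)}\|h\|_{\icgg}$ with $C$ depending only on $x_0$, $\bz$, $\gz$, which is precisely the continuous embedding asserted.

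The argument is almost entirely bookkeeping; the only point that requires even a moment of thought is the homogeneity of the Luxemburg quasi-norm $\|\cdot\|_{L^{\vz}(\cx)}$, which follows from the standard change of variables $\lz' := \lz/|c|$ inside its defining infimum. Every other ingredient (the quasi-triangle structure of $\cg(\bz,\gz)$, the lattice property, and the fact that $\ch1_{B(x_0,1)} \in L^{\vz}(\cx)$) has already been recorded in the excerpt, so there is no substantive obstacle.
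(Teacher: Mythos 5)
Your proposal is correct, but it follows a different route from the paper: the paper gives no proof of this lemma at all, simply quoting it as \cite[Lemma 4.9]{fmy19}, whereas you re-derive it internally by observing that $H^{*,\vz}(\cx)$ is exactly $H_Y^*(\cx)$ with $Y(\cx):=L^{\vz}(\cx)$, which Remark \ref{qbsdefrem}(iii) identifies as a ball quasi-Banach function space, so that the argument of Lemma \ref{7.9.x1} applies verbatim. Indeed, that argument uses nothing beyond the lattice property, the homogeneity of the quasi-norm, and $0<\|\ch1_{B(x_0,1)}\|_{Y(\cx)}<\fz$, all of which hold for $L^{\vz}(\cx)$ (finiteness from Remark \ref{qbsdefrem}(iii), positivity from Remark \ref{bqbsrem}(i) together with $\mu(B(x_0,1))>0$), and no boundedness assumption on the Hardy--Littlewood maximal operator is needed. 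Your route has the advantage of being self-contained within the present paper and of making transparent that the Musielak--Orlicz case is a literal specialization of the general embedding; the paper's citation is shorter and imports the statement in exactly the form used later in the proof of Proposition \ref{atde}. One small point you should make explicit: you need the equivalence $\|h\|_{\cg(x,1,\bz,\gz)}\sim\|h\|_{\cg(\bz,\gz)}$ with implicit constants uniform in $x\in B(x_0,1)$, not merely the equivalence recorded after Definition \ref{test}, whose constants are allowed to depend on the individual point $x$; this uniformity is standard (the constants are controlled through $\rho(x,x_0)<1=r$) and is precisely what the paper asserts in its own proof of Lemma \ref{7.9.x1}, so it costs only one sentence, but as written your appeal to that remark does not quite deliver it.
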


The following lemma is just \cite[Lemma 2.8(ii)]{fmy19}
(see \cite[Lemma 1.1.10(i)]{ylk17} for the corresponding
Euclidean case).
\begin{lemma}\label{fulemma}
Let $\vz$ be as in Definition \ref{fuatomdefn}.
Then, for any $f\in L^{\vz}(\cx)\setminus\{0\}$,
\begin{align*}
\int_{\cx}\vz\lf(x,\frac{|f(x)|}{\|f\|_{L^{\vz}(\cx)}}\r)\,d\mu(x)=1.
\end{align*}
\end{lemma}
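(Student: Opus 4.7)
The plan is to establish the equality via a two-sided estimate, handling one direction by monotone convergence and the other by a continuity/contradiction argument. Set $\lambda_0:=\|f\|_{L^{\vz}(\cx)}$, which is a positive real number since $f\ne 0$, and define the modular $\Phi(\lambda):=\int_{\cx}\vz(x,|f(x)|/\lambda)\,d\mu(x)$ for $\lambda\in(0,\fz)$. The goal is to show $\Phi(\lambda_0)=1$. Note that $\Phi$ is non-increasing in $\lambda$ because $\vz(x,\cdot)$ is non-decreasing.

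For the inequality $\Phi(\lambda_0)\le 1$, I would choose a sequence $\{\lambda_n\}_{n\in\nn}$ with $\lambda_n\downarrow\lambda_0$ satisfying $\Phi(\lambda_n)\le 1$; such a sequence exists by the infimum definition of the Luxemburg quasi-norm. Then $|f(x)|/\lambda_n$ increases pointwise to $|f(x)|/\lambda_0$, and by the monotonicity (and continuity in $t$) of the Musielak--Orlicz function $\vz(x,\cdot)$ the integrand $\vz(x,|f(x)|/\lambda_n)$ increases pointwise to $\vz(x,|f(x)|/\lambda_0)$. The monotone convergence theorem then gives $\Phi(\lambda_0)=\lim_{n\to\fz}\Phi(\lambda_n)\le 1$.

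For the reverse inequality $\Phi(\lambda_0)\ge 1$, I would argue by contradiction: assume $\Phi(\lambda_0)<1$ and produce some $\lambda'\in(0,\lambda_0)$ still satisfying $\Phi(\lambda')\le 1$, contradicting the fact that $\lambda_0$ is the infimum. The key tool is the left-continuity of $\Phi$ at $\lambda_0$. To establish this, take $\lambda\in(0,\lambda_0)$ close to $\lambda_0$ and write $|f|/\lambda=(\lambda_0/\lambda)\cdot|f|/\lambda_0$ with $\lambda_0/\lambda>1$; the uniform upper type $1$ property of $\vz$ (part (c) of the definition of growth function in Remark \ref{qbsdefrem}(iii)) yields $\vz(x,|f(x)|/\lambda)\le C(\lambda_0/\lambda)\vz(x,|f(x)|/\lambda_0)$. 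This furnishes an integrable dominating function (once $\lambda$ is confined to say $[\lambda_0/2,\lambda_0)$, using the already proved $\Phi(\lambda_0)\le 1$), and the pointwise continuity of $\vz(x,\cdot)$ together with the dominated convergence theorem gives $\Phi(\lambda)\to\Phi(\lambda_0)$ as $\lambda\uparrow\lambda_0$. Under the contradiction hypothesis $\Phi(\lambda_0)<1$, this forces $\Phi(\lambda)<1$ for some $\lambda<\lambda_0$, contradicting the definition of $\lambda_0$.

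The main obstacle is making the dominated convergence argument rigorous: verifying that the uniform upper type $1$ estimate provides a genuine integrable majorant valid uniformly on a left neighborhood of $\lambda_0$, and carefully tracking how the continuity of the Orlicz function $\vz(x,\cdot)$ in $t$ is exploited (this is standard in the theory of Musielak--Orlicz spaces but should be acknowledged explicitly). Everything else is a routine application of the Luxemburg norm formalism, and the argument is formally identical to the Euclidean version in \cite[Lemma 1.1.10(i)]{ylk17}, so one can simply transplant that proof to the setting of the space $\cx$ of homogeneous type.
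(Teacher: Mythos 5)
The paper offers no internal proof of this lemma at all: it is simply quoted from \cite[Lemma 2.8(ii)]{fmy19}, with \cite[Lemma 1.1.10(i)]{ylk17} as the Euclidean antecedent, so your proposal is compared against a citation rather than an argument. What you write is essentially the standard proof behind that citation, and it is sound: a minimizing sequence $\lz_n\downarrow\|f\|_{L^{\vz}(\cx)}$ together with monotone convergence gives the modular $\le 1$, and the uniformly upper type $1$ estimate supplies an integrable majorant on $[\lz_0/2,\lz_0)$ (integrable precisely because of the first half), so that the modular of $|f|/\lz$ converges to that of $|f|/\lz_0$ as $\lz\uparrow\lz_0$ and the value cannot be $<1$ without producing an admissible $\lz<\lz_0$, contradicting the infimum; one minor omission is that $\|f\|_{L^{\vz}(\cx)}>0$ for $f\neq 0$ needs a one-line Fatou argument using $\vz(x,t)>0$ for $t>0$. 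The point you flag at the end is indeed the only delicate one: both halves of your argument use pointwise continuity of $\vz(x,\cdot)$ in $t$ (left-continuity for the monotone convergence step, right-continuity for the dominated convergence step), whereas the notion of Orlicz function recalled in Remark \ref{qbsdefrem}(iii) only demands monotonicity, and the uniform upper/lower type conditions do not force continuity; without some such continuity the stated equality can genuinely fail (a jump of $\vz(x,\cdot)$ at the value $|f(x)|/\|f\|_{L^{\vz}(\cx)}$ destroys the ``$\le 1$'' half), so this hypothesis is implicitly inherited from the cited sources rather than being a defect peculiar to your argument, and acknowledging it explicitly, as you do, is exactly right.
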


To prove Proposition \ref{atde}, we also need the following lemma.
\begin{lemma}\label{futheorem}
Let $\vz$ and $q$ be as in Definition \ref{fuatomdefn}.
Then there exists a positive constant $C$ such that,
for any $(\vz,q)$-atom $a$, $\|a\|_{H^{*,\vz}(\cx)}\le C$.
\end{lemma}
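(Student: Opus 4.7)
The plan is to invoke Lemma \ref{fulemma}, which reduces the claim to finding a positive constant $C$, independent of the atom, such that
\begin{align*}
\int_{\cx}\vz\lf(x,\frac{a^*(x)}{C}\r)\,d\mu(x)\le 1.
\end{align*}
Let $a$ be a $(\vz,q)$-atom supported in a ball $B:=B(x_B,r_B)$ and set $\widetilde B:=2A_0B$. I would split the integration into the local part over $\widetilde B$ and the nonlocal part over $\widetilde B^{\complement}$, and estimate each separately.

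For the local part, an argument identical to that leading to \eqref{5.9.x3} gives the pointwise bound $a^*(x)\ls\cm(a)(x)$ on $\widetilde B$. Combining this with the boundedness of $\cm$ on the weighted space $L^q_{\vz(\cdot,t)}(\cx)$, valid uniformly in $t$ since $q>q(\vz)$ yields $\vz(\cdot,t)\in\aa_q(\cx)$, together with the atom size condition $\|a\|_{L^q_\vz(B)}\le\|\ch1_B\|^{-1}_{L^{\vz}(\cx)}$ from Definition \ref{fuatomdefn}(ii) and the uniformly lower type property of $\vz$, I would obtain $\int_{\widetilde B}\vz(x,a^*(x)/C)\,d\mu(x)\ls 1$ for a suitable choice of $C$.

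For the nonlocal part, the cancellation $\int_\cx a\,d\mu=0$ combined with the regularity condition \eqref{3.31.x2} for test functions gives, exactly as in the derivation of \eqref{5.9.x2},
\begin{align*}
a^*(x)\ls\|\ch1_B\|^{-1}_{L^{\vz}(\cx)}\lf[\cm(\ch1_B)(x)\r]^{(\bz+\oz)/\oz}
\end{align*}
for any $x\in\widetilde B^{\complement}$. Decomposing $\widetilde B^{\complement}$ into dyadic annuli $2^{k+1}\widetilde B\setminus 2^k\widetilde B$, bounding $\cm(\ch1_B)$ on each annulus via Lemma \ref{lem6.2}, and transferring $\vz$-integrals from the annulus back to $B$ using the $\aa_\fz(\cx)$-property of $\vz$ should produce a geometric series in $k$ which sums to a universal constant, provided $(\bz+\oz)/\oz$ exceeds the critical exponent $q(\vz)/i(\vz)$.

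The main obstacle is the nonlocal estimate: the $x$-dependence of $\vz$ forces one to carefully track the weight $\vz(\cdot,t)$ through every rescaling of $t$ induced by the inhomogeneous pointwise bound on $a^*$, and one must jointly exploit the uniformly upper type $1$ and uniformly lower type $i(\vz)$ properties of $\vz$ together with the uniform Muckenhoupt condition to close the geometric sum. This is precisely the content of the atomic half of \cite[Theorem 5.4]{fmy19}, from which the uniform bound stated here follows as a direct consequence.
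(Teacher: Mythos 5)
Your proposal is correct in substance and, like the paper, it ultimately rests on the Musielak--Orlicz theory of \cite{fmy19}; the difference lies only in which result of \cite{fmy19} is invoked and how much is re-derived. The paper's own proof is two lines: it quotes the modular estimate \cite[(5.2)]{fmy19}, namely $\int_{\cx}\vz(x,a^*(x))\,d\mu(x)\ls\vz(B,\|a\|_{L^q_{\vz}(B)})$, then uses the monotonicity of $\vz(x,\cdot)$ together with the atom size condition and Lemma \ref{fulemma} applied to $f=\ch1_B$ (which gives $\vz(B,\|\ch1_B\|^{-1}_{L^{\vz}(\cx)})=1$) to bound the modular by a constant, and finally the uniformly lower type property to convert this modular bound into the quasi-norm bound. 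You instead sketch a re-derivation of that modular estimate through the local/nonlocal splitting (paralleling \eqref{5.9.x3} and \eqref{5.9.x2}), and then correctly observe that the uniform bound also follows from the reconstruction half of \cite[Theorem 5.4]{fmy19}; either citation is legitimate and creates no circularity in the present paper, so your fallback closes the argument even where your sketch is only schematic. Two small corrections: the reduction to a modular inequality $\int_{\cx}\vz(x,a^*(x)/C)\,d\mu(x)\le1$ is simply the definition of the Luxemburg quasi-norm and does not require Lemma \ref{fulemma} (that lemma is what evaluates $\vz(B,\|\ch1_B\|^{-1}_{L^{\vz}(\cx)})$ in the paper's route), and in the local part the linearization of the modular of $\cm(a)$ uses the uniformly \emph{upper} type $1$ property rather than the lower type, the latter entering in the nonlocal geometric sum and in passing from the modular bound to the norm bound. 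Your proviso that $(\bz+\oz)/\oz$ exceed $q(\vz)/i(\vz)$ is precisely the standing restriction $\bz,\gz\in(\oz[q(\vz)/i(\vz)-1],\eta)$ under which $H^{*,\vz}(\cx)$ and its atomic characterization are treated in \cite{fmy19}, and it is satisfied in the applications of this lemma in the proof of Proposition \ref{atde}, so it is harmless.
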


\begin{proof}
Let $a$ be a $(\vz,q)$-atom supported in $B$. Then,
by \cite[(5.2)]{fmy19}, the fact that $\varphi$ is a growth function,
and Lemma \ref{fulemma}, we know that
\begin{align*}
\int_{\cx}\vz\lf(x,a^*(x)\r)\,d\mu(x)\ls\vz\lf(B,\|a\|_{L^q_{\vz}(B)}\r)
\ls\vz\lf(B,\|\ch1_B\|^{-1}_{L^{\vz}(\cx)}\r)\sim1,
\end{align*}
which further implies that
\begin{align*}
\|a\|_{H^{*,\vz}(\cx)}=\lf\|a^*\r\|_{L^{\vz}(\cx)}\ls 1.
\end{align*}
This finishes the proof of Lemma \ref{futheorem}.
\end{proof}

Now, we give the proof of Proposition \ref{atde}.

\begin{proof}[Proof of Proposition \ref{atde}]
Let all the symbols be as in the present proposition.
For any $m\in\nn$ and $j\in\zz$, let $f^{(m)}$ be as in \eqref{4.8x},
and $b_{j}^{(m)}$ and $g_{j}^{(m)}$ as in Lemma \ref{prop4.13}.
Then, by Lemma \ref{prop4.13}, we know that,
for any $m\in\nn$, and any $J_1$, $J_2\in\nn$,
\begin{align*}
f^{(m)}-\sum_{j=-J_1}^{J_2}\lf[g_{j+1}^{(m)}-g_{j}^{(m)}\r]
&=f^{(m)}-\lf[g_{J_2+1}^{(m)}-g_{-J_1}^{(m)}\r]\\
&=b_{J_2+1}^{(m)}+g_{-J_1}^{(m)}\rightarrow0
\end{align*}
in $(\icgg)'$, as $J_1$, $J_2\rightarrow\fz$,
which further implies that
\begin{align}\label{7.21.x3}
f^{(m)}=\sum_{j\in\zz}\lf[g_{j+1}^{(m)}-g_{j}^{(m)}\r]
\end{align}
in $(\icgg)'$. For any $m\in\nn$, $j\in\zz$, $k\in I_j$,
and $\ell\in I_{j+1}$, let $\phi_{j,k}$, $P_{j,k}^{(m)}$,
and $b_{j,k}^{(m)}$ be as in \eqref{5.28.x1},
$$L_{j+1,k}^{(m),\ell}:=\frac{1}{\|\phi_{j+1,{\ell}}\|_{L^1(\cx)}}
\int_{\cx}\lf[f^{(m)}(\xi)-P_{j+1,{\ell}}^{(m)}(\xi)\r]
\phi_{j,k}(\xi)\phi_{j+1,{\ell}}(\xi)\,d\mu(\xi),$$
and
$$h^{(m)}_{j,k}:=b_{j,k}^{(m)}-\sum_{{\ell}\in I_{j+1}}
\lf[b_{j+1,{\ell}}^{(m)}\phi_{j,k}-L_{j+1,k}^{(m),{\ell}}
\phi_{j+1,{\ell}}\r].$$
Then, by \eqref{7.21.x3} and an argument similar to that used in
the proof of \cite[p.\,214]{zhy}, we conclude that, for any $m\in\nn$,
$j\in\zz$, and $k\in I_j$,
\begin{align}\label{7.5.x1}
\lf\|h_{j,k}^{(m)}\r\|_{L^{\fz}(\cx)}\ls 2^j,\
\supp h_{j,k}^{(m)}\st B(x_{j,k},Ar_{j,k}),\
\int_{\cx}h_{j,k}^{(m)}(x)\,d\mu(x)=0,
\end{align}
and
\begin{align}\label{7.3.y1}
g_{j+1}^{(m)}-g_{j}^{(m)}=\sum_{k\in I_j}h^{(m)}_{j,k}
\end{align}
in $(\icgg)'$, where $A:=16A_0^4$.
Moreover, by \eqref{7.5.x1} and the Banach--Alaoglu theorem
(see, for instance, \cite[Theorem 3.17]{rudin91}),
together with the well-known diagonal rule for series two times,
we conclude that there exist a subsequence
$\{m_q\}_{q=1}^{\fz}\st\nn$ and a sequence
$\{h_{j,k}\}_{k\in I_j}\st L^{\fz}(\cx)$ with any $j\in\zz$
such that, for any $j\in\zz$ and  $k\in I_j$,
\begin{align}\label{3.30.x1}
\lf\|h_{j,k}\r\|_{L^{\fz}(\cx)}\ls2^j,
\quad\supp h_{j,k}\st B(x_{j,k},Ar_{j,k}),
\quad\int_{\cx}h_{j,k}(x)\,d\mu(x)=0,
\end{align}
and, for any $g\in L^1(\cx)$,
\begin{align*}
\lf\langle h_{j,k}^{(m_q)},g\r\rangle\rightarrow
\lf\langle h_{j,k},g\r\rangle
\end{align*}
as $q\rightarrow\fz$. Now, we claim that, for any given $j\in\zz$,
$\sum_{k\in I_j}h_{j,k}$ converges in $(\icgg)'$.
Indeed, by \eqref{3.30.x1}, (ii) and (v) of Lemma \ref{helem1},
and Lemma \ref{omegafinite}(ii),
we know that, for any given $j\in\zz$ and $r\in(1,\fz)$,
\begin{align*}
\lf\|\sum_{k\in I_j}h_{j,k}\r\|_{L^r_w(\cx)}
&\ls 2^j\lf\|\sum_{k\in I_j}\ch1_{B(x_{j,k},Ar_{j,k})}\r\|_{L^r_w(\cx)}
\ls 2^j\lf\|\ch1_{\Omega_j}\r\|_{L^r_w(\cx)}
\sim 2^j\lf[\mu_{w}(\Omega_j)\r]^{\frac1r}<\fz,
\end{align*}
where $w$ and $\mu_{w}$ are as in Lemma \ref{omegafinite}.
This further implies the above claim.

For any $j\in\zz$ and $k\in I_j$, let
\begin{align}\label{7.22.x1}
\lz_{j,k}:=2^j\lf\|\ch1_{B(x_{j,k},Ar_{j,k})}\r\|_{Y(\cx)}
\quad\mbox{and}\quad a_{j,k}:=\frac{h_{j,k}}{\lz_{j,k}}.
\end{align}
Then, by \eqref{3.30.x1}, we know that $a_{j,k}$ is a
$(Y(\cx),\fz)$-atom supported in $B(x_{j,k},Ar_{j,k})$.
Next, we prove
\begin{align}\label{3.26.x1}
f=\sum_{j\in\zz}\sum_{k\in I_j}\lz_{j,k}a_{j,k}
\end{align}
in $(\icgg)'$, and
\begin{align}\label{21.7.24.x1}
{\rm{K}}:=\lf\|\lf\{\sum_{j\in\zz}\sum_{k\in I_j}
\lf[\frac{\lz_{j,k}}{\|\ch1_{B(x_{j,k},Ar_{j,k})}\|_{Y(\cx)}}\r]^{d}
\ch1_{B(x_{j,k},Ar_{j,k})}\r\}^{\frac{1}{d}}\r\|_{Y(\cx)}
\ls\|f\|_{H_Y^*(\cx)}.
\end{align}
To this end, for any $m\in\nn$ and $j\in\zz$,
let $f_j^{(m)}:=g_{j+1}^{(m)}-g_{j}^{(m)}$
and $f_j:=\sum_{k\in I_j}h_{j,k}$ in $(\icgg)'$.
It suffices to show that
\begin{align}\label{12.30.x1}
\lim_{J_3\rightarrow-\fz}\sup_{m\in\nn}
\lf\|\sum_{j\le J_3}f_j^{(m)}\r\|_{H^1_{w}(\cx)}=0
\quad\mbox{and}\quad
\lim_{J_4\rightarrow\fz}\sup_{m\in\nn}
\lf\|\sum_{j\ge J_4}f_j^{(m)}\r\|_{H^{p_1}_{w}(\cx)}=0
\end{align}
for some $p_1\in(\frac{\oz}{\oz+\eta},\tz_0)$,
where $w$ is as in Lemma \ref{embedding}.
Assuming this for the moment, by \eqref{12.30.x1},
Lemma \ref{fulem}, and an argument similar to that
used in \cite[pp.\,215-217]{zhy}, we obtain
\begin{align*}
\lim_{q\rightarrow\fz}\sum_{j\in\zz}f_j^{(m_q)}
=\sum_{j\in\zz}f_j
\end{align*}
in $(\icgg)'$, which, combined with \eqref{7.21.y1}, \eqref{7.21.x3},
and \eqref{7.22.x1}, further implies that
\begin{align*}
f&=\lim_{m\rightarrow\fz}f^{(m)}
=\lim_{m\rightarrow\fz}\sum_{j\in\zz}\lf[g_{j+1}^{(m)}-g_{j}^{(m)}\r]
=\lim_{q\rightarrow\fz}\sum_{j\in\zz}f_j^{(m_q)}\\
&=\sum_{j\in\zz}f_j
=\sum_{j\in\zz}\sum_{k\in I_j}h_{j,k}
=\sum_{j\in\zz}\sum_{k\in I_j}\lz_{j,k}a_{j,k}
\end{align*}
in $(\icgg)'$, and hence \eqref{3.26.x1} holds true.
Moreover, from Definition \ref{qbs}(ii),
(ii) and (v) of Lemma \ref{helem1},
and the fact that $f^\star\sim f^*$, we deduce that
\begin{align}\label{7.5.x2}
{\rm{K}}&\ls\lf\|\lf\{\sum_{j\in\zz}2^{jd}\ch1_{\Omega_{j}}\r\}
^{\frac{1}{d}}\r\|_{Y(\cx)}
\sim\lf\|\lf\{\sum_{j\in\zz}\sum_{r=j}^{\fz}
2^{jd}\ch1_{\Omega_{r}\setminus\Omega_{r+1}}\r\}
^{\frac{1}{d}}\r\|_{Y(\cx)}\noz\\
&\sim\lf\|\lf\{\sum_{r\in\zz}2^{rd}\sum_{j=-\fz}^{r}
2^{(j-r)d}\ch1_{\Omega_{r}\setminus\Omega_{r+1}}\r\}
^{\frac{1}{d}}\r\|_{Y(\cx)}
\sim\lf\|\lf\{\sum_{r\in\zz}2^{rd}
\ch1_{\Omega_{r}\setminus\Omega_{r+1}}\r\}
^{\frac{1}{d}}\r\|_{Y(\cx)}\noz\\
&\ls\lf\|f^\star\lf\{\sum_{r\in\zz}
\ch1_{\Omega_{r}\setminus\Omega_{r+1}}\r\}
^{\frac{1}{d}}\r\|_{Y(\cx)}
\sim\|f^*\|_{Y(\cx)}\sim\|f\|_{H_Y^*(\cx)}.
\end{align}
Thus, \eqref{21.7.24.x1} holds true, which then completes
the proof of this proposition.

Now, we show \eqref{12.30.x1}.
On one hand, for any $x\in\cx$ and $t\in[0,\fz)$,
let $\vz_1(x,t):=w(x)t$ with $w$ as in Lemma \ref{embedding}.
Obviously, $\vz_1$ satisfies all the assumptions of
Lemma \ref{futheorem}. From this, \eqref{7.3.y1}, \eqref{7.5.x1},
Lemma \ref{futheorem}, (ii) and (v) of Lemma \ref{helem1},
the fact that $f^\star\sim f^*$, and Lemma \ref{embedding},
we deduce that, for any $m\in\nn$ and $J_3\in\zz$,
\begin{align*}
\lf\|\sum_{j\le J_3}f_j^{(m)}\r\|_{H^1_{w}(\cx)}
&\le\sum_{j\le J_3}\sum_{k\in I_j}
\lf\|h_{j,k}^{(m)}\r\|_{H^1_{w}(\cx)}\\
&=\sum_{j\le J_3}\sum_{k\in I_j}
2^j\lf\|\ch1_{B(x_{j,k},Ar_{j,k})}\r\|_{L^1_{w}(\cx)}
\lf\|\frac{h_{j,k}^{(m)}}
{2^j\|\ch1_{B(x_{j,k},Ar_{j,k})}\|_{L^1_{w}(\cx)}}
\r\|_{H^1_{w}(\cx)}\noz\\
&\ls\sum_{j\le J_3}\sum_{k\in I_j}2^jw(B(x_{j,k},Ar_{j,k}))
\ls\sum_{j\le J_3}2^jw(\Omega_j)\noz\\
&\ls\sum_{j\le J_3}2^j\int_{\Omega_j}
\lf[\frac{f^\star(x)}{2^j}\r]^{\tz_0}w(x)\,d\mu(x)
\ls\sum_{j\le J_3}2^{j(1-\tz_0)}
\lf\|f^{\star}\r\|^{\tz_0}_{L^{\tz_0}_{w}(\cx)}\noz\\
&\ls\sum_{j\le J_3}2^{j(1-\tz_0)}
\lf\|f^{\star}\r\|^{\tz_0}_{Y(\cx)}
\sim 2^{J_3(1-\tz_0)}\lf\|f\r\|^{\tz_0}_{H_Y^*(\cx)}\noz
\end{align*}
with the implicit positive constants independent of $m$ and $J_3$,
which, combined with $\tz_0\in(0,1)$ and $J_3\rightarrow-\fz$,
further implies the first estimate of \eqref{12.30.x1}.

On the other hand, choose $p_1\in(\frac{\oz}{\oz+\eta},\tz_0)$
and let $\vz_2(x,t):=w(x)t^{p_1}$ for any $x\in\cx$
and $t\in[0,\fz)$ with $w$ as in Lemma \ref{embedding}.
Obviously, $\vz_2$ also satisfies all the assumptions of
Lemma \ref{futheorem}.
Using this, \eqref{7.5.x1}, Lemma \ref{futheorem},
(ii) and (v) of Lemma \ref{helem1}, the fact that $f^\star\sim f^*$,
and Lemma \ref{embedding}, we know that,
for any $m\in\nn$ and $J_4\in\zz$,
\begin{align*}
\lf\|\sum_{j\ge J_4}f_j^{(m)}\r\|^{p_1}_{H^{p_1}_{w}(\cx)}
&\le\sum_{j\ge J_4}\sum_{k\in I_j}
\lf\|h_{j,k}^{(m)}\r\|^{p_1}_{H^{p_1}_{w}(\cx)}\\
&=\sum_{j\ge J_4}\sum_{k\in I_j}
2^{jp_1}\lf\|\ch1_{B(x_{j,k},Ar_{j,k})}\r\|^{p_1}
_{L^{p_1}_{w}(\cx)}
\lf\|\frac{h_{j,k}^{(m)}}
{2^j\|\ch1_{B(x_{j,k},Ar_{j,k})}\|_{L^{p_1}_{w}(\cx)}}
\r\|^{p_1}_{H^{p_1}_{w}(\cx)}\noz\\
&\ls\sum_{j\ge J_4}\sum_{k\in I_j}
2^{jp_1}w(B(x_{j,k},Ar_{j,k}))
\ls\sum_{j\ge J_4}2^{jp_1}w(\Omega_j)\noz\\
&\ls\sum_{j\ge J_4}2^{jp_1}\int_{\Omega_j}
\lf[\frac{f^\star(x)}{2^j}\r]^{\tz_0}w(x)\,d\mu(x)
\ls\sum_{j\ge J_4}2^{j(p_1-\tz_0)}
\lf\|f^{\star}\r\|^{\tz_0}_{L^{\tz_0}_{w}(\cx)}\noz\\
&\ls\sum_{j\ge J_4}2^{j(p_1-\tz_0)}
\lf\|f^{\star}\r\|^{\tz_0}_{Y(\cx)}
\sim 2^{J_4(p_1-\tz_0)}\lf\|f\r\|^{\tz_0}_{H_Y^*(\cx)}\noz
\end{align*}
with the implicit positive constants independent of $m$ and $J_4$,
which, combined with $p_1\in(\frac{\oz}{\oz+\eta},\tz_0)$
and $J_4\rightarrow\fz$, further implies the second estimate of \eqref{12.30.x1}
and hence completes the proof of Proposition \ref{atde}.
\end{proof}

Next, we introduce the atomic Hardy spaces
associated with $Y(\cx)$.

\begin{definition}\label{atomhy}
Let $Y(\cx)$ be a ball quasi-Banach function space on $\cx$ satisfying Assumption \ref{assump1} with $p_-\in({\omega}/(\omega+\eta),\fz)$,
where $\omega$ is as in \eqref{eq-doub} and $\eta$ as in
Definition \ref{expati}. Further assume that $Y(\cx)$
satisfies Assumption \ref{assump2} with the same $p_-$
as in Assumption \ref{assump1},
$\tz_0\in({\omega}/(\omega+\eta),\underline{p})$,
and $p_0\in(\tz_0,\fz)$, where $\underline{p}$ is as in \eqref{2.1y}.
Let $q\in(\max\{p_0,1\},\fz]$, $d\in(0,\tz_0]$,
and $\bz$, $\gz\in(\omega(1/{\tz_0}-1),\eta)$.
The \emph{atomic Hardy space} $H_{\mathrm{atom}}^{Y,q,d}(\cx)$
is defined to be the set of all $f\in(\icgg)'$ satisfying that there
exist a sequence $\{\lz_j\}_{j\in\nn}$ of non-negative numbers and
a sequence $\{a_j\}_{j\in\nn}$ of $(Y(\cx),q)$-atoms supported,
respectively, in balls $\{B_j \}_{j\in\nn}$ of $\cx$ such that
\begin{align*}
f&=\sum_{j\in\nn}\lz_j a_j
\end{align*}
in $(\icgg)'$, and
$$\lf\|\lf\{\sum_{j\in\nn}
\lf[\frac{\lz_j}{\|\ch1_{B_j}\|_{Y(\cx)}}\r]^{d}\ch1_{B_j}\r\}
^{\frac{1}{d}}\r\|_{Y(\cx)}<\fz.$$

Moreover, for any $f\in H_{\mathrm{atom}}^{Y,q,d}(\cx)$, let
$$\|f\|_{H_{\mathrm{atom}}^{Y,q,d}(\cx)}:=\inf\lf\{\lf\|\lf\{\sum_{j\in\nn}
\lf[\frac{\lz_j}{\|\ch1_{B_j}\|_{Y(\cx)}}\r]^{d}\ch1_{B_j}\r\}
^{\frac{1}{d}}\r\|_{Y(\cx)}\r\},$$
where the infimum is taken over all decompositions of $f$ as above.
\end{definition}

Combining Propositions \ref{atre} and \ref{atde},
we obtain the following conclusion.

\begin{theorem}\label{atthm}
Let $Y(\cx)$ be a ball quasi-Banach function space on $\cx$ satisfying Assumption \ref{assump1} with $p_-\in({\omega}/(\omega+\eta),\fz)$,
where $\omega$ is as in \eqref{eq-doub} and $\eta$ as in
Definition \ref{expati}. Further assume that $Y(\cx)$
satisfies Assumption \ref{assump2} with the same $p_-$
as in Assumption \ref{assump1},
$\tz_0\in({\omega}/(\omega+\eta),\underline{p})$,
and $p_0\in(\tz_0,\fz)$, where $\underline{p}$ is as in \eqref{2.1y}.
Let $q\in(\max\{p_0,1\},\fz]$, $d\in(0,\tz_0]$,
and $\bz$, $\gz\in(\omega(1/{\tz_0}-1),\eta)$.
Then
$$\lf[H_Y^*(\cx)\cap(\icgg)'\r]
=\lf[H_{\mathrm{atom}}^{Y,q,d}(\cx)\cap(\icgg)'\r]$$
with equivalent quasi-norms.
\end{theorem}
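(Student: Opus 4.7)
The plan is to deduce Theorem \ref{atthm} as a direct corollary of Propositions \ref{atre} and \ref{atde}, checking that the range of parameters $(q,d,\bz,\gz)$ appearing in the theorem is precisely what both propositions need.

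First, I would establish the inclusion $[H_Y^*(\cx)\cap(\icgg)']\subset[H_{\mathrm{atom}}^{Y,q,d}(\cx)\cap(\icgg)']$. Given $f\in H_Y^*(\cx)\cap(\icgg)'$ with $\bz,\gz\in(\omega(1/\tz_0-1),\eta)$, Proposition \ref{atde} produces a sequence $\{a_j\}_{j\in\nn}$ of $(Y(\cx),\fz)$-atoms supported in balls $\{B_j\}_{j\in\nn}$ and non-negative coefficients $\{\lz_j\}_{j\in\nn}$ such that $f=\sum_{j\in\nn}\lz_j a_j$ in $(\icgg)'$, together with the quasi-norm control
\[
\lf\|\lf\{\sum_{j\in\nn}\lf[\frac{\lz_j}{\|\ch1_{B_j}\|_{Y(\cx)}}\r]^{d}\ch1_{B_j}\r\}^{1/d}\r\|_{Y(\cx)}\ls\|f\|_{H_Y^*(\cx)}.
\]
Now I observe that every $(Y(\cx),\fz)$-atom is automatically a $(Y(\cx),q)$-atom for any $q\in[1,\fz]$: conditions (i) and (iii) of Definition \ref{atom} transfer verbatim, and for (ii) the H\"older inequality together with $\|a_j\|_{L^\fz(\cx)}\le\|\ch1_{B_j}\|_{Y(\cx)}^{-1}$ yields $\|a_j\|_{L^q(\cx)}\le[\mu(B_j)]^{1/q}\|\ch1_{B_j}\|_{Y(\cx)}^{-1}$. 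Feeding this decomposition into Definition \ref{atomhy} gives $f\in H_{\mathrm{atom}}^{Y,q,d}(\cx)$ with $\|f\|_{H_{\mathrm{atom}}^{Y,q,d}(\cx)}\ls\|f\|_{H_Y^*(\cx)}$.

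For the reverse inclusion $[H_{\mathrm{atom}}^{Y,q,d}(\cx)\cap(\icgg)']\subset[H_Y^*(\cx)\cap(\icgg)']$, I would apply Proposition \ref{atre} directly. Given $f\in H_{\mathrm{atom}}^{Y,q,d}(\cx)$, for any admissible atomic decomposition $f=\sum_{j\in\nn}\lz_j a_j$ in $(\icgg)'$ with $\{a_j\}_{j\in\nn}$ a sequence of $(Y(\cx),q)$-atoms supported in $\{B_j\}_{j\in\nn}$, the hypotheses of Proposition \ref{atre} are met (the parameter ranges $q\in(\max\{p_0,1\},\fz]$, $d\in(0,\tz_0]$, and $\bz,\gz\in(\omega(1/\tz_0-1),\eta)$ are identical in both statements). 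Hence $\sum_{j\in\nn}\lz_j a_j$ already converges in $(\icgg)'$ to an element of $H_Y^*(\cx)$, with
\[
\|f\|_{H_Y^*(\cx)}\ls\lf\|\lf\{\sum_{j\in\nn}\lf[\frac{\lz_j}{\|\ch1_{B_j}\|_{Y(\cx)}}\r]^{d}\ch1_{B_j}\r\}^{1/d}\r\|_{Y(\cx)}.
\]
Taking the infimum over all admissible decompositions then gives $\|f\|_{H_Y^*(\cx)}\ls\|f\|_{H_{\mathrm{atom}}^{Y,q,d}(\cx)}$.

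Combining these two inclusions and the two-sided quasi-norm comparisons yields the claimed equality with equivalent quasi-norms. Since the hard analytic work (the atomic reconstruction estimate via Lemma \ref{claatlem} and Assumption \ref{assump1}, and the Calder\'on--Zygmund-type decomposition via the embedding into $L^{\tz_0}_w(\cx)$) has been carried out in Propositions \ref{atre} and \ref{atde}, the only mild obstacle here is a bookkeeping one: verifying that a $(Y(\cx),\fz)$-atom is indeed a $(Y(\cx),q)$-atom, and checking that the parameter ranges imposed in Theorem \ref{atthm} match those required by both propositions. Both checks are routine.
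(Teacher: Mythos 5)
Your proposal is correct and follows exactly the paper's route: Theorem \ref{atthm} is stated there as an immediate combination of Propositions \ref{atre} and \ref{atde}, and your two bookkeeping checks (that a $(Y(\cx),\fz)$-atom is a $(Y(\cx),q)$-atom via H\"older, and that the parameter ranges of the theorem match those of both propositions) are precisely what makes that combination legitimate.
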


As a corollary of Theorems \ref{maxchprop} and \ref{atthm},
the following conclusion shows that the atomic Hardy space
in Definition \ref{atomhy} is independent of the choices
of $(\icgg)'$.

\begin{corollary}\label{atomcor}
Let $Y(\cx)$, $q$, $d$, $\eta$, $\oz$, and $\tz_0$
be as in Theorem \ref{atthm}.
Then $H_{\mathrm{atom}}^{Y,q,d}(\cx)$ is independent of the
choices of $(\icgg)'$ whenever $\bz$, $\gz\in(\omega(1/{\tz_0}-1),\eta)$.
\end{corollary}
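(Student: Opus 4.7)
The plan is to deduce this corollary by combining Theorem \ref{atthm} and Theorem \ref{maxchprop}, using the grand maximal Hardy space as the bridge between two realizations of the atomic Hardy space over different test function classes. More precisely, I would fix two admissible pairs $(\bz_1,\gz_1)$ and $(\bz_2,\gz_2)$ in $(\omega(1/\tz_0-1),\eta)^2$ and, for each $i\in\{1,2\}$, write $H_{\mathrm{atom}}^{Y,q,d,i}(\cx)$ and $H_Y^{*,i}(\cx)$ for the atomic Hardy space of Definition \ref{atomhy} and the grand maximal Hardy space of \eqref{3.5z}, both viewed as subspaces of $(\mathcal{G}_0^\eta(\bz_i,\gz_i))'$. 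The goal then becomes the identification
$$H_{\mathrm{atom}}^{Y,q,d,1}(\cx)=H_{\mathrm{atom}}^{Y,q,d,2}(\cx)$$
with equivalent quasi-norms.

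First, I would verify that the hypotheses of Theorem \ref{maxchprop} are available for both pairs. Since $\tz_0\in({\oz}/(\oz+\eta),\underline{p})$, one has $\oz(1/\tz_0-1)>\oz(1/\underline{p}-1)$, so the range $(\oz(1/\tz_0-1),\eta)$ appearing in Theorem \ref{atthm} is contained in the range $(\oz(1/\underline{p}-1),\eta)$ appearing in Theorem \ref{maxchprop}. Moreover, Assumption \ref{assump1} guarantees that, for every $\tz\in(0,\underline{p})$, the Hardy--Littlewood maximal operator $\cm$ is bounded on $Y^{1/\tz}(\cx)$ (taking the trivial case $r=1$ of the Fefferman--Stein inequality). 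Therefore Theorem \ref{maxchprop} applies and yields
$$H_Y^{*,1}(\cx)=H_Y^{*,2}(\cx)$$
in the sense of both representing the same distributions and having equivalent quasi-norms.

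Next, I would apply Theorem \ref{atthm} to each pair $(\bz_i,\gz_i)$, obtaining
$$\lf[H_Y^{*,i}(\cx)\cap(\mathcal{G}_0^\eta(\bz_i,\gz_i))'\r]
=\lf[H_{\mathrm{atom}}^{Y,q,d,i}(\cx)\cap(\mathcal{G}_0^\eta(\bz_i,\gz_i))'\r]$$
with equivalent quasi-norms for $i\in\{1,2\}$. Since each $H_{\mathrm{atom}}^{Y,q,d,i}(\cx)$ is by definition a subset of $(\mathcal{G}_0^\eta(\bz_i,\gz_i))'$, and each $H_Y^{*,i}(\cx)$ is similarly contained in that space of distributions, the intersections are redundant. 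Chaining the three identifications
$$H_{\mathrm{atom}}^{Y,q,d,1}(\cx)=H_Y^{*,1}(\cx)=H_Y^{*,2}(\cx)
=H_{\mathrm{atom}}^{Y,q,d,2}(\cx),$$
with equivalent quasi-norms at each step, yields the desired conclusion.

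There is no genuine obstacle here: the corollary is essentially a bookkeeping consequence of the two theorems, and the only mild subtlety to check is the compatibility of the admissible ranges of $(\bz,\gz)$, which follows at once from $\tz_0<\underline{p}$. Because the atomic decomposition is meaningful only once the ambient space of distributions is fixed, the use of $H_Y^*(\cx)$ as the intermediary is essential: it is Theorem \ref{maxchprop} that transfers a distribution identified via atoms in one test function class into the other class, after which Theorem \ref{atthm} recovers an atomic decomposition in that new class with a controlled quasi-norm.
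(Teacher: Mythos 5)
Your proposal is correct and is exactly the argument the paper intends: Corollary \ref{atomcor} is presented as an immediate consequence of Theorems \ref{maxchprop} and \ref{atthm}, chained through the grand maximal Hardy space $H_Y^*(\cx)$ precisely as you do, with the compatibility of the parameter ranges coming from $\tz_0<\underline{p}$. One small phrasing correction: Assumption \ref{assump1} only allows $r\in(1,\fz)$, so the boundedness of $\cm$ on $Y^{1/\tz}(\cx)$ for $\tz\in(0,\underline{p})$ is obtained by specializing the Fefferman--Stein inequality to a sequence with a single nonzero term (for any fixed $r\in(1,\fz)$), not by ``taking $r=1$''.
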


\begin{remark}\label{atomre}
By the definitions of $Y^{1/\tz_0}(\cx)$ and
$(Y^{1/\tz_0})'(\cx)$, we know that \eqref{5.14.y1}
is equivalent to
\begin{align*}
\lf\|\cm(f)\r\|_{[(Y^{1/{\tz_0}})']^{1/(p_0/\tz_0)'}(\cx)}
\ls\|f\|_{[(Y^{1/{\tz_0}})']^{1/(p_0/\tz_0)'}(\cx)},
\end{align*}
where the implicit positive constant is independent of $f$.
Next, we apply Theorem \ref{atthm} to several concrete
examples of ball quasi-Banach function spaces on $\cx$,
namely, classical Lebesgue spaces, Lorentz spaces,
weighted Lebesgue spaces, Orlicz spaces,
and variable Lebesgue spaces.
\begin{enumerate}
\item[{\rm (i)}]
Let $p\in({\omega}/(\omega+\eta),1]$
and $\bz$, $\gz\in(\omega(1/p-1),\eta)$ with $\omega$
as in \eqref{eq-doub} and $\eta$ as in Definition \ref{expati}.
Assume that $Y(\cx):=L^{p}(\cx)$.
Choose $p_-\in(\max\{{\omega}/(\omega+\bz),{\omega}/(\omega+\gz)\},p]$,
$\tz_0\in(\max\{{\omega}/(\omega+\bz),{\omega}/(\omega+\gz)\},p_-)$,
and $p_0\in(p,\fz)$.
By this and an argument similar to that used in
\cite[Remark 2.7(a)]{wyy}, we know that
$$[(Y^{1/{\tz_0}})']^{1/(p_0/\tz_0)'}(\cx)
=L^{(p/\tz_0)'/(p_0/\tz_0)'}(\cx)$$
and $(p/\tz_0)'/(p_0/\tz_0)'>1$.
From this, Remark \ref{qbsdefrem}(i), \cite[(3.6)]{cw77},
and \cite[Theorem 1.2]{gly09},
we deduce that $L^{p}(\cx)$ satisfies all the assumptions
of Theorem \ref{atthm}. In this case,
a different variant of Theorem \ref{atthm} was obtained
in \cite[Theorem 4.2]{hhllyy} which uses the same atoms,
but equips a different quasi-norm.

\item[{\rm (ii)}]
Let $r\in(0,\fz)$, $p\in({\omega}/(\omega+\eta),1]$,
and $\bz$, $\gz\in(\omega(1/p-1),\eta)$ with $\omega$
as in \eqref{eq-doub} and $\eta$ as in Definition \ref{expati}.
Assume that $Y(\cx):=L^{p,r}(\cx)$ is defined as in
Remark \ref{qbsdefrem}(ii).
Choose $p_-\in(\max\{{\omega}/(\omega+\bz),{\omega}/(\omega+\gz)\},p]$,
$\tz_0\in(\max\{{\omega}/(\omega+\bz),{\omega}/(\omega+\gz)\},p_-)$,
and $p_0\in(p,\fz)$.
By this and an argument similar to that used in
\cite[Remark 2.7(f)]{wyy}, we know that
$$[(Y^{1/{\tz_0}})']^{1/(p_0/\tz_0)'}(\cx)
=L^{(p/\tz_0)'/(p_0/\tz_0)',(r/\tz_0)'/(p_0/\tz_0)'}(\cx)$$
and $(p/\tz_0)'/(p_0/\tz_0)'>1$.
From this, Remark \ref{qbsdefrem}(ii),
\cite[Lemma 3.5]{zhy}, and \cite[Theorem 7.3]{zhy},
we deduce that $L^{p,r}(\cx)$ satisfies all the assumptions
of Theorem \ref{atthm}.
Note that the definition of atomic Hardy spaces here
is different from \cite[Definition 4.2]{zhy}. Thus,
Theorem \ref{atthm} is a different variant of
\cite[Theorem 4.4]{zhy} which uses the same atoms,
but equips a different quasi-norm.

\item[{\rm (iii)}]
Let $p\in({\omega}/(\omega+\eta),1]$, $r\in(1,\fz)$,
$w\in A_{r}(\cx)$ satisfy
$$r_w:=\inf\lf\{r\in[1,\fz):\
w\in A_r(\cx)\r\}\in(1,p(\omega+\eta)/{\omega}),$$
and $\bz$, $\gz\in(\omega(r_w/p-1),\eta)$ with $\omega$
as in \eqref{eq-doub} and $\eta$ as in Definition \ref{expati}.
Assume that $Y(\cx):=L^p_w(\cx)$ is defined as in Section \ref{emd}.
Choose $p_-\in(\max\{{\omega}/(\omega+\bz),{\omega}/(\omega+\gz)\},p/r_w]$
and
$\tz_0\in(\max\{{\omega}/(\omega+\bz),{\omega}/(\omega+\gz)\},p_-)$.
By this and \eqref{21.7.14.x1}, we obtain
$w^{1-(p/\tz_0)'}\in A_{(p/\tz_0)'}(\cx)$.
On one hand, choose $p_0\in(p,\fz)$ large enough such that
\begin{align}\label{21.7.11.x1}
w^{1-(p/\tz_0)'}\in A_{(p/\tz_0)'/(p_0/\tz_0)'}(\cx).
\end{align}
On the other hand, by an argument similar to that used in
\cite[Remark 2.7(b)]{wyy}, we conclude that
$$[(Y^{1/{\tz_0}})']^{1/(p_0/\tz_0)'}(\cx)
=L^{(p/\tz_0)'/(p_0/\tz_0)'}_{w^{1-(p/\tz_0)'}}(\cx).$$
From this, Remark \ref{qbsdefrem}(iii), \eqref{21.7.11.x1},
\cite[Theorem 4.11]{fmy19}, and \cite[Theorem 6.5(ii)]{fmy19},
we deduce that $L^p_w(\cx)$ satisfies all the assumptions of
Theorem \ref{atthm}. In this case,
a different variant of Theorem \ref{atthm} was obtained
in \cite[Theorem 5.4]{fmy19} which uses different atoms
and equips a different quasi-norm.

\item[{\rm (iv)}]
Let $\Phi$ be an Orlicz function as in
Remark \ref{qbsdefrem}(iii) with positive lower type
$p_{\Phi}^-$ and positive upper type $p_{\Phi}^+$.
Recall that the \emph{Orlicz space} $L^{\Phi}(\cx)$
is defined to be the set of all $\mu$-measurable functions
$f$ on $\cx$ such that
$$\|f\|_{L^{\Phi}(\cx)}:=\inf\lf\{\lz\in(0,\fz):\
\int_{\cx}\Phi\lf(\frac{|f(x)|}{\lz}\r)\,d\mu(x)\le1\r\}<\infty.$$
Let $p_{\Phi}^-\in({\omega}/(\omega+\eta),1]$, $p_{\Phi}^+=1$,
and $\bz$, $\gz\in(\omega(1/p_{\Phi}^--1),\eta)$ with $\omega$
as in \eqref{eq-doub} and $\eta$ as in Definition \ref{expati}.
Assume that $Y(\cx):=L^{\Phi}(\cx)$.
Choose $p_-\in(\max\{{\omega}/(\omega+\bz),{\omega}/(\omega+\gz)\},p_{\Phi}^-]$,
$\tz_0\in(\max\{{\omega}/(\omega+\bz),{\omega}/(\omega+\gz)\},p_-)$,
and $p_0\in(1,\fz)$.
By this and an argument similar to that used in
\cite[Remark 2.7(g)]{wyy}, we know that
$$[(Y^{1/{\tz_0}})']^{1/(p_0/\tz_0)'}(\cx)
=L^{\widetilde{\Psi}}(\cx)$$
and $p_{\widetilde{\Psi}}^-=(p_{\Phi}^+/\tz_0)'/(p_0/\tz_0)'>1$,
where, for any $t\in[0,\fz)$,
$$\widetilde{\Psi}(t):=
\sup_{r\in[0,\fz)}\lf[rt^{1/(p_0/\tz_0)'}-\Phi\lf(r^{1/s}\r)\r].$$
By this, Remark \ref{qbsdefrem}(iii), \cite[Theorem 4.11]{fmy19},
and \cite[Theorem 6.6]{fmy19}, we know that $L^{\Phi}(\cx)$
satisfies all the assumptions of Theorem \ref{atthm}.
In this case, a different variant of Theorem \ref{atthm} was obtained
in \cite[Theorem 5.4]{fmy19} which uses the same atoms,
but equips a different quasi-norm.

\item[{\rm (v)}]
Let $(\cx,\rho,\mu)$ be an RD-space,
$p(\cdot)\in C^{\log}(\cx)$ satisfy
$\widetilde{p_-}\in(\oz/(\oz+\eta),\fz)$,
and $\bz$, $\gz\in(\omega[1/\widetilde{p_-}-1]_+,\eta)$
with $\omega$ as in \eqref{eq-doub},
$\eta$ as in Definition \ref{expati},
and $[1/\widetilde{p_-}-1]_+:=\max\{1/\widetilde{p_-}-1,0\}$.
Assume that $Y(\cx):=L^{p(\cdot)}(\cx)$.
If $\widetilde{p_-}\in({\omega}/(\omega+\eta),1)$,
we choose $p_-\in(\max\{{\omega}/(\omega+\bz),{\omega}/(\omega+\gz)\},\widetilde{p_-}]$,
$\tz_0\in(\max\{{\omega}/(\omega+\bz),{\omega}/(\omega+\gz)\},p_-)$,
and $p_0\in(\widetilde{p_+},\fz)$;
if $\widetilde{p_-}\in[1,\fz)$,
we choose $p_-\in[1,\widetilde{p_-}]$,
$\tz_0\in(0,1)$, and $p_0\in(\widetilde{p_+},\fz)$.
By this and an argument similar to that used in
\cite[Remark 2.7(f)]{wyy}, we know that
$$[(Y^{1/{\tz_0}})']^{1/(p_0/\tz_0)'}(\cx)
=L^{(p(\cdot)/\tz_0)'/(p_0/\tz_0)'}(\cx)$$
and $(p(\cdot)/\tz_0)'/(p_0/\tz_0)'
\ge(\widetilde{p_+}/\tz_0)'/(p_0/\tz_0)'>1$.
From this, Remark \ref{qbsdefrem}(iv),
\cite[Lemma 2.5]{zsy}, and \cite[Theorem 2.7]{zsy},
we deduce that $L^{p(\cdot)}(\cx)$ satisfies all the assumptions
of Theorem \ref{atthm}. In this case, Theorem \ref{atthm}
improves \cite[Theorem 4.3]{zsy} by removing the reverse
doubling assumption of $\mu$.
\end{enumerate}

Let $\vz$ be a growth function as in Remark \ref{qbsdefrem}(iii).
Although the Musielak--Orlicz space $L^{\vz}(\cx)$ is also a
ball quasi-Banach function space
[see Remark \ref{qbsdefrem}(iii) above],
Theorem \ref{atthm} is not applicable to the
Musielak--Orlicz Hardy space $H^{*,\vz}(\cx)$.
Indeed, the space variable $x$ and the growth variable
$t$ in $\vz(x,t)$ are in general not separable,
which makes Assumption \ref{assump2} fail on $L^{\vz}(\cx)$.
Recently, Fu et al. \cite{fmy19} obtained a corresponding conclusion
of Theorem \ref{atthm} in case that $Y(\cx):=L^{\vz}(\cx)$
independently.
\end{remark}

\subsection{Finite atomic characterizations\label{sfinatom}}

In this subsection, we establish finite atomic
characterizations of $H_Y^*(\cx)$. To this end,
we first introduce the notion of finite
atomic Hardy spaces associated with $Y(\cx)$.

\begin{definition}\label{finatom}
Let $Y(\cx)$ be a ball quasi-Banach function space on $\cx$ satisfying Assumption \ref{assump1} with $p_-\in({\omega}/(\omega+\eta),\fz)$,
where $\omega$ is as in \eqref{eq-doub} and $\eta$ as in
Definition \ref{expati}. Further assume that $Y(\cx)$
satisfies Assumption \ref{assump2} with the same $p_-$
as in Assumption \ref{assump1},
$\tz_0\in({\omega}/(\omega+\eta),\underline{p})$,
and $p_0\in(\tz_0,\fz)$, where $\underline{p}$ is as in \eqref{2.1y}.
Let $q\in(\max\{p_0,1\},\fz]$ and $d\in(0,\tz_0]$.
The \emph{finite atomic Hardy space} $H_{\mathrm{fin}}^{Y,q,d}(\cx)$
associated with $Y(\cx)$ is defined to be the set of all finite
linear combinations of $(Y(\cx),q)$-atoms. The quasi-norm
$\|\cdot\|_{H_{\mathrm{fin}}^{Y,q,d}(\cx)}$ in $H_{\mathrm{fin}}^{Y,q,d}(\cx)$
is defined by setting, for any $f\in H_{\mathrm{fin}}^{Y,q,d}(\cx)$,
\begin{align*}
\|f\|_{H_{\mathrm{fin}}^{Y,q,d}(\cx)}&:=\inf\lf\{\lf\|\lf\{\sum_{j=1}^{m}
\lf[\frac{\lz_j}{\|\ch1_{B_j}\|_{Y(\cx)}}\r]^{d}\ch1_{B_j}\r\}
^{\frac{1}{d}}\r\|_{Y(\cx)}:\ m\in\nn,\r.\\
&\qquad\qquad\qquad\qquad
\lf.f=\sum_{j=1}^{m}\lz_ja_j,\ \{\lz_j\}_{j=1}^{m}\subset[0,\infty)\r\},
\end{align*}
where the infimum is taken over all finite linear
combinations of $f$ via $(Y(\cx),q)$-atoms as above.
\end{definition}

Next, we establish the finite atomic characterization of $H_Y^*(\cx)$
(see \cite[Theorem 5.12]{yyy20b} for the corresponding Euclidean case).
In what follows, denote by the \emph{symbol $\uu\cc(\cx)$} the \emph{set
of all uniformly continuous functions on $\cx$}, that is, a function
$f\in\uu\cc(\cx)$ if and only if, for any given $\ez\in(0,\fz)$,
there exists a $\delta\in(0,\fz)$ such that $|f(x)-f(y)|<\ez$
whenever $\rho(x,y)<\delta$.
\begin{theorem}\label{finatomeq}
Let $Y(\cx)$ be a ball quasi-Banach function space on $\cx$ satisfying Assumption \ref{assump1} with $p_-\in({\omega}/(\omega+\eta),\fz)$,
where $\omega$ is as in \eqref{eq-doub} and $\eta$ as in
Definition \ref{expati}. Further assume that $Y(\cx)$
satisfies Assumption \ref{assump2} with the same $p_-$
as in Assumption \ref{assump1},
$\tz_0\in({\omega}/(\omega+\eta),\underline{p})$,
and $p_0\in(\tz_0,\fz)$, where $\underline{p}$ is as in \eqref{2.1y}.
Let $q\in(\max\{p_0,1\},\fz]$ and $d\in(0,\tz_0]$.
\begin{enumerate}
\item[{\rm (i)}]  If $q\in(\max\{p_0,1\},\fz)$,
then $\|\cdot\|_{H_{\mathrm{fin}}^{Y,q,d}(\cx)}$ and $\|\cdot\|_{H_Y^*(\cx)}$
are equivalent quasi-norms on the space $H_{\mathrm{fin}}^{Y,q,d}(\cx)$.

\item[{\rm (ii)}] If $q=\infty$, then
$\|\cdot\|_{H_{\mathrm{fin}}^{Y,\fz,d}(\cx)}$
and $\|\cdot\|_{H_Y^*(\cx)}$ are equivalent quasi-norms on the space
$H_{\mathrm{fin}}^{Y,\fz,d}(\cx)\cap\uu\cc(\cx)$.
\end{enumerate}
\end{theorem}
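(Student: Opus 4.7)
The plan is to prove the two inequalities separately. The direction $\|f\|_{H_{Y}^{*}(\cx)}\ls\|f\|_{H_{\mathrm{fin}}^{Y,q,d}(\cx)}$ is essentially immediate from Theorem \ref{atthm}, because any finite atomic decomposition of $f$ competing in the infimum defining $\|f\|_{H_{\mathrm{fin}}^{Y,q,d}(\cx)}$ is in particular an atomic decomposition in the sense of Definition \ref{atomhy}, and the infimum then yields the bound. The reverse inequality is the substantive part, and I will attack it by combining the atomic decomposition from Proposition \ref{atde} with a truncation-and-absorption argument in the Meda--Sj\"ogren--Vallarino style (following the strategy of \cite[Theorem 5.12]{yyy20b} in the Euclidean case).

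Let $f\in H_{\mathrm{fin}}^{Y,q,d}(\cx)$, with the extra requirement $f\in\uu\cc(\cx)$ in case (ii). Then $f$ is supported in some ball $B_{0}:=B(x_{0},R_{0})$, has vanishing integral, and belongs to $L^{q}(\cx)$. A crucial preliminary is that the grand maximal function $f^{*}$ decays at infinity: testing $f$ against $\vz\in\icgg$ and exploiting the cancellation of $f$ together with the regularity condition \eqref{3.31.x2} yields $f^{*}(x)\to 0$ as $\rho(x_{0},x)\to\fz$, so each level set $\Omega_{j}:=\{f^{*}>2^{j}\}$ is bounded, and Lemma \ref{helem1}(vi) forces the index set $I_{j}$ from Proposition \ref{atde} to be finite for every $j\in\zz$. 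Proposition \ref{atde} then produces a decomposition $f=\sum_{j\in\zz}\sum_{k\in I_{j}}\lz_{j,k}a_{j,k}$ in $(\icgg)'$ with $(Y(\cx),\fz)$-atoms $a_{j,k}$ supported in $B_{j,k}\st\Omega_{j}$, coefficients $\lz_{j,k}=2^{j}\|\ch1_{B_{j,k}}\|_{Y(\cx)}$, and the sequence quasi-norm bounded by $\|f\|_{H_{Y}^{*}(\cx)}$. Fixing $j_{0}\in\zz$ and splitting at level $j_{0}$, I bundle the low-level part $h:=\sum_{j\le j_{0},\,k}\lz_{j,k}a_{j,k}$ into a single atom: the bounded-overlap in Lemma \ref{helem1}(v), together with the pointwise bound $|\lz_{j,k}a_{j,k}|\le 2^{j}\ch1_{B_{j,k}}$, gives $\|h\|_{L^{\fz}(\cx)}\ls 2^{j_{0}}$; moreover $\supp h\st\Omega_{j_{0}}$, and by dominated convergence (applied under the same pointwise bound) $\int_{\cx}h\,d\mu=0$. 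Thus $h=\mu_{0}\widetilde{a}_{0}$ for some $(Y(\cx),\fz)$-atom $\widetilde{a}_{0}$ supported in a ball containing $\Omega_{j_{0}}$, with $\mu_{0}\ls 2^{j_{0}}\|\ch1_{\Omega_{j_{0}}}\|_{Y(\cx)}\le\|f^{*}\ch1_{\Omega_{j_{0}}}\|_{Y(\cx)}\le\|f\|_{H_{Y}^{*}(\cx)}$.

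The high-level part $\ell:=\sum_{j>j_{0},\,k}\lz_{j,k}a_{j,k}$ is treated differently in the two cases. In case (ii), $f\in L^{\fz}(\cx)$ forces $f^{*}\in L^{\fz}(\cx)$, so $\Omega_{j}=\emptyset$ for all $j$ exceeding some threshold; hence $\ell$ is already a finite sum of $(Y(\cx),\fz)$-atoms, and combined with the single atom from $h$ this yields a finite decomposition whose quasi-norm is controlled by $\|f\|_{H_{Y}^{*}(\cx)}$ via the layer-cake estimate $\sum_{j\in\zz}2^{jd}\ch1_{\Omega_{j}}\ls(f^{*})^{d}$ (valid since $d\in(0,1]$), the bounded-overlap $\sum_{k}\ch1_{B_{j,k}}\le L_{0}\ch1_{\Omega_{j}}$, and the quasi-triangle inequality from Remark \ref{r-ar}. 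In case (i) with $q\in(\max\{p_{0},1\},\fz)$, however, $f^{*}$ need not be bounded and $\ell$ may contain atoms at arbitrarily large $j$; so I further split $\ell=\ell_{N}+\ell_{>N}$ with $\ell_{N}:=\sum_{j_{0}<j\le N,\,k}\lz_{j,k}a_{j,k}$ a finite sum, re-absorb the tail by setting $\widetilde{h}_{N}:=h+\ell_{>N}$, and write $f=\widetilde{h}_{N}+\ell_{N}$, a finite decomposition for every $N$. The pointwise bound $|\ell_{>N}|\ls f^{*}\ch1_{\Omega_{N}}$ together with $f^{*}\in L^{q}(\cx)$ (ensured by the $L^{q}$-boundedness of $\cm$, which requires $q>1$) and dominated convergence give $\|\ell_{>N}\|_{L^{q}(\cx)}\to 0$ as $N\to\fz$. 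For $N$ large, $\widetilde{h}_{N}$ is supported in the fixed bounded set $\Omega_{j_{0}}$, has vanishing mean, and satisfies $\|\widetilde{h}_{N}\|_{L^{q}(\cx)}\ls 2^{j_{0}}\mu(\Omega_{j_{0}})^{1/q}$; hence $\widetilde{h}_{N}=\mu_{N}\widetilde{a}_{N}$ for a $(Y(\cx),q)$-atom $\widetilde{a}_{N}$, with $\mu_{N}\ls 2^{j_{0}}\|\ch1_{\Omega_{j_{0}}}\|_{Y(\cx)}\ls\|f\|_{H_{Y}^{*}(\cx)}$.

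The main obstacle I anticipate is the tail-absorption step in case (i): one must arrange that the support of $\widetilde{h}_{N}$ is the \emph{fixed} bounded set $\Omega_{j_{0}}$ (independent of $N$), so that the normalization factor $\mu(\Omega_{j_{0}})^{-1/q}\|\ch1_{\Omega_{j_{0}}}\|_{Y(\cx)}$ in the atom definition stays bounded, while the $L^{q}$-norm of $\widetilde{h}_{N}$ is controlled by $\|h\|_{L^{\fz}(\cx)}\mu(\Omega_{j_{0}})^{1/q}$ uniformly in $N$; this forces the low-level bundling to precede the high-level truncation, and explains the restriction $q>\max\{p_{0},1\}$, which is what ensures $f^{*}\in L^{q}(\cx)$ and hence the required $L^{q}$-decay of the tail. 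Case (ii) avoids this difficulty precisely because the hypothesis $f\in\uu\cc(\cx)$ (giving $f\in L^{\fz}(\cx)$ and thus $f^{*}\in L^{\fz}(\cx)$) forces the high-level sum to terminate automatically at a finite level, so no tail absorption is needed.
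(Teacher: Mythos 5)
The easy direction and the general strategy (decompose via Proposition \ref{atde}, bundle the low levels into one atom, truncate the high levels) are in the spirit of the paper's proof, but two of your key claims do not hold, and they are exactly the points where the paper has to work. First, Lemma \ref{helem1}(vi) does \emph{not} say that $I_j$ is finite when $\Omega_j$ is bounded; it only says that, for each fixed $\sz\in(0,\fz)$, the set of indices $k$ with $r_{j,k}>\sz$ is finite. There may be infinitely many Whitney balls of small radius. Consequently, in your case (ii) the sum $\ell$ over the finitely many levels $j'<j\le j''$ is in general \emph{not} a finite linear combination of atoms, and in your case (i) the partial sum $\ell_N$ over $j_0<j\le N$ is not finite either. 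This is precisely why the paper, for $q=\fz$, splits $\ell=\ell_1^\epsilon+\ell_2^\epsilon$ according to whether $12A_0^3r_{j,k}\ge\delta$ or $<\delta$ and uses the uniform continuity of $f$ to show $\|\ell_2^\epsilon\|_{L^\fz(\cx)}\ls\epsilon$, so that the infinitely many small-radius pieces form one small multiple of a $(Y(\cx),\fz)$-atom; your proposal invokes $\uu\cc(\cx)$ only to get $f\in L^\fz(\cx)$, which is automatic for a finite combination of $(Y(\cx),\fz)$-atoms, so the hypothesis is never genuinely used and the infinitely-many-small-atoms problem is left open. For $q<\fz$ the paper avoids the issue by truncating over the finite sets $F_i=\{(j,k):j>j',\,k\in I_j,\,|j|+k\le i\}$ and making the remainder $\ell-\ell_{i_0}$ a single $(Y(\cx),q)$-atom via $L^q$-convergence.

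Second, your bundling of the low-level part is not justified as written. The atoms at levels $j\le j_0$ are supported in balls contained in $\Omega_j\supset\Omega_{j_0}$, so $\supp h\subset\Omega_{j_0}$ is false; and even after repairing the support (the paper gets $\supp h\subset B_0$ from $h=f-\ell$, $\supp f\subset B_0$, and $\supp\ell\subset B_0$), an atom must be normalized relative to a \emph{ball}, and you have no control of $\|\ch1_{B'}\|_{Y(\cx)}$ for a ball $B'\supset\Omega_{j_0}$ in terms of $\|\ch1_{\Omega_{j_0}}\|_{Y(\cx)}$, so the coefficient bound $\mu_0\ls 2^{j_0}\|\ch1_{\Omega_{j_0}}\|_{Y(\cx)}\le\|f\|_{H_Y^*(\cx)}$ does not close. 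The missing ingredient is the paper's quantitative exterior estimate: for $x\notin B_0$ one shows $f^\star(x)\le c_0\|\ch1_{B_0}\|_{Y(\cx)}^{-1}$ (the two cases $r\ge 4A_0^2\rho(x_1,x)/3$ and $r<4A_0^2\rho(x_1,x)/3$, i.e.\ \eqref{7.5.y1} and \eqref{8.9.x1}); choosing $j'$ as the largest integer with $2^{j'}<c_0\|\ch1_{B_0}\|_{Y(\cx)}^{-1}$ then gives simultaneously $\Omega_j\subset B_0$ for $j>j'$ (hence the supports of $h$ and of all high-level pieces sit in the fixed ball $B_0$) and $\|h\|_{L^\fz(\cx)}\ls 2^{j'}\ls\|\ch1_{B_0}\|_{Y(\cx)}^{-1}$, which is exactly the $(Y(\cx),\fz)$-atom normalization on $B_0$ with coefficient $\ls\|f\|_{H_Y^*(\cx)}$. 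Your decay-at-infinity observation gives boundedness of the level sets but not this quantitative link between the threshold level, the ball $B_0$, and $\|\ch1_{B_0}\|_{Y(\cx)}$, and without it neither the low-level bundle nor the tail absorption in your case (i) can be normalized correctly.
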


\begin{proof}
Let all the symbols be as in the present theorem.
From Theorem \ref{atthm}, we deduce that,
for any $q\in(\max\{p_0,1\},\fz]$,
$$H_{\mathrm{fin}}^{Y,q,d}(\cx)\st H_{\mathrm{atom}}^{Y,q,d}(\cx)\st H_Y^*(\cx)$$
and, for any $f\in H_{\mathrm{fin}}^{Y,q,d}(\cx)$,
$$\|f\|_{H_Y^*(\cx)}\ls\|f\|_{H_{\mathrm{atom}}^{Y,q,d}(\cx)}
\ls\|f\|_{H_{\mathrm{fin}}^{Y,q,d}(\cx)}.$$
Thus, to complete the proof of this theorem,
it suffices to show that
$$\|f\|_{H_{\mathrm{fin}}^{Y,q,d}(\cx)}\ls\|f\|_{H_Y^*(\cx)}$$
for any $f\in H_{\mathrm{fin}}^{Y,q,d}(\cx)$ when
$q\in(\max\{p_0,1\},\fz)$, or any
$f\in H_{\mathrm{fin}}^{Y,\fz,d}(\cx)\cap\uu\mathcal{C}(\cx)$.

Assume that $q\in(\max\{p_0,1\},\fz]$. By the homogeneity of both
$\|\cdot\|_{H_Y^*(\cx)}$ and $\|\cdot\|_{H_{\mathrm{fin}}^{Y,q,d}(\cx)}$,
without loss of generality, we may assume that
$f\in H_{\mathrm{fin}}^{Y,q,d}(\cx)$ and $\|f\|_{H_Y^*(\cx)}=1$.
Since $f$ is a finite linear combination of $(Y(\cx),q)$-atoms,
it follows that there exist an $x_1\in\cx$ and a $K\in(0,\fz)$
such that $\supp f\st B(x_1,K)$. In the remainder of this proof,
let $B_0:=B(x_1,AK)$ with $A:=16A_0^4$
and all the natation be as in the proof of Proposition \ref{atde}.
Let $x\notin B_0$ and $\vz\in\icgg$ with
$\|\vz\|_{\cg(x,r,\bz,\gz)}\le 1$ for some $r\in(0,\fz)$.
We consider the following two cases on $r$.

If $r\ge 4A_0^2\rho(x_1,x)/3$, then, by an argument similar
to that used in the proof of \cite[(7.1)]{hhllyy},
we conclude that, for any $y\in B(x,\rho(x_1,x))$,
\begin{align}\label{8.9.x2}
|\langle f,\varphi\rangle|\ls f^\ast(y).
\end{align}
Let $h\in(0,\underline{p})$. From the fact that $\ch1_{B_0}\le[C_{(\mu)}2^{-\oz}
\cm(\ch1_{B(x,\rho(x_1,x))})]^{1/h}$
with $C_{(\mu)}$ and $\oz$ as in \eqref{eq-doub},
Definition \ref{qbs}(ii), and Assumption \ref{assump1},
we deduce that
\begin{align*}
\lf\|\ch1_{B_0}\r\|_{Y(\cx)}
&\le\lf[C_{(\mu)}2^{-\oz}\r]^{1/h}
\lf\|\lf[\cm\lf(\ch1_{B(x,\rho(x_1,x))}\r)\r]^{1/h}\r\|_{Y(\cx)}
\ls\lf\|\ch1_{B(x,\rho(x_1,x))}\r\|_{Y(\cx)}.
\end{align*}
By this and \eqref{8.9.x2}, we conclude that
\begin{align}\label{7.5.y1}
|\langle f,\varphi\rangle|\ls\inf_{y\in B(x,\rho(x,x_1))}f^*(y)
\ls\lf\|f^*\r\|_{Y(\cx)}
\lf\|\ch1_{B(x,\rho(x_1,x))}\r\|_{Y(\cx)}^{-1}
\ls\lf\|\ch1_{B_0}\r\|_{Y(\cx)}^{-1}.
\end{align}
This is the desired estimate.

If $r<4A_0^2\rho(x_1,x)/3$, then, by an argument similar
to that used in the proof of \cite[(7.1)]{hhllyy},
we conclude that, for any $y\in B(x_1,\rho(x_1,x))$,
$|\langle f,\varphi\rangle|\ls f^\ast(y)$.
From this, we deduce that
\begin{align}\label{8.9.x1}
|\langle f,\varphi\rangle|
\ls\inf_{y\in B(x_1,\rho(x,x_1))}f^*(y)
\ls\inf_{y\in B_0}f^*(y)
\ls\lf\|f^*\r\|_{Y(\cx)}
\lf\|\ch1_{B_0}\r\|_{Y(\cx)}^{-1}
\sim\lf\|\ch1_{B_0}\r\|_{Y(\cx)}^{-1}.
\end{align}
This is also the desired estimate.

Combining \eqref{7.5.y1}, \eqref{8.9.x1},
the arbitrariness of $\vz$, and
the fact that $f^\star\sim f^*$,
we know that,
for any $x\in B_0^{\com}$,
\begin{align*}
f^\star(x)&\le Cf^*(x)\le c_0
\lf\|\ch1_{B_0}\r\|_{Y(\cx)}^{-1},
\end{align*}
where $c_0$ and $C$ are positive constants independent of $f$ and $x$.
Denote by $j'$ the largest integer $j$ such that
$2^j<c_0\|\ch1_{B_0}\|_{Y(\cx)}^{-1}$.
Then, for any $j\in(j',\fz)\cap\zz$,
\begin{align}\label{7.6.x2}
\Omega_j\subset B_0.
\end{align}
Let
\begin{align}\label{3.16.y3}
h:&=\sum_{j=-\fz}^{j'}\sum_{k\in I_j}\lz_{j,k}\ajk\quad\mbox{and}
\quad \ell:=\sum_{j=j'+1}^{\fz}\sum_{k\in I_j}\lz_{j,k}\ajk,
\end{align}
where the series converge both in $(\icgg)'$ and almost everywhere in $\cx$.
By \eqref{7.6.x2}, \eqref{3.16.y3}, and Lemma \ref{helem1}(ii),
we know that $\supp \ell\subset B_0$,
and hence $\supp h\subset B_0$.
Moreover, from \eqref{7.22.x1} and Lemma \ref{helem1}(v),
we deduce that
\begin{align*}
\|h\|_{L^{\fz}(\cx)}\ls\sum_{j=-\fz}^{j'}
\lf\|\sum_{k\in I_j}\lz_{j,k}\ajk\r\|_{L^{\fz}(\cx)}
\ls\sum_{j=-\fz}^{j'}2^j\ls 2^{j'}
\ls\lf\|\ch1_{B_0}\r\|_{Y(\cx)}^{-1}.
\end{align*}
Notice that $f\in L^{\widetilde{q}}(\cx)$ with
$\widetilde{q}:=q$ if $q<\fz$,
and $\widetilde{q}:=2$ if $q=\fz$.
From this and \cite[Theorem 3.4]{hhllyy},
we deduce that $f^*\in L^{\widetilde{q}}(\cx)$,
which, combined with the H\"older inequality,
\eqref{7.6.x2}, \eqref{7.22.x1}, and an argument
similar to that used in the proof of \eqref{7.5.x2},
further implies that
\begin{align}\label{3.22.y1}
\lf\|\sum_{j=j'+1}^{\fz}\sum_{k\in I_j}
\lf|\lz_{j,k}\ajk\r|\r\|_{L^{1}(\cx)}
&\ls\lf\|\sum_{j=j'+1}^{\fz}\sum_{k\in I_j}
\lf|\lz_{j,k}\ajk\r|\r\|_{L^{\widetilde{q}}(\cx)}
\ls\lf\|\sum_{j=j'+1}^{\fz}\sum_{k\in I_j}
2^j\ch1_{B(x_{j,k},Ar_{j,k})}\r\|_{L^{\widetilde{q}}(\cx)}\noz\\
&\ls\lf\|\sum_{j=j'+1}^{\fz}2^j
\ch1_{\Omega_{j}}\r\|_{L^{\widetilde{q}}(\cx)}
\ls\lf\|f^*\r\|_{L^{\widetilde{q}}(\cx)}<\fz.
\end{align}
By this, the Lebesgue dominated convergence theorem,
and the cancellation of $\ajk$, we conclude that
\begin{align}\label{7.7.x1}
\int_{\cx}\ell(x)\,d\mu(x)
=\sum_{j=j'+1}^{\fz}\sum_{k\in I_j}\int_{\cx}
\lz_{j,k}(x)\ajk(x)\,d\mu(x)=0,
\end{align}
which further implies that
$$\int_{\cx}h(x)\,d\mu(x)=0.$$
Thus, $h$ is a harmless constant multiple of a $(Y(\cx),\fz)$-atom
supported in $B_0$.
For $\ell$, we consider two cases on $q$.

{\it Case 1)} $q\in(\max\{p_0,1\},\fz)$.
In this case, $f\in H_{\mathrm{fin}}^{Y,q,d}(\cx)$.
For any $i\in\nn$, let
$$F_i:=\lf\{(j,k)\in\zz\times\nn:\ j>j',\,k\in I_j,\,|j|+k\le i\r\},$$
and $\ell_i:=\sum_{(j,k)\in F_i}\lz_{j,k}\ajk$.
By \eqref{3.22.y1}, we know that $\ell$ converges in $L^{q}(\cx)$,
which further implies that there exists a positive integer $i_0$
such that
\begin{align}\label{8.2.x1}
\lf\|\ell-\ell_{i_0}\r\|_{L^{q}(\cx)}
&\le\frac{[\mu(B_0)]^{1/q}}
{\|\ch1_{B_0}\|_{Y(\cx)}}.
\end{align}
Moreover, from \eqref{7.7.x1}, we deduce that
\begin{align*}
\int_{\cx}\lf[\ell(x)-\ell_{i_0}(x)\r]\,d\mu(x)
=\int_{\cx}\ell(x)\,d\mu(x)-\int_{\cx}\ell_{i_0}(x)\,d\mu(x)=0.
\end{align*}
By this, the fact that $\supp(\ell-\ell_{i_0})\subset B_0$,
and \eqref{8.2.x1}, we conclude that $\ell-\ell_{i_0}$
is a $(Y(\cx),q)$-atom supported in $B_0$.
Therefore,
$$f=h+\ell=h+(\ell-\ell_{i_0})+\ell_{i_0}$$
is a finite decomposition of $f$ in terms of $(Y(\cx),q)$-atoms.
From this, Remark \ref{r-ar}, \eqref{7.22.x1},
and \eqref{7.5.x2}, we deduce that
there exists a $\nu\in(0,1)$ such that
\begin{align*}
\|f\|^{\nu}_{H_{\mathrm{fin}}^{Y,q,d}(\cx)}
&\ls\lf\|\lf\{\lf[\frac{1}
{\|\ch1_{B_0}\|_{Y(\cx)}}\r]^{d}\ch1_{B_0}
+\sum_{(j,k)\in F_{i_0}}\lf[\frac{\lz_{j,k}}
{\|\ch1_{B(x_{j,k},Ar_{j,k})}\|_{Y(\cx)}}\r]^{d}
\ch1_{B(x_{j,k},Ar_{j,k})}\r\}^{\frac1{d}}\r\|^{\nu}_{{Y(\cx)}}\noz\\
&\ls1+\lf\|\lf\{\sum_{j\in\zz}\sum_{k\in I_j}2^{jd}
\ch1_{B(x_{j,k},Ar_{j,k})}\r\}^{\frac1{d}}\r\|^{\nu}_{{Y(\cx)}}
\ls1+\|f\|^{\nu}_{H_Y^*(\cx)}\ls\|f\|^{\nu}_{H_Y^*(\cx)},\noz
\end{align*}
which completes the proof of (i).

{\it Case 2)} $q=\fz$. In this case,
$f\in H_{\mathrm{fin}}^{Y,\fz,d}(\cx)\cap\uu\cc(\cx)$.
Assume that $\|f\|_{H_Y^*(\cx)}=1$. Since $f$ is bounded,
it follows from the fact that $f^\star\sim f^*$,
\cite[Proposition 3.9]{gly08}, and the boundedness of $\cm$
on $L^{\fz}(\cx)$ (see, for instance, \cite[(3.6)]{cw77}) that
\begin{align*}
\lf\|f^\star\r\|_{L^{\fz}(\cx)}&\sim\lf\|f^*\r\|_{L^{\fz}(\cx)}
\ls\lf\|\cm(f)\r\|_{L^{\fz}(\cx)}\ls\|f\|_{L^{\fz}(\cx)}<\fz.
\end{align*}
Thus, there exists a positive
integer $j''>j'$ such that, for any $j\in\{j''+1,\,j''+2,\,\ldots\}$,
$\mu(\Omega_j)=0$. Consequently, in this case, we have
$$\ell=\sum_{j=j'+1}^{j''}\sum_{k\in I_j}\lz_{j,k}\ajk$$
almost everywhere in $\cx$. Let $\epsilon\in(0,\fz)$. By the fact that $f\in\uu\cc(\cx)$,
we know that there exists a $\delta\in(0,\fz)$ such that,
for any $x$, $y\in\cx$ with $\rho(x,y)<\delta$,
$|f(x)-f(y)|<\epsilon$. Write
$\ell=\ell_1^\epsilon+\ell_2^\epsilon$
with $\ell_1^\epsilon:=\sum_{(j,k)\in J_1}\lz_{j,k}\ajk$
and $\ell_2^\epsilon:=\sum_{(j,k)\in J_2}\lz_{j,k}\ajk$,
where
$$J_1:=\lf\{(j,k)\in\zz\times\nn:\ j'<j\le j'',
\,k\in I_j,\,12A_0^3r_{j,k}\ge\delta\r\}$$
and
$$J_2:=\lf\{(j,k)\in\zz\times\nn:\ j'<j\le j'',
\,k\in I_j,\,12A_0^3r_{j,k}<\delta\r\}.$$
On one hand, by \eqref{7.6.x2}, we know that, for any $j\in(j',j'']\cap\zz$,
$\Omega_j$ is bounded, which, combined with (i) and Lemma \ref{helem1}(vi),
further implies that $\ell_1^\epsilon$ is a finite linear
combination of $(Y(\cx),\fz)$-atoms, and
$\|\ell_1^\epsilon\|_{H_{\mathrm{fin}}^{Y,q,d}(\cx)}\ls\|f\|_{H_Y^*(\cx)}$.
On the other hand, it is easy to see
$\supp \ell_2^\epsilon\st B_0$
and $\int_{\cx}\ell_2^\epsilon(x)\,d\mu(x)=0$.
By an argument similar to that used in the proof of
\cite[Theorem 5.7(ii)]{zhy}, we obtain
$\|\ell_2^\epsilon\|_{L^{\fz}(\cx)}\ls\epsilon$.
Thus, $\ell_2^\epsilon$ is a small constant multiple
of a $(Y(\cx),\fz)$-atom.
Therefore, $f=h+\ell_1^\epsilon+\ell_2^\epsilon$
is a finite decomposition of $f$ in terms of $(Y(\cx),\fz)$-atoms.
Furthermore, using this fact and repeating the proof of (i),
we obtain
$$\|f\|_{H_{\mathrm{fin}}^{Y,\fz,d}(\cx)}\ls\|f\|_{H_Y^*(\cx)}.$$
This finishes the proof of (ii) and hence of Theorem \ref{finatomeq}.
\end{proof}

\begin{remark}\label{finatomre}
We now give several applications of Theorem \ref{finatomeq} as follows.
\begin{enumerate}
\item[{\rm (i)}]
Let $p\in({\omega}/(\omega+\eta),1]$ with $\omega$ as
in \eqref{eq-doub} and $\eta$ as in Definition \ref{expati}.
If $Y(\cx):=L^{p}(\cx)$, then, by Remark \ref{atomre}(i),
we know that $L^{p}(\cx)$ satisfies all the assumptions
of Theorem \ref{finatomeq}. In this case, Theorem \ref{finatomeq}
is a different variant of \cite[Theorem 7.1]{hhllyy}
which uses the same atoms, but equips a different quasi-norm.

\item[{\rm (ii)}]
Let $p\in({\omega}/(\omega+\eta),1]$ with $\omega$ as in
\eqref{eq-doub} and $\eta$ as in Definition \ref{expati},
and $r\in(0,\fz)$. If $Y(\cx):=L^{p,r}(\cx)$,
then, by Remark \ref{atomre}(ii),
we know that $L^{p,r}(\cx)$ satisfies all the assumptions
of Theorem \ref{finatomeq}.
In this case, Theorem \ref{finatomeq} is a different variant of
\cite[Theorem 5.7]{zhy} which uses the same atoms, but equips
a different quasi-norm.

\item[{\rm (iii)}]
Let $p\in({\omega}/(\omega+\eta),1]$, $r\in(1,\fz)$,
$w\in A_{r}(\cx)$ satisfy
$$r_w:=\inf\lf\{r\in[1,\fz):\
\vz\in A_r(\cx)\r\}\in(1,p(\omega+\eta)/{\omega})$$
with $\omega$ as in \eqref{eq-doub} and
$\eta$ as in Definition \ref{expati}.
If $Y(\cx):=L^p_w(\cx)$, then, by Remark \ref{atomre}(iii),
we know that $L^p_w(\cx)$ satisfies all the assumptions
of Theorem \ref{finatomeq}. In this case,
Theorem \ref{finatomeq} is a different variant of
\cite[Theorem 7.2]{fmy19} which uses different
atoms and equips a different quasi-norm.

\item[{\rm (iv)}]
Let $\Phi$ be an Orlicz function, $p_{\Phi}^+=1$,
and $p_{\Phi}^-\in({\omega}/(\omega+\eta),1]$
with $\omega$ as in \eqref{eq-doub} and
$\eta$ as in Definition \ref{expati}.
If $Y(\cx):=L^{\Phi}(\cx)$,
then, by Remark \ref{atomre}(iv),
we know that $L^{\Phi}(\cx)$ satisfies all the assumptions
of Theorem \ref{finatomeq}. In this case,
Theorem \ref{finatomeq} is a different variant of
\cite[Theorem 7.2]{fmy19} which uses the same atoms,
but equips a different quasi-norm.

\item[{\rm (v)}]
Let $(\cx,\rho,\mu)$ be an RD-space,
and $p(\cdot)\in C^{\log}(\cx)$ satisfy
$\widetilde{p_-}\in(\oz/(\oz+\eta),\fz)$
with $\omega$ as in \eqref{eq-doub} and
$\eta$ as in Definition \ref{expati}.
If $Y(\cx):=L^{p(\cdot)}(\cx)$,
then, by Remark \ref{atomre}(v),
we know that $L^{p(\cdot)}(\cx)$ satisfies all the assumptions
of Theorem \ref{finatomeq}.
In this case, Theorem \ref{finatomeq}
improves \cite[Theorem 4.24]{zsy} by removing the reverse
doubling assumption of $\mu$.
\end{enumerate}

Let $\vz$ be a growth function as in Remark \ref{qbsdefrem}(iii).
As was mentioned in Remark \ref{atomre},
Theorem \ref{finatomeq} is not applicable to $H^{*,\vz}(\cx)$.
Fu et al. \cite{fmy19} obtained a similar
conclusion of Theorem \ref{finatomeq} in case that
$Y(\cx):=L^{\vz}(\cx)$ independently.
\end{remark}

\section{Molecular characterizations of $H_Y^*(\cx)$\label{s-mole}}
In this section, we establish the molecular characterization
of $H_Y^*(\cx)$. To this end, we first introduce the notion of molecules.
\begin{definition}\label{mold}
Let $Y(\cx)$ be a ball quasi-Banach function space on $\cx$ satisfying Assumption \ref{assump1} with $p_-\in({\omega}/(\omega+\eta),\fz)$,
where $\omega$ is as in \eqref{eq-doub} and $\eta$ as in
Definition \ref{expati}. Further assume that $Y(\cx)$
satisfies Assumption \ref{assump2} with the same $p_-$
as in Assumption \ref{assump1},
$\tz_0\in({\omega}/(\omega+\eta),\underline{p})$,
and $p_0\in(\tz_0,\fz)$, where $\underline{p}$ is as in \eqref{2.1y}.
Let $q\in (\max\{p_0,1\},\fz]$, $\epsilon\in (0,\fz)$,
and $\dz\in(0,1)$ be as in Lemma \ref{daydic}.
\begin{enumerate}
\item[{\rm (i)}] A $\mu$-measurable function $m$ on $\cx$ is called
a $(Y(\cx),q,\epsilon)$-\emph{molecule centered at a ball
$B\subset\cx$} if
\begin{enumerate}
\item[{$\rm (i)_1$}] for any $j\in\zz_+$,
$$\|m\|_{L^q(U_j(B))}\le\dz^{j\ez}
\lf[\mu\lf(\dz^{-j}B\r)\r]^{\frac{1}{q}}
\lf\|\ch1_B\r\|_{Y(\cx)}^{-1};$$
here and thereafter, $U_0(B):=B$ and, for any $j\in\nn$,
$U_j(B):=(\dz^{-j}B) \setminus (\dz^{-j+1} B)$;

\item[{$\rm (i)_2$}] $\int_{\cx}m(x)\,d\mu(x)=0$.
\end{enumerate}
\item[{$\rm (ii)$}] Let $d\in(0,\tz_0]$ and $\bz$,
$\gz\in(\omega(1/{\tz_0}-1),\eta)$.
The \emph{molecular Hardy space}
$H_{\mathrm{mol}}^{Y,q,d,\ez}(\cx)$ is defined to be the set of
all $f\in(\icgg)'$ satisfying that there exist a sequence
$\{\lz_j\}_{j\in\nn}\subset[0,\fz)$ and a sequence $\{m_j\}_{j\in\nn}$
of $(Y(\cx),q,\epsilon)$-molecules centered, respectively,
at balls $\{B_j \}_{j\in\nn}$ of $\cx$ such that
$f=\sum_{j\in\nn}\lz_j m_j$ in $(\icgg)'$, and
$$\lf\|\lf\{\sum_{j\in\nn}
\lf[\frac{\lz_j}{\|\ch1_{B_j}\|_{Y(\cx)}}\r]^{d}\ch1_{B_j}\r\}
^{\frac{1}{d}}\r\|_{Y(\cx)}<\fz.$$
Moreover, for any $f\in H_{\mathrm{mol}}^{Y,q,d,\ez}(\cx)$,
let
$$\|f\|_{H_{\mathrm{mol}}^{Y,q,d,\ez}(\cx)}
:=\inf\lf\{\lf\|\lf\{\sum_{j\in\nn}
\lf[\frac{\lz_j}{\|\ch1_{B_j}\|_{Y(\cx)}}\r]^{d}\ch1_{B_j}\r\}
^{\frac{1}{d}}\r\|_{Y(\cx)}\r\},$$
where the infimum is taken over all decompositions of $f$ as above.
\end{enumerate}
\end{definition}

The following lemma shows that a $(Y(\cx),q,\epsilon)$-molecule
can be decomposed into a sum of a sequence of $(Y(\cx),q)$-atoms.
\begin{lemma}\label{moldec}
Let $Y(\cx)$ be a ball quasi-Banach function space on $\cx$ satisfying Assumption \ref{assump1} with $p_-\in({\omega}/(\omega+\eta),\fz)$,
where $\omega$ is as in \eqref{eq-doub} and $\eta$ as in
Definition \ref{expati}. Further assume that $Y(\cx)$
satisfies Assumption \ref{assump2} with the same $p_-$
as in Assumption \ref{assump1},
$\tz_0\in({\omega}/(\omega+\eta),\underline{p})$,
and $p_0\in(\tz_0,\fz)$, where $\underline{p}$ is as in \eqref{2.1y}.
Let $q\in (\max\{p_0,1\},\fz]$, $\dz\in(0,1)$ be as in Lemma \ref{daydic},
$\oz$ as in \eqref{eq-doub}, and $\ez\in (\oz,\fz)$.
If $m$ is a $(Y(\cx),q,\ez)$-molecule centered at a ball $B:=B(x_0,r_0)$
for some $x_0\in\cx$ and $r_0\in(0,\fz)$, then it holds true that
$$m=\sum_{j=0}^{\fz}u_jb_j+\sum_{j=1}^{\fz}\sum_{k=j}^{\fz}v_{j,k}c_{j,k}$$
in $(\icgg)'$ with $\bz$, $\gz\in(\omega(1/{\tz_0}-1),\eta)$,
where, for any $j\in\zz_+$ and $k\in\zz\cap[j,\fz)$,
$$u_j:=\frac{2\dz^{\ez j}\|\ch1_{\dz^{-j}B}\|_{Y(\cx)}}
{\|\ch1_B\|_{Y(\cx)}}\quad\mbox{and}\quad
v_{j,k}:=\frac{2C_{(\mu)}\dz^{(\ez-\oz)k+\oz j-\oz}
\|\ch1_{\dz^{-j}B}\|_{Y(\cx)}}{\|\ch1_B\|_{Y(\cx)}}$$
with $C_{(\mu)}$ as in \eqref{eq-doub}, and
$b_j$ and $c_{j,k}$ are both $(Y(\cx),q)$-atoms supported in $\dz^{-j}B$.
\end{lemma}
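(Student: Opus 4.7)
The plan is to decompose the molecule $m$ via its pieces on the dyadic annuli $U_j(B)$, correcting each piece to have vanishing mean and then treating the mean corrections by Abel summation. For every $j\in\zz_+$, set $m_j:=m\ch1_{U_j(B)}$ and $N_j:=\int_{\cx}m_j(x)\,d\mu(x)$, so that $m=\sum_{j=0}^{\fz}m_j$ pointwise $\mu$-almost everywhere, while the cancellation condition of $m$ from Definition \ref{mold} gives $\sum_{j=0}^{\fz}N_j=0$. I then introduce $b_j':=m_j-N_j[\mu(\dz^{-j}B)]^{-1}\ch1_{\dz^{-j}B}$, which is supported in $\dz^{-j}B$ and has vanishing mean.

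From the molecule size condition and H\"older's inequality, $|N_j|\le\dz^{j\ez}\mu(\dz^{-j}B)\|\ch1_B\|_{Y(\cx)}^{-1}$, so the Minkowski inequality gives
\[
\|b_j'\|_{L^q(\cx)}\le\|m_j\|_{L^q(\cx)}+|N_j|[\mu(\dz^{-j}B)]^{1/q-1}\le 2\dz^{j\ez}[\mu(\dz^{-j}B)]^{1/q}\|\ch1_B\|_{Y(\cx)}^{-1}.
\]
This identifies $b_j'=u_jb_j$ with $u_j:=2\dz^{j\ez}\|\ch1_{\dz^{-j}B}\|_{Y(\cx)}/\|\ch1_B\|_{Y(\cx)}$ and $b_j$ a $(Y(\cx),q)$-atom supported in $\dz^{-j}B$, producing the first single-indexed sum.

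To handle the residue $\sum_{j\ge 0}N_j[\mu(\dz^{-j}B)]^{-1}\ch1_{\dz^{-j}B}$, I would apply Abel summation. Setting $T_j:=\sum_{k\ge j}N_k$, one has $T_0=0$ and $N_j=T_j-T_{j+1}$, so a rearrangement, justified by the bound $|N_k|\ls\dz^{k(\ez-\oz)}\mu(B)\|\ch1_B\|_{Y(\cx)}^{-1}$, which is absolutely summable thanks to $\ez>\oz$, yields
\[
\sum_{j=0}^{\fz}\frac{N_j}{\mu(\dz^{-j}B)}\ch1_{\dz^{-j}B}=\sum_{j=1}^{\fz}T_jD_j=\sum_{j=1}^{\fz}\sum_{k=j}^{\fz}N_kD_j,
\]
where $D_j:=[\mu(\dz^{-j}B)]^{-1}\ch1_{\dz^{-j}B}-[\mu(\dz^{-j+1}B)]^{-1}\ch1_{\dz^{-j+1}B}$ is supported in $\dz^{-j}B$, has vanishing mean, and satisfies $\|D_j\|_{L^q(\cx)}\ls[\mu(\dz^{-j}B)]^{1/q-1}$ via the doubling inequality \eqref{eq-doub} (with $\lz=\dz^{-1}$) used to compare $\mu(\dz^{-j+1}B)$ and $\mu(\dz^{-j}B)$. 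Absorbing the sign of $N_k$ into $c_{j,k}$ and writing $N_kD_j=v_{j,k}c_{j,k}$, the bound $|N_k|\le C_{(\mu)}\dz^{(\ez-\oz)k+\oz j}\mu(\dz^{-j}B)\|\ch1_B\|_{Y(\cx)}^{-1}$ obtained via $\mu(\dz^{-k}B)\le C_{(\mu)}\dz^{-\oz(k-j)}\mu(\dz^{-j}B)$, combined with the $L^q$ normalization of $D_j$, yields the stated formula for $v_{j,k}$ and realizes $c_{j,k}$ as a $(Y(\cx),q)$-atom supported in $\dz^{-j}B$.

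The main obstacle is the careful bookkeeping of constants, in particular tracing the extra factor $\dz^{-\oz}$ appearing in $v_{j,k}$ back to the doubling comparison $\mu(\dz^{-j}B)\le C_{(\mu)}\dz^{-\oz}\mu(\dz^{-j+1}B)$ used when bounding $\|D_j\|_{L^q(\cx)}$. The convergence of both series in $(\icgg)'$ is then routine: the first series converges by pairing the zero-mean atom $b_j$ against the H\"older regularity of any test function in $\icgg$, while the absolute convergence $\sum_j|T_j|\|D_j\|_{L^1(\cx)}\ls\sum_j\dz^{j(\ez-\oz)}<\fz$, thanks again to $\ez>\oz$, ensures that the double sum converges in $L^1(\cx)$ and hence a fortiori in $(\icgg)'$.
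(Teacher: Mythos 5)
Your decomposition is exactly the paper's: the mean-corrected annular pieces give the atoms $b_j$, and the Abel summation of the mean corrections (your $T_j$ and $D_j$ are precisely the paper's $N_j$ and $\ch1_j-\ch1_{j-1}$, and $N_kD_j$ its $d_{j,k}$) produces the $c_{j,k}$, with the same H\"older/doubling estimates and the same convergence argument relying on $\ez>\oz$. One bookkeeping caution: to land on the stated constant $2C_{(\mu)}$ (rather than $2C_{(\mu)}^2$) in $v_{j,k}$, compare $\mu(\dz^{-k}B)$ directly with $\mu(\dz^{-j+1}B)$ by a single application of \eqref{eq-doub} (this is where the extra $\dz^{-\oz}$ comes from), instead of chaining the two separate doubling comparisons in your sketch.
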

\begin{proof}
Let $m$ be a $(Y(\cx),q,\ez)$-molecule centered at some ball $B\st\cx$
with $Y(\cx)$, $q$, and $\ez$ as in the present lemma.
For any $j\in\zz_+$, let
$$m_j:=\frac{\ch1_{\dz^{-j}B}}{\mu(\dz^{-j}B)}
\int_{\cx}m(y)\ch1_{U_j(B)}(y)\,d\mu(y)$$
and $M_j:=m\ch1_{U_j(B)}-m_j$. Obviously,
\begin{align}\label{5.14.x1}
m&=\sum_{j=0}^{\fz}M_j+\sum_{j=0}^{\fz}m_j
\end{align}
pointwisely.

Now, we consider the first sum of \eqref{5.14.x1}.
Fix $j\in\zz_+$. We claim that $M_j$ is a multiple of a $(Y(\cx),q)$-atom.
Indeed, it is easy to see that
\begin{align}\label{3.27.x1}
\supp M_j\subset\dz^{-j}B\quad\mbox{and}\quad\int_{\cx}M_j(x)\,d\mu(x)=0.
\end{align}
Moreover,
by the Minkowski and the H\"older inequalities,
we obtain
\begin{align}\label{5.12.x1}
\lf\|M_j\r\|_{L^{q}(\cx)}&\le\lf\|m\ch1_{U_j(B)}\r\|_{L^{q}(\cx)}
+\lf[\mu\lf(\dz^{-j}B\r)\r]^{-\frac{1}{q'}}
\lf\|m\ch1_{U_j(B)}\r\|_{L^{1}(\cx)}\noz\\
&\le2\lf\|m\ch1_{U_j(B)}\r\|_{L^{q}(\cx)}
\le2\dz^{\ez j}\lf[\mu\lf(\dz^{-j}B\r)\r]^{\frac{1}{q}}
\lf\|\ch1_B\r\|_{Y(\cx)}^{-1}\noz\\
&=\frac{2\dz^{\ez j}\|\ch1_{\dz^{-j}B}\|_{Y(\cx)}}
{\lf\|\ch1_B\r\|_{Y(\cx)}}
\lf[\mu\lf(\dz^{-j}B\r)\r]^{\frac{1}{q}}
\lf\|\ch1_{\dz^{-j}B}\r\|_{Y(\cx)}^{-1}.
\end{align}
Now, for any $j\in\zz_+$, let
$$u_j:=\frac{2\dz^{\ez j}\|\ch1_{\dz^{-j}B}\|_{Y(\cx)}}
{\lf\|\ch1_B\r\|_{Y(\cx)}}
\quad\mbox{and}\quad b_j:=\frac{M_j}{u_j}.$$
Then, by \eqref{3.27.x1} and \eqref{5.12.x1}, we know
that $b_j$ is a $(Y(\cx),q)$-atom supported in $\dz^{-j}B$.
From the Minkowski inequality, \eqref{5.12.x1}, \eqref{eq-doub},
and the fact that $\ez>\oz$, we deduce that
\begin{align*}
\sum_{j=0}^{\fz}\lf\|M_j\r\|_{L^{q}(\cx)}
&\ls\sum_{j=0}^{\fz}\dz^{\ez j}
\lf[\mu\lf(\dz^{-j}B\r)\r]^{\frac{1}{q}}
\lf\|\ch1_B\r\|_{Y(\cx)}^{-1}
\ls\sum_{j=0}^{\fz}\dz^{(\ez-\oz/q)j}[\mu(B)]^{\frac{1}{q}}
\lf\|\ch1_B\r\|_{Y(\cx)}^{-1}\\
&\ls[\mu(B)]^{\frac{1}{q}}\lf\|\ch1_B\r\|_{Y(\cx)}^{-1}<\fz.
\end{align*}
Thus, $\sum_{j=0}^{\fz}M_j$ converges in $L^{q}(\cx)$ and
hence in $(\icgg)'$ with $\bz$ and $\gz$ as in the present lemma.

Next, we consider the second sum of \eqref{5.14.x1}.
For any $j\in\zz_+$, let
$$\ch1_j:=\frac{\ch1_{\dz^{-j}B}}{\mu(\dz^{-j}B)},\quad
\widetilde{m}_j:=\int_{\cx}m(y)\ch1_{U_j(B)}(y)\,d\mu(y),\quad
\mbox{and}\quad N_j:=\sum_{k=j}^{\fz}\widetilde{m}_k.$$
Then, by the cancellation of $m$, we obtain
\begin{align*}
N_0&=\sum_{k=0}^{\fz}\widetilde{m}_k=\int_{\cx}m(y)\,d\mu(y)=0,
\end{align*}
which further implies that
\begin{align}\label{5.14.x2}
\sum_{j=0}^{\fz}m_j&=\sum_{j=0}^{\fz}\ch1_j\widetilde{m}_j
=\sum_{j=0}^{\fz}\ch1_j\lf(N_j-N_{j+1}\r)
=\sum_{j=1}^{\fz}\lf(\ch1_j-\ch1_{j-1}\r)N_j\noz\\
&=\sum_{j=1}^{\fz}\sum_{k=j}^{\fz}
\lf(\ch1_j-\ch1_{j-1}\r)\widetilde{m}_k
=:\sum_{j=1}^{\fz}\sum_{k=j}^{\fz}d_{j,k}.
\end{align}
Fix $j\in\nn$ and $k\in[j,\fz)\cap\nn$. We claim that
$d_{j,k}$ is a multiple of a $(Y(\cx),q)$-atom.
Indeed, it is easy to see that
\begin{align}\label{3.27.x2}
\supp d_{j,k}\subset\dz^{-j}B\quad\mbox{and} \quad\int_{\cx}d_{j,k}(x)\,d\mu(x)=0.
\end{align}
Moreover, by the Minkowski inequality, the H\"older inequality,
$k\ge j$, and \eqref{eq-doub}, we know that
\begin{align}\label{5.12.x2}
\lf\|d_{j,k}\r\|_{L^{q}(\cx)}
&\le\frac{2}{\lf[\mu(\dz^{-j+1}B)\r]^{1/q'}}
\lf|\widetilde{m}_k\r|\le\frac{2}{\lf[\mu(\dz^{-j+1}B)\r]^{1/q'}}
\lf\|m\ch1_{U_k(B)}\r\|_{L^{1}(\cx)}\noz\\
&\le\frac{2\lf[\mu(U_k(B))\r]^{1/q'}}{\lf[\mu(\dz^{-j+1}B)\r]^{1/q'}}
\lf\|m\ch1_{U_k(B)}\r\|_{L^{q}(\cx)}
\le\frac{2\dz^{\ez k}\mu(\dz^{-k}B)}{\lf[\mu(\dz^{-j+1}B)\r]^{1/q'}}
\lf\|\ch1_B\r\|_{Y(\cx)}^{-1}\noz\\
&\le2C_{(\mu)}\dz^{(\ez-\oz)k+\oz j-\oz}\lf[\mu\lf(\dz^{-j+1}B\r)\r]^{1/q}
\lf\|\ch1_B\r\|_{Y(\cx)}^{-1}\noz\\
&\le\frac{2C_{(\mu)}\dz^{(\ez-\oz)k+\oz j-\oz}
\|\ch1_{\dz^{-j}B}\|_{Y(\cx)}}
{\|\ch1_B\|_{Y(\cx)}}\lf[\mu\lf(\dz^{-j}B\r)\r]^{1/q}
\lf\|\ch1_{\dz^{-j}B}\r\|_{Y(\cx)}^{-1},
\end{align}
where $C_{(\mu)}$ is as in \eqref{eq-doub}.
Thus, we find that $d_{j,k}$ is a multiple of a $(Y(\cx),q)$-atom.
Now, for any $j\in\nn$ and $k\in[j,\fz)\cap\nn$, define
$$v_{j,k}:=\frac{2C_{(\mu)}\dz^{(\ez-\oz)k+\oz j-\oz}
\|\ch1_{\dz^{-j}B}\|_{Y(\cx)}}{\lf\|\ch1_B\r\|_{Y(\cx)}}
\quad\mbox{and}\quad c_{j,k}:=\frac{d_{j,k}}{v_{j,k}}.$$
Then, by \eqref{3.27.x2} and \eqref{5.12.x2}, we know that
$c_{j,k}$ is a $(Y(\cx),q)$-atom supported in $\dz^{-j}B$.
From \eqref{5.12.x2}, \eqref{eq-doub},
and the fact that $\ez>\oz$,
we deduce that
\begin{align*}
\sum_{j=1}^{\fz}\sum_{k=j}^{\fz}\lf\|d_{j,k}\r\|_{L^{q}(\cx)}
&\ls\sum_{j=1}^{\fz}\sum_{k=j}^{\fz}\dz^{(\ez-\oz)k+\oz j-\oz}
\lf[\mu\lf(\dz^{-j+1}B\r)\r]^{1/q}\lf\|\ch1_B\r\|_{Y(\cx)}^{-1}\\
&\ls\sum_{j=1}^{\fz}\dz^{\ez j}\lf[\mu\lf(\dz^{-j+1}B\r)\r]^{1/q}
\lf\|\ch1_B\r\|_{Y(\cx)}^{-1}
\ls\sum_{j=1}^{\fz}\dz^{(\ez-\oz/q)j}\lf[\mu(B)\r]^{1/q}
\lf\|\ch1_B\r\|_{Y(\cx)}^{-1}\\
&\ls\lf[\mu(B)\r]^{1/q}\lf\|\ch1_B\r\|_{Y(\cx)}^{-1}<\fz.
\end{align*}
Thus, $\sum_{j=1}^{\fz}\sum_{k=j}^{\fz}d_{j,k}$
converges in $L^{q}(\cx)$, and hence in $(\icgg)'$.
By the convergences of both $\sum_{j=0}^{\fz}M_j$ and
$\sum_{j=1}^{\fz}\sum_{k=j}d_{j,k}$, \eqref{5.14.x1},
and \eqref{5.14.x2}, we conclude that
\begin{align*}
m&=\sum_{j=0}^{\fz}M_j+\sum_{j=0}^{\fz}m_j
=\sum_{j=0}^{\fz}u_jb_j+\sum_{j=1}^{\fz}
\sum_{k=j}^{\fz}v_{j,k}c_{j,k}
\end{align*}
converges in $(\icgg)'$,
which completes the proof of Lemma \ref{moldec}.
\end{proof}

Now, we establish the molecular characterization
of $H_Y^*(\cx)$ as follows (see \cite[Theorem 3.9]{shyy17}
for the corresponding Euclidean case).

\begin{theorem}\label{mol}
Let $Y(\cx)$ be a ball quasi-Banach function space on $\cx$ satisfying Assumption \ref{assump1} with $p_-\in({\omega}/(\omega+\eta),\fz)$,
where $\omega$ is as in \eqref{eq-doub} and $\eta$ as in
Definition \ref{expati}. Further assume that $Y(\cx)$
satisfies Assumption \ref{assump2} with the same $p_-$
as in Assumption \ref{assump1},
$\tz_0\in({\omega}/(\omega+\eta),\underline{p})$,
and $p_0\in(\tz_0,\fz)$, where $\underline{p}$ is as in \eqref{2.1y}.
Let $q\in (\max\{p_0,1\},\fz]$, $d\in(0,\tz_0]$, $\ez\in(\oz/{d},\fz)$,
and $\bz$, $\gz\in(\omega(1/{\tz_0}-1),\eta)$.
Then
$$\lf[H_Y^*(\cx)\cap(\icgg)'\r]
=\lf[H_{\mathrm{mol}}^{Y,q,d,\ez}(\cx)\cap(\icgg)'\r]$$
with equivalent quasi-norms.
\end{theorem}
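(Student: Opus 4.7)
The plan is to prove the two inclusions separately, using Propositions \ref{atre} and \ref{atde} as the main engines, together with Lemma \ref{moldec} to bridge between atoms and molecules.

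For the forward inclusion $[H_Y^*(\cx)\cap(\icgg)']\subset[H_{\mathrm{mol}}^{Y,q,d,\ez}(\cx)\cap(\icgg)']$, the observation is that every $(Y(\cx),q)$-atom $a$ supported in a ball $B$ is automatically a $(Y(\cx),q,\ez)$-molecule centered at $B$: condition $(i)_1$ of Definition \ref{mold} for $j=0$ is just the atomic size condition, while for $j\in\nn$ it holds trivially because $a\equiv 0$ on $U_j(B)$, and the cancellation condition $(i)_2$ coincides with Definition \ref{atom}(iii). Hence, if $f\in H_Y^*(\cx)\cap(\icgg)'$, Proposition \ref{atde} yields a decomposition $f=\sum_{j\in\nn}\lz_j a_j$ in $(\icgg)'$ in terms of $(Y(\cx),\fz)$-atoms (which are, in particular, $(Y(\cx),q)$-atoms, hence $(Y(\cx),q,\ez)$-molecules), with the required control of the coefficient functional by $\|f\|_{H_Y^*(\cx)}$. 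This directly gives $f\in H_{\mathrm{mol}}^{Y,q,d,\ez}(\cx)$ together with the estimate $\|f\|_{H_{\mathrm{mol}}^{Y,q,d,\ez}(\cx)}\ls\|f\|_{H_Y^*(\cx)}$.

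For the reverse inclusion, let $f\in H_{\mathrm{mol}}^{Y,q,d,\ez}(\cx)\cap(\icgg)'$ and pick a molecular decomposition $f=\sum_{i\in\nn}\lz_i m_i$ in $(\icgg)'$ with $m_i$ centered at a ball $B_i\subset\cx$. Applying Lemma \ref{moldec} (which requires $\ez>\oz$; note that the hypothesis $\ez\in(\oz/d,\fz)$ with $d\le\tz_0\le 1$ ensures this) to each $m_i$, I obtain
\begin{align*}
m_i=\sum_{j=0}^{\fz}u_{i,j}b_{i,j}+\sum_{j=1}^{\fz}\sum_{k=j}^{\fz}v_{i,j,k}c_{i,j,k},
\end{align*}
where $b_{i,j}$ and $c_{i,j,k}$ are $(Y(\cx),q)$-atoms supported in $\dz^{-j}B_i$, and the coefficients satisfy
\begin{align*}
u_{i,j}=\frac{2\dz^{\ez j}\|\ch1_{\dz^{-j}B_i}\|_{Y(\cx)}}{\|\ch1_{B_i}\|_{Y(\cx)}},\qquad
v_{i,j,k}=\frac{2C_{(\mu)}\dz^{(\ez-\oz)k+\oz j-\oz}\|\ch1_{\dz^{-j}B_i}\|_{Y(\cx)}}{\|\ch1_{B_i}\|_{Y(\cx)}}.
\end{align*}
Substituting these into $f=\sum_i\lz_i m_i$ yields an atomic representation of $f$ in $(\icgg)'$. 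To close the argument via Proposition \ref{atre}, I need to control
\begin{align*}
\lf\|\lf\{\sum_{i,j}\lf[\frac{\lz_i u_{i,j}}{\|\ch1_{\dz^{-j}B_i}\|_{Y(\cx)}}\r]^{d}\ch1_{\dz^{-j}B_i}\r\}^{\frac{1}{d}}\r\|_{Y(\cx)}
+(\text{analogous term with }v_{i,j,k}).
\end{align*}

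The main obstacle, and the technical heart of the proof, will be estimating these two sums by a constant multiple of $\lf\|\{\sum_i[\lz_i/\|\ch1_{B_i}\|_{Y(\cx)}]^{d}\ch1_{B_i}\}^{1/d}\r\|_{Y(\cx)}$. After cancelling $\|\ch1_{\dz^{-j}B_i}\|_{Y(\cx)}$ the task reduces to controlling $\sum_j\dz^{\ez jd}\ch1_{\dz^{-j}B_i}$ and the double sum $\sum_{j\ge 1,k\ge j}\dz^{[(\ez-\oz)k+\oz j-\oz]d}\ch1_{\dz^{-j}B_i}$ in $Y^{1/d}(\cx)$. For each fixed $j$, I will use Lemma \ref{lem6.2} together with Remark \ref{rek3.19} (applied to $Y^{1/d}(\cx)$, which inherits Assumption \ref{assump1} through a suitable choice of parameter, since $d\le\tz_0<\underline{p}$) to replace $\ch1_{\dz^{-j}B_i}$ by a constant multiple of $\dz^{-j\oz/h}\cm(\ch1_{B_i})^{1/h}$ for a small $h\in(0,\underline{p}/d)$; then summing a geometric series in $j$ (resp.\ in both $j$ and $k$) converges precisely because $\ez d>\oz$ (resp.\ because $(\ez-\oz)d>0$ and $\ez d>\oz$), both of which are guaranteed by the hypothesis $\ez>\oz/d$. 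The resulting expression is then bounded by the original functional via the Fefferman--Stein type inequality embedded in Assumption \ref{assump1}, exactly as in the proof of Proposition \ref{atre}. Once this summability is established, Proposition \ref{atre} gives convergence of the atomic series in $(\icgg)'$ and the bound $\|f\|_{H_Y^*(\cx)}\ls\|f\|_{H_{\mathrm{mol}}^{Y,q,d,\ez}(\cx)}$, completing the proof.
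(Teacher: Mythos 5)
Your proposal is correct and follows essentially the same route as the paper: for one inclusion, atoms are molecules and Proposition \ref{atde} (i.e., Theorem \ref{atthm}) applies; for the other, Lemma \ref{moldec} turns molecules into atoms and the coefficient functional is controlled via Lemma \ref{lem6.2} and Assumption \ref{assump1}, after which Proposition \ref{atre}/Theorem \ref{atthm} finishes. One small correction: the auxiliary exponent $h$ must be chosen \emph{large} enough, namely $h\in(\oz/(\ez d),1)$ (exactly what the hypothesis $\ez>\oz/d$ makes possible, and what the paper does), not ``small'', since for small $h$ the geometric series $\sum_{j}\dz^{(\ez d-\oz/h)j}$ diverges.
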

\begin{proof}
Let $Y(\cx)$, $q$, and $d$ be as in the present theorem.
It is easy to see that, for any given $\ez\in(0,\fz)$,
any $(Y(\cx),\fz)$-atom is a $(Y(\cx),q,\ez)$-molecule.
Thus, by Theorem \ref{atthm}, we have
$$H_Y^*(\cx)\subset H_{\mathrm{atom}}^{Y,\fz,d}(\cx)\subset H_{\mathrm{mol}}^{Y,q,d,\ez}(\cx)$$
and, for any $f\in H_Y^*(\cx)$,
$$\|f\|_{H_{\mathrm{mol}}^{Y,q,d,\ez}(\cx)}
\ls\|f\|_{H_{\mathrm{atom}}^{Y,\fz,d}(\cx)}
\ls\|f\|_{H_Y^*(\cx)}.$$

Next, we show that $H_{\mathrm{mol}}^{Y,q,d,\ez}(\cx)\subset H_Y^*(\cx)$
with $\ez\in(\oz/d,\fz)$.
By Theorem \ref{atthm}, it suffices to prove
$H_{\mathrm{mol}}^{Y,q,d,\ez}(\cx)\subset H_{\mathrm{atom}}^{Y,q,d}(\cx)$.
Let $f\in H_{\mathrm{mol}}^{Y,q,d,\ez}(\cx)$. Then,
by Definition \ref{mold}, we know that there exist
sequences $\{\lz_i\}_{i\in\nn}\subset[0,\fz)$ and $\{m_i\}_{i\in\nn}$
of $(Y(\cx),q,\epsilon)$-molecules centered, respectively,
at balls $\{B_i\}_{i\in\nn}$ of $\cx$ such that
$f=\sum_{i\in\nn}\lz_i m_i$ in $(\icgg)'$ and
\begin{align}\label{9.1.x1}
\lf\|\lf\{\sum_{i\in\nn}\lf[\frac{\lz_i}{\|\ch1_{B_i}\|
_{Y(\cx)}}\r]^{d}\ch1_{B_i}\r\}^{\frac{1}{d}}\r\|_{Y(\cx)}
\ls\|f\|_{H_{\mathrm{mol}}^{Y,q,d,\ez}(\cx)}.
\end{align}
From this and Lemma \ref{moldec}, we deduce that
\begin{align*}
f&=\sum_{i\in\nn}\sum_{j=0}^{\fz}\lz_iu_{i,j}b_{i,j}
+\sum_{i\in\nn}\sum_{j=1}^{\fz}\sum_{k=j}^{\fz}\lz_iv_{i,j,k}c_{i,j,k}
\end{align*}
in $(\icgg)'$, where, for any $i\in\nn$, $j\in\zz_+$,
and $k\in\zz_+\cap[j,\fz)$,
$$u_{i,j}:=\frac{2\dz^{\ez j}\|\ch1_{\dz^{-j}B_i}\|_{Y(\cx)}}
{\|\ch1_{B_i}\|_{Y(\cx)}}\quad\mbox{and}\quad
v_{i,j,k}:=\frac{2C_{(\mu)}\dz^{(\ez-\oz)k+\oz j-\oz}
\|\ch1_{\dz^{-j}B_i}\|_{Y(\cx)}}{\|\ch1_{B_i}\|_{Y(\cx)}}$$
with $C_{(\mu)}$ as in \eqref{eq-doub}, and both $b_{i,j}$ and
$c_{i,j,k}$ are $(Y(\cx),q)$-atoms supported in $\dz^{-j}B_i$.
Moreover,
\begin{align}\label{5.14.x3}
\|f\|_{H_{\mathrm{atom}}^{Y,q,d}(\cx)}
&\ls\lf\|\lf\{\sum_{i\in\nn}\sum_{j=0}^{\fz}
\lf[\frac{\lz_iu_{i,j}}{\|\ch1_{\dz^{-j}B_i}\|_{Y(\cx)}}\r]
^{d}\ch1_{\dz^{-j}B_i}\r\}^{\frac{1}{d}}\r\|_{Y(\cx)}\noz\\
&\qquad+\lf\|\lf\{\sum_{i\in\nn}\sum_{j=1}^{\fz}\sum_{k=j}^{\fz}
\lf[\frac{\lz_iv_{i,j,k}}{\|\ch1_{\dz^{-j}B_i}\|_{Y(\cx)}}\r]
^{d}\ch1_{\dz^{-j}B_i}\r\}^{\frac{1}{d}}\r\|_{Y(\cx)}\noz\\
&=:{\rm I}_1+{\rm I}_2.
\end{align}

We first estimate ${\rm I}_1$. Choose $h\in(\frac{\oz}{\ez d},1)$.
By Lemma \ref{lem6.2}, \eqref{eq-doub}, Assumption \ref{assump1},
and \eqref{9.1.x1}, we know that
\begin{align}\label{5.14.x4}
{\rm I}_1&\ls\lf\|\lf\{\sum_{i\in\nn}\sum_{j=0}^{\fz}
\lf[\frac{\lz_i\dz^{\ez j}}{\|\ch1_{B_i}\|_{Y(\cx)}}\r]
^{d}\ch1_{\dz^{-j}B_i}\r\}^{\frac{1}{d}}\r\|_{Y(\cx)}\noz\\
&\ls\lf\|\lf\{\sum_{i\in\nn}\sum_{j=0}^{\fz}
\lf[\frac{\lz_i}{\|\ch1_{B_i}\|_{Y(\cx)}}\r]^{d}
\dz^{(\ez d-\oz/h)j}\lf[\cm\lf(\ch1_{B_i}\r)\r]^{\frac{1}{h}}
\r\}^{\frac{1}{d}}\r\|_{Y(\cx)}\noz\\
&\ls\lf\|\lf\{\sum_{i\in\nn}
\lf[\frac{\lz_i}{\|\ch1_{B_i}\|_{Y(\cx)}}\r]^{d}
\lf[\cm\lf(\ch1_{B_i}\r)\r]^{\frac{1}{h}}
\r\}^{\frac{1}{d}}\r\|_{Y(\cx)}\noz\\
&\sim\lf\|\lf\{\sum_{i\in\nn}
\lf[\frac{\lz_i}{\|\ch1_{B_i}\|_{Y(\cx)}}\r]^{d}
\lf[\cm\lf(\ch1_{B_i}\r)\r]^{\frac{1}{h}}
\r\}^{h}\r\|^{\frac{1}{hd}}_{Y^{\frac{1}{hd}}(\cx)}\noz\\
&\ls\lf\|\lf\{\sum_{i\in\nn}\lf[\frac{\lz_i}{\|\ch1_{B_i}\|
_{Y(\cx)}}\r]^{d}\ch1_{B_i}\r\}^{\frac{1}{d}}\r\|_{Y(\cx)}
\ls\|f\|_{H_{\mathrm{mol}}^{Y,q,d,\ez}(\cx)}.
\end{align}

For ${\rm I}_2$, from \eqref{5.14.x4}, we deduce that
\begin{align}\label{5.14.x5}
{\rm I}_2&\ls\lf\|\lf\{\sum_{i\in\nn}\sum_{j=1}^{\fz}\sum_{k=j}^{\fz}
\lf[\frac{\lz_i\dz^{(\ez-\oz)k+\oz j-\oz}}{\|\ch1_{B_i}\|_{Y(\cx)}}\r]
^{d}\ch1_{\dz^{-j}B_i}\r\}^{\frac{1}{d}}\r\|_{Y(\cx)}\noz\\
&\ls\lf\|\lf\{\sum_{i\in\nn}\sum_{j=1}^{\fz}
\lf[\frac{\lz_i\dz^{\ez j}}{\|\ch1_{B_i}\|_{Y(\cx)}}\r]
^{d}\ch1_{\dz^{-j}B_i}\r\}^{\frac{1}{d}}\r\|_{Y(\cx)}
\ls\|f\|_{H_{\mathrm{mol}}^{Y,q,d,\ez}(\cx)}.
\end{align}
Combining \eqref{5.14.x3}, \eqref{5.14.x4}, and \eqref{5.14.x5},
we obtain
$\|f\|_{H_{\mathrm{atom}}^{Y,q,d}(\cx)}
\ls\|f\|_{H_{\mathrm{mol}}^{Y,q,d,\ez}(\cx)}$,
which completes the proof of
$H_{\mathrm{mol}}^{Y,q,d,\ez}(\cx)\subset H_Y^*(\cx)$
and hence of Theorem \ref{mol}.
\end{proof}

As a corollary of Theorems \ref{maxchprop} and \ref{mol},
the following conclusion shows that the molecular Hardy space
in Definition \ref{mold}(ii) is independent of the choices
of $(\icgg)'$.

\begin{corollary}\label{molecor}
Let $Y(\cx)$, $q$, $d$, $\ez$, $\eta$, $\oz$, and $\tz_0$
be as in Theorem \ref{mol}. Then $H_{\mathrm{mol}}^{Y,q,d,\ez}(\cx)$
is independent of the choices of $(\icgg)'$ whenever
$\bz$, $\gz\in(\omega(1/{\tz_0}-1),\eta)$.
\end{corollary}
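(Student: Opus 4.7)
The plan is to deduce Corollary \ref{molecor} by simply combining Theorem \ref{maxchprop} (which says $H_Y^*(\cx)$ is independent of the choice of $(\icgg)'$) with Theorem \ref{mol} (which identifies, for each admissible pair $(\bz,\gz)$, the intersection of $H_Y^*(\cx)$ with $(\icgg)'$ as the molecular Hardy space defined via that same distribution space). The argument mirrors what must be done for Corollary \ref{atomcor} in the atomic setting.

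More concretely, I would fix two pairs $(\bz_1,\gz_1),(\bz_2,\gz_2)\in(\omega(1/{\tz_0}-1),\eta)^2$ and, for each $i\in\{1,2\}$, write $H_{\mathrm{mol},i}^{Y,q,d,\ez}(\cx)$ for the molecular space constructed inside $(\mathcal{G}_0^\eta(\bz_i,\gz_i))'$ and $H_Y^{*,i}(\cx)$ for the grand maximal Hardy space inside the same distribution space. By Theorem \ref{mol} applied with each pair, one has
\begin{align*}
\lf[H_Y^{*,i}(\cx)\cap(\mathcal{G}_0^\eta(\bz_i,\gz_i))'\r]
=\lf[H_{\mathrm{mol},i}^{Y,q,d,\ez}(\cx)\cap(\mathcal{G}_0^\eta(\bz_i,\gz_i))'\r]
\end{align*}
with equivalent quasi-norms for $i\in\{1,2\}$. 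On the other hand, Theorem \ref{maxchprop} guarantees that
\begin{align*}
H_Y^{*,1}(\cx)=H_Y^{*,2}(\cx)
\end{align*}
with equivalent quasi-norms, both as sets of distributions and in norm. Chaining these three identifications then yields $H_{\mathrm{mol},1}^{Y,q,d,\ez}(\cx)=H_{\mathrm{mol},2}^{Y,q,d,\ez}(\cx)$ with equivalent quasi-norms, which is exactly the assertion of the corollary.

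The only mild point requiring attention (rather than a genuine obstacle) is to verify that the embedding $H_{\mathrm{mol},i}^{Y,q,d,\ez}(\cx)\hookrightarrow(\mathcal{G}_0^\eta(\bz_j,\gz_j))'$ holds for $j\neq i$, so that the second application of Theorem \ref{mol} makes sense; however, this is built into the chain since, by Theorem \ref{mol}, any element of $H_{\mathrm{mol},1}^{Y,q,d,\ez}(\cx)$ lies in $H_Y^{*,1}(\cx)=H_Y^{*,2}(\cx)$, and the latter is, by construction in \eqref{3.5z} (with the $(\bz_2,\gz_2)$ choice) together with Theorem \ref{indep}, contained in $(\mathcal{G}_0^\eta(\bz_2,\gz_2))'$. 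Thus the entire argument reduces to a brief formal chain of equalities; no new estimates are required and the main engine has already been built in Sections \ref{s-max} and \ref{s-mole}.
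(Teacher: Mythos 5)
Your proposal is correct and is essentially the paper's own argument: the corollary is stated there precisely as a consequence of Theorem \ref{maxchprop} (independence of $H_Y^*(\cx)$ from the choice of $(\icgg)'$) combined with Theorem \ref{mol} (identification of the molecular space with $H_Y^*(\cx)$ within each fixed distribution space), which is exactly the chain of identifications you carry out. Your added remark that membership in $(\mathcal{G}_0^\eta(\bz_2,\gz_2))'$ is ensured via Theorem \ref{indep} is the same implicit point the paper relies on, so no genuinely different route is taken.
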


\begin{remark}\label{molere}
We give several applications of Theorem \ref{mol} as follows.
\begin{enumerate}
\item[{\rm (i)}]
Let $p\in({\omega}/(\omega+\eta),1]$ with $\omega$ as
in \eqref{eq-doub} and $\eta$ as in Definition \ref{expati}.
If $Y(\cx):=L^{p}(\cx)$,
then, by Remark \ref{atomre}(i), we know that
$L^{p}(\cx)$ satisfies all the assumptions of
Theorem \ref{mol}. In this case,
Theorem \ref{mol} is a different variant of
\cite[Proposition 5.5]{hlyy19} which uses the different
molecules and equips a different quasi-norm.

\item[{\rm (ii)}]
Let $p\in({\omega}/(\omega+\eta),1]$ with $\omega$
as in \eqref{eq-doub} and $\eta$ as in Definition \ref{expati},
and $r\in(0,\fz)$. If $Y(\cx):=L^{p,r}(\cx)$,
then, by Remark \ref{atomre}(ii),
we know that $L^{p,r}(\cx)$ satisfies all the assumptions
of Theorem \ref{mol}. In this case,
Theorem \ref{mol} is a different variant of
\cite[Theorem 6.4]{zhy} which uses the same molecules,
but equips a different quasi-norm.

\item[{\rm (iii)}]
Let $p\in({\omega}/(\omega+\eta),1]$, $r\in(1,\fz)$,
$w\in A_{r}(\cx)$ satisfy
$$r_w:=\inf\lf\{r\in[1,\fz):\
\vz\in A_r(\cx)\r\}\in(1,p(\omega+\eta)/{\omega})$$
with $\omega$ as in \eqref{eq-doub} and
$\eta$ as in Definition \ref{expati}.
If $Y(\cx):=L^p_w(\cx)$, then, by Remark \ref{atomre}(iii),
we know that $L^p_w(\cx)$ satisfies all the assumptions
of Theorem \ref{mol}. In this case, Theorem \ref{mol} is
a different variant of \cite[Theorem 6.8]{fmy19} which uses
same molecules, but equips a different quasi-norm.

\item[{\rm (iv)}]
Let $\Phi$ be an Orlicz function, $p_{\Phi}^+=1$,
and $p_{\Phi}^-\in({\omega}/(\omega+\eta),1]$
with $\omega$ as in \eqref{eq-doub} and
$\eta$ as in Definition \ref{expati}.
If $Y(\cx):=L^{\Phi}(\cx)$, then, by Remark \ref{atomre}(iv),
we know that $L^{\Phi}(\cx)$ satisfies all the assumptions
of Theorem \ref{mol}. In this case, Theorem \ref{mol} is a
different variant of \cite[Theorem 6.8]{fmy19} which uses
the same molecules, but equips a different quasi-norm.

\item[{\rm (v)}]
Let $(\cx,\rho,\mu)$ be an RD-space,
$p(\cdot)\in C^{\log}(\cx)$ satisfy
$\widetilde{p_-}\in(\oz/(\oz+\eta),\fz)$
with $\omega$ as in \eqref{eq-doub}
and $\eta$ as in Definition \ref{expati}.
If $Y(\cx):=L^{p(\cdot)}(\cx)$,
then, by Remark \ref{atomre}(v), we know that
$L^{p(\cdot)}(\cx)$ satisfies all the assumptions of
Theorem \ref{mol}. In this case, Theorem \ref{mol} is new.
\end{enumerate}

Let $\vz$ be a growth function as in Remark \ref{qbsdefrem}(iii).
As was mentioned in Remark \ref{atomre},
Theorem \ref{mol} is not applicable to $H^{*,\vz}(\cx)$.
Fu et al. \cite{fmy19} obtained a similar
conclusion of Theorem \ref{mol} in case that
$Y(\cx):=L^{\vz}(\cx)$ independently.
\end{remark}

\section{Dual space of $H_Y^*(\cx)$ \label{s-lipa}}

In this section, we give the dual space of $H_Y^*(\cx)$.
To this end, we first introduce the notion of absolutely
continuous quasi-norms as follows
(see, for instance, \cite[Chapter 1, Definition 3.1]{bs88}
and \cite[Definition 3.2]{wyy} for the corresponding Euclidean case).
\begin{definition}\label{ably}
Let $Y(\cx)$ be a ball quasi-Banach function space on $\cx$.
A function $f\in Y(\cx)$ is said to have an
\emph{absolutely continuous quasi-norm} in $Y(\cx)$ if
$\|f\ch1_{E_j}\|_{Y(\cx)}\downarrow0$
whenever $\{E_j\}_{j\in\nn}$ is a sequence of $\mu$-measurable
sets satisfying $E_j\supset E_{j+1}$ for any $j\in\nn$, and
$\bigcap_{j\in\nn}E_j=\emptyset$. Moreover, $Y(\cx)$ is said to
have an \emph{absolutely continuous quasi-norm} if, for any
$f\in Y(\cx)$, $f$ has an absolutely continuous quasi-norm in $Y(\cx)$.
\end{definition}

\begin{remark}\label{ablyrem}
\begin{enumerate}
\item[{\rm (i)}]
Assume that $Y(\cx)$ is a ball quasi-Banach function space
on $\cx$ and has an absolutely continuous quasi-norm.
Let $p\in(0,\fz)$. Then, by Definition \ref{cvex} and
Remark \ref{covrem}, we know that $Y^p(\cx)$ also has
an absolutely continuous quasi-norm.

\item[{\rm (ii)}] Observe that, in Definition \ref{ably},
if we replace $\bigcap_{j\in\nn}E_j=\emptyset$ by
$\mu(\bigcap_{j\in\nn}E_j)=0$, we obtain its another
equivalent formulation.
\end{enumerate}

\end{remark}

We prove the following dominated convergence
theorem on $Y(\cx)$ based on the assumption that
$Y(\cx)$ has an absolutely continuous quasi-norm.
For the case of Banach function spaces on $\rn$,
this conclusion is a simple corollary of
\cite[Chapter 1, Proposition 3.6]{bs88}.

\begin{lemma}\label{dominate}
Assume that $Y(\cx)$ is a ball quasi-Banach function space on $\cx$
and has an absolutely continuous quasi-norm.
Let $g\in Y(\cx)$ and $\{f_m\}_{m\in\nn}$ be a sequence of $\mu$-measurable
functions satisfying that $|f_m|\le|g|$ for any $m\in\nn$,
and $\lim_{m\rightarrow\fz}f_m=f$ almost everywhere in $\cx$.
Then
\begin{align*}
\lim_{m\rightarrow\fz}\lf\|f_m-f\r\|_{Y(\cx)}=0.
\end{align*}
\end{lemma}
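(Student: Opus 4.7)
My plan is to reduce to a pointwise envelope: set $h_m := \sup_{k \ge m} |f_k - f|$ for each $m \in \nn$. This is $\mu$-measurable as a supremum of countably many $\mu$-measurable functions, satisfies $h_m \le 2|g|$ $\mu$-almost everywhere, and decreases to $0$ $\mu$-almost everywhere in $\cx$. Since $|f_m - f| \le h_m$, Definition \ref{qbs}(ii) reduces the task to proving $\lim_{m \to \fz}\|h_m\|_{Y(\cx)} = 0$. To this end, for any $\epsilon,\ \delta \in (0,\fz)$, I would use the pointwise split
\begin{align*}
h_m \le 2|g|\ch1_{\{h_m > \epsilon\}} + \epsilon\ch1_{\{|g| > \delta\}} + 2|g|\ch1_{\{0 < |g| \le \delta\}},
\end{align*}
obtained by separating according to whether $h_m$ exceeds $\epsilon$ and, on $\{h_m \le \epsilon\}$, whether $|g|$ exceeds $\delta$ (noting that $h_m$ vanishes on $\{g=0\}$). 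The Aoki--Rolewicz theorem in Remark \ref{r-ar} then yields some $v \in (0,1)$ such that $\|h_m\|_{Y(\cx)}^v$ is controlled, up to the factor $4$, by the sum of the $v$-th powers of the quasi-norms of the three functions on the right-hand side.

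Next, I would estimate each of the three resulting terms separately. For the middle one, since $\ch1_{\{|g| > \delta\}} \le |g|/\delta$, Definition \ref{qbs}(ii) gives $\|\epsilon\ch1_{\{|g|>\delta\}}\|_{Y(\cx)} \le (\epsilon/\delta)\|g\|_{Y(\cx)}$, which can be made arbitrarily small by choosing $\epsilon$ small compared to $\delta$. For the third term, the sets $\{0 < |g| \le 1/n\}_{n\in\nn}$ form a decreasing sequence with empty intersection, so the absolutely continuous quasi-norm of $2|g| \in Y(\cx)$ (Definition \ref{ably}) forces $\|2|g|\ch1_{\{0<|g|\le 1/n\}}\|_{Y(\cx)} \to 0$; hence this term can be made arbitrarily small by shrinking $\delta$. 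For the first term, since $h_m$ is non-increasing in $m$, the sets $\{h_m > \epsilon\}$ are decreasing in $m$, and their intersection is contained in the $\mu$-null set where $h_m \not\to 0$; by Remark \ref{ablyrem}(ii) and the absolutely continuous quasi-norm of $2|g|$, this implies $\|2|g|\ch1_{\{h_m > \epsilon\}}\|_{Y(\cx)} \to 0$ as $m \to \fz$.

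Combining these three estimates, for any prescribed $\epsilon' \in (0,\fz)$, I would first pick $\delta$ small enough that the third term has $v$-th power below $\epsilon'$, next pick $\epsilon$ small enough that the middle term has $v$-th power below $\epsilon'$, and finally choose $M \in \nn$ so large that the first term has $v$-th power below $\epsilon'$ for all $m \ge M$; these three choices together yield $\|h_m\|_{Y(\cx)}^v \le 12\epsilon'$ for $m \ge M$. The main technical point, rather than a genuine obstacle, is that $Y(\cx)$ lacks an ordinary triangle inequality, which is precisely why the Aoki--Rolewicz exponent $v$ from Remark \ref{r-ar} is indispensable when assembling the three pieces; moreover one must carefully verify, as above, that each of the three set systems decreases along a $\mu$-null intersection so that Definition \ref{ably} applies to $2|g|$ in each instance.
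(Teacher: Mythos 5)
Your argument is correct, but it takes a genuinely different route from the paper's proof of Lemma \ref{dominate}. The paper argues in two stages: first it fixes a ball $B$ and runs an Egorov-type construction (the sets $E_{n,j}$ of eventual uniform closeness, with exceptional sets $B_J\subset B$ decreasing to a $\mu$-null set), applying the absolute continuity of the quasi-norm to $g\ch1_{B_J}$ and using $\ch1_B\in Y(\cx)$ to absorb the uniformly small part; it then passes to general $f$ by exhausting $\cx$ with an increasing sequence of balls and applying Definition \ref{ably} to $g(1-\ch1_{B_{N_1}})$. You instead introduce the monotone envelope $h_m:=\sup_{k\ge m}|f_k-f|\le 2|g|$, which decreases to $0$ $\mu$-almost everywhere, and perform a single three-way pointwise splitting via the level sets $\{h_m>\ez\}$, $\{|g|>\dz\}$, and $\{0<|g|\le\dz\}$; absolute continuity is then applied twice to the fixed function $2|g|\in Y(\cx)$, along $\{h_m>\ez\}$ (whose intersection is $\mu$-null, so Remark \ref{ablyrem}(ii) applies) and along $\{0<|g|\le 1/n\}$ (empty intersection), while the middle piece is handled by the elementary bound $\ch1_{\{|g|>\dz\}}\le|g|/\dz$ and Definition \ref{qbs}(ii), and Remark \ref{r-ar} recombines the three pieces in place of the quasi-triangle inequality. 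Your order of choices ($\dz$, then $\ez$, then $M$) is consistent, so there is no gap. What your route buys is the avoidance of both the Egorov step and the ball-exhaustion reduction, and indeed of any use of $\ch1_B\in Y(\cx)$; what the paper's route buys is closer alignment with the classical Banach-function-space argument (localization to a ball where uniform convergence can be exploited). Both proofs ultimately rest on the same two ingredients: the lattice property in Definition \ref{qbs}(ii) and the absolute continuity of the quasi-norm of $g$ (or $2|g|$) along decreasing set sequences with $\mu$-null intersection.
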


\begin{proof}
Without loss of generality,
we may assume that $\lim_{m\rightarrow\fz}f_m=f$ pointwisely.
First, it is easy to see that, for any $x\in\cx$,
\begin{align}\label{6.1x}
|f(x)|&\le |g(x)|.
\end{align}
Now, we prove this lemma in case that $\supp f\st B$,
where $B$ is a ball of $\cx$. For any $n$, $j\in\nn$, define
$$E_{n,j}:=\lf\{x\in B:\
\lf|f_m(x)-f(x)\r|<2^{-j},\ \forall\, m\ge n\r\}.$$
It is easy to see that, for any given $j\in\nn$,
\begin{align*}
\bigcup_{n\in\nn}E_{n,j}=B,
\end{align*}
$E_{n,j}\st E_{n+1,j}$ for any $n\in\nn$, and
$$\lim_{n\rightarrow\fz}\mu\lf(E_{n,j}\r)=\mu(B).$$
Therefore, for any $j\in\nn$, there exists an $n_j\in\nn$ such that
\begin{align}\label{6.21.y3}
\mu\lf(B\setminus E_{n_j,j}\r)<2^{-j}.
\end{align}
For any $J\in\nn$, define
$$A_J:=\bigcap_{j=J}^{\fz}E_{n_j,j}\quad
\mbox{and}\quad B_J:=B\setminus A_J.$$
Then, for any $J\in\nn$,
\begin{align}\label{6.21.x3}
B_{J+1}&=B\setminus A_{J+1}
=\bigcup_{j=J+1}^{\fz}\lf(B\setminus E_{n_j,j}\r)
\st\bigcup_{j=J}^{\fz}\lf(B\setminus E_{n_j,j}\r)
=B\setminus A_{J}=B_J;
\end{align}
moreover, by \eqref{6.21.y3}, we know that,
for any $J\in\nn$,
\begin{align*}
\mu\lf(B_{J}\r)
=\mu\lf(\bigcup_{j=J}^{\fz}\lf(B\setminus E_{n_j,j}\r)\r)
\le\sum_{j=J}^{\fz}\mu\lf(B\setminus E_{n_j,j}\r)
<\sum_{j=J}^{\fz}2^{-j}\ls2^{-J},
\end{align*}
which, combined with \eqref{6.21.x3}, further implies that
\begin{align}\label{6.21.y2}
\mu\lf(\bigcap_{J\in\nn}B_{J}\r)=0.
\end{align}
On one hand, from \eqref{6.21.x3}, \eqref{6.21.y2},
and Remark \ref{ablyrem}(ii), we deduce that,
for any $\ez\in(0,\fz)$, there exists a $J_1\in\nn$ such that
\begin{align}\label{6.21.x1}
\lf\|g\ch1_{B_{J_1}}\r\|_{Y(\cx)}<\frac{\ez}2.
\end{align}
On the other hand, it is easy to see that, for any $\ez\in(0,\fz)$,
there exists a $J_2\in(J_1,\fz)\cap\nn$ such that
\begin{align}\label{6.21.x2}
2^{-J_2}<\frac{\ez}{2\|\ch1_{B}\|_{Y(\cx)}},
\end{align}
which further implies that there exists an $n_{J_2}\in\nn$ such that,
for any $x\in A_{J_2}\st E_{n_{J_2},J_2}$ and any $m>n_{J_2}$,
\begin{align}\label{6.21.y1}
\lf|f_m(x)-f(x)\r|<2^{-J_2}.
\end{align}
Combining Definition \ref{qbs}(ii), \eqref{6.21.y1}, \eqref{6.1x},
\eqref{6.21.x2}, \eqref{6.21.x3}, and \eqref{6.21.x1},
we conclude that, for any $m>n_{J_2}$,
\begin{align*}
\lf\|f_m-f\r\|_{Y(\cx)}
&\ls\lf\|\lf(f_m-f\r)\ch1_{A_{J_2}}\r\|_{Y(\cx)}
+\lf\|\lf(f_m-f\r)\ch1_{B_{J_2}}\r\|_{Y(\cx)}\\
&\ls 2^{-J_2}\lf\|\ch1_{B}\r\|_{Y(\cx)}
+\lf\|g\ch1_{B_{J_2}}\r\|_{Y(\cx)}\\
&\ls\frac{\ez}2
+\lf\|g\ch1_{B_{J_1}}\r\|_{Y(\cx)}\ls\ez.
\end{align*}
Thus,
\begin{align}\label{6.21.z1}
\lim_{m\rightarrow\fz}\lf\|f_m-f\r\|_{Y(\cx)}=0.
\end{align}

Next, we prove this lemma for any $f\in Y(\cx)$.
Let $\{B_n\}_{n\in\nn}$ be a sequence of balls satisfying that
$\cx=\bigcup_{n\in\nn}B_n$ and, for any $n\in\nn$, $B_n\st B_{n+1}$.
On one hand, by Definition \ref{ably}, we know that,
for any $\ez\in(0,\fz)$, there exists an $N_1\in\nn$ such that
\begin{align}\label{6.7y}
\lf\|g\lf(1-\ch1_{B_{N_1}}\r)\r\|_{Y(\cx)}&<\frac{\ez}2.
\end{align}
On the other hand, from \eqref{6.21.z1}, we deduce that
there exists an $N_2\in\nn$ such that, for any $m>N_2$,
\begin{align*}
\lf\|\lf(f_m-f\r)\ch1_{B_{N_1}}\r\|_{Y(\cx)}&<\frac{\ez}2,
\end{align*}
which, combined with Definition \ref{qbs}(ii), \eqref{6.1x},
and \eqref{6.7y}, further implies that, for any $m>N_2$,
\begin{align*}
\lf\|f_m-f\r\|_{Y(\cx)}
&\ls\lf\|f_m\lf(1-\ch1_{B_{N_1}}\r)\r\|_{Y(\cx)}
+\lf\|\lf(f_m-f\r)\ch1_{B_{N_1}}\r\|_{Y(\cx)}
+\lf\|f\lf(\ch1_{B_{N_1}}-1\r)\r\|_{Y(\cx)}\\
&\ls\lf\|g\lf(1-\ch1_{B_{N_1}}\r)\r\|_{Y(\cx)}
+\lf\|\lf(f_m-f\r)\ch1_{B_{N_1}}\r\|_{Y(\cx)}
\ls\frac{\ez}2+\frac{\ez}2\sim\ez.\noz
\end{align*}
This shows that $\lim_{m\rightarrow\fz}\lf\|f_m-f\r\|_{Y(\cx)}=0$
and hence finishes the proof of Lemma \ref{dominate}.
\end{proof}

If we assume that $Y(\cx)$ has an absolutely continuous quasi-norm,
then we have the following density conclusions of $H_Y^*(\cx)$.

\begin{lemma}\label{dense}
Let $Y(\cx)$ be a ball quasi-Banach function space on $\cx$ satisfying Assumption \ref{assump1} with $p_-\in({\omega}/(\omega+\eta),\fz)$,
where $\omega$ is as in \eqref{eq-doub} and $\eta$ as in
Definition \ref{expati}. Further assume that $Y(\cx)$
has an absolutely continuous quasi-norm and
satisfies Assumption \ref{assump2} with the same $p_-$
as in Assumption \ref{assump1},
$\tz_0\in({\omega}/(\omega+\eta),\underline{p})$,
and $p_0\in(\tz_0,\fz)$, where $\underline{p}$ is as in \eqref{2.1y}.
Let $q\in(\max\{p_0,1\},\fz]$ and $d\in(0,\tz_0]$.
Then
\begin{enumerate}
\item[{\rm (i)}] $H_{\mathrm{fin}}^{Y,q,d}(\cx)$
is dense in $H_Y^*(\cx)$;

\item[{\rm (ii)}] $H_{\mathrm{fin}}^{Y,\fz,d}(\cx)\cap\uu\cc(\cx)$
is dense in $H_Y^*(\cx)$.
\end{enumerate}
\end{lemma}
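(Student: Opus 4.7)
The plan is to combine the atomic decomposition of $H_Y^*(\cx)$ (Proposition \ref{atde} and Theorem \ref{atthm}) with a dominated convergence argument on the $d$-convexification $Y^{1/d}(\cx)$, which by Remark \ref{ablyrem}(i) also has an absolutely continuous quasi-norm.

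For (i), I would fix $f\in H_Y^*(\cx)$ and use Proposition \ref{atde} to decompose $f=\sum_{j\in\nn}\lz_j a_j$ in $(\icgg)'$, with $\{a_j\}_{j\in\nn}$ a sequence of $(Y(\cx),\fz)$-atoms, hence (up to a harmless constant) of $(Y(\cx),q)$-atoms, supported in balls $\{B_j\}_{j\in\nn}$, and satisfying
$$\|G\|_{Y(\cx)}\ls\|f\|_{H_Y^*(\cx)},\qquad G:=\lf\{\sum_{j\in\nn}\lf[\frac{\lz_j}{\|\ch1_{B_j}\|_{Y(\cx)}}\r]^{d}\ch1_{B_j}\r\}^{1/d}.$$
Then the partial sums $f_N:=\sum_{j=1}^{N}\lz_j a_j$ belong to $H_{\mathrm{fin}}^{Y,q,d}(\cx)$, and an application of Proposition \ref{atre} to the tail $f-f_N=\sum_{j>N}\lz_j a_j$ gives
$$\|f-f_N\|_{H_Y^*(\cx)}\ls\|G_N\|_{Y(\cx)},\qquad G_N:=\lf\{\sum_{j>N}\lf[\frac{\lz_j}{\|\ch1_{B_j}\|_{Y(\cx)}}\r]^{d}\ch1_{B_j}\r\}^{1/d}.$$
Since $G_N^{d}\le G^{d}\in Y^{1/d}(\cx)$ and $G_N^{d}\downarrow 0$ almost everywhere on $\cx$, Lemma \ref{dominate} applied in $Y^{1/d}(\cx)$ yields $\|G_N^{d}\|_{Y^{1/d}(\cx)}\to 0$, equivalently $\|G_N\|_{Y(\cx)}\to 0$, which proves (i).

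For (ii), the idea is to reduce to (i) and then mollify the resulting finite $q$-atomic sum. Given $f\in H_Y^*(\cx)$ and $\ez\in(0,\fz)$, choose any $q\in(\max\{p_0,1\},\fz)$ and, by (i), find $h=\sum_{j=1}^{N}c_j\az_j\in H_{\mathrm{fin}}^{Y,q,d}(\cx)$ with $\|f-h\|_{H_Y^*(\cx)}<\ez/2$, where each $\az_j$ is a $(Y(\cx),q)$-atom supported in a ball $B_j$. For each $\az_j$ I would construct a uniformly continuous substitute $\widetilde\az_{j,k}$ as follows: convolve with a $1$-exp-ATI $\{P_k\}_{k\in\zz}$, whose regularity forces uniform continuity of $P_k\az_j$ and whose integration-$1$ property preserves the cancellation $\int P_k\az_j\,d\mu=0$; multiply by a uniformly continuous cutoff adapted to a fixed enlargement $\widetilde B_j$ of $B_j$; and subtract a small uniformly continuous constant times $\ch1_{\widetilde B_j}$ to restore the vanishing integral. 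The pointwise bound $\|\widetilde\az_{j,k}\|_{L^{\fz}(\cx)}\ls\|\ch1_{B_j}\|_{Y(\cx)}^{-1}$ comes from the $L^{\fz}$-boundedness of the $1$-exp-ATI together with the atomic bound on $\az_j$, so $\widetilde\az_{j,k}$ is, up to a harmless constant, a $(Y(\cx),\fz)$-atom supported in $\widetilde B_j$. Setting $h_k:=\sum_{j=1}^{N}c_j\widetilde\az_{j,k}\in H_{\mathrm{fin}}^{Y,\fz,d}(\cx)\cap\uu\cc(\cx)$ and expressing $h-h_k$ as a finite linear combination of $(Y(\cx),q)$-atoms supported in $\widetilde B_j$ with coefficients that tend to zero as $k\to\fz$, Proposition \ref{atre} together with Lemma \ref{dominate} on $Y^{1/d}(\cx)$ again yields $\|h-h_k\|_{H_Y^*(\cx)}<\ez/2$ for sufficiently large $k$, which finishes the proof.

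The main obstacle will be the construction step in (ii), where one must simultaneously preserve compact support, cancellation, and the atomic $L^{\fz}$-bound while controlling the error in the $H_Y^*(\cx)$ quasi-norm. The key point making this feasible is that $L^{q}(\cx)$ has an absolutely continuous norm for $q\in(1,\fz)$, which forces $P_k\az_j\to\az_j$ in $L^{q}(\cx)$ as $k\to\fz$; since all correction atoms in $\az_j-\widetilde\az_{j,k}$ are supported in the fixed enlargement $\widetilde B_j$, this $L^{q}$-convergence transfers, via the atomic bound of Proposition \ref{atre} and Lemma \ref{dominate} in $Y^{1/d}(\cx)$, to the desired convergence in $H_Y^*(\cx)$.
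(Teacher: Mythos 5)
Part (i) of your proposal is essentially the paper's own proof: decompose $f$ by Theorem \ref{atthm} (equivalently Proposition \ref{atde}), take partial sums $f_N\in H_{\mathrm{fin}}^{Y,q,d}(\cx)$, bound the tail by its atomic quasi-norm (your appeal to Proposition \ref{atre} is the same estimate), and conclude with Lemma \ref{dominate} applied in $Y^{1/d}(\cx)$ to $G_N^d\le G^d$, using Remark \ref{ablyrem}(i); this matches the paper line by line.

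Part (ii) takes a genuinely different route, and as written it has concrete defects. The paper reduces, via (i) with $q=\fz$, to approximating a single $(Y(\cx),\fz)$-atom $a$, and then simply invokes the operators $S_k$ from the proof of \cite[Lemma 5.3(ii)]{zhy}: each $S_ka$ is uniformly continuous, is again a $(Y(\cx),\fz)$-atom on the dilated ball, and $\|S_ka-a\|_{L^2(\cx)}\to0$, so $S_ka-a$ is a small multiple of a $(Y(\cx),2)$-atom and Theorem \ref{atthm} finishes. You instead start from $(Y(\cx),q)$-atoms with finite $q$ and mollify by hand, and two steps fail as stated. First, a $(Y(\cx),q)$-atom $\az_j$ with $q<\fz$ need not be bounded, and H\"older's inequality only gives $|P_k\az_j(x)|\ls [V_{\dz^k}(x)]^{-1/q}[\mu(B_j)]^{1/q}\|\ch1_{B_j}\|_{Y(\cx)}^{-1}$, which degenerates as $k\to\fz$; so your claimed uniform bound $\|\widetilde\az_{j,k}\|_{L^\fz(\cx)}\ls\|\ch1_{B_j}\|_{Y(\cx)}^{-1}$ is false. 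This is repairable, because membership in $H_{\mathrm{fin}}^{Y,\fz,d}(\cx)$ only requires boundedness for each fixed $k$ (which, after the cutoff, follows from a lower bound of $V_{\dz^k}$ on the bounded set $\widetilde B_j$), or it can be avoided entirely by reducing to $(Y(\cx),\fz)$-atoms as the paper does, where $\|P_ka\|_{L^\fz(\cx)}\ls\|a\|_{L^\fz(\cx)}$ is immediate. Second, subtracting a constant multiple of $\ch1_{\widetilde B_j}$ to restore the cancellation destroys uniform continuity, so the resulting $h_k$ would not lie in $\uu\cc(\cx)$; the correction must be a constant multiple of a H\"older-continuous bump with nonvanishing integral supported in $\widetilde B_j$. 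Finally, the convergence $P_k\az_j\to\az_j$ in $L^q(\cx)$ does not follow from the absolute continuity of the $L^q$-norm; it follows from the pointwise domination $|P_kg|\ls\cm(g)$ together with convergence on the dense class of H\"older functions with bounded support. With these repairs your scheme does work and constitutes an alternative to citing \cite{zhy} (at the cost of rebuilding that mollification), and your final error estimate, being a finite combination of small multiples of $(Y(\cx),q)$-atoms on the fixed balls $\widetilde B_j$, needs no appeal to Lemma \ref{dominate} at all.
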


\begin{proof}
Let all the symbols be as in the present lemma.
We first prove (i). Let $f\in H_Y^*(\cx)$ and
$\bz$, $\gz\in(\omega(1/{\tz_0}-1),\eta)$.
Then, by Theorem \ref{atthm}, we know that
there exist a sequence $\{\lz_j\}_{j\in\nn}$ of
non-negative numbers and a sequence $\{a_j\}_{j\in\nn}$
of $(Y(\cx),q)$-atoms supported, respectively, in balls
$\{B_j \}_{j\in\nn}$ of $\cx$ such that
\begin{align*}
f&=\sum_{j\in\nn}\lz_j a_j
\end{align*}
in $(\icgg)'$, and
$$\lf\|\lf\{\sum_{j\in\nn}
\lf[\frac{\lz_j}{\|\ch1_{B_j}\|_{Y(\cx)}}\r]^{d}\ch1_{B_j}\r\}
^{\frac{1}{d}}\r\|_{Y(\cx)}\sim\|f\|_{H_Y^*(\cx)}<\fz.$$
Let
$$F:=\lf\{\sum_{j\in\nn}\lf[\frac{\lz_j}{\|\ch1_{B_j}\|
_{Y(\cx)}}\r]^{d}\ch1_{B_j}\r\}^{\frac{1}{d}}$$
and, for any $N\in\nn$, let
$$f_N:=\sum_{j=1}^N\lz_j a_j\quad\mbox{and}\quad
F_N:=\lf\{\sum_{j=1}^N\lf[\frac{\lz_j}{\|\ch1_{B_j}\|
_{Y(\cx)}}\r]^{d}\ch1_{B_j}\r\}^{\frac{1}{d}}.$$
Then $F^d\in Y^{\frac1d}(\cx)$, $f_N\in H_{\mathrm{fin}}^{Y,q,d}(\cx)$
and $F_N^d\le F^d$ for any $N\in\nn$, and
$\lim_{N\rightarrow\fz}F_N^d=F^d$ almost everywhere in $\cx$.
From this, Theorem \ref{atthm}, Remark \ref{ablyrem}(i),
and Lemma \ref{dominate}, we deduce that
\begin{align*}
\lf\|f-f_N\r\|_{H_Y^*(\cx)}
&\sim\lf\|f-f_N\r\|_{H_{\mathrm{atom}}^{Y,q,d}(\cx)}
\ls\lf\|\lf\{\sum_{j=N+1}^{\fz}
\lf[\frac{\lz_j}{\|\ch1_{B_j}\|_{Y(\cx)}}\r]^{d}\ch1_{B_j}\r\}
^{\frac{1}{d}}\r\|_{Y(\cx)}\\
&\sim\lf\|\lf(F^d-F_N^d\r)^{\frac{1}{d}}\r\|_{Y(\cx)}
\sim\lf\|F^d-F_N^d\r\|_{Y^{\frac{1}{d}}(\cx)}^{\frac{1}{d}}
\rightarrow0
\end{align*}
as $N\rightarrow\fz$, which further implies (i).
Thus, to prove the present lemma, it remains to show (ii).
By (i), we only need to show that
$H_{\mathrm{fin}}^{Y,\fz,d}(\cx)\cap\uu\cc(\cx)$ is dense in
$H_{\mathrm{fin}}^{Y,\fz,d}(\cx)$ with the quasi-norm
$\|\cdot\|_{H_Y^*(\cx)}$. To this end,
let $a$ be a $(Y(\cx),\fz)$-atom supported in a ball
$B:=B(x_B,r_B)\st\cx$ with $x_B\in\cx$ and $r_B\in(0,\fz)$,
and $k_0\in\zz$ sufficiently large.
Then, by the proof of \cite[Lemma 5.3(ii)]{zhy}
(with the same notation as therein), we know that, for any $k>k_0$,
$S_ka\in\uu\cc(\cx)$, $S_ka$ is a $(Y(\cx),\fz)$-atom supported in
$B(x_B,2A_0r_B)$, and
\begin{align}\label{appoiden}
\lim_{k\rightarrow\fz}\lf\|S_ka-a\r\|_{L^2(\cx)}=0.
\end{align}
Thus,
$$\widetilde{a}:=\frac{(S_ka-a)[\mu(B(x_B,2A_0r_B))]^{1/2}}
{\|S_ka-a\|_{L^2(\cx)}\|\ch1_{B(x_B,2A_0r_B)}\|_{Y(\cx)}}$$
is a $(Y(\cx),2)$-atom. From this, Theorem \ref{atthm},
and \eqref{appoiden}, we deduce that
\begin{align*}
\lim_{k\rightarrow\fz}\lf\|S_ka-a\r\|_{H_Y^*(\cx)}
&\sim\lim_{k\rightarrow\fz}\lf\|S_ka-a\r\|_{H_{\mathrm{atom}}^{Y,2,d}(\cx)}\\
&\sim\frac{\|\ch1_{B(x_B,2A_0r_B)}\|_{Y(\cx)}}{[\mu(B(x_B,2A_0r_B))]^{1/2}}
\lim_{k\rightarrow\fz}\lf\|S_ka-a\r\|_{L^2(\cx)}=0.
\end{align*}
This finishes the proof of (iii) and hence of Lemma \ref{dense}.
\end{proof}

Let $q\in(0,{\infty})$.
Recall that the \emph{space} $L^q_{\mathbb{B}}({\cx})$
is defined to be the set of all $\mu$-measurable functions
$f$ on $\cx$ such that, for any ball $B\st\cx$,
$$\int_B|f(x)|^q\,d\mu(x)<\fz.$$
Next, we introduce the notion of ball Campanato-type
function spaces associated with $Y(\cx)$
(see \cite[Definition 2.4]{hyy21} and \cite[Definition 3.2]{zhyy21},
respectively, for the cases of anisotropic mixed-norm Lebesgue spaces
and ball quasi-Banach function spaces on $\rn$).

\begin{definition}\label{cqb}
Let $Y(\cx)$ be a ball quasi-Banach function space on $\cx$, $q\in[1,{\infty})$, and $d\in(0,\infty)$.
Then the \emph{ball Campanato-type function space}
${\cl}_{Y,q,d}(\cx)$, associated with $Y(\cx)$,
is defined to be the set of all $f\in L^q_{\mathbb{B}}({\cx})$
such that
\begin{align*}
\|f\|_{{\cl}_{Y,q,d}(\cx)}
:=&\,\sup
\lf\|\lf\{\sum_{i=1}^m
\lf[\frac{{\lambda}_i}{\|{\mathbf{1}}_{B_i}\|_{Y(\cx)}}\r]^d
{\mathbf{1}}_{B_i}\r\}^{\frac1d}\r\|_{Y(\cx)}^{-1}
\\
&\quad\quad\times\sum_{j=1}^m\lf\{\frac{{\lambda}_j\mu(B_j)}
{\|{\mathbf{1}}_{B_j}\|_{Y(\cx)}}\lf[\frac1{\mu(B_j)}\int_{B_j}
\lf|f(x)-m_{B_j}(f)\r|^q \,d\mu(x)\r]^\frac1q\r\}<\fz,
\end{align*}
where, for any ball $B\st\cx$,
$$m_{B}(f):=\frac{1}{\mu(B)}\int_{B}f(x)\,d\mu(x),$$
and the supremum is taken over all $m\in\nn$,
balls $\{B_j\}_{j=1}^m$ of $\cx$, and $\{\lambda_j\}_{j=1}^m\subset[0,\infty)$
with $\sum_{j=1}^m\lambda_j\neq0$.
\end{definition}

Now, we give the dual space of $H_Y^*(\cx)$,
which is the main result of this section.

\begin{theorem}\label{qbs-dual}
Assume that $Y(\cx)$ is a ball quasi-Banach function space on
$\cx$ having an absolutely continuous quasi-norm.
Further assume that $Y(\cx)$ satisfies Assumption
\ref{assump1} with $p_-\in({\omega}/(\omega+\eta),\fz)$,
where $\omega$ is as in \eqref{eq-doub} and $\eta$ as in
Definition \ref{expati}, and Assumption \ref{assump2}
with the same $p_-$ as in Assumption \ref{assump1},
$\tz_0\in({\omega}/(\omega+\eta),\underline{p})$,
and $p_0\in(\tz_0,\fz)$, where $\underline{p}$ is as in \eqref{2.1y}.
Let $q\in(\max\{p_0,1\},\fz]$ and $d\in(0,\tz_0]$.
Then the dual space of $H_Y^*(\cx)$, denoted by $(H_Y^*(\cx))^*$,
is ${\cl}_{Y,q',d}(\cx)$ with $1/q+1/q'=1$ in the following sense:
\begin{enumerate}
\item[{\rm (i)}] if $f\in{\cl}_{Y,q',d}(\cx)$,
then the linear functional
\begin{align}\label{2te1}
T_f:\ g\mapsto T_f(g):=\int_{\cx}f(x)g(x)\,d\mu(x),
\end{align}
initially defined for any $g\in H_{\mathrm{fin}}^{Y,q,d}(\cx)$,
has a bounded extension to $H_Y^*(\cx)$;

\item[{\rm (ii)}] conversely, any continuous linear
functional on $H_Y^*(\cx)$ arises as in \eqref{2te1}
with a unique $f\in{\cl}_{Y,q',d}(\cx)$.
\end{enumerate}
Moreover,
$\|f\|_{{\cl}_{Y,q',d}(\cx)}\sim\|T_f\|_{(H_Y^*(\cx))^*}$,
where the positive equivalence constants
are independent of $f$.
\end{theorem}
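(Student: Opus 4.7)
The plan is to reduce the dual statement to the finite atomic Hardy space $H_{\mathrm{fin}}^{Y,q,d}(\cx)$, which by Theorem \ref{finatomeq} has a quasi-norm equivalent to $\|\cdot\|_{H_Y^*(\cx)}$ and, by Lemma \ref{dense}, is dense in $H_Y^*(\cx)$; thus any bounded linear functional on $H_Y^*(\cx)$ is determined by its action on finite atomic decompositions.

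For (i), given $f\in\cl_{Y,q',d}(\cx)$ and $g=\sum_{j=1}^m\lz_ja_j\in H_{\mathrm{fin}}^{Y,q,d}(\cx)$ with each $a_j$ a $(Y(\cx),q)$-atom supported in a ball $B_j$, I would exploit the cancellation $\int_{\cx}a_j\,d\mu=0$ to rewrite $\int_{\cx}fa_j\,d\mu=\int_{B_j}[f-m_{B_j}(f)]a_j\,d\mu$, then apply the H\"older inequality together with the atomic size bound to obtain
\begin{align*}
\lf|\int_{\cx}fa_j\,d\mu\r|
\le\frac{\mu(B_j)}{\|\ch1_{B_j}\|_{Y(\cx)}}
\lf[\frac{1}{\mu(B_j)}\int_{B_j}|f-m_{B_j}(f)|^{q'}\,d\mu\r]^{1/q'}.
\end{align*}
Multiplying by $\lz_j$, summing in $j$, and invoking Definition \ref{cqb} then gives $|T_f(g)|\le\|f\|_{\cl_{Y,q',d}(\cx)}\|\{\sum_j[\lz_j/\|\ch1_{B_j}\|_{Y(\cx)}]^d\ch1_{B_j}\}^{1/d}\|_{Y(\cx)}$. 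Taking the infimum over all finite atomic decompositions of $g$ and applying Theorem \ref{finatomeq} yields $|T_f(g)|\ls\|f\|_{\cl_{Y,q',d}(\cx)}\|g\|_{H_Y^*(\cx)}$, and the density supplied by Lemma \ref{dense} then extends $T_f$ uniquely and boundedly to $H_Y^*(\cx)$.

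For (ii), given $L\in(H_Y^*(\cx))^*$, the plan is to reconstruct $f$ by local duality on each ball. Assuming $q\in(\max\{p_0,1\},\fz)$, for each ball $B\subset\cx$, any $g\in L^q(B)$ supported in $B$ with $\int_Bg\,d\mu=0$ and $\|g\|_{L^q(B)}\le[\mu(B)]^{1/q}\|\ch1_B\|_{Y(\cx)}^{-1}$ is a $(Y(\cx),q)$-atom and thus, by Theorem \ref{atthm}, satisfies $\|g\|_{H_Y^*(\cx)}\ls 1$. Consequently $L$ restricts to a continuous linear functional on $L^q_0(B):=\{g\in L^q(B):\int_Bg\,d\mu=0\}$ with operator norm bounded by $C\|L\|\|\ch1_B\|_{Y(\cx)}[\mu(B)]^{-1/q}$. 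The Hahn--Banach theorem together with the duality $(L^q(B)/\mathbb{C})^*\cong\{f\in L^{q'}(B):\int_Bf\,d\mu=0\}$ then yields an $f_B\in L^{q'}(B)$ with $m_B(f_B)=0$ representing $L$ on $L^q_0(B)$. A standard consistency argument (the restrictions $f_{B_1}$ and $f_{B_2}|_{B_1}$ for $B_1\subset B_2$ differ by a constant) glues these into a single $f\in L^{q'}_{\mathbb{B}}(\cx)$ with $T_f=L$ on every $(Y(\cx),q)$-atom, hence on all of $H_{\mathrm{fin}}^{Y,q,d}(\cx)$, and by density $T_f=L$ on $H_Y^*(\cx)$. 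The case $q=\fz$ is handled by first carrying out the above construction with some finite $q_1\in(\max\{p_0,1\},\fz)$ and observing that Theorem \ref{finatomeq} identifies the resulting finite atomic spaces.

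To finish (ii), I must verify $\|f\|_{\cl_{Y,q',d}(\cx)}\ls\|L\|$. For any finite family $\{B_j\}_{j=1}^m$ of balls and $\{\lz_j\}_{j=1}^m\subset[0,\fz)$, I would choose, for each $j$, a function $g_j\in L^q_0(B_j)$ with $\|g_j\|_{L^q(B_j)}\le[\mu(B_j)]^{1/q}\|\ch1_{B_j}\|_{Y(\cx)}^{-1}$ which nearly attains the $L^{q'}/L^q$-duality on $B_j$, so that
\begin{align*}
\int_{B_j}[f-m_{B_j}(f)]g_j\,d\mu
\ge\frac{1}{2}\cdot\frac{\mu(B_j)}{\|\ch1_{B_j}\|_{Y(\cx)}}
\lf[\frac{1}{\mu(B_j)}\int_{B_j}|f-m_{B_j}(f)|^{q'}\,d\mu\r]^{1/q'}.
\end{align*}
Since each $g_j$ is a $(Y(\cx),q)$-atom, the sum $\sum_j\lz_jg_j$ lies in $H_{\mathrm{fin}}^{Y,q,d}(\cx)$ with its quasi-norm controlled, via Theorem \ref{finatomeq}, by $\|\{\sum_j[\lz_j/\|\ch1_{B_j}\|_{Y(\cx)}]^d\ch1_{B_j}\}^{1/d}\|_{Y(\cx)}$. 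Pairing with $L=T_f$ (and using $\int g_j\,d\mu=0$ to drop the $m_{B_j}(f)$ term) then yields the Campanato bound after taking the supremum over all such finite families. The main obstacle will be the bookkeeping in the nested-ball stitching and the $q=\fz$ endpoint, together with verifying that the role-as-a-whole quasi-norm in Definition \ref{cqb} behaves correctly under the quasi-triangle inequality from Remark \ref{r-ar}; this is precisely why the Campanato quasi-norm is defined via finite linear combinations of balls rather than single balls.
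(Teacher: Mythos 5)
Your overall route coincides with the paper's own proof. Part (i) is the same atomic pairing argument: cancellation of the atoms, the H\"older inequality against $f-m_{B_j}(f)$, the Campanato quasi-norm applied to the finite family, the bound $\|g\|_{H_{\mathrm{fin}}^{Y,q,d}(\cx)}\ls\|g\|_{H_Y^*(\cx)}$ from Theorem \ref{finatomeq}, and then density from Lemma \ref{dense}. Part (ii) is the same local-duality scheme: mean-zero elements of $L^q(B)$ with the atomic size bound are $(Y(\cx),q)$-atoms, so $T$ is bounded on $L^q_0(B)$; Hahn--Banach plus $L^q$--$L^{q'}$ duality give a representative on each ball; consistency up to constants lets you glue along an increasing exhausting sequence of balls (your mean-zero normalization is a cosmetic variant of the paper's inductive constant adjustments); and the Campanato bound follows by turning (near-)extremizers $h_k-m_{B_k}(h_k)$ into $(Y(\cx),q)$-atoms and pairing them with $T$, exactly as in the paper. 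Your reduction of the $q=\fz$ case of (ii) to a finite exponent matches the paper's use of Theorem \ref{atthm}.

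The one genuine gap is the $q=\fz$ endpoint of part (i), which you flag as an ``obstacle'' but do not resolve. For $q=\fz$, Theorem \ref{finatomeq}(ii) gives $\|g\|_{H_{\mathrm{fin}}^{Y,\fz,d}(\cx)}\ls\|g\|_{H_Y^*(\cx)}$ only for $g\in H_{\mathrm{fin}}^{Y,\fz,d}(\cx)\cap\uu\cc(\cx)$, and Lemma \ref{dense} gives density only of that intersection; a finite linear combination of $(Y(\cx),\fz)$-atoms need not be uniformly continuous. So your estimate plus density produce a bounded functional on $H_Y^*(\cx)$ that is known to agree with $T_f$ only on $H_{\mathrm{fin}}^{Y,\fz,d}(\cx)\cap\uu\cc(\cx)$, whereas the theorem asserts an extension of $T_f$ from all of $H_{\mathrm{fin}}^{Y,\fz,d}(\cx)$. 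The paper closes this with an extra approximation step (its identity \eqref{oo9}): for $g\in H_{\mathrm{fin}}^{Y,\fz,d}(\cx)$ supported in $B(x_0,R)$, the regularizations $S_kg$ from the proof of Lemma \ref{dense} lie in $H_{\mathrm{fin}}^{Y,\fz,d}(\cx)\cap\uu\cc(\cx)$, converge to $g$ in $H_Y^*(\cx)$ and almost everywhere, and $\int_{\cx}fS_kg\,d\mu\to\int_{\cx}fg\,d\mu$ by dominated convergence, since $|fS_kg|\ls\|g\|_{L^\fz(\cx)}\ch1_{B(x_0,2A_0R)}|f|$ and $f\in{\cl}_{Y,1,d}(\cx)\st L^1_{\mathbb{B}}(\cx)$. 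You need this (or an equivalent) step; once it is added, your proposal matches the paper's proof in all essentials.
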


\begin{proof}
Let all the symbols be as in the present theorem.
We first prove (i) in the case $q\in(\max\{p_0,1\},\fz)$.
Let $f\in{\cl}_{Y,q',d}(\cx)$ and $g\in H_{\mathrm{fin}}^{Y,q,d}(\cx)$.
By Definition \ref{finatom}, we know that there exist an $N\in\nn$,
a sequence $\{\lz_j\}_{j=1}^N\subset[0,\fz)$, and
a sequence $\{a_j\}_{j=1}^N$ of $(Y(\cx),q)$-atoms
supported, respectively, in balls $\{B_j\}_{j=1}^N$ such that
$g=\sum_{j=1}^N\lz_ja_j$ almost everywhere in $\cx$ and
\begin{equation*}
\lf\|\lf\{\sum_{j=1}^{N}\lf[\frac{\lz_j}
{\|\ch1_{B_j}\|_{Y(\cx)}}\r]^{d}\ch1_{B_j}\r\}^{\frac1{d}}\r\|_{Y(\cx)}
\ls\|g\|_{H_{\mathrm{fin}}^{Y,q,d}(\cx)}.
\end{equation*}
By this, Definition \ref{atom}, the H\"older inequality,
and Theorem \ref{finatomeq}(i), we obtain
\begin{align*}
\lf|T_f(g)\r|&=\lf|\int_{\cx}f(x)g(x)\,d\mu(x)\r|
\le\sum_{j=1}^N\lz_j\lf|\int_{B_j}f(x)a_j(x)\,d\mu(x)\r|\\
&\le\sum_{j=1}^N\lz_j\int_{B_j}
\lf|\lf[f(x)-m_{B_j}(f)\r]a_j(x)\r|\,d\mu(x)\\
&\le\sum_{j=1}^N\lz_j\lf\|a_j\r\|_{L^q(\cx)}
\lf\{\int_{B_j}\lf|f(x)-m_{B_j}(f)\r|^{q'}\,d\mu(x)\r\}^{1/q'}\\
&\le\sum_{j=1}^N\frac{\lz_j\mu(B_j)}{\|\ch1_{B_j}\|_{Y(\cx)}}
\lf[\frac{1}{\mu(B_j)}\int_{B_j}\lf|f(x)-m_{B_j}(f)\r|
^{q'}\,d\mu(x)\r]^{1/q'}\\
&\le\lf\|\lf\{\sum_{j=1}^N
\lf[\frac{{\lambda}_j}{\|{\mathbf{1}}_{B_j}\|_{Y(\cx)}}\r]^d
{\mathbf{1}}_{B_j}\r\}^{\frac1d}\r\|_{Y(\cx)}
\|f\|_{{\cl}_{Y,q',d}(\cx)}\\
&\ls\|g\|_{H_{\mathrm{fin}}^{Y,q,d}(\cx)}
\|f\|_{{\cl}_{Y,q',d}(\cx)}
\sim\|g\|_{H_Y^*(\cx)}\|f\|_{{\cl}_{Y,q',d}(\cx)}.
\end{align*}
From this, Lemma \ref{dense}(ii), and a standard density argument,
we deduce that $T_f$ can uniquely be extended to a bounded linear
functional on $H_Y^*(\cx)$ and
$$\lf\|T_f\r\|_{(H_Y^*(\cx))^*}
\ls\|f\|_{{\cl}_{Y,q',d}(\cx)},$$
which then completes the proof of (i)
in the case $q\in(\max\{p_0,1\},\fz)$.

Next, we prove (i) in the case $q=\fz$. Indeed, using
Theorem \ref{finatomeq}(ii) and Lemma \ref{dense}(iii),
and repeating the previous proof,
we conclude that any $f\in{\cl}_{Y,1,d}(\cx)$ induces
a bounded linear functional on $H_Y^*(\cx)$, which is initially defined on
$H_{\fin}^{Y,\infty,d}(\cx)\cap\uu\cc{(\cx)}$
given by setting, for any
$g\in H_{\fin}^{Y,\infty,d}(\cx)\cap\uu\cc{(\cx)}$,
\begin{equation}\label{opl}
T_f:\ g\mapsto\ T_f(g):=\int_{\cx}f(x)g(x)\,d\mu(x),
\end{equation}
and then has a bounded linear extension to $H_Y^*(\cx)$.
Let $f\in{\cl}_{Y,1,d}(\cx)$. Thus, it remains to prove that,
for any $g\in H_{\fin}^{Y,\infty,d}(\cx)$,
\begin{equation}\label{oo9}
T_f(g)=\int_{\cx}f(x)g(x)\,d\mu(x).
\end{equation}
Indeed, suppose $g\in H_{\fin}^{Y,\infty,d}(\cx)$
and $\supp g\st B(x_0,R)$ for some $x_0\in\cx$ and $R\in(0,\fz)$.
By the proof of Lemma \ref{dense}(iii), we know that there exists a
sequence $\{S_k\}_{k\in\nn}$ of bounded operators on $L^2(\cx)$
such that $S_kg\in H_{\mathrm{fin}}^{Y,\fz,d}(\cx)\cap\uu\cc(\cx)$,
$\lim_{k\rightarrow\fz}S_kg(x)=g(x)$ for almost every $x\in\cx$, and
\begin{align*}
\lim_{k\rightarrow\fz}\lf\|S_kg-g\r\|_{H_Y^*(\cx)}=0.
\end{align*}
From this, \eqref{opl}, the fact that
$|fS_kg|\ls\|g\|_{L^{\fz}(\cx)}\ch1_{B(x_0,2A_0R)}|f|$
for any $k\in\nn$, and the Lebesgue dominated convergence theorem,
we deduce that
$$T_f(g)=\lim_{k\rightarrow\fz}T_f(S_kg)
=\lim_{k\rightarrow\fz}\int_{\cx}f(x)S_kg(x)\,d\mu(x)
=\int_{\cx}f(x)g(x)\,d\mu(x),$$
which shows that \eqref{oo9} holds true and hence completes the proof of (i).

Next, we prove (ii).
Let $T\in(H_Y^*(\cx))^*=(H_{\mathrm{atom}}^{Y,q,d}(\cx))^*$.
For any given $q\in(\max\{p_0,1\},\fz]$ and any ball $B\st\cx$,
let $L^q(B)$ be the set of all $L^q(\cx)$ functions
vanishing outside $B$, and
$$L^{q}_0(B):=\lf\{g\in L^q(B):\ \int_{\cx}g(x)\,d\mu(x)=0\r\}.$$
Then $L^{q}_0(B)\subset H_{\mathrm{atom}}^{Y,q,d}(\cx)$ and, for any
$g\in L^{q}_0(B)$ with $\|g\|_{L^{q}(B)}\neq0$,
$a:=\frac{[\mu(B)]^{1/q}}{\|g\|_{L^q(\cx)}\|\ch1_B\|_{Y(\cx)}}g$
is a $(Y(\cx),q)$-atom; moreover,
$$|T(g)|=\frac{\|g\|_{L^q(\cx)}\|\ch1_B\|_{Y(\cx)}}{[\mu(B)]^{1/q}}
|T(a)|\le\frac{\|g\|_{L^q(\cx)}\|\ch1_B\|_{Y(\cx)}}{[\mu(B)]^{1/q}}
\|T\|_{(H_{\mathrm{atom}}^{Y,q,d}(\cx))^*},$$
which implies that
$$\|T\|_{(L^{q}_0(B))^*}\le\frac{\|\ch1_B\|_{Y(\cx)}
\|T\|_{(H_{\mathrm{atom}}^{Y,q,d}(\cx))^*}}{[\mu(B)]^{1/q}}$$
and hence $T$ is a bounded linear functional on $L^{q}_0(B)$.
From this and the Hahn--Banach theorem,
we deduce that $T$ can be extended to the whole
space $L^q(B)$ without increasing its norm.

If $q\in(\max\{p_0,1\},\fz)$, by the duality
$[L^q(B)]^*=L^{q'}(B)$ with $1/q+1/q'=1$,
we know that there exists an $F\in L^{q'}(B)$ such that,
for any $g\in L^{q}_0(B)$,
\begin{align*}
T(g)=\int_{\cx}F(x)g(x)\,d\mu(x)\
\end{align*}
and
\begin{align*}
\|F\|_{L^{q'}(B)}=\|T\|_{(L^q(B))^*}
\le\frac{\|\ch1_B\|_{Y(\cx)}
\|T\|_{(H_{\mathrm{atom}}^{Y,q,d}(\cx))^*}}{[\mu(B)]^{1/q}}.
\end{align*}
For the case $q=\fz$, let $\widetilde{q}\in(\max\{p_0,1\},\fz)$.
By Theorem \ref{atthm}, we know that
$T\in(H_{\mathrm{atom}}^{Y,\fz,d}(\cx))^*$
implies $T\in(H_{\mathrm{atom}}^{Y,\widetilde{q},d}(\cx))^*$
with equivalent norms.
Thus, there exists an $F\in L^{\widetilde{q}'}(B)\subset L^{1}(B)$
such that, for any $g\in L^{\fz}_0(B)$,
$T(g)=\int_{\cx}F(x)g(x)\,d\mu(x)$ and
$$\|F\|_{L^{1}(B)}\le\|F\|_{L^{\widetilde{q}'}(B)}
[\mu(B)]^{1/\widetilde{q}}\le\frac{\|\ch1_B\|_{Y(\cx)}
\|T\|_{(H_{\mathrm{atom}}^{Y,\widetilde{q},d}(\cx))^*}}
{[\mu(B)]^{1/\widetilde{q}}}[\mu(B)]^{1/\widetilde{q}}
\ls\|\ch1_B\|_{Y(\cx)}\|T\|_{(H_{\mathrm{atom}}^{Y,\fz,d}(\cx))^*}.$$
Altogether, we find that,
for any given $q\in(\max\{1,p_0\},\infty]$,
there exists an $F\in L^{q'}(B)$ such that,
for any $g\in L^q_0(B)$,
\begin{align}\label{10.18.x1}
T(g)=\int_{\cx}F(x)g(x)\,d\mu(x)\
\end{align}
and
\begin{align*}
\|F\|_{L^{q'}(B)}\ls\frac{\|\ch1_B\|_{Y(\cx)}
\|T\|_{(H_{\mathrm{atom}}^{Y,q,d}(\cx))^*}}{[\mu(B)]^{1/q}}.
\end{align*}

Next, we claim that, if there exists another function
$\widetilde{F}\in L^{q'}(B)$ such that, for any $g\in L^q_0(B)$,
$T(g)=\int_{B}\widetilde{F}(x)g(x)\,d\mu(x)$, then
$\widetilde{F}-F=m_B(\widetilde{F}-F)$ almost everywhere in $B$.
Indeed, for any $g\in L^q_0(B)$,
$$\int_{B}\lf[\widetilde{F}(x)-F(x)\r]g(x)\,d\mu(x)=0,$$
which, together with the fact that $h-m_B(h)\in L^{q}_0(B)$
for any $h\in L^q(B)$, further implies that, for any
$h\in L^q(B)$,
\begin{align}\label{10.17.x1}
\int_{B}\lf[\widetilde{F}(x)-F(x)\r]\lf[h(x)-m_B(h)\r]\,d\mu(x)=0.
\end{align}
Moreover, for any $H\in L^q(\cx)$ and $G\in L^{q'}(\cx)$,
by the definitions of $m_B(H)$ and $m_B(G)$,
we obtain
\begin{align}\label{4.1.y1}
&\int_{B}\lf[m_B(H)G(x)-m_B(G)H(x)\r]\,d\mu(x)\noz\\
&\hs=\int_{B}\lf\{m_B(H)\lf[G(x)-m_B(G)\r]
+m_B(G)\lf[m_B(H)-H(x)\r]\r\}\,d\mu(x)=0,
\end{align}
which, together with $h\in L^q(B)$ and $F$,
$\widetilde{F}\in L^{q'}(B)$, further implies that
\begin{align}\label{10.17.x2}
\int_{B}m_B(h)\lf[\widetilde{F}(x)-F(x)\r]\,d\mu(x)
=\int_{B}h(x)m_B\lf(\widetilde{F}-F\r)\,d\mu(x).
\end{align}
Combining \eqref{10.17.x1} and \eqref{10.17.x2},
we conclude that, for any $h\in L^q(B)$,
\begin{align*}
\int_{B}h(x)\lf[\widetilde{F}(x)-F(x)
-m_B\lf(\widetilde{F}-F\r)\r]\,d\mu(x)=0,
\end{align*}
which implies that, for almost every $x\in B$,
\begin{align*}
\widetilde{F}(x)-F(x)=m_B\lf(\widetilde{F}-F\r)(x).
\end{align*}

Now, we take a sequence $\{B_j\}_{j\in\nn}$ of balls such that
$B_1\subset B_2\subset\cdots\subset B_j\subset\cdots$
and $\bigcup_{j\in\nn}B_j=\cx$. Then, by \eqref{10.18.x1},
we know that there exists a sequence $\{F_j\}_{j\in\nn}$ of
measurable functions such that, for any $j\in\nn$,
$F_j\in L^{q'}(B_j)$ and, for any $g\in L^{q}_0(B_j)$,
\begin{align}\label{10.18.x2}
T(g)=\int_{\cx}F_j(x)g(x)\,d\mu(x).
\end{align}
Let $\widetilde{F}_1:=F_1$ and
$\widetilde{F}_{j+1}:=F_{j+1}+m_{B_j}(\widetilde{F}_{j}-{F}_{j+1})$
for any $j\in\nn$.
Then, by the above claim, we find that,
for any $j\in\nn$ and almost every $x\in B_j$, $\widetilde{F}_{j+1}(x)=\widetilde{F}_{j}(x)$
and $\widetilde{F}_{j}\in L^{q'}(B_j)$.
Let $g\in H_{\mathrm{fin}}^{Y,q,d}(\cx)$ and
$f$ be a measurable function satisfying that,
for any $j\in\nn$ and $x\in B_j$, $f(x)=\widetilde{F}_{j}(x)$.
Then there exists some $j_0\in\nn$ such that $\supp g\subset B_{j_0}$.
By this, the cancellation of $g$, and \eqref{10.18.x2},
we know that
\begin{align}\label{4.1.y2}
T(g)=\int_{B_{j_0}}F_{j_0}(x)g(x)\,d\mu(x)
=\int_{B_{j_0}}\widetilde{F}_{j_0}(x)g(x)\,d\mu(x)
=\int_{\cx}f(x)g(x)\,d\mu(x).
\end{align}
It remains to prove that $f\in{\cl}_{Y,q',d}(\cx)$.
To this end, for any given $m\in\nn$, any given balls
$\{B_k\}_{k=1}^m$ of $\cx$,
and any given $\{\lambda_k\}_{k=1}^m\subset[0,\infty)$
with $\sum_{k=1}^m\lambda_k\neq0$, and for any $k\in\{1,\ldots,m\}$,
let $h_k\in L^q(B_k)$ with $\|h_k\|_{L^q(B_k)}=1$ be such that
\begin{align}\label{2e5}
\lf[\int_{B_k}\lf|f(x)-m_{B_k}(f)\r|^{q'} \,d\mu(x)\r]^\frac1{q'}
=\int_{B_k}\lf[f(x)-m_{B_k}(f)\r]h_k(x)\,d\mu(x)
\end{align}
and, for any $x\in\cx$, let
\begin{align*}
a_k(x):=
\begin{cases}
\displaystyle0
\ \ &\text{if}\ h_k\ \text{is a constant function},\\
\displaystyle \frac{[\mu(B_k)]^{\frac{1}{q}}
[h_k(x)-m_{B_k}(h_k)]{\mathbf{1}}_{B_k}}
{\|{\mathbf{1}}_{B_k}\|_{Y(\cx)}
\|h_k-m_{B_k}(h_k)\|_{L^q(B_k)}} \ \ &\text{otherwise}.
\end{cases}
\end{align*}
Then it is easy to see that, for any $k\in\{1,\ldots,m\}$,
$a_k$ is a $(Y(\cx),q)$-atom. From this and Theorem \ref{atthm},
it follows that $\sum_{k=1}^m \lambda_k a_k\in H_Y^*(\cx)$.
This, together with \eqref{2e5}, \eqref{4.1.y1}, \eqref{4.1.y2},
Theorem \ref{finatomeq}, $T\in(H_Y^*(\cx))^*$, and
$\|h_k-m_{B_k}(h_k)\|_{L^q(B_k)}\ls1$ for any $k\in\{1,\ldots,m\}$,
further implies that
\begin{align*}
&\sum_{k=1}^m\frac{{\lambda}_k\mu(B_k)}{\|{\mathbf{1}}_{B_k}\|_{Y(\cx)}}
\lf[\frac1{\mu(B_k)}\int_{B_k}
\lf|f(x)-m_{B_k}(f)\r|^{q'} \,d\mu(x)\r]^\frac1{q'}\\
&\quad=\sum_{k=1}^m\frac{{\lambda}_k[\mu(B_k)]^{\frac1q}}
{\|{\mathbf{1}}_{B_k}\|_{Y(\cx)}}\int_{B_k}
\lf[f(x)-m_{B_k}(f)\r]h_k(x)\,d\mu(x)\noz\\
&\quad=\sum_{k=1}^m\frac{{\lambda}_k[\mu(B_k)]^{\frac1q}}
{\|{\mathbf{1}}_{B_k}\|_{Y(\cx)}}
\int_{B_k}\lf[h_k(x)-m_{B_k}(h_k)\r]f(x){\mathbf{1}}_{B_k}(x)\,d\mu(x)\noz\\
&\quad\ls\sum_{k=1}^m{\lambda}_k\int_{B_k}a_k(x)f(x)\,d\mu(x)
\sim\sum_{k=1}^m{\lambda}_k T(a_k)
\sim T\lf(\sum_{k=1}^m{\lambda}_k a_k\r)\noz\\
&\quad\ls\|T\|_{(H_Y^*(\cx))^*}
\lf\|\sum_{k=1}^m{\lambda}_k a_k\r\|_{H_Y^*(\cx)}
\ls\|T\|_{(H_Y^*(\cx))^*}\lf\|\lf\{\sum_{k=1}^m
\lf[\frac{{\lambda}_k}{\|{\mathbf{1}}_{B_k}\|_{Y(\cx)}}\r]^d
{\mathbf{1}}_{B_k}\r\}^{\frac1d}\r\|_{Y(\cx)}.\noz
\end{align*}
This implies that $f\in{\cl}_{Y,q',d}(\cx)$ and
hence finishes the proof of (ii) and also of Theorem \ref{qbs-dual}.
\end{proof}

As a consequence of Theorem \ref{qbs-dual}, we have the following
equivalence of ball Campanato-type function spaces;
we omit the details.

\begin{corollary}\label{2c1}
Let $Y(\cx)$, $d$, and $p_0$ be as in Theorem \ref{qbs-dual}.
Suppose $q\in[1,\fz)$ when $p_0\in(0,1)$,
or $q\in[1,p_0')$ when $p_0\in[1,\fz)$.
Then $${\cl}_{Y,q,d}(\cx)={\cl}_{Y,1,d_0}(\cx)$$
with equivalent quasi-norms, where $d_0\in(0,\tz_0)$
with $\tz_0$ as in Theorem \ref{qbs-dual}.
\end{corollary}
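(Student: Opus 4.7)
The strategy is to identify both $\cl_{Y,q,d}(\cx)$ and $\cl_{Y,1,d_0}(\cx)$ with $(H_Y^*(\cx))^*$ by two applications of Theorem \ref{qbs-dual}, and then chain the two representations. The essential compatibility is that the range of $q$ prescribed in Corollary \ref{2c1} matches, term by term, the conjugate range of the index appearing in Theorem \ref{qbs-dual}: when $p_0\in(0,1)$, $\max\{p_0,1\}=1$, so $(1,\fz]'=[1,\fz)$; when $p_0\in[1,\fz)$, $\max\{p_0,1\}=p_0$, so $(p_0,\fz]'=[1,p_0')$. In both cases the conjugate range is precisely the set where $q$ is allowed to lie in the corollary.

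Building on this observation, I would first apply Theorem \ref{qbs-dual} with the parameter choice $(\fz,d_0)$ to deduce $\cl_{Y,1,d_0}(\cx)\cong(H_Y^*(\cx))^*$ via the pairing $f\mapsto T_f$, with $\|f\|_{\cl_{Y,1,d_0}(\cx)}\sim\|T_f\|_{(H_Y^*(\cx))^*}$. Setting $\widetilde q:=q'$ (with the convention $1':=\fz$), the parameter analysis of the previous paragraph places $\widetilde q$ in $(\max\{p_0,1\},\fz]$, so a second application of Theorem \ref{qbs-dual} with parameters $(\widetilde q,d)$ yields the analogous identification $\cl_{Y,q,d}(\cx)\cong(H_Y^*(\cx))^*$ with equivalent quasi-norms.

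Finally, for any $f\in\cl_{Y,1,d_0}(\cx)$, the functional $T_f$ lies in $(H_Y^*(\cx))^*$, and by Theorem \ref{qbs-dual}(ii) on the $\cl_{Y,q,d}$ side, there is an $\widetilde f\in\cl_{Y,q,d}(\cx)$ with $T_{\widetilde f}=T_f$. Testing $T_{\widetilde f}-T_f$ against $(Y(\cx),\fz)$-atoms (which are simultaneously $(Y(\cx),\widetilde q)$-atoms, as $\fz\ge\widetilde q$) forces $\int_\cx(f-\widetilde f)(x)a(x)\,d\mu(x)=0$ for every such atom $a$; since atoms are bounded compactly-supported functions with vanishing integral, a standard argument shows $f-\widetilde f$ is a constant $\mu$-almost everywhere, and this constant is annihilated by the Campanato seminorm. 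Hence $f\in\cl_{Y,q,d}(\cx)$ with the same norm, the reverse inclusion being symmetric; chaining yields $\|f\|_{\cl_{Y,q,d}(\cx)}\sim\|T_f\|_{(H_Y^*(\cx))^*}\sim\|f\|_{\cl_{Y,1,d_0}(\cx)}$. The only genuinely delicate point is this uniqueness-modulo-constants step, but since the finite atomic spaces $H_{\mathrm{fin}}^{Y,q,d}(\cx)$ for different $q$ are all embedded in the common target $(H_Y^*(\cx))^*$ by Theorem \ref{finatomeq}, this reduces to the standard argument that a locally integrable function pairing trivially with all mean-zero bounded compactly-supported functions must be a constant.
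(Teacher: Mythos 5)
Your proposal is correct and is exactly the derivation the paper intends (and omits): both $\cl_{Y,q,d}(\cx)$ and $\cl_{Y,1,d_0}(\cx)$ are identified with $(H_Y^*(\cx))^*$ by two applications of Theorem \ref{qbs-dual}, your index bookkeeping $\widetilde q:=q'\in(\max\{p_0,1\},\fz]$ being precisely the needed compatibility, and the two representations are then chained. The uniqueness-modulo-constants point you single out is settled by the same testing-against-$(Y(\cx),\fz)$-atoms argument already used inside the proof of Theorem \ref{qbs-dual} (the claim around \eqref{10.17.x1} and \eqref{10.17.x2}, applied on an increasing sequence of balls exhausting $\cx$), and constants are invisible to the Campanato seminorm while preserving membership in $L^q_{\mathbb{B}}(\cx)$, so your proof is complete.
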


\begin{remark}\label{6.6x}
\begin{enumerate}
\item[{\rm (i)}] We point out that the proof of Theorem \ref{qbs-dual}
strongly depends on the fact that $H_{\mathrm{fin}}^{Y,q,d}(\cx)$
is dense in $H_Y^*(\cx)$, which is guaranteed by $Y(\cx)$ having
absolutely continuous quasi-norm (see Lemma \ref{dense}). Therefore,
the assumption that $Y(\cx)$ has an absolutely continuous quasi-norm
in Theorem \ref{qbs-dual} is necessary.

\item[{\rm (ii)}] Observe that we establish a continuous
embedding of $Y(\cx)$ into the weighted Lebesgue space to
overcome the difficulty caused by the lack of the good dense
subset of $H_{Y}^*({\mathcal X})$ in Section \ref{s-atom}.
However, this method does not work anymore in the proof of
Theorem \ref{qbs-dual} because, in Section \ref{s-atom},
we only need the convergence in the distributional sense
of the atomic decomposition under consideration,
but now we need its convergence in the quasi-norm
$\|\cdot\|_{H_Y^*(\cx)}$.

\item[{\rm (iii)}]
Recall that, for any $\az\in(0,\fz)$, the \emph{Lipschitz space}
$\mathcal{L}_{\az}(\cx)$ is defined to be the set of
all measurable functions $f$ on $\cx$ such that
\begin{align*}
\|f\|_{\mathcal{L}_{\az}(\cx)}
:=\sup_{x\neq y}\frac{|f(x)-f(y)|}{[V(x,y)]^{\az}}<\fz.
\end{align*}
Let $p\in({\omega}/(\omega+\eta),1]$ with $\omega$ as
in \eqref{eq-doub} and $\eta$ as in Definition \ref{expati}.
On one hand, by \cite[Theorem B]{cw77} and
\cite[Theorems 4.2 and 4.16]{hlyy19}, we know that
the dual space of $H^{*,p}(\cx)$ is the space $\mathcal{L}_{1/p-1}(\cx)$
when $p<1$, or $\mathop\mathrm{BMO}\,(\cx)$ when $p=1$
(see \cite[p.\,593]{cw77} for the definition of $\mathop\mathrm{BMO}\,(\cx)$).
On the other hand, if $Y(\cx):=L^{p}(\cx)$, then,
by Remark \ref{atomre}(i) and Theorem \ref{qbs-dual},
we conclude that the dual space of $H^{*,p}(\cx)$ is
${\cl}_{Y,q',d}(\cx)$ with $Y(\cx)$ replaced by $L^p(\cx)$.
Thus, Theorem \ref{qbs-dual} give an equivalent characterization
of $\mathcal{L}_{1/p-1}(\cx)$ when $p<1$, or $\mathop\mathrm{BMO}\,(\cx)$ when $p=1$.

\item[{\rm (iv)}]
Let $p\in({\omega}/(\omega+\eta),1]$ with $\omega$ as in
\eqref{eq-doub} and $\eta$ as in Definition \ref{expati},
and $r\in(0,\fz)$. If $Y(\cx):=L^{p,r}(\cx)$,
then, by Remark \ref{atomre}(ii), we know that $L^{p,r}(\cx)$
satisfies all the assumptions of Theorem \ref{qbs-dual}.
In this case, Theorem \ref{qbs-dual} is new.

\item[{\rm (v)}]
Let $p\in({\omega}/(\omega+\eta),1]$, $r\in(1,\fz)$,
$w\in A_{r}(\cx)$ satisfy
$$r_w:=\inf\lf\{r\in[1,\fz):\
\vz\in A_r(\cx)\r\}\in(1,p(\omega+\eta)/{\omega})$$
with $\omega$ as in \eqref{eq-doub} and
$\eta$ as in Definition \ref{expati}.
If $Y(\cx):=L^p_w(\cx)$, then, by Remark \ref{atomre}(iii),
we know that $L^p_w(\cx)$ satisfies all the assumptions of
Theorem \ref{qbs-dual}. From this and a generalization of
\cite[Proposition 3.7]{zhyy21} on $\rn$ to $\cx$,
we deduce that \cite[Theorem 8.1]{fmy19}
is a special case of Theorem \ref{qbs-dual}.

\item[{\rm (vi)}]
Let $\Phi$ be an Orlicz function, $p_{\Phi}^+=1$,
and $p_{\Phi}^-\in({\omega}/(\omega+\eta),1]$
with $\omega$ as in \eqref{eq-doub} and
$\eta$ as in Definition \ref{expati}.
If $Y(\cx):=L^{\Phi}(\cx)$, then, by Remark \ref{atomre}(iv),
we know that $L^{\Phi}(\cx)$ satisfies all the assumptions of
Theorem \ref{qbs-dual}. From this and a generalization of
\cite[Proposition 3.7]{zhyy21} on $\rn$ to $\cx$,
we deduce that \cite[Theorem 8.1]{fmy19}
is a special case of Theorem \ref{qbs-dual}.

\item[{\rm (vii)}]
Let $(\cx,\rho,\mu)$ be an RD-space,
and $p(\cdot)\in C^{\log}(\cx)$ satisfy
$p_+\in(0,1]$ and $\widetilde{p_-}\in(\oz/(\oz+\eta),\fz)$,
where $\omega$ is as in \eqref{eq-doub} and
$\eta$ as in Definition \ref{expati}.
If $Y(\cx):=L^{p(\cdot)}(\cx)$,
then, by Remark \ref{atomre}(v), we know that
$L^{p(\cdot)}(\cx)$ satisfies all the assumptions of
Theorem \ref{qbs-dual}. In this case,
Theorem \ref{qbs-dual} improves the corresponding
results in \cite[Theorem 7.2]{zsy} by removing
the additional restriction $p_+\in(0,1]$ and
the reverse doubling assumption of $\mu$.

\item[{\rm (viii)}]
Let $\vz$ be a growth function as in Remark \ref{qbsdefrem}(iii).
As was mentioned in Remark \ref{atomre},
Theorem \ref{qbs-dual} is not applicable to $H^{*,\vz}(\cx)$.
Fu et al. \cite{fmy19} obtained a similar conclusion
of Theorem \ref{qbs-dual} in case that $Y(\cx):=L^{\vz}(\cx)$
independently.
\end{enumerate}
\end{remark}

\noindent\bf{Acknowledgements}\quad\rm
{The authors would like to thank Fan Wang for
some helpful discussions on the subject of this article.}

\bigskip

\noindent Xianjie Yan, Dachun Yang (Corresponding author) and Wen Yuan

\medskip

\noindent Laboratory of Mathematics and Complex Systems
(Ministry of Education of China),
School of Mathematical Sciences, Beijing Normal University,
Beijing 100875, People's Republic of China

\smallskip

\noindent {\it E-mails}: \texttt{xianjieyan@mail.bnu.edu.cn} (X. Yan)

\noindent\phantom{{\it E-mails:}} \texttt{dcyang@bnu.edu.cn} (D. Yang)

\noindent\phantom{{\it E-mails:}} \texttt{wenyuan@bnu.edu.cn} (W. Yuan)

\bigskip

\noindent Ziyi He

\medskip

\noindent School of Science, Beijing University of Posts
and Telecommunications, Beijing, 100876, People's Republic of China

\smallskip

\noindent {\it E-mail}: \texttt{ziyihe@bupt.edu.cn} (Z. He)

\end{document}